\documentclass[12pt]{article}

%% Packages
%\let\proof\relax
%\let\endproof\relax

\usepackage{amsmath,amsthm,amsfonts,amssymb,bbm,dsfont,mathrsfs,yfonts,mathalfa,bm}
\usepackage[authoryear]{natbib}
\usepackage[colorlinks,citecolor=blue,urlcolor=blue]{hyperref}
\usepackage[pdftex]{graphicx}
\usepackage{subcaption}
\usepackage{xr}
\usepackage{color}
\usepackage{enumerate,mathtools}
\usepackage{multirow,tabularx, booktabs}\graphicspath{{figures/}}
%\startlocaldefs
\usepackage[title]{appendix}

\usepackage[
left=1.2in,
right=1.2in,
top=1.2in,
bottom=1.2in,]{geometry}
%Definitions
% Adam
\addtolength{\oddsidemargin}{-.3in}%
\addtolength{\evensidemargin}{-.3in}%
\addtolength{\textwidth}{0.6in}%
\addtolength{\textheight}{0.8in}%
\addtolength{\topmargin}{-.4in}%

\theoremstyle{plain}
\newtheorem{theorem}{Theorem}
\newtheorem{lemma}{Lemma}
\newtheorem{proposition}{Proposition}

\newtheorem{assumption}{Assumptions}

\newtheorem*{test*}{Hypothesis Test}

\newenvironment{myprop}[1]
  {\innercustomthm}
  {\endinnercustomthm}
  
\theoremstyle{definition}

\newtheorem{remark}{Remark}

% equation numbering
\numberwithin{equation}{section}

%Definitions
% Adam
\newcommand\numberthis{\addtocounter{equation}{1}\tag{\theequation}}

\newcommand{\rt}[1]{\sqrt #1}
\newcommand{\tp}{^{\top}}
\newcommand{\td}{\widetilde}

\newcommand{\norm}[1]{\lVert {#1} \rVert}
\newcommand{\normm}[1]{\left\lVert {#1} \right\rVert}

\newcommand{\weakly}{\Rightarrow}

\newcommand{\tr}{\mathrm{tr} }

\newcommand{\Var}{\mathrm{Var}}

\newcommand{\pf}[1]{\noindent{\text{#1})}}
\newcommand{\pfspace}{\vspace{2mm}}

\renewcommand{\norm}[1]{\lVert {#1} \rVert}

%Vectors
\newcommand{\0}{{\mathbf 0}}

\renewcommand{\aa}{{\mathbf a}}

\newcommand{\ee}{{\mathbf e}}
\newcommand{\ff}{{\mathbf f}}
\newcommand{\sigmab}{{\boldsymbol \sigma}}

\newcommand{\uu}{{\mathbf u}}
\newcommand{\vvv}{{\mathbf v}}

\newcommand{\xx}{{\mathbf x}}
\newcommand{\yy}{{\mathbf y}}
\newcommand{\zz}{{\mathbf z}}
\newcommand{\epsilonb}{\boldsymbol{\epsilon}}
\newcommand{\phib}{\boldsymbol{\phi}}

% Caligraphy
\newcommand{\cB}{{\mathcal B}}
\newcommand{\cF}{{\mathcal F}}

% math blackboard
\newcommand{\N}{\mathbb N}
\newcommand{\C}{\mathbb C}

\newcommand{\Em}{{\mathbb E}}
\newcommand{\Pm}{{\mathbb P}}
\newcommand{\Rm}{{\mathbb R}}
\newcommand{\Z}{{\mathbb Z}}

%Daning
\DeclareMathOperator*{\argmin}{argmin}
\newcommand{\zero}{\boldsymbol{0}}

\newcommand{\tGa}{\widetilde{\boldsymbol{\Sigma}}}

\newcommand{\pW}{\boldsymbol{W}}

\newcommand{\tu}{\widetilde{\boldsymbol{\epsilon}}}

\newcommand{\hu}{\widehat{\boldsymbol{\epsilon}}}

\newcommand{\pM}{\boldsymbol{M}}

%\endlocaldefs

\begin{document}
	\def\spacingset#1{\renewcommand{\baselinestretch}%
		{#1}\small\normalsize} \spacingset{1}
	
	\title{Spiked eigenvalues of high-dimensional sample autocovariance matrices: CLT and applications}
	\author{Daning Bi\\
		Hunan University, China\\
		\\
		Xiao Han\footnote{Corrsponding author: Professor Xiao Han, International Institute of Finance, School of Management, University of Science and Technology of China. Email Address: xhan011@ustc.edu.cn}\\
		University of Science and Technology of China, China\\
		\\
		Adam Nie\\
		The Australian National University, Australia \\
		\\
		Yanrong Yang\footnote{Four coauthors are co-first authors.}\\
		The Australian National University, Australia }
	\maketitle

	%\spacingset{1.45} 
	\begin{abstract}
		We establish the central limit theorem (CLT) for spiked eigenvalues of high-dimensional sample autocovariance matrices under general conditions: (1) the spiked eigenvalues are allowed to go to infinity without restrictions in divergence order; (2) the number of spiked eigenvalues and the time lag of the autocovariance matrix could be either fixed or tending to infinity. As a further statistical application, a novel autocovariance test is proposed to detect the equivalence of spiked eigenvalues for two high-dimensional time series. Simulation studies are illustrated to justify the theoretical findings. Lastly, a hierarchical clustering approach is constructed to clustering mortality data from multiple countries.  
	\end{abstract}
	\noindent%
	{\it Keywords: high dimensional sample autocovariance matrices; spiked eigenvalues; central limit theorem; autocovariance test; hierarchical clustering} 
	\vfill
	
	\newpage

	%%%%%%%%%%%%%%%%%%%%%%%%%%%%%%%%%%%%%%%%%%%%%%
	\section{Introduction}
%%%%%%%%%%%%%%Dimension Reduction of High Dimension Time Series%%%%%%%%%%%%%%%
Advances in modern technology have facilitated the collection and analysis of high-dimensional data. A major challenge of statistical inference on high-dimensional data is the well-known ``curse of dimensionality'' phenomenon \citep{Donoho2000}. Dimension reduction, which projects high-dimensional data into a low-dimensional subspace, is a natural idea to overcome the large-dimensional disaster. 
Principal component analysis (PCA) is a commonly-used dimension-reduction technique for high-dimensional independent and identically distributed (i.i.d) data, which pursuits the low-dimensional subspace that keeps the most variation of the original data. A significant and intrinsic difference between i.i.d data and time series lies in the perception that time series have temporal dependence along the sample observations. As informed in earlier literature, \citet{Box_Tiao1977, Pena_Box1987, Tiao_Tsay1989}, identifying the low-dimensional representation or common factors that drive the temporal dependence of original time series is the major purpose of dimension reduction for high-dimensional time series. \citet{LamYao2012} conduct the eigen-decomposition of autocovariance matrices and justify that, the eigenvectors corresponding to the largest eigenvalues span a subspace where the projection of the original time series reserves the most temporal covariance. In view of such a close connection, there is a need to explore the spectral properties for high-dimensional autocovariance matrices.

%%%%%%%%%%%%%%Relation between Dimension Reduction and Spectral Analysis%%%%%%%%
The major contribution of this paper is to establish the asymptotic distribution of spiked eigenvalues for high-dimensional sample autocovariance matrices. Similar to the spiked covariance model raised in \citet{Johnstone2001}, we consider a spiked autocovariance model in which the population autocovariance matrix has a few large eigenvalues, called spiked eigenvalues, that are detached from the bulk spectrum. The spiked autocovariance model could be expressed via a factor model, like \citet{LamYao2012}, in which all temporal dependence is absorbed in low-dimensional common-factor time series. Intuitively speaking, spiked eigenvalues are equal to autocovariances of common-factor time series. In view of this point, spiked eigenvalues from high-dimensional autocovariance matrices could quantify the temporal dependence reserved in low-dimensional projected time series or common-factor time series. We will work under this factor model and investigate spiked eigenvalues from a symmetrized sample autocovariance matrix, which is the product of the autocovariance matrix and its transpose.

%%%%%%%%%%%%%%Literature Review on Spectral Analysis for HD Autocovariance matrix%%%%
In the context of high-dimensional sample autocovariance matrix analysis, fundamental asymptotic properties for spiked eigenvalues under moderately high-dimensional settings are available in the literature. Although \citet{LamYao2012} and \citet{LiWangYao2017} both focus on a ratio-based selection criterion for the number of factors, they essentially contribute to asymptotic properties of spiked and nonspiked eigenvalues of high-dimensional sample autocovariance matrices. In the case of strong spikiness, that is the spiked eigenvalues tending to infinity, \citet{LamYao2012} provide the rate of convergence for spiked and nonspiked eigenvalues. To be more sophisticated, \citet{LiWangYao2017} investigate the exact phase transition that distinguishes the factor part and the noise part. So their proposed ratio-based estimator is also applicable in weak spiked cases when the spiked eigenvalues are of constant order. 
More result for unspiked eigenvalues are derived in \citet{LiPanYao2015}, \citet{WangYao2015} and \citet{BoseBhattacharjee2018}. Recently, \citet{yao2021eigenvalue}, \citet{bose2021spectral} develop asymptotic properties of the smallest eigenvalues for large dimensional autocovariance matrices and their variants.

%%%%%%%%%%%%%%Main Contribution in CLT%%%%%%%%%%%%%%%%%%%%%%%%%%%%%
As the first main contribution of this paper, under general conditions, we establish the asymptotic normality of $\lambda_i$, the $i$-th largest spiked eigenvalue of the matrix $\widehat \Sigma_\yy(\tau) \widehat \Sigma_\yy\tp(\tau)$, where $\widehat{\Sigma}_{\yy}(\tau)$ is the lag $\tau$ sample autocovariance matrix of high-dimensional time series $\{\yy_t, t=1, 2, \ldots, T\}$ under study. 
We assume that the spiked population eigenvalues $\{\mu_i\}_{i\le K}$ diverge as $T\to\infty$ without restrictions on the diverging rates, which relaxes the specific rates used in \citet{LamYaoBathia2011} and \citet{LamYao2012}. Additionally, we also allow the number of factor $K$ to be either fixed or diverging as $T\to\infty$. This type of assumption has been made in the literature for large covariance matrices such as \citet{CaiHanPan2017}, but has not yet been incorporated into the factor model for high-dimensional time series. Furthermore, the lag $\tau$ in the autocovariance matrix $\widehat \Sigma_\yy(\tau)$ is allowed to be either fixed or diverging. Our results show that the scalings for CLTs are not of the same order. In particular, if one is interested in the eigenvalues of $\widehat \Sigma_\yy(\tau) \widehat \Sigma_\yy(\tau)\tp$ for a moderately large $\tau$, the CLT in the regime where $K \to\infty$ might provide a more accurate result than the case for a fixed $\tau$. Since we are working in a regime with less restricted assumptions on the number of factors, the lag of autocovariance, and the spikiness, as a natural trade-off, some difficulties arising in our work are worth to be noted. 

A major source of difficulty in our setting comes from less restrictions on the rate of divergence of spiked population eigenvalues $\{\mu_{i}, i=1, 2, \ldots, K\}$. We argue that the specification of the diverging speed of $\mu_i$ such as ones used in \citet{LamYaoBathia2011} and \citet{LamYao2012} entirely reduce the analysis of a high-dimensional factor model to the study of low-dimensional common-factor time series (see the remarks below Theorem \ref{theorem - 2.1}). 
While this aligns with the goals of dimension reductions in \citet{LamYaoBathia2011} and \citet{LamYao2012}, it obfuscates some interesting features otherwise seen in high-dimensional models. 
Without such restriction, the idiosyncratic noise is no longer negligible and we obtain a clearer picture of how the high-dimensional noise part accumulates and affects the location of spiked eigenvalues. 
More specifically, even though $\lambda_i$ is close to $\mu_i$ asymptotically, the convergence rate of $\lambda_i- \mu_i$ (after appropriate scaling) is in general slower than $T^{-1/2}$. In other words, we will not be able to obtain a CLT using $\mu_i$ as the centering term. What happens here is that the bias of $\lambda_i$ decays too slowly to obtain a CLT and a more accurate centering is needed. In our work, this centering term will be defined implicitly as the solution to an established equation. The phenomenon described above is common in large random matrix literature where, however, there is less emphasis on reducing high-dimensional models into low-dimensional ones \citep[see,][for example]{CaiHanPan2017}.

Besides, instead of working with the autocovariance matrix $\widehat \Sigma_\yy(\tau)$, we are dealing with the symmetrized version $\widehat \Sigma_\yy(\tau) \widehat \Sigma_\yy^{\tp}(\tau)$ in our analysis.
From the technical aspect, the matrix $\widehat \Sigma_\yy(\tau) \widehat \Sigma_\yy^{\tp}(\tau)$ could not be decomposed into a matrix with independent entries like the covariance matrix $\widehat \Sigma$ does. Therefore, the common ideas and regular techniques of some existing works in large random matrix theory such as \citet{BaiYao2008} and \citet{CaiHanPan2017} are not applicable directly in our work. Consequently, as the first work on CLT for large autocovariance matrices under less restricted assumptions, we need a new approach to establish the asymptotic normality for the empirical eigenvalues $\{\lambda_i\}$. Moreover, the approach we develop here could potentially be applied to other types of products of covariance-type matrices. 

%%%%%%%%%%%%%%Contribution in Autocovariance test%%%%%%%%%%%%%%%%%%%%%%%
Another important contribution of this paper is a novel autocovariance test which is built on the developed CLT for $\{\lambda_i\}$. It is well known that when the data dimension $p$ increases with sample size $T$, directly comparing and testing the equivalence of two autocovariance matrices is infeasible due to the ``curse of dimensionality''. The major idea of the proposed so-called autocovariance test is to compare the autocovariance of the low-dimensional common-factor time series. It is equivalent to testing whether spiked population eigenvalues of two high-dimensional autocovariance matrices are the same. It is worth mentioning that, as the CLT involves some unknown parameters, we propose an AR-sieve bootstrap to derive a feasible test statistic. Furthermore, the proposed test statistic is powerful under some local alternative hypotheses, which are demonstrated via theoretical results and various simulation designs. This autocovariance test is not only in its own interest %as a statistical problem 
but also motivates other statistical inferences such as statistical clustering analysis on multi-population high-dimensional time series. In this paper, we construct a new hierarchical clustering approach based on the autocovariance test. It is applied to multi-country mortality data, for which we group those countries with similar low-dimensional autocovariances. The clustering results are consistent with findings in common literature on mortality studies. % More details of the statistical application are addressed in Section~\ref{section - applications}.

%%%%%%%%%%%%%%Organization%%%%%%%%%%%%%%%%%%%%%%%%%%%%%%%%%%%%
The rest of this article is organized as follows. Section~\ref{section - setting} introduces the setting and assumptions of our work, sets up the relevant notations, and presents some preliminary results. The theoretical results of our work are given in Section~\ref{section - main clt}. 
In Section~\ref{section - location of spikes in main results} we investigate the asymptotic location of empirical eigenvalues and construct an accurate centering for these eigenvalues where technical results are collected in Appendix~\ref{section - proofs} of the Supplement Material. The CLT for empirical eigenvalues, which is the main result of our work, is given in Section~\ref{section - subsection CLT for }. The proof of the CLT is quite involved and is thus divided into a series of intermediate results collected in Appendix~\ref{CLT - proofs}, and technical lemmas are collected in Appendixes~\ref{section - clt lemmas}~and~\ref{section - resolvents} of the Supplement Material.
We give a summary of the strategy of the proof in Section~\ref{section - subsection CLT for } and explain how the intermediate results are used to obtain the CLT. Lastly, a novel autocovariance test is illustrated in Section~\ref{section - applications} as a statistical application of the proposed CLT where numerical results including simulation studies and real applications on mortality data 
are also provided in Sections~\ref{3:sec:4}~and~\ref{3:sec:5}, respectively. Technical proofs, additional numerical results of this autocovariance test including its application on the multi-country mortality data are left to the Supplement Material. %All technical proofs of theorems in this work are collected in the supplementary.
% \footnote{Adam and Daning: I have made some changes in sections for the whole paper. Please correct this paragraph accordingly.\\
% Adam: I have modified it a bit, please double check it. -Daning}

	%!TEX root = ../main_withsup.tex
\section{Model Setting}\label{section - setting}
We consider a high-dimensional time series with a factor model structure that appeared previously in \cite{LamYao2012,LamYaoBathia2011,LiWangYao2017}.
Let $(\yy_t)_{t=1,\ldots, T}$ be a $K+p$ dimensional stationary time series containing $K$ independent factors, observed over a period of length $T$. 
Formally we may write the time series as 
\begin{align*}
	\yy_t = L \ff_t + \epsilonb_t,\quad t=1,\ldots, T,\numberthis\label{equation - original model equation}
\end{align*}
where $(\ff_t)_{t=1,\ldots, T}$ is the $K\times T$ matrix of factors, each assumed to be a stationary times series, $L$ is the $(p+K)\times K$ factor loading matrix and $(\epsilonb_t)_t$ is a $K+p$ dimensional idiosyncratic noise series to be specified below.

It is well-known (see for example \cite{BaiLi2012}) that the factor model \eqref{equation - original model equation} is not identifiable without additional constraints on $L$ and $\ff_t$, we refer to Table 1 of \cite{BaiLi2012} for a discussion of the different setups found in the existing literature.
In our work we will assume, without any loss of generality, that $L\tp L$ is equal to a diagonal matrix and all factors are standardized, i.e. $\Em[f_{it}]=0$ and $\Em[f_{it}^2]=1$ for all $i=1,\ldots, K$ and $t=1,\ldots, T$.
% In this case, we denote $L \tp L= \mathrm{diag}(\sigma_1^2, \ldots, \sigma_K^2)$, where $\sigma_1, \ldots, \sigma_K$.

We will work in the so-called high-dimensional setting where the dimension of the model $p$ and the sample size $T$ diverge at the same time while the ratio $p/T$ tends to a constant $c>0$ as $T\to\infty$. We allow the number of factors $K$ to diverge as $T\to\infty$, but impose conditions on the speed of its divergence so that the number of factors remains small in comparison to the dimension of the entire observation (see Assumption \ref{assumptions - tau fixed} and Assumption \ref{assumptions - tau div}).

Each factor $(f_{it})_t$ is assumed to be a stationary time series of the form
\begin{align*}
	f_{it} = \sum_{l=0}^{\infty} \phi_{il} z_{i, t-l}, \quad i = 1,\ldots, K, \quad t = 1, \ldots, T,
	\numberthis\label{equation - fit = timeseries}
\end{align*}
where the random variables $(z_{it})$ are i.i.d. with zero mean, unit variance and finite $(4+\epsilon)$-th moment for some small $\epsilon>0$.
Under this setup, the constraint $\Var(f_{it}) = 1$ mentioned above directly translates to the constraint $\norm{\phib_i}_{\ell_2}=1$ where $\phib_i := (\phi_{il})_l$ is the vector of coefficients for the $i$-th factor and $\norm{\cdot}_{\ell_2}$ is the usual $\ell_2$ norm.
Write $\gamma_i(\tau):= \Em[f_{i,1} f_{i, \tau+1}]$ for the lag-$\tau$ population auto-covariance of the  $i$-th factor  time series $\ff_i$.
% Let $S$ be the right shift operator in the sequence space $\ell_2$, i.e. $S(1,0,\ldots) = (0,1, 0,\ldots)$.
In terms of the representation (\ref{equation - fit = timeseries}), clearly $\gamma_i(\tau)$ can be written as
\begin{align*}
	\numberthis\label{equation - definition of gammitau}
	\gamma_i(\tau) :=\Em[f_{i,1} f_{i, \tau+1}]
	= \sum_{l=0}^\infty \phi_{i,l} \phi_{i,l+ \tau}.
	% =\<\phib_i, S^\tau \phib_i\>.
\end{align*}

Although the loading matrix $L$ appears in the (population) covariance and auto-covariance matrices of $\yy_t$, it does not affect the eigenvalues of the covariance and auto-covariance matrix after normalization.
Furthermore, as observed in \cite{LiWangYao2017}, under additional Gaussian assumptions on the error time series $\epsilonb_t$, the factor model can be reduced to a canonical form where
$L = \begin{pmatrix} I_K & \boldsymbol{0}_{K\times p}\end{pmatrix}\tp$. 
Under this assumption, \cite{LiWangYao2017} is able to obtain explicit results on the phase transition of the asymptotic locations of the spiked eigenvalues.
As already mentioned, we will adopt a slightly different normalization for the matrix $L$, mainly for notational convenience. Nevertheless, we can invoke similar arguments as in \cite{LiWangYao2017} to obtain a canonical form of the factor model where $L$ takes the form
\begin{align}
	L = \begin{pmatrix}
		\mathrm{diag}(\sigma_1, \ldots, \sigma_K) \\ \boldsymbol{0}_{p\times K}
	\end{pmatrix},
	\numberthis\label{canonical form}
\end{align}
where $(\sigma_1, \ldots, \sigma_K)$ is a sequence of positive real numbers.

For the completeness of our exposition, we give a detailed explanation of the simplification~\eqref{canonical form}. 
By assumption, the $(p+K)\times K$ matrix $\overline L:= L\ \mathrm{diag}(\sigma_1^{-1}, \ldots, \sigma_K^{-1})$ satisfies $\overline L \tp \overline L = I_K$ under our choice of normalization, thus there exists a $(p+K)\times p$ matrix $\underline L$ with orthonormal columns such that $\td L:= (\overline L, \underline L)$ is orthonormal. Recall from \eqref{equation - original model equation} that $\yy_t = L \ff_t + \epsilonb_t$. Define
\begin{align*}
	\zz_t:= \td L\tp \yy_t
	 & =  \begin{pmatrix}
		\overline L\tp \\ \underline L\tp
	\end{pmatrix}
	L\ff_t
	+\td L\tp \epsilonb_t
	=  \begin{pmatrix}
		\overline L\tp \\ \underline L\tp
	\end{pmatrix}
	\overline L\ \mathrm{diag}(\sigma_1, \ldots, \sigma_K)\ \ff_t
	+\td L\tp \epsilonb_t.
	% =
	% \begin{pmatrix}
	% 	\mathrm{diag}(\sigma_1, \ldots, \sigma_K) \\ \boldsymbol{0}_{p\times K}
	% \end{pmatrix}
	% \ff_t
	% +\td L\tp \epsilonb_t.
\end{align*}
By definition we have $\overline L\tp \overline L = I_K$ and $\underline L\tp \overline L=\0_{p}$,
therefore
\begin{align*}
	\zz_t=\td L\tp \yy_t
	=
	\begin{pmatrix}
		\mathrm{diag}(\sigma_1, \ldots, \sigma_K) \\ \boldsymbol{0}_{p\times K}
	\end{pmatrix}
	\ff_t
	+\td L\tp \epsilonb_t.
	\numberthis\label{equation - alternative model}
\end{align*}
Note that $\zz_t$ is simply the original data $\yy_{t}$ subjected to an orthonormal transformation, so intuitively the sample auto-covariance matrix of $(\zz_t)$ contains the same temporal information as that of $(\yy_t)$.
More precisely, define the sample auto-covariance matrices
\begin{align*}
	\Sigma_{\yy}(\tau):= \frac{1}{T}\sum_{t=1}^{T- \tau} \yy_{t+ \tau} \yy_t\tp, \quad
	\Sigma_{\zz}(\tau):= \frac{1}{T}\sum_{t=1}^{T- \tau} \zz_{t+ \tau} \zz_t\tp =
	\td L\tp \Sigma_\yy(\tau) \td L.
	% \frac{1}{T}\sum_{t=1}^{T- \tau} \td L\tp \yy_{t+ \tau} \yy_t\tp \td L.
\end{align*}
It is easy to see that the spectrum of $\Sigma_{\yy}(\tau) \Sigma_{\yy}\tp(\tau)$ coincides with that of  $\Sigma_{\zz}(\tau) \Sigma_{\zz}\tp(\tau)$. Indeed, we have
\begin{align*}
	\Sigma_{\zz}(\tau)\Sigma_{\zz}\tp(\tau)
	= \td L\tp \Sigma_\yy(\tau) \td L\ \td L\tp \Sigma_\yy(\tau)\tp \td L = \td L\tp \Sigma_\yy(\tau) \Sigma_\yy\tp(\tau) \td L,
\end{align*}
where $\td L$ is orthonormal so a conjugation by $\td L$ does not affect the spectrum $\Sigma_\yy(\tau)\Sigma_\yy\tp(\tau)$.

The main goal of our work is to establish the asymptotic distribution of the spiked eigenvalues of $\Sigma_{\yy}(\tau)\Sigma_{\yy}\tp(\tau)$. Since the eigenvalues of $\Sigma_{\zz}(\tau)\Sigma_{\zz}^{\top}(\tau)$ are the same as those of the matrix $\Sigma_{\yy}(\tau)\Sigma_{\yy}^{\top}(\tau)$, it suffices to consider $\Sigma_{\zz}(\tau)\Sigma_{\zz}\tp(\tau)$ instead of $\Sigma_{\yy}(\tau)\Sigma_{\yy}\tp(\tau)$. That is, we may without any loss of generality assume that
\begin{align*}
	\yy_t
	=
	\begin{pmatrix}
		\mathrm{diag}(\sigma_1, \ldots, \sigma_K) \\ \boldsymbol{0}_{p\times K}
	\end{pmatrix}
	\ff_t + \td L\tp \epsilonb_t.
\end{align*}
Finally, under the assumptions  $\epsilonb_t$ is orthogonally invariant, the transformed error $\td L\tp \epsilonb_t$ is equal in distribution to $\epsilonb_t$. Under this assumption, we have
\begin{align*}
	\yy_t
	\stackrel{\mathrm{dist.}}{=}
	\begin{pmatrix}
		\mathrm{diag}(\sigma_1, \ldots, \sigma_K) \\ \boldsymbol{0}_{p\times K}
	\end{pmatrix}
	\ff_t + \epsilonb_t
	\numberthis\label{equation - canonical form of y}
\end{align*}
and we may take this as the canonical form of the factor model (\ref{equation - original model equation}). Motivated by these observations, we will work under the canonical form \eqref{equation - canonical form of y}.
% We remark that the Gaussianity assumption

\subsection{Assumptions}
The asymptotic properties of empirical spiked eigenvalues $\lambda_i, i=1, 2, \ldots, K$ mainly depend on the following five types of parameters:
\begin{enumerate}
\item[(1).]\textbf{Factor strength} $(\sigma_i^2)_{i=1}^K$, which are the variances of factors (before normalization).  
\item[(2).]\textbf{Spikiness} $(\mu_i^2)_{i=1}^K$, which are spiked eigenvalues of the population matrix $\Sigma_{\yy}\Sigma_{\yy}^{\top}$. 
\item[(3).]\textbf{Time lag} $\tau$, which is allowed to be fixed or tend to infinity. 
\item [(4).]\textbf{The number of factors} $K$, which could be fixed or tending to infinity. 
\item[(5).]\textbf{The dimension and sample size}: $p$ and $T$, which tend to infinity simultaneously. 
\end{enumerate}
Depending on the choices made on these parameter, the model could exhibit a wide range of different behaviours. 
Below we will detail the assumptions we made relating to these five types of parameters, together with some discussion and justifications on these choices. 
  
First, under the canonical representation \eqref{equation - canonical form of y}, $\sigma_1, \ldots, \sigma_K$ are in fact the standard deviations of the original factors.
We will assume that the factors are ``strong'', i.e. $\sigma_i\to\infty, i=1, 2, \ldots, K$ as $p\to\infty$, which is a common assumption in the factor modelling literature. We note that \cite{LamYao2012} and many subsequent results explicitly specified the rate at which $\sigma_i$ diverges, for instance \cite{LamYao2012} assumed $\sigma_i^2\sim p^{1- \delta}$ where $\delta\in[0,1]$ is a fixed constant. This type of assumption can be understood as essentially reducing the problem to a low dimensional setting, in the sense that the accumulated effects of the high dimensional noise $\epsilonb$ is still small in comparison to the signal strength $\sigma_i$. We will lift this restriction and consider a very general setting where $\sigma_i$ can diverge at any rate instead of specific functions of $T$. This relaxation brings numerous technical difficulties and required us to carry out detailed analysis of high dimensional random matrices.

Next, we note that the spiked eigenvalues $\mu_{i, \tau}, i=1, 2, \ldots, K$ of the population matrix $\Sigma_{\yy}(\tau)\Sigma_{\yy}^{\top}(\tau)$ are closely related to the factor strength as well as the temporal dependence of the common factors. Recall $\gamma_i(\tau) :=\Em[f_{i,1} f_{i, \tau+1}]$ from \eqref{equation - definition of gammitau}. Under the canonical form \eqref{equation - canonical form of y}, the (population) lag-$\tau$ auto-covariance function for each time series $(y_{it})_t$ can be written as
\begin{align*}
	\mu_{i,\tau} := \left(\Em[y_{i,t}y_{i,t+\tau}]\right)^2
	= \sigma_i^4 \gamma_i(\tau)^2, \quad i=1,\ldots, K,\quad \tau\ge 0.
	\numberthis\label{equation - definition of mu n tau}
\end{align*}

To the extent of our knowledge, in the existing literature the parameter $\tau$ is assumed to be a given constant. This classical setting is included in our work under Assumption \ref{assumptions - tau fixed}. 
We will also study a novel setup where we allow $\tau$ diverge as $T\to\infty$. This new setup is motivated by the observation that the theoretical results under the classical fixed $\tau$ settings can be inaccurate when $\tau$ is large. As will be shown, the quantity $\gamma_n(\tau)$ will appear in the characterisation of the asymptotic location of the eigenvalues as well as the variance term in the central limit theorem. For $\tau$ large, $\gamma_n(\tau)$ can be very small, in which case our second setting provides a more accurate description on the effect of large $\tau$.

In the case where $\tau$ is fixed, we will assume without any loss of generality that the sequence $(\mu_{i,\tau})_i$ is arranged in decreasing order, at least for large $T$. Furthermore, we assume impose a spectral gap condition that guarantees $\{\mu_{i, \tau}\}$ is well separated, i.e. there exists $\epsilon>0$ such that $\mu_{i, \tau}/\mu_{i+1, \tau}> 1+ \epsilon$ for all $i$ and $\tau$. This assumption is standard (see e.g. \cite{CaiHanPan2017}) and ensures that the empirical eigenvalues are separated asymptotically.

In the case where $\tau$ is allowed to vary with $T$,
it is too restrictive to assume that such an ordering on $\mu_{i, \tau}$ exists for all $\tau\ge 0$. For example, suppose the $(y_{1t})_t$ has a large variance $\sigma_1^2$ but a very rapidly decaying auto-covariance function $\gamma_1(\cdot)$, while $(y_{2t})_t$ has a smaller variance but a slow decaying auto-covariance function.
Then we can easily have $\mu_{1, 1}> \mu_{2,1}$ as well as $\mu_{1, \tau}< \mu_{2,\tau}$ for a larger $\tau$ so the assumption $\mu_{1, \tau}>\mu_{2, \tau}$ for all $\tau$ is unrealistic.
Instead, we will assume that the sequence $(\mu_{i,\tau})_i$ is well separated only asymptotically, i.e.
we assume there exists $\tau_0$
% \footnote{Adam: I change $T_0$ and $T$ into $\tau_0$ and $\tau$. Please double check it.} 
large enough and $\epsilon>0$ such that
\begin{align*}
	\mu_{i, \tau}/\mu_{i+1, \tau} >1 + \epsilon, \quad \forall \tau>\tau_0, \quad i =1,\ldots, K.
\end{align*}
For simplicity and transparency of our results, we will assume that all $\gamma_i(\tau), i=1, 2, \ldots, K$ decay at the same speed asymptotically, i.e.  $\gamma_i(\tau)/\gamma_j(\tau)<C_1$ for $i,j=1,\ldots,K$ and some constant $C_1$. This implies that the $\mu_{i,\tau}$'s are of the same order as well and a comparison between them is more reasonable.

Finally, following \cite{LiWangYao2017} we will assume that the error time series $(\epsilonb_t)_t$ is standard Gaussian. This ensures a particularly transparent model \eqref{equation - canonical form of y} and th{}eoretical results. We remark that the Gaussianity assumption can be significantly weakened. For instance, from the discussion leading up to \eqref{equation - canonical form of y}, we see that we may still obtain \eqref{equation - canonical form of y} under the assumption that $\epsilonb_t$ is orthogonally invariant. However, as a trade-off, we would have to impose other less intuitive assumptions on $\epsilonb_t$ to ensure the concentration of certain quadratic forms to obtain a CLT. An alternative approach is to invoke a Linderberg type of argument and bound the difference between the Gaussian and non-Gaussian model as a sum of low rank perturbations, see for example \cite{bao2015universality,lee2016tracy}. This type of argument, commonly used in studying the universality of random matrices, is more suitable for a standalone work and we will not pursue such an extension here.

For clarity and the convenience of the reader we summarize our settings into the following sets of conditions which will be referred to in later parts of the paper.
\begin{assumption}\label{assumptions}
	\begin{enumerate}
		\item \label{assumption - order of p, T, K}
		$p, T\to\infty$ and $p/T\to c\in (0, +\infty)$.
		
		\item \label{assumption - diverging eigenvalues}
		$\sigma_i\to\infty$  and $\sigma_i = o(\sigma_j^2)$ for all $i,j=1,\ldots, K$.
		% {\color{red}HX: This condition $\sigma_i/\sigma_j < C$ looks restrictive. Is it possible to extend to infinity? I went through the proof of Theorem 3.1, maybe using singular value instead of eigenvalue can help. If it is too hard to prove, another potential option is to assume that $\|diag(\sigma_1,...,\sigma_K)/r_n-A_K\|\rightarrow 0$, where $r_n$ is some divergent variable and $A_K$ is some non singular diagonal matrix.}
		
		\item
		$(z_{it})_{1\le i\le K, 1-l \le t\le T+1}$ is independent, identically distributed with $\Em[z_{it}] =0$, $\Em[z_{it}^2]=1$ and uniformly bounded $(4+\epsilon)$-th moment for some $\epsilon>0$.
		
		% Furthermore, $\norm{(\phi_{il})_{0\le l\le L}}_{\ell_2} =1$.
		\item
		$(\epsilon_{it})_{1\le i\le p+K, 1 \le t\le T+1}$ is i.i.d. standard Gaussian.
		
		\item
		$\sup_{i}\norm{\phib_i}_{\ell_1}<\infty$.

	\end{enumerate}
\end{assumption}

Part (a) and (b) of Assumption \ref{assumptions} capture our asymptotic regime where $p$ diverges at the same rate as $T$ and the strength of all factors diverges at some non-specific rates. 
In factor model analysis literature it is common to require all factors diverge at the same rate, see \citep{BaiNg2002}. We will not impose such a restriction, instead we opt for a mild assumption on the relative size of factors not being too different. 

Moment conditions such as (c) of Assumption \ref{assumptions} are standard in the random matrix literature; see for instance \cite{BaoDingWang2018,CaiHanPan2017,LiPanYao2015,WangYao2016,WangYao2017}. The normality assumption in (d) is discussed above and finally condition (e) is very standard in the time series literature, see \cite{BrockwellDavisFienberg1991}. For instance, condition (e) is satisfied by an auto-regressive moving average process written in the form \eqref{equation - fit = timeseries}.

The following two sets of assumptions encapsulate the two asymptotic schemed discussed above relating to the parameter $\tau$. Our main results are formulated in such a way that they hold under either set of assumptions.

\begin{assumption}\label{assumptions - tau fixed}
	% \label{assumptions - tau}
	\begin{enumerate}

		\item $\tau$ is a fixed, positive integer. 
		% \footnote{Adam: I have changed $\tau$ into positive integer instead of non-negative. Please double check it.}

		\item $K = o\left(T^{1/4}\right)$ as $T\to\infty$.

		\item the set of parameters $(\sigma_i)_{i=1}^K$ satisfy
		\begin{align*}
			\frac{K \sigma_i}{\sigma_j^{3/2}} = O(1), \quad
			\frac{K^2 \sigma_i^2}{\sigma_j^2} = o(\rt T), \quad \forall i,j=1,\ldots, K,
		\end{align*}

		\item the sequence $(\mu_{1, \tau}, \ldots, \mu_{K, \tau})$ is arranged in decreasing order and there exists $\epsilon>0$ such that $\mu_{i, \tau}/ \mu_{i+1, \tau}>1+\epsilon$ for all $i=1, \ldots, K-1$.
	\end{enumerate}
\end{assumption}

\begin{assumption}\label{assumptions - tau div}
	\begin{enumerate}
		\item $\tau$ is a positive integer and
		      $\tau\to\infty$ as $T\to\infty$.

		\item $K = o\left(T^{1/4}\right)$ and $\gamma_i(\tau)^{-1} = o(\rt T)$ as $T\to\infty$.

		\item the set of parameters $(\sigma_i)_{i=1}^K$ and $(\gamma_i(\tau))_i$ satisfy
		\begin{align*}
			\frac{K \sigma_i}{\sigma_j^{3/2} \gamma_{j}(\tau)^{1/5}} = O(1), \quad
			\frac{K^2 \sigma_i^2}{\sigma_j^2 \gamma_j(\tau)^2} = o(\rt T), \quad \forall i,j=1,\ldots, K,
		\end{align*}
		\item
		      % the sequence $(\mu_{1, \tau}, \ldots, \mu_{K, \tau})$ is asymptotically decreasing, i.e.
		      there exists $\tau_0$ large enough and some $\epsilon>0$ such that
		      $\mu_{i, \tau}/ \mu_{i+1, \tau}>1+\epsilon$ for all $i=1, \ldots, K-1$ and $\tau>\tau_0$.
	\end{enumerate}
\end{assumption}
As already mentioned, we do not impose specific restrictions on the speed of divergence of $\sigma_i$, nor do we assume all spikes are of the same size. Instead, it suffices to impose mild restrictions on the relative sizes of the spikes. As such, condition these assumptions are trivially satisfied when the spike sizes are comparable. The exponents appearing in the assumptions are chosen for convenient and are likely suboptimal. We will leave the optimization of these exponents to future work.  Finally, we note that Assumption~\ref{assumptions - tau div} is strictly stronger than Assumption~\ref{assumptions - tau fixed}, since $\gamma_j(\tau)^{-1}\to\infty$ as $T\to\infty$.

\subsection{Notations and Preliminaries}
In our exposition and proofs, we will often encounter various resolvent matrices, which capture the spectral information of the random matrices we are studying.
Since we are constantly dealing with many different matrices,  assigning to each a different letter will easily exhaust the alphabet.
Instead, we adopt some non-standard notations for matrices and sub-matrices. Write $(a_{ij})$ for a matrix where the $(i,j)$-th entry is equal to $a_{ij}$. For such a matrix $(a_{ij})$, we  will write
\begin{align*}
	\aa_{[i: j], [k: l]} := \begin{pmatrix}
		a_{ik} & \ldots & a_{il}
		\\
		\vdots & \ddots & \vdots
		\\
		a_{jk} & \ldots & a_{jl}
	\end{pmatrix}
\end{align*}
for a specified sub-matrix. Similarly we will write
$\aa_{i, [j:k]}$ and $\aa_{[i:j], k}$ for the column vectors $(a_{ij}, \ldots, a_{ik})\tp $ and $(a_{ik},\ldots, a_{jk})\tp $ respectively.

First, we introduce notations for some of the more important random matrices in our study.
We denote

\begin{align*}
	x_{it} = \sigma_i f_{it} +  \epsilon_{it}, i=1, \ldots, K, t=1,\ldots, T
	\numberthis\label{equation - def of xit}
\end{align*}
and write $\xx=(x_{it})$, 
% \begin{align*}
% 	x_{it} = \sigma_i f_{it} +  \epsilon_{it},
% 	\numberthis\label{equation - definition of xx}
% \end{align*}
% and
% \begin{subequations}
\begin{align*}
	\numberthis\label{equation - definition of xx}
	X_0 := \frac{1}{\rt T} \xx_{[1:K], [1:T-\tau]},             &
	\quad
	X_\tau :=\frac{1}{\rt T} \xx_{[1:K], [\tau +1 : T]},
	\\
	E_0 := \frac{1}{\rt T} \epsilonb _{[K+1:K+p], [1:T- \tau]}, & \quad E_\tau := \frac{1}{\rt T} \epsilonb _{[K+1:K+p], [\tau+1:T]},
\end{align*}
for matrices used later that contain the factors and noises in our model. We will also write
\begin{align*}
	\numberthis\label{equation - definition of yy}
	Y_0
	:=\frac{1}{\rt{T}} \yy_{[1:p+K], [1:T- \tau]}, &
	% =\frac{1}{\rt{T}} (\yy_1, \ldots, \yy_{T}),
	\quad
	Y_{\tau}
	:=\frac{1}{\rt{T}} \yy_{[1:p+K], [\tau+1:T]} ,
	% =\frac{1}{\rt{T}} (\yy_2, \ldots, \yy_{T+1}),
\end{align*}
i.e. we have $Y_0 =
	(X_0\tp, E_0\tp )\tp$
and $Y_{\tau} =(X_\tau\tp, E_\tau\tp )\tp$.
For an integer $\tau> 0$, the lag-$\tau$ sample auto-covariance matrix of $\yy_t$ can then be written as
\begin{align*}
	\widehat\Sigma_\tau & :=  \frac{1}{T} \sum_{t=1}^{T- \tau} \yy_{t+\tau} \yy_{t}\tp
	=
	\begin{pmatrix}
		X_{\tau} \\ E_{\tau}
	\end{pmatrix}
	\begin{pmatrix}
		X_0 \\E_0
	\end{pmatrix}\tp
	=
	\begin{pmatrix}
		X_{\tau}X_0\tp & X_{\tau} E_0\tp \\
		E_{\tau}X_0\tp & E_{\tau} E_0\tp
	\end{pmatrix}.
	% = :
	% \begin{pmatrix}
	% 	S_{11} & S_{12}\\ S_{21}& S_{22}
	% \end{pmatrix}.
\end{align*}

% Observe that the spectral norm of $E_\tau\tp E_\tau E_0\tp E_0$ is bounded with high probability. Then for any sequence $(a_n)$ that diverges as $T\to\infty$, we have $a_n$ falling outside of the spectrum of  $E_\tau\tp E_\tau E_0\tp E_0$ with probability tending to 1.
Next, we introduce resolvent matrices which are central to the study of spectral properties of random matrices. Most of our results rely on certain bilinear forms formed using the resolvents.
For $a\in\Rm$ outside of the spectrum of the matrix $E_\tau \tp E_\tau E_0\tp E_0$ write
\begin{align*}
	\numberthis \label{equation - definition of R}
	R({a})  := (  I_{T- \tau} - a^{-1} E_\tau \tp E_\tau E_0\tp E_0)^{-1} = a(a - E_\tau \tp E_\tau  E_0\tp E_0)^{-1}
\end{align*}
for the (scaled) resolvent of $E_\tau \tp E_\tau  E_0\tp E_0$ at $a$.
The resolvent $R(a)$ satisfies
\begin{align*}
	\numberthis \label{equation - first resolvent identity}
	R(a)= I_{T- \tau} + a^{-1} R(a)  E_\tau \tp E_\tau E_0\tp E_0,
\end{align*}
which follows from rearranging $R(a)(I_{T- \tau} - a^{-1} E_\tau \tp E_\tau E\tp E) = I_{T- \tau}$.
Using the identity
\begin{align*}
	\numberthis\label{equation - bullet pencil identity}
	A(\lambda I - BA)^{-1} = (\lambda I - AB)^{-1}A
\end{align*}
we may also obtain the following identities
\begin{align*}
	R(a)  E_\tau \tp E_\tau  = E_\tau \tp E_\tau  R(a) \tp,
	\quad
	E_0\tp E_0 R(a)  =  R(a) \tp  E_0\tp E_0.
	\numberthis\label{equation - bullet pencil identity for REE}
\end{align*}
In our analysis we will constantly be dealing with certain quadratic forms involving matrices $X_0, X_\tau$, $E_0,E_\tau$ and the resolvent $R(a)$. To simplify notations we will write
\begin{align*}
	\numberthis\label{equation - definition of A and B}
	A(a):= \frac{1}{\rt a}{ X_0 R(a) X_\tau\tp},
	\quad
	& B(a):= \frac{1}{a}{ X_\tau  E_0\tp E_0 R(a) X_\tau\tp},
	\\
	\overline Q(a) := I_K -  a^{-1}X_\tau  E_0\tp E_0 R(a)  X_\tau \tp ,
	\quad
	& Q(a) := I_K - a^{-1}X_0 R(a) E_\tau \tp E_\tau  X_0\tp.
	\numberthis\label{equation - definition of Q}
\end{align*}
For any $a$ outside the spectrum of the matrix $X_0 R(a) E_\tau \tp E_\tau  X_0\tp$, the matrix $Q(a)$ defined above is invertible and similar to \eqref{equation - first resolvent identity}, we have
\begin{align*}
	Q(a)^{-1} = I_K + \frac{1}{a}Q(a)^{-1} X_0 R(a) E_\tau\tp E_\tau X_0\tp.
	\numberthis\label{equation - first resolvent identity for Q}
\end{align*}

For two sequences of positive numbers $(a_n)$ and $(b_n)$, we write $a_n\lesssim b_n$ if there exists a constant $c>0$ such that $a_n\le c b_n$.
We write $a_n\asymp b_n$ if $a_n\lesssim b_n$ and $b_n\lesssim a_n$ hold simultaneously.
A sequence of events $(F_n)$ is said to hold with high probability if there exists constants $c,C>0$ such that $\Pm(F_n^c)\le C n^{-c}$.
The operator and Hilbert-Schmidt norms of a matrix $M$ are denoted by $\norm{M}$ and $\norm{M}_{F}$ respectively, and we write $\norm{(a_n)}_{\ell^p}$ for the $\ell_p$ norm of a sequence $(a_n)$.
We will write $(\ee_i)_{i=1}^n$ for the standard orthonormal basis of Euclidean space $\Rm^n$, often without specifying the dimension $n$.

We will use the usual $o_p$ and $O_p$ notations for convergence in probability and stochastic compactness. For $p\ge 1$, we will write $o_{L^p}$ and $O_{L^p}$ for convergence to zero and boundedness in $L^p$, i.e. for a sequence of random variables $(X_n)_n$ and real numbers $(a_n)$, we write $X_n = O_{L^p}(a_n)$ if $\Em|X_n/a_n|^p = O(1)$ and $X_n = o_{L^p}(a_n)$ if $\Em|X_n/a_n|^p = o(1)$. For matrices $(A_n)$ we will write $A_n = O_{p, \norm{\cdot}}(a_n)$ if $\norm{A_n} = O_p(a_n)$.
% and use notations $o_{p,\norm{\cdot}}, o_{L^p,\norm{\cdot}}, O_{L^p,\norm{\cdot}}$ analogously.

Throughout the paper we will make use of certain events of high probability.
Define
\begin{align*}
	 & \cB_0 := \left\{ \norm{E_0\tp E_0}+\norm{E_\tau\tp E_\tau} \le 4\left(1+ \frac{p}{T}\right) \right\},
	\\
	 & \cB_1 := \left\{ \norm{X_0\tp X_0} + \norm{X_\tau\tp X_\tau} \le 2\sum_{i=1}^K\sigma_i^2  \right\}
	\numberthis\label{equation - high prob event}
\end{align*}
and
$\cB_2:= \cB_0\cap \cB_1$.
We first state a preliminary result showing that these events happen with high probability as $T\to\infty$. The proof will be given in Appendix \ref{section - proofs}.
\begin{lemma}\label{lemma - high prob event}
	Under Assumption \ref{assumptions} and either Assumption \ref{assumptions - tau fixed} or \ref{assumptions - tau div}, we have
	\begin{enumerate}

		\item $\cB_0$ holds with probability  $\Pm(\cB_0)=1- o(T^{-l})$ for any $l\in\N^+$
		% \footnote{Adam: here $l$ should be positive integer? If yes, please change $\N$ into $\N^{+}$.} 
		as $T\to\infty$.
		      \label{lemma - high prob event - EE}

		\item \label{lemma - high prob event XX}
		      For $k=1,2$, $\cB_k$ holds with probability $\Pm(\cB_k) = 1 - O(KT^{-1})$ as $T\to\infty$.
	\end{enumerate}
\end{lemma}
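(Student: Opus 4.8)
The two statements concern, respectively, the top of the spectrum of the pure‑noise blocks $E_0,E_\tau$ and of the signal‑plus‑noise blocks $X_0,X_\tau$, and call for different tools. For $\cB_0$, write $E_0=T^{-1/2}G_0$ and $E_\tau=T^{-1/2}G_\tau$, where $G_0,G_\tau\in\Rm^{p\times(T-\tau)}$ are two (overlapping) column windows of one Gaussian array of i.i.d.\ standard normals, and note $\norm{E_0\tp E_0}=\norm{E_0}^2$. The plan is: (i) bound the mean via Gordon's comparison inequality, $\Em\norm{E_0}\le\sqrt{p/T}+\sqrt{(T-\tau)/T}\le 1+\sqrt{p/T}$; (ii) apply Gaussian concentration of measure to $M\mapsto T^{-1/2}\norm{M}$, which is $T^{-1/2}$‑Lipschitz for the Frobenius norm, to get $\Pm(\norm{E_0}\ge\Em\norm{E_0}+\varepsilon)\le e^{-\varepsilon^2T/2}$; (iii) fix a small constant $\varepsilon>0$ for which $(1+\sqrt{p/T}+\varepsilon)^2\le 2(1+p/T)$ holds for all large $T$, so that $\norm{E_0\tp E_0}\le 2(1+p/T)$ off an event of probability $e^{-\varepsilon^2T/2}$; and (iv) repeat for $E_\tau$ and take a union bound, which yields $\Pm(\cB_0^c)\le 2e^{-\varepsilon^2T/2}=o(T^{-l})$ for every $l\in\N^+$.

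For $\cB_1$ and $\cB_2$: since $\cB_2=\cB_0\cap\cB_1$, the bound on $\Pm(\cB_0^c)$ just obtained reduces matters to bounding $\Pm(\cB_1^c)$. Using $x_{it}=\sigma_if_{it}+\epsilon_{it}$ and $\norm{X_0\tp X_0}=\norm{X_0X_0\tp}$, one has $X_0X_0\tp=D\pM_0D+DH_0+H_0\tp D+\pW_0$, where $D:=\mathrm{diag}(\sigma_1,\dots,\sigma_K)$, the $K\times K$ matrix $\pM_0$ is the sample Gram of the factor vectors $(f_{1t},\dots,f_{Kt})\tp$, $t=1,\dots,T-\tau$, $\pW_0$ is the sample Gram of the corresponding (first $K$) noise vectors, and $H_0$ collects the factor--noise cross terms, with the analogous decomposition for $X_\tau X_\tau\tp$. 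The plan is to show that, with probability $1-O(K/T)$: (i) $\norm{\pM_0-\tfrac{T-\tau}{T}I_K}=o(1)$: indeed each diagonal entry $\tfrac1T\sum_tf_{it}^2$ equals $\tfrac{T-\tau}{T}+O_{L^2}(T^{-1/2})$ and each off‑diagonal entry is $O_{L^2}(T^{-1/2})$ (obtained by substituting the expansion \eqref{equation - fit = timeseries}, using $\sup_i\norm{\phib_i}_{\ell_1}<\infty$ from Assumption \ref{assumptions}(e) to bound the variances of the relevant partial sums by $O(T)$, together with the $(4+\epsilon)$‑th moment bound on the innovations), so that Chebyshev's inequality plus a union bound over the $O(K^2)$ entries, combined with $K=o(T^{1/16})$, give $\norm{\pM_0}\le1+o(1)$, hence $\norm{D\pM_0D}\le\sigma_1^2(1+o(1))$; (ii) $\norm{\pW_0}=O(1+K/T)=o(\sigma_1^2)$, by the same Chebyshev/union‑bound argument on the entries of $\pW_0$; and (iii) $\norm{H_0}=O_p(1)$ (conditionally on the factors, $H_0$ is a small Gaussian matrix), so $\norm{DH_0}=O_p(\sigma_1)=o(\sigma_1^2)$. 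Combining these, on the intersection event $\norm{X_0\tp X_0}\le\sigma_1^2(1+o(1))\le\sum_{i=1}^K\sigma_i^2$, and the same for $X_\tau$, so the sum is at most $2\sum_{i=1}^K\sigma_i^2$; therefore $\Pm(\cB_1^c)=O(K/T)$ and $\Pm(\cB_2^c)\le\Pm(\cB_0^c)+\Pm(\cB_1^c)=O(K/T)$.

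The one genuinely delicate point is step (i) for $\cB_1$ (the control of $\pM_0$): one must control the empirical (cross‑)autocovariances $\tfrac1T\sum_tf_{it}f_{jt}$ of the \emph{dependent} linear‑process factors \emph{uniformly in a number of factors $K$ that may grow with $T$}, with only $(4+\epsilon)$ innovation moments. Gaussian concentration is not available here, so one instead substitutes the moving‑average representation \eqref{equation - fit = timeseries}, uses $\sup_i\norm{\phib_i}_{\ell_1}<\infty$ to establish $\Var\big(\sum_{t=1}^{T-\tau}f_{it}f_{jt}\big)\lesssim T$, and then pays for the $O(K^2)$‑fold union bound with the budget $K=o(T^{1/16})$ from condition (b) of Assumptions \ref{assumptions - tau fixed} and \ref{assumptions - tau div}. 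The windowing ($[1:T-\tau]$ versus $[\tau+1:T]$) produces only lower‑order edge corrections, so the remaining $\tau$‑dependent hypotheses enter only mildly.
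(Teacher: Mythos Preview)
Your treatment of part~(a) is correct and standard: Gordon's inequality plus Gaussian Lipschitz concentration is precisely the argument behind the result the paper cites from \cite{CaiHanPan2017}. (One small caveat: the inequality $(1+\sqrt{p/T}+\varepsilon)^2\le 2(1+p/T)$ with a \emph{fixed} $\varepsilon>0$ requires $p/T$ to stay bounded away from $1$; when $c=1$ the gap closes and $\varepsilon$ would have to shrink, but this is a routine fix.)

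For part~(b) your route differs from the paper's and, as written, does not reach the target probability $O(K/T)$. The paper's argument is much shorter: since $X_0\tp X_0$ is positive semidefinite,
\[
\norm{X_0\tp X_0}\le \tr(X_0\tp X_0)=\sum_{i=1}^K \frac{1}{T}\sum_{t=1}^{T-\tau} x_{it}^2,
\]
and one then applies Chebyshev to each of the $K$ scalars $\tfrac1T\sum_t x_{it}^2$ (mean $\sigma_i^2+1$, variance $O(\sigma_i^4/T)$ by Lemma~\ref{lemma - concentration of xBx}) and a union bound over only $K$ terms. This yields $O(K/T)$ immediately, with no entrywise-to-operator-norm conversion and no $K^2$-fold union.

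Your decomposition $X_0X_0\tp=D\pM_0D+DH_0+H_0\tp D+\pW_0$ is fine algebraically, but step~(i) does not achieve what you claim. Entrywise Chebyshev at level $\epsilon$ plus a $K^2$-fold union bound gives failure probability $O(K^2/(T\epsilon^2))$; forcing this to be $O(K/T)$ requires $\epsilon\gtrsim\sqrt K$, and then the Gershgorin/Frobenius conversion yields only $\norm{\pM_0-\tfrac{T-\tau}{T}I}\le K\epsilon\gtrsim K^{3/2}$, not $o(1)$. So for diverging $K$ you cannot have both $\norm{\pM_0}\le 1+o(1)$ and failure probability $O(K/T)$ by this route. (A secondary issue: even if you had $\norm{X_0\tp X_0}\le\sigma_1^2(1+o(1))$, the final step $\sigma_1^2(1+o(1))\le\sum_i\sigma_i^2$ fails when $K=1$.) Your argument would still give \emph{some} $o(1)$ probability bound under $K=o(T^{1/16})$, just not the stated rate; the trace bound avoids both difficulties because it never passes through an operator norm of a $K\times K$ matrix.
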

As an immediate consequence of this lemma and (b) of Assumption \ref{assumptions}, we have
\begin{align*}
	\norm{E_0\tp E_0}+\norm{E_\tau\tp E_\tau} = O_p(1),
	\quad
	\norm{X_0\tp X_0} + \norm{X_\tau\tp X_\tau} = O_p(K\sigma_1^2).
	\numberthis\label{equation - high prob event Op}
\end{align*}
Furthermore,
we observe that under the event $\cB_0$, for any sequence $(a_T)_T$ such that $a_T\to\infty$,  the matrix
$I_{T- \tau} - a_T^{-1} E_\tau \tp E_\tau E_0\tp E_0$ is eventually invertible. Moreover, we note that under $\cB_0$ we have $\norm{a_{T}^{-1} E_\tau\tp E_\tau E_0\tp E_01_{\cB_0}} \le  4 a_{T}^{-1} (1+ p/T) = O(a_{T}^{-1})$, which is a non-random  upper-bound. By the reverse triangle inequality we immediately have $\norm{I_{T- \tau} - a_T^{-1} E_\tau \tp E_\tau E_0\tp E_0 1_{\cB_0}}\ge 1- O(a_{T}^{-1})$ and therefore
\begin{align*}
	\norm{R(a_T)1_{\cB_0}} = 1+ o(1),
	\quad
	\norm{R(a_T)} = 1+ o_p(1),  \numberthis\label{equation - norm of R 1cB}
\end{align*}
where the definition of $R(\cdot)$ is in (\ref{equation - definition of R}). 

Similarly, under the event $\cB_2$ the matrix $Q(a_T)$ is eventually invertible as $a_T\to\infty$ and
\begin{align*}
	\norm{Q(a_T)^{-1}1_{\cB_2}} = 1+ o(1),
	\quad
	\norm{Q(a_T)^{-1}} = 1+ o_p(1). \numberthis\label{equation - norm of Q 1cB}
\end{align*}
Finally, let $\cF_p$ be the $\sigma$-algebra generated by the noise time series $(\epsilonb_t)$, i.e.
\begin{align*}
	\cF_p := \sigma\big(\{\epsilon_{it}, i=K+1, \ldots, K+p, t=1, \ldots, T\}\big).
	\numberthis\label{equation - definition of sigalg CFp}
\end{align*}
We will often take expectations conditional on the noise series, in which case we shall write
\begin{align*}
	\underline\Em[\ \cdot \ ] := \Em[\ \cdot\ |\cF_p]
	% , \quad
	% {\underline\Em^{\cB_k}}[\ \cdot\ ] :=\Em[\ \cdot\ 1_{\cB_k}|\cF_p]
	.
	\numberthis\label{equation - Emcond}
\end{align*}

	%!TEX root = ../main_withsup.tex
\section{Main results} \label{section - main clt}
Write $\lambda_{n, \tau}$ for the $n$-th largest spiked eigenvalue of the symmetrized lag-$\tau$ sample auto-covariance matrix $\widehat\Sigma_\tau  \widehat\Sigma_\tau \tp $.  The main goal of our work is to establish the asymptotic normality of $\lambda_{n, \tau}$ for $n\le K$ after appropriate centering and scaling.
We will first in Section~\ref{section - location of spikes in main results} establish the asymptotic location of the eigenvalue $\lambda_{n, \tau}$ as well as identify the correct centering for $\lambda_{n, \tau}$ in order to obtain a central limit theorem. 
The proof will be presented in Appendix \ref{section - proofs} of the supplement.
The central limit theorem itself, which is the main result of our work, is stated in Theorem~\ref{theorem - CLT} of Section~\ref{section - subsection CLT for }.

Due to its length, the proof of Theorem~\ref{theorem - CLT} will be divided into a series of propositions and technical lemmas, which are collected in Appendixes \ref{CLT - proofs} to \ref{section - resolvents} of the supplement. For the convenience of the reader, we will summarize the strategy of the proof of Theorem~\ref{theorem - CLT} and explain how the intermediate results are used in Section~\ref{section - subsection CLT for }.
% \footnote{Adam: I have adjusted the appendix labeling a bit, please double check if the label is correct and match what has been written in the introduction.——Daning}

\subsection{Location of Spiked Eigenvalues}\label{section - location of spikes in main results}
We first show that the spiked eigenvalue $\lambda_{n, \tau}$ is close to its population counterpart $\mu_{n, \tau}$ asymptotically. This will in particular give the asymptotic order of $\lambda_{n, \tau}$ as $T\to\infty$.
\begin{theorem}\label{theorem - 2.1}
	Under Assumption~\ref{assumptions} and either Assumption~\ref{assumptions - tau fixed} or \ref{assumptions - tau div}, we have
	\begin{align*}
		\frac{\lambda_{n, \tau}}{\mu_{n, \tau}} -1
		=
		O_p\left(\frac{1 }{\gamma_n(\tau)\rt T }\right) +  O_p\left(\frac{K \sigma_1^2}{\sigma_n^4 \gamma_n(\tau)^2 }\right),\quad n=1,\ldots, K.
		\numberthis\label{equation in theorem 2.1}
	\end{align*}
	where $\mu_{n, \tau}$ and $\gamma_n(\tau)$ are defined in \eqref{equation - definition of mu n tau} and \eqref{equation - definition of gammitau} respectively.
\end{theorem}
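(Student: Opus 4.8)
The plan is to localize $\lambda_{n,\tau}$ in two stages: a soft perturbation argument that confines $\lambda_{n,\tau}$ to a shrinking window around $\mu_{n,\tau}$, followed by a sharp analysis that reduces the eigenvalue problem to a determinantal equation of size at most $2K$ and extracts the rate in \eqref{equation in theorem 2.1}.

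First I would work on the high-probability events $\cB_0,\cB_1,\cB_2$ of Lemma~\ref{lemma - high prob event} and split
\begin{align*}
	\widehat\Sigma_\tau\widehat\Sigma_\tau\tp
	=\begin{pmatrix} X_\tau\big(X_0\tp X_0+E_0\tp E_0\big)X_\tau\tp & \0 \\ \0 & \0\end{pmatrix}
	+\text{(remainder)},
\end{align*}
where the remainder collects the two off-diagonal blocks and the $p\times p$ block. Using \eqref{equation - high prob event Op}, standard operator-norm bounds for $X_0,X_\tau,E_0,E_\tau$, and the independence of the $K$ factor coordinates from the $p$ noise coordinates in the canonical model \eqref{equation - canonical form of y}, the remainder has operator norm $O_p\big(\sigma_1^3|\gamma_1(\tau)|\big)$, while a second-moment computation for the sample autocovariances of the factor rows gives that the displayed diagonal block equals $\mathrm{diag}(\mu_{i,\tau})_{i\le K}$ up to an $o_p(\mu_{n,\tau})$ error. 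Since Assumptions~\ref{assumptions - tau fixed}/\ref{assumptions - tau div} force $\sigma_1^3|\gamma_1(\tau)|=o(\mu_{n,\tau})$, Weyl's inequality together with the separation of $\{\mu_{i,\tau}\}$ yields $\lambda_{n,\tau}=\mu_{n,\tau}(1+o_p(1))$; in particular, with high probability $\lambda_{n,\tau}$ lies in an interval $I_n$ shrinking to $\mu_{n,\tau}$, well above the bulk spectrum (which is $O_p(1)$) and isolated from the other spikes.

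For the sharp stage I would use the fact that the nonzero eigenvalues of $\widehat\Sigma_\tau\widehat\Sigma_\tau\tp=Y_\tau Y_0\tp Y_0 Y_\tau\tp$ coincide with those of $(Y_\tau\tp Y_\tau)(Y_0\tp Y_0)=E_\tau\tp E_\tau E_0\tp E_0+\Delta$, where
\begin{align*}
	\Delta:=X_\tau\tp\big(X_\tau Y_0\tp Y_0\big)+\big(E_\tau\tp E_\tau X_0\tp\big)X_0
\end{align*}
has rank at most $2K$. Since $\norm{E_\tau\tp E_\tau E_0\tp E_0}=O_p(1)$ on $\cB_0$, the interval $I_n$ avoids the spectrum of $E_\tau\tp E_\tau E_0\tp E_0$, so by the Weinstein--Aronszajn determinant identity, for $\lambda\in I_n$, $\lambda$ is an eigenvalue if and only if
\begin{align*}
	\det\!\Big(I_{2K}-\lambda^{-1}\,V\,R(\lambda)\,U\Big)=0,\qquad
	U=\big(X_\tau\tp \ \ E_\tau\tp E_\tau X_0\tp\big),\quad
	V=\begin{pmatrix}X_\tau Y_0\tp Y_0\\ X_0\end{pmatrix},
\end{align*}
with $R(\lambda)$ the resolvent \eqref{equation - definition of R}. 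Expanding the $2K\times 2K$ matrix $\lambda^{-1}VR(\lambda)U$ into $K\times K$ blocks and simplifying with the resolvent identities \eqref{equation - first resolvent identity} and \eqref{equation - bullet pencil identity for REE}, the blocks are precisely bilinear forms of the type that define $A(\lambda),B(\lambda),Q(\lambda),\overline Q(\lambda)$ in \eqref{equation - definition of A and B}--\eqref{equation - definition of Q}, together with the ``signal'' block $X_\tau X_0\tp X_0 X_\tau\tp$. Since $\lambda\in I_n$ diverges, \eqref{equation - norm of R 1cB} and \eqref{equation - norm of Q 1cB} give $R(\lambda)=I+O_p(\lambda^{-1})$ and $Q(\lambda)^{-1}=I+O_p(\lambda^{-1})$; combining this with the concentration estimates $X_\tau X_0\tp=\mathrm{diag}(\sigma_i^2\gamma_i(\tau))+O_{p,\norm{\cdot}}\big(\sigma_1^2\sqrt{K}/\sqrt{T}\big)$, $X_\tau E_0\tp E_0 X_\tau\tp=(p/T)\mathrm{diag}(\sigma_i^2)+(\text{lower order})$, and their $R(\lambda)$-weighted analogues (all uniform over $i\le K$ using the $(4+\epsilon)$-th moment hypothesis and the restrictions $K=o(T^{1/16}\gamma_1(\tau)^{1/2})$, $K=o(\sigma_1^2\gamma_1(\tau)^3)$), the determinantal equation reduces on $I_n$ to a scalar fixed-point equation $\lambda=\mu_{n,\tau}+r_n(\lambda)$ whose remainder obeys
\begin{align*}
	r_n(\lambda)=\mu_{n,\tau}\Big(O_p\big(\gamma_n(\tau)^{-1}T^{-1/2}\big)+O_p\big(K\,\sigma_n^{-2}\gamma_n(\tau)^{-2}\big)\Big)
\end{align*}
uniformly for $\lambda\in I_n$, with $\lambda$-derivative $o_p(1)$; a continuity/contraction argument on $I_n$ then gives \eqref{equation in theorem 2.1}.

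The main obstacle is this last reduction. One must show that the bilinear forms built from the \emph{non}-self-adjoint matrix $E_\tau\tp E_\tau E_0\tp E_0$ and its resolvent $R(\lambda)$ concentrate at the stated rates, and, crucially, that the errors aggregated over the $2K$ low-rank directions contribute no more than the $K\sigma_n^{-2}\gamma_n(\tau)^{-2}$ term. This is where the $(4+\epsilon)$-th moment assumption and the polynomial restriction on $K$ are consumed: the relevant quadratic forms in the Gaussian noise are degree four, so Hanson--Wright-type inequalities are needed, the identities \eqref{equation - bullet pencil identity for REE} are used to handle cross terms, and every estimate must be made uniform over the $K$ factor indices; the separation Assumptions~\ref{assumptions - tau fixed}(c)/\ref{assumptions - tau div}(d) are then used to isolate the root of the determinantal equation attached to the $n$-th factor.
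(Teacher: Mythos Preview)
Your two-stage strategy is sound and would lead to \eqref{equation in theorem 2.1}, but it is substantially heavier than what the paper actually does for this theorem. In fact your second stage---the rank-$2K$ Weinstein--Aronszajn reduction to a determinantal equation in the resolvent $R(\lambda)$, followed by concentration of the bilinear forms $A(\lambda),B(\lambda),Q(\lambda)$---is precisely the machinery the paper builds in Proposition~\ref{proposition - detequation } and Lemmas~\ref{lemma - concentration of xBx}--\ref{lemma - expectation of ABQ} for the CLT (Theorem~\ref{theorem - CLT}), not for Theorem~\ref{theorem - 2.1}.

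The paper's proof of Theorem~\ref{theorem - 2.1} is much more elementary. It passes to the $(T-\tau)\times(T-\tau)$ matrix $Y_0\tp Y_0 Y_\tau\tp Y_\tau=(X_0\tp X_0+E_0\tp E_0)(X_\tau\tp X_\tau+E_\tau\tp E_\tau)$, which has the same nonzero spectrum as $\widehat\Sigma_\tau\widehat\Sigma_\tau\tp$, and applies Weyl's inequality \emph{once} to compare with $X_0\tp X_0 X_\tau\tp X_\tau$; the cross terms $X_0\tp X_0 E_\tau\tp E_\tau$, $E_0\tp E_0 X_\tau\tp X_\tau$, $E_0\tp E_0 E_\tau\tp E_\tau$ all have operator norm $O_p(K\sigma_1^2)$ by \eqref{equation - high prob event Op}, which after dividing by $\mu_{n,\tau}=\sigma_n^4\gamma_n(\tau)^2$ already yields the second term $O_p(K\sigma_n^{-2}\gamma_n(\tau)^{-2})$ in \eqref{equation in theorem 2.1}. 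It then analyzes the $K\times K$ matrix $X_\tau X_0\tp X_0 X_\tau\tp$ entrywise via the second-moment bound $(X_0 X_\tau\tp)_{ij}=1_{i=j}\sigma_i^2\gamma_i(\tau)+O_{L^2}(\sigma_i\sigma_j T^{-1/2})$ from Lemma~\ref{lemma - concentration of xBx}, and extracts the eigenvalues by a Leibniz-formula argument on $\det\big(X_\tau X_0\tp X_0 X_\tau\tp-\omega I_K\big)\mathrm{diag}(\mu_{i,\tau}^{-1})$, using the separation assumptions to match the $n$-th root with $\mu_{n,\tau}$; this gives the first term $O_p(\gamma_n(\tau)^{-1}T^{-1/2})$. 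No resolvent, no $R(\lambda)$, no fixed-point equation.

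What each approach buys: the paper's argument is short, needs only \eqref{equation - high prob event Op} and an $L^2$ bound on the scalar autocovariances, and cleanly separates the ``noise perturbation'' contribution $K\sigma_n^{-2}\gamma_n(\tau)^{-2}$ from the ``sampling fluctuation'' contribution $\gamma_n(\tau)^{-1}T^{-1/2}$. Your approach front-loads the CLT infrastructure; it would work, and if you are heading to Theorem~\ref{theorem - CLT} anyway the effort is not wasted, but as a proof of Theorem~\ref{theorem - 2.1} alone it invokes uniform resolvent concentration estimates (your ``main obstacle'') that are simply not needed for the stated rate.
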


\begin{remark}\label{remark - remark on location of eigenvalue}
	A closer inspection of the convergence rate in Theorem~\ref{theorem - 2.1} shows that $\mu_{n, \tau}$ is not the appropriate centering constant for $\lambda_{n, \tau}$ for the purpose of obtaining a CLT.
	The first term on the right-hand side of \eqref{equation in theorem 2.1} can indeed be shown to be asymptotically normal at a scaling of $\gamma_n(\tau) \rt T$, which is the same scaling as our main result in Theorem~\ref{theorem - CLT}.
	However, the second term in \eqref{equation in theorem 2.1} is in general not negligible after scaling by $\gamma_n(\tau) \rt T$ unless some restrictions on the rate of divergence of $\mu_{n, \tau}$ are imposed. 
	
	If we were to impose stronger assumptions on the rate of $\mu_{n, \tau}$, for example assuming the rate $\mu_{n, \tau}\asymp p^{1- \delta}$ required in \cite{LamYao2012}, then the second term in \eqref{equation in theorem 2.1} indeed becomes negligible. Under such assumptions the $K$ spiked eigenvalues of the $(p+K)\times (p+K)$ dimensional matrix $\widehat \Sigma_\tau \widehat \Sigma_\tau\tp$ are extremely close to the eigenvalues of the $K\times K$ matrix $X_\tau X_0\tp X_0 X_\tau\tp$, as can be deduced from the proof of Theorem~\ref{theorem - 2.1}. The analysis of the matrix $\widehat \Sigma_\tau \widehat \Sigma_\tau\tp$ reduces to the analysis of the much simpler matrix $X_\tau X_0\tp X_0 X_\tau\tp$, which is essentially a low-dimensional problem. In this case, the derivation of a CLT is much easier. 
\end{remark}
As can be seen from the proof of Theorem~\ref{theorem - 2.1}, the second term in \eqref{equation in theorem 2.1} represents the bias incurred when estimating $\mu_{n, \tau}$ using $\lambda_{n, \tau}$. In order to obtain a CLT, we need a more accurate centering term for $\lambda_{n, \tau}$ to remove or reduce this bias. This centering term, which we write as $\theta_{n, \tau}$, will be defined implicitly as the  unique solution to the equation
\begin{align*}
	\numberthis\label{equation - definition of thetak}
	1= \Em[B ({\theta_{n, \tau}}) _{nn}1_{\cB_0}] - \Em[A ({\theta_{n, \tau}}) _{nn}1_{\cB_0}]^2 \Em[Q ({\theta_{n, \tau}})^{-1} _{nn}1_{\cB_2}],
\end{align*}
where the matrices $B(a)$, $A(a)$ and $Q(a)$ are defined in \eqref{equation - definition of A and B} and \eqref{equation - definition of Q} for $a\in\Rm$.

To make this definition rigorous, we start with Proposition~\ref{proposition - solution for theta} which shows that \eqref{equation - definition of thetak} indeed has a unique solution for $T$ large enough. Furthermore, this solution is shown to exist in some small interval containing $\mu_{n, \tau} = \sigma_n^4 \gamma_n(\tau)^2$. This in particular establishes the asymptotic order of $\theta_{n, \tau}$.

\begin{proposition}\label{proposition - solution for theta}
	Suppose Assumption~\ref{assumptions} and either Assumption~\ref{assumptions - tau fixed} or Assumption~\ref{assumptions - tau div} hold.
	Fix $n\in \{1, \ldots, K\}$ and let $\epsilon \in (0,1)$ be an arbitrary constant not related to $p,T$.
	Then there exists $T_0$ large enough such that for $T>T_0$, the function
	% \begin{align*}
	% 	a\mapsto g(a) = 1- \Em[B(a)1_F]_{nn} - \Em[A(a)1_F]_{nn}^2 \Em[Q(a)^{-1}1_F]_{nn}
	% \end{align*}
	\begin{align*}
		a\mapsto g(a) = 1- \Em[B(a) _{nn}1_{\cB_0}] - \Em [A(a)_{nn}1_{\cB_0}]^2 \Em [Q(a)^{-1}_{nn}1_{\cB_2}]
	\end{align*}
	% \begin{align*}
	% 	a\mapsto g(a) = 1- \Em_F[B(a)]_{nn} - \Em_F[A(a)]_{nn}^2 \Em_F[Q(a)^{-1}_{nn}]
	% \end{align*}
	has a unique root in the interval
	\begin{align*}
		\numberthis\label{equation - interval for a around true theta}
		\sigma_n^4 \gamma_n(\tau)^2 [1- \epsilon	 , 1+ \epsilon].
	\end{align*}
\end{proposition}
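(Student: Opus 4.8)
The plan is to study the three expectations appearing in $g$ as functions of $a$ on the interval \eqref{equation - interval for a around true theta} and show that $g$ is continuous, takes values of opposite signs at the two endpoints, and is strictly monotone there; the existence and uniqueness of the root then follow from the intermediate value theorem together with strict monotonicity. The key observation, which I expect to do most of the work, is that on the interval $\sigma_n^4\gamma_n(\tau)^2[1-\epsilon,1+\epsilon]$ one has $a\to\infty$ (since $\sigma_n\to\infty$ and $\mu_{n,\tau}=\sigma_n^4\gamma_n(\tau)^2$ diverges), so the resolvent $R(a)$ is close to the identity by \eqref{equation - norm of R 1cB} and $Q(a)^{-1}$ is close to the identity by \eqref{equation - norm of Q 1cB}, both on the relevant high-probability events. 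This lets us obtain the leading-order behavior of each expectation.

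First I would compute the leading terms. On $\cB_0$, write $R(a)=I+a^{-1}R(a)E_\tau\tp E_\tau E_0\tp E_0$ from \eqref{equation - first resolvent identity}; substituting into $B(a)_{nn}=a^{-1}(X_\tau E_0\tp E_0 R(a)X_\tau\tp)_{nn}$ and using $\norm{E_0\tp E_0}+\norm{E_\tau\tp E_\tau}\lesssim 1$ shows $\Em[B(a)_{nn}1_{\cB_0}] = a^{-1}\Em[(X_\tau E_0\tp E_0 X_\tau\tp)_{nn}1_{\cB_0}]+O(a^{-2})$. The leading term is $a^{-1}\Em[\norm{X_{\tau,[n,:]}}^2]\Em[\tr E_0\tp E_0]/(\text{dimension})$-type quantity — more precisely, conditioning on $\cF_p$ and using independence of the factor rows, $(X_\tau E_0\tp E_0 X_\tau\tp)_{nn}$ has conditional expectation $\sigma_n^2\gamma_n(\tau)^{0}$-weighted trace of $E_0\tp E_0$ plus lower order; since $\tr(E_0\tp E_0)/T\to c$, the leading order of $\Em[B(a)_{nn}1_{\cB_0}]$ is of size $\sigma_n^2\cdot (p/T) / a \asymp \sigma_n^2/(\sigma_n^4\gamma_n(\tau)^2) = 1/(\sigma_n^2\gamma_n(\tau)^2)$, which is $o(1)$. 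Similarly, $A(a)_{nn}=a^{-1/2}(X_0 R(a)X_\tau\tp)_{nn} = a^{-1/2}(X_0 X_\tau\tp)_{nn}+O_p(a^{-3/2})$, and $\Em[(X_0X_\tau\tp)_{nn}] = \sigma_n^2\gamma_n(\tau)+(\text{noise contribution }O(1))$, so $\Em[A(a)_{nn}1_{\cB_0}]^2 \approx \sigma_n^4\gamma_n(\tau)^2/a$. Finally $\Em[Q(a)^{-1}_{nn}1_{\cB_2}] = 1+o(1)$ by \eqref{equation - norm of Q 1cB}. Putting these together, $g(a)\approx 1 - \sigma_n^4\gamma_n(\tau)^2/a + (\text{smaller terms})$, which vanishes precisely when $a\approx \sigma_n^4\gamma_n(\tau)^2=\mu_{n,\tau}$; in particular $g$ is negative at the left endpoint $(1-\epsilon)\mu_{n,\tau}$ and positive at the right endpoint $(1+\epsilon)\mu_{n,\tau}$ once $T$ is large enough to make the error terms smaller than $\epsilon/2$, say.

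For uniqueness I would establish strict monotonicity of $g$ on the interval by differentiating in $a$. The derivative of the leading term $-\sigma_n^4\gamma_n(\tau)^2/a$ is $\sigma_n^4\gamma_n(\tau)^2/a^2 \asymp 1/\mu_{n,\tau}$, which is strictly positive; the derivatives of the error terms, obtained by differentiating the resolvent identity $\frac{d}{da}R(a) = -a^{-1}R(a)E_\tau\tp E_\tau E_0\tp E_0 a^{-1}R(a) \cdot(-1)$... (more carefully, $\frac{d}{da}R(a)$ is controlled on $\cB_0$ by $O(a^{-2})$ in operator norm), are of smaller order than $1/\mu_{n,\tau}$ uniformly on the interval, so $g'(a)>0$ throughout for $T$ large. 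A strictly increasing continuous function that is negative at the left endpoint and positive at the right endpoint has exactly one root in the interval.

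The main obstacle will be obtaining the leading-order asymptotics of the three expectations with errors that are genuinely uniform in $a$ over the $\epsilon$-interval and small relative to the dominant term $1/\mu_{n,\tau}$; this requires careful bookkeeping of how the factor strengths $\sigma_i$, the number of factors $K$, the lag $\tau$ (through $\gamma_n(\tau)$), and the high-probability bounds from Lemma \ref{lemma - high prob event} interact — in particular one must check that the noise contributions to $(X_0 X_\tau\tp)_{nn}$ and $(X_\tau E_0\tp E_0 X_\tau\tp)_{nn}$, and the deviation of $Q(a)^{-1}$ from $I_K$, are all controlled using the moment conditions in Assumption \ref{assumptions} and the constraints $K=o(\sigma_1^2)$ (resp.\ $K=o(\sigma_1^2\gamma_1(\tau)^3)$) in Assumptions \ref{assumptions - tau fixed} and \ref{assumptions - tau div}. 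The truncation by $1_{\cB_0}$ and $1_{\cB_2}$ is precisely what makes all these expectations well-defined and the operator-norm bounds non-random, so most estimates reduce to expanding resolvent identities and applying the norm bounds \eqref{equation - high prob event}, \eqref{equation - norm of R 1cB}, \eqref{equation - norm of Q 1cB} term by term.
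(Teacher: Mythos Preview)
Your proposal is correct and follows essentially the same route as the paper's proof: both obtain the leading-order approximation $g(a)=1-\sigma_n^4\gamma_n(\tau)^2/a+o(1)$ on the interval, deduce a sign change at the two endpoints, and then invoke monotonicity for uniqueness. The only organizational difference is that the paper packages the three asymptotics $\Em[A(a)_{nn}1_{\cB_0}]=\sigma_n^2\gamma_n(\tau)/\sqrt{a}+o(1)$, $\Em[B(a)_{nn}1_{\cB_0}]=o(1)$, and $\Em[Q(a)^{-1}_{nn}1_{\cB_2}]=1+o(1)$ into a separate lemma (Lemma~\ref{lemma - expectation of ABQ}) and simply cites it, whereas you sketch these computations inline; and the paper asserts monotonicity without giving the derivative argument you outline.
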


\subsection{Central Limit Theorem for Spiked Eigenvalues}\label{section - subsection CLT for }
The constant $\theta_{n, \tau}$ defined in \eqref{equation - definition of thetak} turns out to be the appropriate centering constant for $\lambda_{n, \tau}$, in the sense that the second term in \eqref{equation in theorem 2.1} becomes negligible after centering by $\theta_{n, \tau}$. We are ready to state the main result of our work.
Define
\begin{align*}
	\delta_{n,\tau} := \frac{\lambda_{n, \tau} - \theta_{n, \tau}}{\theta_{n, \tau}} = \frac{\lambda_{n, \tau}}{\theta_{n, \tau}}-1.
\end{align*}
% where the last estimate follows from Theorem~\ref{theorem - 2.1}.
% The main result of the paper is the asymptotic normality of $\delta_{n, \tau}$ as $T\to\infty$.
\begin{theorem}\label{theorem - CLT}
	Under Assumption~\ref{assumptions} and either Assumption~\ref{assumptions - tau fixed} or \ref{assumptions - tau div}, we have
	\begin{align*}
		\rt T\frac{\gamma_n(\tau)}{2  v_{n, \tau}}\delta_{n,\tau} \weakly N(0,1), \ \ \ n=1, 2, \ldots, K
	\end{align*}
	where
	% $\gamma_n(\tau)$ is as defined in \eqref{equation - definition of gammitau} and
	$v_{n, \tau}$ is defined by
	\begin{align*}
		\numberthis\label{equation - definition of v i tau}
		v^2_{n,\tau}:= \frac{1}{T}\Var(\ff_{n0}\tp \ff_{n \tau}) = &
		\sum_{|k|< T - \tau} \left(1 - \frac{|k|}{ T - \tau}\right)
		u_k,
	\end{align*}
	and $(u_k)_{|k|<T- \tau}$ is a sequence of constants given by
	\begin{align*}
		u_k:=u_{nk}:= \gamma_n(k)^2 + \gamma_n(k+ \tau) \gamma_{n}(k- \tau)
		+ & (\Em[z_{11}^4]-3)\sum_{l=0}^\infty \phi_{n,l} \phi_{n,l+\tau} \phi_{n, l+k} \phi_{n, l+k+ \tau}.
	\end{align*}
\end{theorem}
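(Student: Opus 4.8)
The plan is to characterise $\lambda_{n,\tau}$ as the root of a scalar random equation, expand that equation around the centering $\theta_{n,\tau}$, and show that the leading stochastic term is the sample lag-$\tau$ auto-covariance of the $n$-th factor, for which the classical central limit theorem for linear processes applies.

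\emph{Step 1 (master equation).} Since $\widehat\Sigma_\tau = (X_\tau\tp, E_\tau\tp)\tp (X_0\tp, E_0\tp)$, the nonzero eigenvalues of $\widehat\Sigma_\tau\widehat\Sigma_\tau\tp$ agree with those of $E_\tau\tp E_\tau E_0\tp E_0$ perturbed by a matrix of rank at most $3K$ assembled from $X_0$ and $X_\tau$. Hence, on the event $\cB_0$ of Lemma~\ref{lemma - high prob event} and for $a$ in a shrinking neighbourhood of $\mu_{n,\tau}$ --- which, by Theorem~\ref{theorem - 2.1}, eventually contains $\lambda_{n,\tau}$ --- the resolvent $R(a)$ is well defined and a Schur-complement reduction of $\det(\widehat\Sigma_\tau\widehat\Sigma_\tau\tp - a I_{p+K})$ expresses the eigenvalue condition through the matrices $A(a), B(a), Q(a), \overline Q(a)$ of \eqref{equation - definition of A and B}--\eqref{equation - definition of Q} (recall $\overline Q(a) = I_K - B(a)$). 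Using that distinct factors are independent, on $\cB_2$ the off-diagonal entries of these $K\times K$ matrices are of smaller order than the diagonal ones; so, together with the well-separatedness in Assumption~\ref{assumptions - tau fixed}(c) (resp.\ Assumption~\ref{assumptions - tau div}(d)), the growth restriction on $K$, and a standard eigenvalue-localisation argument, $\lambda_{n,\tau}$ is identified as the unique root near $\mu_{n,\tau}$ of the scalar equation $h_n(a) = 0$, where
\begin{align*}
  h_n(a) := 1 - B(a)_{nn} - A(a)_{nn}^2\, Q(a)^{-1}_{nn}.
\end{align*}
Its truncated deterministic counterpart is exactly the function $g$ of Proposition~\ref{proposition - solution for theta}, so $g(\theta_{n,\tau}) = 0$; this is the form \eqref{equation - definition of thetak} of the defining equation for $\theta_{n,\tau}$.

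\emph{Step 2 (expansion).} Taylor-expanding $h_n$ around $\theta_{n,\tau}$ and using $h_n(\lambda_{n,\tau}) = 0$,
\begin{align*}
  \delta_{n,\tau} = -\,\frac{h_n(\theta_{n,\tau})}{\theta_{n,\tau}\, h_n'(\theta_{n,\tau})} + (\text{remainder}),
\end{align*}
where the remainder is controlled via the bound on $|\lambda_{n,\tau}/\theta_{n,\tau} - 1|$ from Theorem~\ref{theorem - 2.1}. Differentiating $A(a)_{nn}, B(a)_{nn}, Q(a)^{-1}_{nn}$ and using the norm estimates \eqref{equation - high prob event Op}, \eqref{equation - norm of R 1cB} and \eqref{equation - norm of Q 1cB}, one checks $\theta_{n,\tau}\, h_n'(\theta_{n,\tau}) = 1 + o_p(1)$, so the analysis reduces to the numerator $h_n(\theta_{n,\tau})$.

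\emph{Step 3 (leading fluctuation).} Because $g(\theta_{n,\tau}) = 0$, the numerator is the fluctuation $h_n(\theta_{n,\tau}) - g(\theta_{n,\tau})$ of $h_n$ around its truncated mean. A delta-method expansion of $A(\theta_{n,\tau})_{nn}^2\, Q(\theta_{n,\tau})^{-1}_{nn}$ gives, up to terms that are $o_p(v_{n,\tau}/\rt T)$,
\begin{align*}
  h_n(\theta_{n,\tau}) = -\,2\,\Em\!\left[A(\theta_{n,\tau})_{nn}1_{\cB_0}\right]\Em\!\left[Q(\theta_{n,\tau})^{-1}_{nn}1_{\cB_2}\right]\!\bigl(A(\theta_{n,\tau})_{nn} - \Em[A(\theta_{n,\tau})_{nn}1_{\cB_0}]\bigr) + o_p\!\left(\frac{v_{n,\tau}}{\rt T}\right).
\end{align*}
The concentration estimates developed in the appendices are used precisely here, to show that the fluctuations of $B(\theta_{n,\tau})_{nn}$ and $Q(\theta_{n,\tau})^{-1}_{nn}$, the error from replacing $R(\theta_{n,\tau})$ by $I_{T-\tau}$, and the contribution of the idiosyncratic part of $x_{it} = \sigma_i f_{it} + \epsilon_{it}$ are all of order $o_p(v_{n,\tau}/\rt T)$. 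Since $\theta_{n,\tau} = \sigma_n^4\gamma_n(\tau)^2(1 + o(1))$ by Proposition~\ref{proposition - solution for theta}, one obtains $A(\theta_{n,\tau})_{nn} - \Em[A(\theta_{n,\tau})_{nn}1_{\cB_0}] = |\gamma_n(\tau)|^{-1}\bigl(\tfrac1T\ff_{n0}\tp\ff_{n\tau} - \Em[\tfrac1T\ff_{n0}\tp\ff_{n\tau}]\bigr) + o_p(v_{n,\tau}/\rt T)$, while $\Em[A(\theta_{n,\tau})_{nn}1_{\cB_0}]\to\mathrm{sgn}(\gamma_n(\tau))$ and $\Em[Q(\theta_{n,\tau})^{-1}_{nn}1_{\cB_2}]\to 1$. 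Combining with Step 2,
\begin{align*}
  \frac{\gamma_n(\tau)}{2}\,\delta_{n,\tau} = \frac1T\ff_{n0}\tp\ff_{n\tau} - \Em\!\left[\frac1T\ff_{n0}\tp\ff_{n\tau}\right] + o_p\!\left(\frac{v_{n,\tau}}{\rt T}\right).
\end{align*}

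\emph{Step 4 (classical CLT).} The quantity $\tfrac1T\ff_{n0}\tp\ff_{n\tau} = \tfrac1T\sum_{t=1}^{T-\tau}f_{nt}f_{n,t+\tau}$ is the sample lag-$\tau$ auto-covariance of the linear process $f_{nt} = \sum_{l\ge 0}\phi_{nl}z_{n,t-l}$, whose exact variance equals $v_{n,\tau}^2/T$ with $v_{n,\tau}^2$ as in \eqref{equation - definition of v i tau}. Under Assumption~\ref{assumptions}(c),(e) a martingale approximation together with the martingale CLT gives $\rt T\, v_{n,\tau}^{-1}\bigl(\tfrac1T\ff_{n0}\tp\ff_{n\tau} - \Em[\tfrac1T\ff_{n0}\tp\ff_{n\tau}]\bigr)\weakly N(0,1)$ (the bias $\Em[\tfrac1T\ff_{n0}\tp\ff_{n\tau}] - \gamma_n(\tau)$ is of order $\tau/T$ and negligible at this scaling, being moreover absorbed by the centering $\theta_{n,\tau}$); the uniformity needed when $K,\tau\to\infty$ follows from the $\ell_1$-summability in Assumption~\ref{assumptions}(e) and the moment bound in Assumption~\ref{assumptions}(c). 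Slutsky's theorem then yields $\rt T\,\tfrac{\gamma_n(\tau)}{2v_{n,\tau}}\delta_{n,\tau}\weakly N(0,1)$.

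\emph{Main obstacle.} The crux is Step 3: proving that among all the random quantities entering $h_n(\theta_{n,\tau})$ only the fluctuation of $A(\theta_{n,\tau})_{nn}$ survives at the scale $v_{n,\tau}/\rt T$, with the fluctuations of $B(\theta_{n,\tau})_{nn}$ and $Q(\theta_{n,\tau})^{-1}_{nn}$, the resolvent correction $R(\theta_{n,\tau}) - I_{T-\tau}$, the factor--noise cross-terms, and the Taylor remainder all strictly of smaller order. This needs sharp concentration inequalities for quadratic forms in the factors and the noise involving $R(a)$, uniformly for $a$ near $\mu_{n,\tau}$ and under the weak hypotheses in force ($4+\epsilon$ moments, $\sigma_n$ and possibly $K,\tau$ diverging). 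Since $\widehat\Sigma_\tau\widehat\Sigma_\tau\tp$ does not reduce to a matrix with independent entries, these bounds cannot be imported from the literature and form the substance of the technical appendices; a secondary difficulty is tracking the delta-method coefficients and $h_n'(\theta_{n,\tau})$ accurately enough to pin down the constants $\gamma_n(\tau)/2$ and $v_{n,\tau}$, and handling the eigenvalue localisation in Step 1 when $K\to\infty$.
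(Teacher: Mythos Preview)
Your proposal is correct and lands on the same backbone as the paper: a Schur-complement reduction of the eigenvalue equation to a $K\times K$ problem built from $A(a),B(a),Q(a)$; identification of the fluctuation of $A(\theta_{n,\tau})_{nn}$ as the sole surviving random term at scale $T^{-1/2}$; reduction of that fluctuation to the sample lag-$\tau$ autocovariance of the $n$-th factor; and a classical CLT for the latter. The differences are organisational rather than substantive. You reduce to a \emph{scalar} equation $h_n(a)=0$ first (Step~1) and then Taylor-expand in $a$ to linearise in $\delta_{n,\tau}$ (Step~2), whereas the paper keeps the full $K\times K$ determinant, centers $R_\lambda$ around $R_\theta$ via resolvent identities to obtain $\det\bigl(M+\tfrac{\delta}{\theta}X_\tau X_0^\top X_0 X_\tau^\top+\delta\,o_{p,\norm{\cdot}}(1)\bigr)=0$ (Proposition~\ref{proposition - detequation }), and only \emph{then} reduces to a scalar relation by expanding the determinant with Leibniz's formula in the final proof. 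The two orderings are equivalent, but the paper's route makes the control of the $K^2$ off-diagonal cross-terms in $(A^\top Q^{-1}A)_{nn}$ (which your $h_n$ replaces by $A_{nn}^2Q^{-1}_{nn}$) explicit via Proposition~\ref{proposition - M}, and its resolvent-identity centering avoids having to bound $h_n''$ for the Taylor remainder. A further small difference: for the factor-autocovariance CLT (your Step~4) you invoke a martingale approximation, while the paper (Proposition~\ref{proposition - CLT for fii}) truncates to an $(L+\tau)$-dependent process, applies Berk's $m$-dependent CLT, and passes to the limit in $L$; both are standard and either works under Assumption~\ref{assumptions}(c),(e). Finally, note that the paper centers $A_{ii}$ at the \emph{conditional} mean $\underline\Em[A_{ii}]$ and uses a separate lemma (Lemma~\ref{lemma - conditional expectations}) to pass to the unconditional one; your Step~3 collapses these two steps, which is fine provided the resolvent-fluctuation bound (Lemma~\ref{lemma - concentration of R}) is in hand.
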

\begin{remark}\label{remark - remark on clt}
	We remark that for generality as well as the tidiness of presentation we choose to formulate Theorem~\ref{theorem - CLT} in a form that holds under either one of Assumption~\ref{assumptions - tau fixed} and \ref{assumptions - tau div}. A closer inspection shows that the two cases are quite different.
	In the case where $\tau\to\infty$, we observe that $\gamma_n(\tau)\to 0$ while the term $v_{n, \tau}^2$ defined by \eqref{equation - definition of v i tau} can easily be shown to be bounded from zero. This implies that the scalings of CLT in the two cases are not of the same order.
	In the case where $\tau$ is fixed, the variance of $\rt T\delta_{n, \tau}$, which is equal to $4 v_{n, \tau} \gamma_n(\tau)^{-2}$,  is bounded both from above and away from zero from below. On the other hand, when $\tau\to\infty$, the variance of $\rt T\delta_{n, \tau}$ tends to infinity at a speed of $\gamma_{n}(\tau)^{-1}$ while $\nu_{n, \tau}$ remains bounded.

	This result might seem surprising since $\delta_{n, \tau} = (\lambda_{n, \tau}- \theta_{n, \tau})/\theta_{n, \tau}$ is already normalized in an obvious way so one might expect $\delta_{n, \tau}$ to be of order $T^{-1/2}$. One might be tempted to draw the conclusion that $\lambda_{n, \tau}$ is less accurate of an estimator of $\theta_{n, \tau}$ for larger values of $\tau$, since the variance of $\delta_{n, \tau}$ increases with $\tau$. However, the exact opposite is true here.
	Since $\theta_{n, \tau}\asymp \sigma_n^4 \gamma_n(\tau)^2$ by Proposition~\ref{proposition - solution for theta}, this implies that in fact $\lambda_{n, \tau} - \theta_{n, \tau} = O_p(\sigma_n^4 \gamma_n(\tau) T^{-1/2})$, which is faster than the rate $\lambda_{n, \tau} - \theta_{n, \tau} = O_p(\sigma_n^4 T^{-1/2})$ obtained in the case where $\tau$ is fixed.
	In practical situations where we deal with the auto-covariance matrix with a larger $\tau$, the CLT under Assumption~\ref{assumptions - tau div} provides a much more accurate convergence speed and asymptotic variance than using fixed $\tau$ results.
\end{remark}

\subsubsection*{Strategy of the proof}
The initial step is to derive an expression for the eigenvalue $\lambda= \lambda_{n, \tau}$ and the related quantity $\delta= \delta_{n, \tau}$.
In general, the eigenvalue $\lambda$ of the matrix $\widehat \Sigma_\tau \widehat \Sigma_\tau\tp$, in general, depends on its entries in complicated and non-linear ways. We take an approach commonly seen in the random matrix literature (e.g. \cite{BaiYao2008,CaiHanPan2017,LiWangYao2017}) and express $\delta$ as the solution to an equation involving the determinant of certain random matrices.
This is established in Proposition~\ref{proposition - detequation }:
\begin{myprop}{2}
	Suppose Assumption~\ref{assumptions} and either Assumption~\ref{assumptions - tau fixed} or \ref{assumptions - tau div} hold. Then the ratio $\delta$ is the solution to the following equation
\begin{align*}
	\det \left(   M   + \frac{\delta}{\theta} X_\tau  X_0\tp   X_0   X_\tau\tp   + \delta o_{p,\norm{\cdot}}(1) \right) =0. 
 \numberthis\label{equation - detequation 2}
\end{align*}
where 
\begin{align*}
	\numberthis\label{equation - definition of M}
	M  := I_K - \frac{1}{\theta}X_\tau  E_0 \tp E_0  R X_\tau \tp  - \frac{1}{\theta} X_\tau  R \tp X_0 \tp Q^{-1} X_0  R X_\tau \tp.
\end{align*}
\end{myprop}
The main idea is then to apply Leibniz's formula to compute this determinant.
In doing so we will express $\delta$ as a polynomial function of the entries of the matrices $M$ and $\theta^{-1} X_\tau  X_0\tp  X_0   X_\tau\tp$ plus many higher order terms.
After estimating the terms in this polynomial, it can be shown that the asymptotic normality of the ratio $\delta$ eventually follows from the asymptotic normality of the $n$-th diagonal entry $M$. This argument is carried out in the proof of Theorem~\ref{theorem - CLT}. More specifically, we establish the CLT, it suffices to (a) show that
\begin{align*}
	\rt T\frac{M_{nn}}{2 \gamma_{n}(\tau) v_{n, \tau}}\weakly N(0, 1),
	\quad
	M_{ii} \gtrsim 1,\quad \forall i\ne n,
 \numberthis\label{equation - deteq in strat}
\end{align*}
then (b) establish a bound of sufficient sharpness on the off-diagonals of $M$, and (c) identify the limits in probability of the matrix $\theta^{-1} X_\tau  X_0\tp  X_0   X_\tau\tp$.
It is clear that the matrix $M$ and the resolvents $R$ and $Q^{-1}$ appearing in the definition of $M$ are the central objects of our analysis.
The following proposition provides an asymptotic approximation to $M$ suited to our purpose. 
\begin{myprop}{3}
	Define the matrices
	\begin{align*}
		A:= \frac{1}{\rt{\theta}}X_0 R X_\tau\tp,
		\quad B:= \frac{1}{\theta} X_\tau E_0\tp E_0 R X_\tau\tp,
	\end{align*}
	so that $M = I_K - B - A\tp Q^{-1}A$. For each $i=1,\ldots, K$, define
	\begin{align*}
		\overline M_{ii} :=1- \Em[B_{ii}1_{\cB_0}] - \Em[A_{ii}1_{\cB_0}]^2 \Em[Q_{ii}^{-1}1_{\cB_2}].
	\end{align*}
	Then under Assumption~\ref{assumptions} and either Assumption~\ref{assumptions - tau fixed} or \ref{assumptions - tau div},
	we have
	\begin{align*}
		{M_{ii}  - \overline M_{ii}}
		 & =
		- 2\big(A_{ii} - \underline\Em [A_{ii}]\big)
		{\Em[A_{ii}1_{\cB_0}]}{\Em [Q_{ii}^{-1} 1_{\cB_2}]}
		+ O_{p}\left(\frac{\sigma_i^2 \norm{\sigmab}_{\ell_1}^2}{\theta T}+ \frac{\sigma_i^2}{\theta \rt T} +KT^{-1}\right),
	\end{align*}
	for all $i=1,\ldots, K$,
	where $\underline \Em[\ \cdot \ ]$ is  defined in \eqref{equation - Emcond}.
	Furthermore,
	% the off-diagonals of $M$ satisfy
	\begin{align*}
		\max_{i\ne j} |M_{ij}|
		=  O_p\left( \frac{K^4 \sigma_1^4 \sigma_i \sigma_j}{\theta^2 \rt T}\right).
	\end{align*}
\end{myprop}
A few remarks are in order to explain why the approximation in Proposition~\ref{proposition - M} is constructed in a seemingly unusual way. Since $\theta$ diverges, the resolvents $R$ and $Q^{-1}$ defined in (\ref{equation - definition of R}) and (\ref{equation - definition of Q}), respectively, are very close to identity matrices for large $T$, a fact used frequently in our proofs. However one cannot simply replace them with identities to simplify \eqref{equation - definition of M}.
Indeed, it is easy to show that $R - I_{T- \tau} = O_{p, \norm{\cdot}}(\theta^{-1})$, which converges to zero but not fast enough for obtaining a CLT after scaling by $\rt T$. This is a fundamental difficulty under our setting since we allow $\theta$  to diverge at any rate and not as a specified function of $T$. In fact, if we were to impose for instance $\theta \gg \rt T$, our proofs will be greatly simplified.

It can be shown however that this approximation error of order $\theta^{-1}$ appears only in the mean of the asymptotic distribution, see for instance \eqref{equation - Aii - xx only differnt in mean}. That is, we can safely use identity matrices to approximate $R$ and $Q^{-1}$ in Proposition~\ref{proposition - M} as long as we include an appropriate centering term to adjust the expectation of $M$ before multiplying by $\rt T$.
In fact, identifying the correct centering for $M$ results in the equation \eqref{equation - definition of thetak} that determines the asymptotic location $\theta$ of the eigenvalues $\lambda$. 

Essentially, we construct approximations to $R$ and $Q^{-1}$ that are more accurate than the identity, which in our case turn out to be their expectations under certain events of high probability.
To bound the approximation errors, we establish the concentration of $R$ around its expectation in Lemma~\ref{lemma - concentration of R}, the concentration of $Q^{-1}$ around a certain conditional expectation in Lemma~\ref{lemma - concentration of Q inverse}, and estimates on the differences between conditional and unconditional expectations in Lemma~\ref{lemma - conditional expectations}.
After obtaining these technical results, we show in Proposition~\ref{proposition - M} that after centering by a certain conditional expectation (which is later replaced by an unconditional one using Lemma~\ref{lemma - conditional expectations}), the asymptotic distribution of $M$ can be obtained from the asymptotic distribution of the bilinear form $X_0 R X_\tau\tp$, up to adjustments in the expectations.

Therefore it remains to establish the asymptotics of $X_0 R X_\tau\tp$.
Using tools developed in
Lemmas~\ref{lemma - properties of Psi}~to~\ref{lemma - expectation of ABQ}, we study the bilinear form $X_0 R X_\tau\tp$ and establish its concentration around some conditional expectation. Using these results we show in the proof of Proposition~\ref{proposition - CLT of Mii} that the asymptotic normality for $X_0 R X_\tau\tp$ follows from the asymptotic normality of the much simpler auto-covariance matrix $X_0 X_\tau\tp$, again up to adjustments in the expectations. The CLT for this matrix $X_0 X_\tau\tp$ is established in Proposition~\ref{proposition - CLT for fii}. Finally, Proposition~\ref{proposition - CLT of Mii} gives the CLT for diagonals of the matrix $M$.

\begin{myprop}{5}
	Under Assumption~\ref{assumptions} and either Assumption~\ref{assumptions - tau fixed} or \ref{assumptions - tau div}, we have
	\begin{align*}
		\rt T
		\frac{\theta}{2\sigma_i^4 \gamma_i(\tau	) v_{i, \tau	}}
		\big(M_{ii} - \overline M_{ii} \big)
		 = Z_T + O_p\left(\frac{\norm{\sigmab}_{\ell_1}^2}{\sigma_i^2 \gamma_i(\tau)^2 \rt T}+ \frac{1}{ \sigma_i^2 \gamma_i(\tau) } +\frac{\sigma_n^4\gamma_n(\tau)^2 K }{\sigma_i^4\gamma_i(\tau)^2 \rt T}\right)
	\end{align*}
	where $Z_T\weakly N(0,1)$, the centering $\overline M_{ii}$ is as defined in \eqref{equation - decomposition of Mii} and $v_{i, \tau}$ is defined as in \eqref{equation - definition of v i tau}.
\end{myprop}

The proof of Theorem~\ref{theorem - CLT} can then be assembled from the above described ingredients, we present the proof at the end of Appendix \ref{CLT - proofs}.
To summarize, the quantity of interest $\delta$ is first shown to satisfy equation \eqref{equation - detequation 2}. Through a series of approximations, we establish the asymptotic normality of the diagonals of the matrix $M$. The off-diagonals of $M$ are bounded in probability and we establish the limit in probability of the matrix $X_\tau X_0\tp X_0 X_\tau\tp$ appearing. Leibniz's formula is then applied to compute the determinant in Proposition~\ref{proposition - detequation }, and our main result Theorem~\ref{theorem - CLT} follows.

\subsection{Outline of the proof}
We finally include a brief outline of the proofs to help the readers navigate.

Firstly, in Proposition~\ref{proposition - detequation }, the quantity of interest $\delta$ is expressed as the solution to equation \eqref{equation - deteq in strat}. 
As will be shown in the proof of Theorem~\ref{theorem - CLT}, the asymptotic normality of $\delta$ follows from the asymptotic normality of the matrix $M$ appearing in Proposition~\ref{proposition - detequation }. 

The asymptotic normality of $M$ is established in Propositions~\ref{proposition - M}~to~\ref{proposition - CLT of Mii}. 
We first identify an appropriate centering for the matrix $M$ in \eqref{equation - decomposition of Mii}. Using this centering, we show in Proposition~\ref{proposition - M} that the diagonal elements of $M$ can be approximated by certain bilinear forms defined in \eqref{equation - definition of A B}, at the scale of $o(T^{-1/2})$. Proposition~\ref{proposition - CLT of Mii} obtains the asymptotic normality of $M$ by further reducing this approximation into a much simpler bilinear form, the asymptotic distribution of which is established in Proposition~\ref{proposition - CLT for fii}. 

The results described above all rely on a collection of technical lemmas. 
Throughout the proofs, we often encounter bilinear forms involving resolvent matrices $R$ and $Q^{-1}$. We routinely approximate these resolvent matrices and the bilinear forms by certain expectations. In Lemmas~\ref{lemma - concentration of R}~to~\ref{lemma - conditional expectations} we establish the concentration of $R$ and $Q^{-1}$ around certain expectations, and in
Lemmas~\ref{lemma - properties of Psi}~to~\ref{lemma - expectation of ABQ} we establish the concentration of bilinear forms involving $R$ and $Q^{-1}$. 
	%!TEX root = ../main.tex
\section{Statistical application: autocovariance test}\label{section - applications}
In this section, a novel test, called the autocovariance test, is proposed to detect the equivalence of spikiness for two high-dimensional time series.   
As analyzed in \citet{LamYao2012}, the eigenvectors corresponding to the $K$ spiked eigenvalues of $\Sigma_\yy(\tau)\Sigma^{\top}_\yy(\tau)$ span a $K$-dimensional linear subspace, where the projection of the original high-dimensional time series holds all the temporal dependence. For easy reference, we call this subspace as $K$-dimensional temporal subspace and denote it as $\mathcal{M}_K$. 
When two high-dimensional time series share the same $K$-dimensional temporal subspace, the proposed test is equivalent to checking whether the two projected time series have the same autocovariance. 
As a further application of the proposed autocovariance test, new hierarchical clustering analysis is constructed to cluster a large set of high-dimensional time series, where the dissimilarity between two populations is measured via the p-value of the proposed autocovariance test. The major aim of this clustering analysis is to group high-dimensional time series with similarly projected autocovariances.  
%With this in mind, we can introduce the idea of the autocovariance test for comparing two high-dimensional time series.
%%%%%%%%%%%%%%%%%%%%%%%%%%%%%%%

To explain the idea of the autocovariance test and its application on the hierarchical clustering in detail, we will simply revisit the factor structures for high-dimensional time series and introduce the proposed test statistic with its asymptotic properties in Section \ref{3:sec:2.1}. Section \ref{3:sec:3} describes how the hypothesis test can be implemented in practice where a flow chart is also provided to clarify the essential idea of the test procedure.
We then use numerical simulations to investigate the empirical sizes and powers of the proposed test %in various scenarios, where the results are presented in Section \ref{3:sec:4}. 
Finally, the proposed test and the hierarchical clustering method with p-values acting as the measure of dissimilarities are applied to mortality data from multiple countries.% in Section \ref{3:sec:5}. 

%It is then feasible to compare two high-dimensional time series in the same space obtained by the common eigenvectors (factor loadings) shared by both series. With this in mind, we can introduce the idea of the test proposed in this work.

%Before introducing the regularization conditions for factor models and the asymptotic properties of the proposed autocovariance test, we firstly discuss the null and alternative hypothesis together with the test statistic. 

%Section \ref{3:sec:2.1} introduces the proposed test statistic and gives its asymptotic distribution. 

\subsection{Hypotheses and test statistic}\label{3:sec:2.1}

Consider for $\left\{\yy_{t}^{(1)} \in \mathbb{R}^{K_1+p_1},\ t = 1,2,...,T\right\}$ and $\left\{\yy_{t}^{(2)} \in \mathbb{R}^{K_2+p_2},\ t = 1,2,...,T\right\}$, which are two high-dimensional time series following the factor model in canonical form \eqref{canonical form}, that is we have
\begin{align*}
	\yy_t^{(m)} = L^{(m)} \ff_t^{(m)} + \epsilonb_t^{(m)},\quad t=1,\ldots, T,
	\quad m=1,2,
	\numberthis\label{3:e1}
\end{align*}
where $\left\{\ff_t^{(m)} \in \mathbb{R}^{K_m},\ t = 1,2,...,T\right\}$ are stationary factor time series with variances normalized to 1, $K_m \ll p_m$, and $L^{(m)}$ is a $(p_m+K_m) \times K_m$ factor loading matrix which takes the form
\begin{align*}
	L^{(m)} = \begin{pmatrix}
		\mathrm{diag}(\sigma_1^{(m)}, \ldots, \sigma_K^{(m)}) \\ \boldsymbol{0}_{p\times K}^{(m)}
	\end{pmatrix}.
	\numberthis\label{canonical form 4}
\end{align*}
%with a normalization condition $L^{(m)\top} L^{(m)} = I_{K_m}$. 
To simplify the notations, we also let $N_m \coloneqq (p_m+K_m)$ be the dimension of $\left\{\yy_t^{(m)}\right\}$. %\footnote{Daning: Please change the notation $N$ to $p+K_m$ for dimension of time series. I have changed them in this paragraph. Please check all parts that you wrote.}

For high-dimensional time series $\left\{\yy_t^{(m)}\right\}$ following factor models such as (\ref{3:e1}), $L^{(m)}$ is the time invariant factor loading matrix. As discussed in \citet{LamYaoBathia2011} and \citet{LamYao2012}, when $\left\{\epsilonb_t^{(m)}\right\}$ are i.i.d, % in this section, we consider i.i.d. $\left\{\epsilonb_t^{(m)}\right\}$ which is also independent of $\left\{\ff_t^{(m)}\right\}$ to illustrate the idea of our test.  %To illustrate the idea of the autocovariance test, without altering the original  . 
%With this set-up, the autocovariance matrices of $\left\{\yy_t^{(m)}\right\}$ can be represented by the factor loading matrix $L^{(m)}$ and the autocovariance matrices of $\left\{\ff_{t}^{(m)}\right\}$ as
%\begin{align*}
%	\Sigma_\yy^{(m)}(\tau) = L^{(m)} \Sigma_\ff^{(m)}(\tau) L^{(m)\top}
%\end{align*}
%for $\tau \ge 1$, where $\Sigma_\yy^{(m)}(\tau) = \text{Cov} \left(\yy_{t}^{(m)}, \yy_{t+\tau}^{(m)}\right)$ is the lag-$\tau$ autocovariance matrix of $\left\{\yy_t^{(m)}\right\}$, and $\Sigma_\ff^{(m)}(\tau) = \text{Cov} \left(\ff_{t}^{(m)}, \ff_{t+\tau}^{(m)}\right)$ is the lag-$\tau$ autocovariance matrix of $\left\{\ff_t^{(m)}\right\}$. As a result,
the temporal dependence of $\left\{\yy_t^{(m)}\right\}$ is fully captured by $\left\{\ff_t^{(m)}\right\}$.
%Next, we will study the theoretical property of the factor loading matrix $L^{(m)}$. As discussed in \citet{LamYaoBathia2011}, we can decompose the symmetrized lag-$\tau$ autocovariance matrix of $\left\{\yy_t^{(m)}\right\}$ as
%\begin{align}\label{3:sauto}
%	\Sigma_\yy^{(m)}(\tau) \Sigma_\yy^{(m)}(\tau)^\top
%	= L^{(m)} \Sigma_\ff^{(m)}(\tau) L^{(m)\top}  L^{(m)} \Sigma_\ff^{(m)}(\tau)^\top L^{(m)\top}
%	= L^{(m)} \Sigma_\ff^{(m)}(\tau)  \Sigma_\ff^{(m)}(\tau)^\top L^{(m)\top}.
%\end{align}
Denote by $\mu_{i,\tau}^{(m)}$ the eigenvalues of $\Sigma_\yy^{(m)}(\tau) \Sigma_\yy^{(m)}(\tau)^\top$. 
We then consider in this section the setting where there are $K$ spiked eigenvalues in $\Sigma_\yy^{(m)}(\tau) \Sigma_\yy^{(m)}(\tau)^\top $ and $\mu_{1,\tau}^{(m)} > \mu_{2,\tau}^{(m)} > ... > \mu_{K_m,\tau}^{(m)}$ tend to infinity with $T,p$, while $\mu_{K_m+1,\tau}^{(m)} = \mu_{K_m+2,\tau}^{(m)} = ... = \mu_{N_m,\tau}^{(m)} = 0$ for any $\tau \ge 1$.
With this definition of spiked eigenvalues, we can show that the columns of $L^{(m)}$ are the eigenvectors of $\Sigma_\yy^{(m)}(\tau) \Sigma_\yy^{(m)}(\tau)^\top$ corresponding to the spiked eigenvalues, as follow.
%also explain the theoretical property and estimation of the factor loading matrix $L^{(m)}$.
%Next, we are explaining that Q is eigenvectors...gamma ... is ...

Write $\pW^{(m)}$ for a $N_m \times p_m$ matrix where $\left(L^{(m)},\pW^{(m)}\right)$ forms a $N_m \times N_m$ orthogonal matrix so that $L^{(m)\top}\pW^{(m)} = \boldsymbol{0}$ and $\pW^{(m)\top}\pW^{(m)} = I_{p_m}$. It then follows %from (\ref{3:sauto}) 
that $\Sigma_\yy^{(m)}(\tau) \Sigma_\yy^{(m)}(\tau)^\top \pW^{(m)} = \boldsymbol{0}$, which means the columns of $\pW^{(m)}$ are precisely the eigenvectors associated with zero-eigenvalues.
In other words, the columns of $L^{(m)}$ are the $K_m$ eigenvectors of $\Sigma_\yy^{(m)}(\tau) \Sigma_\yy^{(m)}(\tau)^\top$ corresponding to those non-zero eigenvalues, and those non-zero eigenvalues of $\Sigma_\yy^{(m)}(\tau) \Sigma_\yy^{(m)}(\tau)^\top$ are precisely $\sigma_1^{2(m)}, \ldots, \sigma_K^{2(m)}.$ %\Sigma_\ff^{(m)}(\tau)$$ \Sigma_\ff^{(m)}(\tau)^\top$ since $L^{(m)\top} L^{(m)} = I_{K_m}$. %since $L^{(m)\top} L^{(m)} = I_{p}$.

%Besides, the condition $L^{(m)\top} L^{(m)} = I_{p}$ is not a sufficient condition for $L^{(m)}$ to be uniquely defined from the factor model (\ref{3:e1}), but only defines an eigenspace (factor loading space). Hence we can take this as an advantage and assume the factor models of $\left\{\yy_t^{(1)}\right\}$ and $\left\{\yy_t^{(2)}\right\}$ share the same factor loading matrix, i.e., $L^{(1)} = L^{(2)}$. 

Consequently, %on one hand, 
$\mathcal{M}\left(L^{(m)}\right)$ is the temporal subspace spanned by the columns of $L^{(m)}$, which are also the eigenvectors corresponding to the spiked eigenvalues of the symmetrized autocovariance matrix of $\left\{\yy_t^{(m)}\right\}$. %On the other hand, the eigenvalues of the symmetrized autocovariance matrix of $\left\{\ff_t^{(m)}\right\}$, which summarize the information contained in the autocovariance matrix of $\left\{\ff_t^{(m)}\right\}$, are precisely the spkied eigenvalues of the symmetrized autocovariance matrix of $\left\{\yy_t^{(m)}\right\}$. 
Therefore, when $\mathcal{M}\left(L^{(1)}\right) = \mathcal{M}\left(L^{(2)}\right)$, we can build a test statistic based on the difference between spiked eigenvalues of the symmetrized lag-$\tau$ sample autocovariance matrices of two high-dimensional time series $\left\{\yy_t^{(1)}\right\}$ and $\left\{\yy_t^{(2)}\right\}$. This test statistic is  to detect the equivalence of autocovariances for two projected time series in the temporal subspace.  In fact, the analysis of $\mathcal{M}\left(L^{(m)}\right)$ does not rely on the canonical form \eqref{canonical form 4}, and is valid for any identification conditions on $L^{(m)}$.

%Consequently, we can study the autocovariance matrices of $\left\{\ff_{t}^{(1)}\right\}$ and $\left\{\ff_{t}^{(2)}\right\}$ instead of the autocovariance matrices of high-dimensional time series $\left\{\yy_t^{(1)}\right\}$ and $\left\{\yy_t^{(2)}\right\}$ to investigate the temporal dependence. 

%On the other hand, the eigenvalues of the symmetrized autocovariance matrix of $\left\{\ff_t^{(m)}\right\}$, which summarize the information contained in the autocovariance matrix of $\left\{\ff_t^{(m)}\right\}$, are precisely the spkied eigenvalues of the symmetrized autocovariance matrix of $\left\{\yy_t^{(m)}\right\}$. Therefore, we can build a test statistic based on the difference between spiked eigenvalues of the symmetrized lag-$\tau$ sample autocovariance matrix of high-dimensional observations $\left\{\yy_t^{(1)}\right\}$ and $\left\{\yy_t^{(2)}\right\}$.

In this section, it is worth noting that we typically focus on testing the equivalence of spiked eigenvalues, but not the eigenspace of autocovariance matrices for two high-dimensional time series $\left\{\yy_t^{(1)}\right\}$ and $\left\{\yy_t^{(2)}\right\}$. Consequently, when $\mathcal{M}\left(L^{(1)}\right) = \mathcal{M}\left(L^{(2)}\right)$ and $K_1 = K_2 =K$, the null and alternative hypotheses of the autocovariance test for $\left\{\yy_t^{(1)}\right\}$ and $\left\{\yy_t^{(2)}\right\}$ with a finite time lag $\tau$ can be summarized as
%\begin{test}\label{3:pet}
%	(Autocovariance test for two high-dimensional time series $\left\{\yy_t^{(1)}\right\}$ and $\left\{\yy_t^{(2)}\right\}$)\\
%	\centerline{H$_0$: The spiked eigenvalues of $\Sigma_\yy^{(1)}(\tau) \Sigma_\yy^{(1)}(\tau)^\top$ and $\Sigma_\yy^{(2)}(\tau) \Sigma_\yy^{(2)}(\tau)^\top$ are the same }\\
%	\centerline{H$_1$: The spiked eigenvalues of $\Sigma_\yy^{(1)}(\tau) \Sigma_\yy^{(1)}(\tau)^\top$ and $\Sigma_\yy^{(2)}(\tau) \Sigma_\yy^{(2)}(\tau)^\top$ are different}
%\end{test}

\begin{test*}\label{3:pet}
	Autocovariance test for two high-dimensional time series $\left\{\yy_t^{(1)}\right\}$ and $\left\{\yy_t^{(2)}\right\}$\\
	\centerline{H$_0$: $\mu_{i,\tau}^{(1)} = \mu_{i,\tau}^{(2)}$ for all $i=1,2,...,K$} \\
	\centerline{H$_1$: $\mu_{i,\tau}^{(1)} \ne \mu_{i,\tau}^{(2)}$ for at least one $i$, $i=1,2,...,K$}
\end{test*}

Recall that for factor models in canonical form (\ref{canonical form 4}), %which will be specified in Section \ref{3:sec:2.2},
we can write $\gamma_{i,\tau}^{(m)} := \mathbb{E}\left(f_{i,1}^{(m)}f_{i,\tau+1}^{(m)}\right)$ and $\left(v_{i,\tau}^{(m)}\right)^{2} := \frac{1}{T-\tau} Var\left( \sum_{t=1}^{T-\tau} f_{i,t}^{(m)} f_{i,t+\tau}^{(m)} \right)$ for a finite time lag $\tau$, $i = 1,2,...,K$ and $m=1,2$. Denote by $\lambda_{i,\tau}^{(m)}$ the $i$-th largest spiked eigenvalue of the symmetrized lag-$\tau$ sample autocovariance matrix $\tGa_\yy^{(m)}(\tau) \tGa_\yy^{(m)}(\tau)^\top$, where $\tGa_\yy^{(m)}(\tau) = \frac{1}{T-\tau-1} \sum_{t=1}^{T-\tau} (\yy_t^{(m)} - \overline{\yy}_T^{(m)}) (\yy_{t+\tau}^{(m)} - \overline{\yy}_T^{(m)})^\top$, for $m=1,2$. Then, for $i=1,2,...,K$ and some finite $\tau$, the test statistic is given by
\begin{equation}\label{3:stat0}
	Z_{i,\tau} = \sqrt{T} \frac{\gamma_{i,\tau}}{2\sqrt{2}v_{i,\tau}} \frac{\lambda_{i,\tau}^{(1)}-\lambda_{i,\tau}^{(2)}}{\theta_{i,\tau}}, \ \ \ i=1, 2, \ldots, K,
\end{equation}
where
\begin{align}\label{3:para}
	{\theta}_{i,\tau} = \frac{ {\theta}_{i,\tau}^{(1)} +  {\theta}_{i,\tau}^{(2)}}{2},\
	{v}_{i,\tau} = \frac{ {v}_{i,\tau}^{(1)} +  {v}_{i,\tau}^{(2)}}{2},\ \text{and}\
	{\gamma}_{i,\tau} = \frac{ {\gamma}_{i,\tau}^{(1)} +  {\gamma}_{i,\tau}^{(2)}}{2},
\end{align}
and ${\theta}_{i,\tau}^{(m)}$ is the asymptotic centering of ${\lambda}_{i,\tau}^{(m)}$ defined in Proposition \ref{proposition - solution for theta}. %It is worth noting that, the exact definition of ${\theta}_{i,\tau}^{(m)}$ is in Proposition 1.3 of \citet{nie_central_2020}, which is rather involved and requires technical details that are beyond the scope of this chapter.
It is then clearly that $\left|Z_{i,\tau}\right|$ is generally large if $\left\{\yy_t^{(1)}\right\}$ and $\left\{\yy_t^{(2)}\right\}$ follow different factor models where the $i$-th largest eigenvalues of the symmetrized lag-$\tau$ sample autocovariance matrix for two factor models are different. We name this test by autocovariance test since the idea behind is testing whether two independent high-dimensional time series observations share the same spiked eigenvalues of the autocovariance matrices. %In particular, for two high-dimensional time series following the same factor model (\ref{3:e1}), the eigenspace is spanned by some common eigenvectors for both factor loading matrices, and the spiked eigenvalues of the autocovariance matrix are captured by low-dimensional representations $\left\{\ff_t^{(1)}\right\}$ and $\left\{\ff_t^{(2)}\right\}$. 
%Hence the autocovariance test is testing on the equivalence of two factor models.

% \subsubsection{Factor model and regularization conditions}\label{3:sec:2.2}

%As discussed in Chapter 2, factor models have been widely discussed in the literature, and there are various identification conditions on $L^{(m)}$, $\ff_t^{(m)}$, and $\epsilonb_t^{(m)}$ in factor models (\ref{3:e1}). 
%Despite various regularization conditions for factor models, in this section, we adopt the idea in \citet{LamYaoBathia2011} and assume the temporal dependence of $\left\{\yy_t^{(m)}\right\}$ can be fully captured by the factors $\left\{\ff_t^{(m)}\right\}$ with a time invariant factor loading matrix $L^{(m)}$. In other words, we still work in the scheme where a static relationship between $\left\{\yy_t^{(m)}\right\}$ and $\left\{\ff_t^{(m)}\right\}$ is maintained. %Besides, to establish a test on $r$-dimensional factors, we also introduce some regularization and identification conditions on $L$, $\ff_t$ and $\epsilonb_t$.
For simplicity, we assume the idiosyncratic components $\left\{\epsilonb_t^{(m)} \in \mathbb{R}^{N_m}, t=1,2,...,T\right\}$ are independent of the factors $\left\{\ff_t^{(m)}\right\}$, with $\mathbb{E}\left(\epsilon_{j,t}^{(m)}\right) = 0$ for all $j=1,2,...,N_m,$ and $\mathbb{E}\left(\epsilonb_{t}^{(m)}\right)^{2} \eqqcolon \Sigma_\epsilon^{(m)} = diag \left( \left(\sigma_{\epsilon,1}^{(m)}\right)^2,\left(\sigma_{\epsilon,2}^{(m)}\right)^2,...,\left(\sigma_{\epsilon,N_m}^{(m)}\right)^2 \right)$. Without loss of generality, we can again work on standardized factor models in canonical form, where the variance of $\epsilon_{j,t}^{(m)}$ is normalized to one, i.e. $\left(\sigma_{\epsilon,j}^{(m)}\right)^{2} = 1$. This standardization is just a transformation on $\left\{\yy_t^{(1)}\right\}$ and $\left\{\yy_t^{(2)}\right\}$ so that they can be transformed to the same canonical form if they share the same number of factors. %\footnote{Daning: I am not clear about how to get this result. Please double check it.}. 
For factor models with $\left(\sigma_{\epsilon,j}^{(m)}\right)^{2} \ne 1$, we can simply standardize them by dividing $\sigma_{\epsilon,j}^{(m)}$. %which results in the the same scale of noises. %This standardization can transform $\left\{\yy_t^{(1)}\right\}$ and $\left\{\yy_t^{(2)}\right\}$ to fulfill the canonical form such that $\mathcal{M}\left(L^{(1)}\right) = \mathcal{M}\left(L^{(2)}\right)$ for $K_1 = K_2 =K$ and restrict them to have the same scale of noises. 
In this section, we only consider the case for a fixed time lag $\tau$ and follow Assumptions \ref{assumptions - tau fixed} to simplify the factor models into canonical form \eqref{canonical form}. %as%set-up in \citet{LiWangYao2017} and assume $\epsilon^{(m)} \sim \mathcal{N} (0,1)$ for all $j$, $t$ and $m$.

In summary, we consider factor models \eqref{3:e1} in canonical form with the loading matrix $L^{(m)}$ defined by \eqref{canonical form 4} and the variances of $\left\{f_{i,t}^{(m)}\right\}$ and $\left\{\epsilon_{j,t}^{(m)}\right\}$ normalized to one. In addition, we assume the data $\left\{\yy_t^{(m)}\right\}$ comes from strong factor models where $\sigma_i^{(m)}$ is divergent as $N \to \infty$ for $i=1,2,...,K$ and $m=1,2$. Besides, for a general strong factor model that is not in the canonical form (\ref{canonical form 4}), it can be normalized by standardizing the variance of $\left\{\epsilon_{j,t}^{(m)}\right\}$ to one first and then rotating the original data such that the loading matrix $L^{(m)}$ is in the canonical form (\ref{canonical form 4}).
Moreover, recall that for a finite time lag $\tau$, $\gamma_{i,\tau}^{(m)} := \mathbb{E}\left(f_{i,1}^{(m)}f_{i,\tau+1}^{(m)}\right)$ is the population lag-$\tau$ autocovariance of the $i$-th factor time series $\left\{f_{i,t}^{(m)}\right\}$. %Since each factor in (\ref{3:e1}) is assumed to be stationary following (\ref{3:lp}), $\gamma_{i,\tau}^{(m)}$ can also be written as
Following (\ref{equation - definition of gammitau}), (\ref{equation - definition of mu n tau}) and (\ref{equation - definition of v i tau}), $\gamma_{i,\tau}^{(m)}$ can be written as
\begin{align*}
	\gamma_{i,\tau}^{(m)} = \mathbb{E}\left(f_{i,1}^{(m)}f_{i,\tau+1}^{(m)}\right) = \sum_{l=0}^{\infty} \phi_{i,l}^{(m)} \phi_{i,l+\tau}^{(m)},
\end{align*}
with the constraint $\norm{\phib_i}_{\ell_2}=1$, the population lag-$\tau$ autocovariance can be defined as
%Therefore, for high-dimensional time series $\left\{y_{i,t}^{(m)}\right\}$ fulfilling factor models (\ref{canonical form 4}), the population lag-$\tau$ autocovariance can be defined as
\begin{align*}
	\mu_{i,\tau}^{(m)} \coloneqq \mathbb{E} \left(y_{i,t}^{(m)} y_{i,t+\tau}^{(m)}\right) = \left(\sigma_i^{(m)}\right)^2 \gamma_{i,\tau}^{(m)},%\ \text{for}\ i=1,2,...,K;\ m=1,2,
\end{align*}
and $\left(v_{i,\tau}^{(m)}\right)^2 = \frac{1}{T-\tau} Var\left( \sum_{t=1}^{T-\tau} f_{i,t}^{(m)} f_{i,t+\tau}^{(m)} \right)$ %is defined as
%\begin{align*}
%	\left(v_{i,\tau}^{(m)}\right)^2 = \frac{1}{T-\tau} Var\left( \sum_{t=1}^{T-\tau} f_{i,t}^{(m)} f_{i,t+\tau}^{(m)} \right),%\ \text{for}\ i=1,2,...,K;\ m=1,2,
%\end{align*}
for a finite positive time lag $\tau$, $i = 1,2,...,K,$ and $m=1,2$. 
If $\left\{\yy_t^{(1)}\right\}$ and $\left\{\yy_t^{(2)}\right\}$ are assumed following the same canonical factor model under Assumptions \ref{assumptions} and \ref{assumptions - tau fixed}, independently, it is clearly that $\lambda_{i,\tau}^{(1)}$ and $\lambda_{i,\tau}^{(2)}$ share the same asymptotic distribution as shown in Theorem \ref{theorem - CLT}, independently. Therefore, to test whether $\left\{\yy_t^{(1)}\right\}$ and $\left\{\yy_t^{(2)}\right\}$ share the same spiked eigenvalues of the autocovariance matrices, it is natural to create the test statistic (\ref{3:stat0}) based on the difference between $\lambda_{i,\tau}^{(1)}$ and $\lambda_{i,\tau}^{(2)}$. When $\left\{\yy_t^{(1)}\right\}$ and $\left\{\yy_t^{(2)}\right\}$ follow the same factor model in the canonical form \eqref{canonical form 4}, we have the following CLT on the difference between $\lambda_{i,\tau}^{(1)}$ and $\lambda_{i,\tau}^{(2)}$.

%Therefore, for two high-dimensional time series $\yy_{t}^{(1)}$ and $\yy_{t}^{(2)}$ sharing the same factor model in canonical form (\ref{canonical form 4}), the $i$-th spiked eigenvalues $\lambda_{i,\tau}^{(1)}$ and $\lambda_{i,\tau}^{(2)}$ of the symmetrized lag-$\tau$ sample autocovariance matrix of $\yy_{t}^{(1)}$ and $\yy_{t}^{(2)}$ should follow the same asymptotic distribution as shown in Theorem \ref{3:le:1}. It is then natural to create a test statistic base on the difference of $\lambda_{i,\tau}^{(1)}$ and $\lambda_{i,\tau}^{(2)}$, since an asymptotic distribution on $\frac{\lambda_{i,\tau}^{(1)}-\lambda_{i,\tau}^{(2)}}{\theta_{i,\tau}}$ can be derived according to Theorem \ref{3:le:1}.

%Consider now for two independent $N$-dimensional time series $\yy_{t}^{(1)}$ and $\yy_{t}^{(2)}$ with the sample size $T$. If $\yy_{t}^{(1)}$ and $\yy_{t}^{(2)}$ are assumed to follow the same factor model in (\ref{canonical form 4}), we have the following proposition on the difference of $\lambda_{i,\tau}^{(1)}$ and $\lambda_{i,\tau}^{(2)}$.

\begin{theorem}\label{3:th:1}
	Under Assumptions \ref{assumptions} and \ref{assumptions - tau fixed}, for two independent high-dimensional time series $\left\{\yy_t^{(1)}\right\}$ and $\left\{\yy_t^{(2)}\right\}$ following the same factors in canonical form (\ref{canonical form 4}), we have
	\begin{equation}
		Z_{i,\tau} = \sqrt{T} \frac{\gamma_{i,\tau}}{2\sqrt{2}v_{i,\tau}} \frac{\lambda_{i,\tau}^{(1)}-\lambda_{i,\tau}^{(2)}}{\theta_{i,\tau}} \Rightarrow \mathcal{N}(0,1),
	\end{equation}
	as $T,p \to \infty$, where ${\theta}_{i,\tau}$, ${v}_{i,\tau}$ and ${\gamma}_{i,\tau}$ are defined in (\ref{3:para}).
\end{theorem}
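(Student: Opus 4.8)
The plan is to deduce Theorem~\ref{3:th:1} from the single-series central limit theorem (Theorem~\ref{theorem - CLT}) together with the independence of the two samples, after first checking that under the null model the population constants appearing in \eqref{3:para} reduce to common values. Since $\{\yy_t^{(1)}\}$ and $\{\yy_t^{(2)}\}$ follow the \emph{same} canonical factor model \eqref{3:canf}, the loadings $\sigma_i$ and the filter coefficients $\phi_{i,l}$ agree across $m=1,2$, hence $\gamma_{i,\tau}^{(1)}=\gamma_{i,\tau}^{(2)}$, $v_{i,\tau}^{(1)}=v_{i,\tau}^{(2)}$, and $\theta_{i,\tau}^{(1)}=\theta_{i,\tau}^{(2)}$ (the last because $\theta$ is the deterministic root of \eqref{equation - definition of thetak}, whose ingredients $A,B,Q$ have laws determined only by $(\sigma_i)$, $(\phi_{i,l})$, $p$, $T$, $\tau$). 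Calling these common values $\gamma_{i,\tau}$, $v_{i,\tau}$, $\theta_{i,\tau}$, the averaged constants in \eqref{3:para} equal exactly these. Writing $\delta_{i,\tau}^{(m)}:=(\lambda_{i,\tau}^{(m)}-\theta_{i,\tau})/\theta_{i,\tau}$ as in Theorem~\ref{theorem - CLT}, one gets the algebraic identity $(\lambda_{i,\tau}^{(1)}-\lambda_{i,\tau}^{(2)})/\theta_{i,\tau}=\delta_{i,\tau}^{(1)}-\delta_{i,\tau}^{(2)}$, so that
\[
Z_{i,\tau}=\tfrac{1}{\sqrt2}\bigl(W_{i,\tau}^{(1)}-W_{i,\tau}^{(2)}\bigr),\qquad W_{i,\tau}^{(m)}:=\sqrt T\,\frac{\gamma_{i,\tau}}{2v_{i,\tau}}\,\delta_{i,\tau}^{(m)}.
\]

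Next I would apply Theorem~\ref{theorem - CLT} to each series separately to obtain $W_{i,\tau}^{(m)}\weakly N(0,1)$ for $m=1,2$. Because the two samples are independent, $W_{i,\tau}^{(1)}$ and $W_{i,\tau}^{(2)}$ are independent random variables; marginal weak convergence of each together with independence yields joint weak convergence of $(W_{i,\tau}^{(1)},W_{i,\tau}^{(2)})$ to a pair of independent standard normals (for instance by factorising characteristic functions). Applying the continuous mapping theorem to $(x,y)\mapsto(x-y)/\sqrt2$ and using that a difference of two independent $N(0,1)$ variables is $\sqrt2\,N(0,1)$, we conclude $Z_{i,\tau}\weakly N(0,1)$. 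The finitely many indices $i=1,\dots,K$ are handled identically.

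The one step needing genuine work is that the statistic \eqref{3:stat0} is built from the sample-mean-centred, $(T-\tau-1)$-normalised estimator $\tGa_\yy^{(m)}(\tau)$, whereas Theorem~\ref{theorem - CLT} and the definition of $\theta_{i,\tau}$ refer to $\widehat\Sigma_\tau=\tfrac1T\sum_t\yy_{t+\tau}\yy_t\tp$. The normalisation discrepancy is harmless: $(T/(T-\tau-1))^2=1+O(T^{-1})$ rescales all eigenvalues uniformly and perturbs $\sqrt T\,\delta_{i,\tau}^{(m)}$ by $o_p(1)$. The demeaning, however, must be controlled at the $T^{-1/2}$-relative scale of the CLT: writing $\tGa_\yy^{(m)}(\tau)-\widehat\Sigma_\tau^{(m)}$ as a low-rank correction built from $\overline{\yy}_T^{(m)}$, one must show that its effect on the $i$-th spiked eigenvalue, after division by $\theta_{i,\tau}$ and multiplication by $\sqrt T\,\gamma_{i,\tau}$, is $o_p(1)$. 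Here a crude operator-norm bound is insufficient, since $\|\overline{\yy}_T^{(m)}\|$ has an $O_p(1)$ contribution from the noise coordinates; one must instead exploit that the noise part of $\overline{\yy}_T^{(m)}$ is asymptotically orthogonal to the $K$ loading directions carrying the spikes, so that the \emph{effective} perturbation in the spike subspace is of much smaller order (of the type already estimated in Appendix~\ref{section - proofs}). I expect this projection/perturbation argument to be the main obstacle; everything else is Slutsky's theorem and the elementary algebra of independent Gaussians.
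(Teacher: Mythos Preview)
Your core argument---identify the common constants under the null, write $Z_{i,\tau}=\tfrac{1}{\sqrt2}(W^{(1)}-W^{(2)})$, apply Theorem~\ref{theorem - CLT} to each sample, use independence to pass to joint convergence, then continuous mapping---is exactly what the paper does. Indeed the paper's own proof is a single sentence: ``Theorem~\ref{3:th:1} is a direct result of Theorem~\ref{theorem - CLT}, since an asymptotic distribution of $(\lambda_{i,\tau}^{(1)}-\lambda_{i,\tau}^{(2)})/\theta_{i,\tau}$ can be derived using the independence between $\lambda_{i,\tau}^{(1)}$ and $\lambda_{i,\tau}^{(2)}$.'' So on the main line you are in complete agreement with the authors, and your write-up is in fact more explicit than theirs.

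The demeaning/normalisation discrepancy you flag between $\widetilde\Sigma_\yy^{(m)}(\tau)$ and $\widehat\Sigma_\tau$ is a genuine technical point, but note that the paper does not address it either---its one-line proof simply invokes Theorem~\ref{theorem - CLT} and independence without comment. So this is not a place where your approach diverges from the paper's; rather, you are being more careful than the paper about a step the authors leave implicit. Your instinct that the correction must be analysed in the spike subspace (where the relevant component of $\overline{\yy}_T^{(m)}$ is $O_p(T^{-1/2})$ coordinate-wise over only $K$ coordinates) rather than via a crude operator-norm bound is the right one, and the perturbation machinery already developed for Theorem~\ref{theorem - 2.1} would handle it; but for the purposes of matching the paper's argument you need not pursue it further.
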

Theorem \ref{3:th:1} is a direct result of Theorem \ref{theorem - CLT}, since an asymptotic distribution of $\frac{\lambda_{i,\tau}^{(1)}-\lambda_{i,\tau}^{(2)}}{\theta_{i,\tau}}$ can be derived using the independence between $\lambda_{i,\tau}^{(1)}$ and $\lambda_{i,\tau}^{(2)}$. Consequently, under the null hypothesis of the autocovariance test, the test statistic $Z_{i,\tau}$ converges weakly to a standard normal random variable when $T,p \to \infty$. Nonetheless, under certain alternative hypotheses such as $K_1 = K_2=K$,\ %\gamma_{i,\tau}^{(1)} = \gamma_{i,\tau}^{(2)},\ v_{i,\tau}^{(1)} = v_{i,\tau}^{(2)},$
but %$\left(\sigma_i^{(1)}\right)^2 \ne \left(\sigma_i^{(2)}\right)^2$
$\mu_{i,\tau}^{(1)} \ne \mu_{i,\tau}^{(2)}$ and $\theta_{i,\tau}^{(1)} \ne \theta_{i,\tau}^{(2)}$, it can be shown in the next theorem that, under a local alternative hypothesis, the power of the autocovariance test converges to 1 as $T,p \to \infty$.

%$\mu_{i,\tau}^{(1)} = \sigma_i^{(1)2} \gamma_{i,\tau}^{(1)} = (1+c) \sigma_i^{(2)2} \gamma_{i,\tau}^{(2)} = \mu_{i,\tau}^{(2)}$

\begin{theorem}\label{3:th:2}
	Under Assumptions \ref{assumptions} and \ref{assumptions - tau fixed}, if we additionally assume two independent high-dimensional time series $\left\{\yy_t^{(1)}\right\}$ and $\left\{\yy_t^{(2)}\right\}$ follow two different canonical factor models (\ref{canonical form 4}) such that $K_1 = K_2 = K$ and $\theta_{i,\tau}^{(1)} = (1+c) \theta_{i,\tau}^{(2)}$.
	%\begin{align*}
		%K_1 = K_2 = K,\
		%\gamma_{i,\tau}^{(1)} = \gamma_{i,\tau}^{(2)} = \gamma_{i,\tau},\
		%v_{i,\tau}^{(1)} = v_{i,\tau}^{(2)} = v_{i,\tau},\ and\
		%\theta_{i,\tau}^{(1)} = (1+c) \theta_{i,\tau}^{(2)}.
	%\end{align*}
	Then, for any $c$ such that $\sqrt{T}\frac{2c}{2+c} \to \infty$ as $T,p \to \infty$ and $\lambda_{i,\tau}^{(1)} \ne \lambda_{i,\tau}^{(2)}$, it holds that
	\begin{equation}
		Pr\left(\left|Z_{i,\tau}\right| > z_{\alpha} | H_1\right) \rightarrow 1,
	\end{equation}
	for $T,p \to \infty$, where $z_{\alpha}$ is the $\alpha$-th quantile of the standard normal distribution.
\end{theorem}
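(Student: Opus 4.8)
The plan is to deduce Theorem~\ref{3:th:2} from the one--series central limit theorem, Theorem~\ref{theorem - CLT}, by extracting from $Z_{i,\tau}$ a deterministic drift that diverges under the stated local alternative while the remaining stochastic fluctuation stays bounded in probability. First I would apply Theorem~\ref{theorem - CLT} to each of the two canonical factor models $\{\yy_t^{(m)}\}$, $m=1,2$, separately (as in the argument for Theorem~\ref{3:th:1}, the de-meaning and normalization entering $Z_{i,\tau}$ do not affect the limit of the diverging spiked eigenvalues). Since $\gamma_{i,\tau}^{(1)}=\gamma_{i,\tau}^{(2)}=\gamma_{i,\tau}$ and $v_{i,\tau}^{(1)}=v_{i,\tau}^{(2)}=v_{i,\tau}$ under $H_1$, this yields
\begin{align*}
	W_m:=\sqrt{T}\,\frac{\gamma_{i,\tau}}{2 v_{i,\tau}}\cdot\frac{\lambda_{i,\tau}^{(m)}-\theta_{i,\tau}^{(m)}}{\theta_{i,\tau}^{(m)}}\ \weakly\ N(0,1),\qquad m=1,2,
\end{align*}
and $W_1,W_2$ are independent because the two samples are, so in particular each is $O_p(1)$.

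Next I would carry out the algebraic decomposition of the statistic. Put $\theta^{(m)}:=\theta_{i,\tau}^{(m)}$ and $\theta:=\theta_{i,\tau}=(\theta^{(1)}+\theta^{(2)})/2$. The hypothesis $\theta^{(1)}=(1+c)\theta^{(2)}$ gives $\theta=\tfrac{2+c}{2}\theta^{(2)}$, whence $\theta^{(1)}/\theta=\tfrac{2(1+c)}{2+c}$, $\theta^{(2)}/\theta=\tfrac{2}{2+c}$ and $(\theta^{(1)}-\theta^{(2)})/\theta=\tfrac{2c}{2+c}$. Rewriting the exact identity $\lambda^{(m)}-\theta^{(m)}=\tfrac{2 v_{i,\tau}}{\gamma_{i,\tau}\sqrt{T}}\,\theta^{(m)}W_m$ from Step~1 and splitting $\lambda_{i,\tau}^{(1)}-\lambda_{i,\tau}^{(2)}=(\lambda^{(1)}-\theta^{(1)})-(\lambda^{(2)}-\theta^{(2)})+(\theta^{(1)}-\theta^{(2)})$, I would divide by $\theta$ and multiply by $\sqrt{T}\,\gamma_{i,\tau}/(2\sqrt{2}\,v_{i,\tau})$ to get
\begin{align*}
	Z_{i,\tau}=\frac{1}{\sqrt{2}}\left(\frac{2(1+c)}{2+c}\,W_1-\frac{2}{2+c}\,W_2\right)+\sqrt{T}\,\frac{\gamma_{i,\tau}}{2\sqrt{2}\,v_{i,\tau}}\cdot\frac{2c}{2+c}.
\end{align*}

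It then remains to show that the right-hand side diverges in absolute value. The coefficients $\tfrac{2(1+c)}{2+c}$ and $\tfrac{2}{2+c}$ are uniformly bounded (they lie in $(0,2)$ for every admissible $c>-1$), so the first, stochastic, term is $O_p(1)$ by Step~1. Since $\gamma_{i,\tau}/v_{i,\tau}$ is bounded away from zero in the fixed-$\tau$ regime --- equivalently, the asymptotic variance in Theorem~\ref{theorem - CLT} is bounded there, cf.\ Remark~\ref{remark - remark on clt} --- the hypothesis $\sqrt{T}\,\tfrac{2c}{2+c}\to\infty$ forces the deterministic term, and hence $|Z_{i,\tau}|$, to tend to infinity in probability. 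Consequently $\Pr(|Z_{i,\tau}|>z_\alpha\mid H_1)\to1$ for any fixed critical value $z_\alpha$, which is the claim.

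The argument uses no estimate beyond Theorem~\ref{theorem - CLT}, so I expect the only genuine obstacle to be bookkeeping around the fact that $c$ may vary with $T$: one cannot simply invoke weak convergence of the linear combination $\tfrac{2(1+c)}{2+c}W_1-\tfrac{2}{2+c}W_2$ to a fixed Gaussian, and should instead deduce its $O_p(1)$ bound from the uniform boundedness of the coefficients together with the marginal tightness of $W_1$ and $W_2$ --- which is also why I would apply the two one-series CLTs individually rather than routing through the joint CLT of Theorem~\ref{3:th:1}. One should also check that these marginal CLTs remain valid when $1+c=\theta^{(1)}/\theta^{(2)}$ itself diverges, which is immediate since Theorem~\ref{theorem - CLT} is applied to each series on its own and imposes no relation between the two models.
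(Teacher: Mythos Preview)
Your proposal is correct and follows essentially the same route as the paper: apply the one-series CLT of Theorem~\ref{theorem - CLT} to each sample, use $\theta^{(1)}=(1+c)\theta^{(2)}$ to compute $\theta^{(1)}/\theta$, $\theta^{(2)}/\theta$ and $(\theta^{(1)}-\theta^{(2)})/\theta=\tfrac{2c}{2+c}$, and then separate $Z_{i,\tau}$ into a bounded-in-probability stochastic part plus a deterministic drift $\sqrt{T}\tfrac{2c}{2+c}$ (times a nonvanishing constant) that diverges. Your bookkeeping is in fact slightly more careful than the paper's: you track $W_1,W_2=O_p(1)$ explicitly and note that the coefficients $\tfrac{2(1+c)}{2+c},\tfrac{2}{2+c}$ stay uniformly bounded even when $c$ varies with $T$, whereas the paper simply writes $\lambda^{(m)}/\theta^{(m)}=1+o_P(T^{-1/2})$ and collects remainders.
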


\begin{remark}
	The condition $\sqrt{T}\frac{2c}{2+c} \to \infty$ as $T,p \to \infty$ in Theorem \ref{3:th:2} indicates a local alternative hypothesis. It implies that for $T,p \to \infty$, the power of the test converges to $1$ not only for a constant $c$, but also for some $c \to 0$ as long as $\sqrt{T}{c} \to \infty$. In other words, this test even works asymptotically for a local alternative hypothesis where the difference between $\theta_{i,\tau}^{(1)}$ and $\theta_{i,\tau}^{(2)}$ tends to $0$, but slower than $1/\sqrt{T}$.

\end{remark}

\subsection{Implementation of testing procedure} \label{3:sec:3}
The test statistic $Z_{i, \tau}$ is an infeasible statistic in practice as it involves some unknown parameters $\gamma_{i, \tau}$, $v_{i, \tau}$ and $\theta_{i, \tau}$. In this part, we will propose 
a practical procedure for the autocovariance test. 

For two high-dimensional time series, the test procedure can be summarized into four steps. Firstly, estimates of the factor models for both populations should be conducted, where the number of factors needs to be determined. Secondly, the original high-dimensional observations and the factor models' estimates need to be standardized to fulfill the canonical factor model (\ref{canonical form 4}). Thirdly, the quantities required to compute the feasible test statistic $\widetilde{Z}_{i,\tau}$ should be estimated from both populations. Furthermore, we can compute the feasible test statistic $\widetilde{Z}_{i,\tau}$ and its corresponding $p$-values for testing the equivalence of eigenvalues. The details of the estimation and testing procedures are illustrated and discussed as follows.
\begin{enumerate}
	\item[]\textbf{Step 1}: Estimation of factor models.
	
	For de-meaned high-dimensional time series observations $\left\{\yy_t^{(m)}\right\}$ with $m=1, 2$, we first compute the symmetrized lag-$\tau$ sample autocovariance matrix $ \tGa_\yy^{(m)}(\tau)\tGa_\yy^{(m)}(\tau)^\top$, where $\tGa_\yy^{(m)}(\tau) = \frac{1}{T-\tau-1} \sum_{t=1}^{T-\tau} \yy_t^{(m)} \yy_{t+\tau}^{(m)\top}$ is the lag-$\tau$ sample autocovariance matrix of $\left\{\yy_t^{(m)}\right\}$. By applying spectral (eigenvalue) decomposition on $\tGa_\yy^{(m)}(\tau) \tGa_\yy^{(m)}(\tau)^\top$, we can obtain an estimate of the factor loading matrix as $\widehat{L}_{\tau}^{(m)}=\left(\widehat{L}_{1,\tau}^{(m)},\widehat{L}_{2,\tau}^{(m)},...,\widehat{L}_{p,\tau}^{(m)}\right)$ with $\widehat{L}_{i,\tau}^{(m)}$ the eigenvector of $\tGa_\yy^{(m)}(\tau) \tGa_\yy^{(m)}(\tau)^\top$ corresponding to the $i$-th largest eigenvalue $\widehat{\lambda}_{i,\tau}^{(m)}$. We then use a ratio-based estimator, which has been considered by \citet{LamYaoBathia2011}, to determine the number of factors. The number of factors is determined as $\widehat{K}_{m} = \argmin_{1 \le j \le R} \widehat{\lambda}_{j+1,\tau}^{(m)}/\widehat{\lambda}_{j,\tau}^{(m)}$ where $\widehat{\lambda}_{1,\tau}^{(m)} \ge \widehat{\lambda}_{2,\tau}^{(m)} \ge \cdots \ge \widehat{\lambda}_{N_m,\tau}^{(m)}$ and $R$ is an integer satisfying $K_{m} \le R < N_m$.
	
	With $\widehat{L}_{\tau}^{(m)}$, the factors can then be estimated by $\widehat{\ff}_{t}^{(m)} = \widehat{L}_{\tau}^{(m)\top} \yy_t^{(m)}$ and the high-dimensional time series can be recovered by $\widehat{\yy}_t^{(m)} = \widehat{L}_{\tau}^{(m)} \widehat{\ff}_{t}^{(m)}$. Hence we have estimates of the factor model that is not in the canonical form (\ref{canonical form 4}) and the residuals can be estimated by
	\begin{align}\label{3:estf}
		\hu_t^{(m)} = \yy_t^{(m)} - \widehat{L}_{\tau}^{(m)} \widehat{\ff}_{t}^{(m)}.
	\end{align}
	Moreover, to standardize the estimated factor model into canonical form (\ref{canonical form 4}), we need to find an estimate of $\Sigma_\epsilon^{(m)}$, the covariance of $\epsilonb_{t}^{(m)}$. To achieve that, we can obtain an estimate of the variance of $\epsilon_{j,t}^{(m)}$ as
	$$\left(\widehat{\sigma}_{\epsilon,j}^{(m)}\right)^2 = \frac{1}{T-1} \sum_{t=1}^{T} \left(\widehat{\epsilon}_{j,t}^{(m)} - \overline{\widehat{\epsilon}}_{j,t}^{(m)}\right)^2.$$
	%$$\left(\widehat{\sigma}_{\epsilon}^{(m)}\right)^2 = \frac{1}{NT-1} \sum_{j=1}^{N} \sum_{t=1}^{T} \left(\widehat{\epsilon}_{j,t}^{(m)} - \overline{\widehat{\epsilon}}_{j,t}^{(m)}\right)^2,$$    %As discussed in \citet{LamYaoBathia2011}, the estimates in (\ref{3:estf}) are not sensitive to the choice of $\tau$, therefore, $\tau$ can be chosen as one or other small integers depending on the interest of test.
	And $\Sigma_\epsilon^{(m)}$ can then be estimated by $$\widehat{\Sigma}_\epsilon^{(m)} = diag \left( \left(\widehat{\sigma}_{\epsilon,1}^{(m)}\right)^2,\left(\widehat{\sigma}_{\epsilon,2}^{(m)}\right)^2,...,\left(\widehat{\sigma}_{\epsilon,N_m}^{(m)}\right)^2 \right).$$
	\begin{remark}\label{3:re:1}
		It is clear that for two high-dimensional time series where the estimated numbers of factors are different, i.e., $\widehat{K}_1 \ne \widehat{K}_2$, one can conclude that the two high-dimensional data follow different factor models where $\mathcal{M}\left(L^{(1)}\right) \ne \mathcal{M}\left(L^{(2)}\right)$ and the numbers of spiked eigenvalues for their autocovariance matrices are different. However, if we are interested in testing the equivalence for the particular spiked eigenvalue of the autocovariance matrices for two high-dimensional data, it is still possible to perform the autocovariance test even if $\widehat{K}_1 \ne \widehat{K}_2$. %The intuition is to test whether the low-dimensional representations of both high-dimensional time series have the same autocovariance in a certain principal direction, though the data cannot be fully projected into the same temporal subspace. 
		%The intuition is that for researchers who need to study or compare two (or more) high-dimensional time series, it may be of interest to test whether some low-dimensional representations for both observations in a common eigenspace share the same autocovariance. 
        For example, in analyzing mortality data, the first factor represents human characteristics that lead the trend of mortality improvement across ages \citep[see][for example]{lee_modeling_1992, LiLee2005}. Testing the equivalence of the first eigenvalue of autocovariance matrices across countries or regions may tell whether human characteristics are of the same importance in affecting mortality rates across countries. %When denoising videos, the spikiness of signals in a spatio-temporal denoising model represents the level of signal ( and noise) of videos. 
        In meteorology, \citet{ZHANG2022151} study the joint prediction of temperature from multi-stations, where the autocovariance test can be applied since the spiked eigenvalues measure the extent of co-movements of temperature from multi-stations. When predicting human trajectory in crowded spaces \citep[see,][for example]{Alexandre2016}, the spiked eigenvalues can be considered for testing the equivalence of the importance of common sense rules and social conventions in different scenarios. Therefore, The autocovariance test performed based on the first several factors is in its own interest, even if $\widehat{K}_1 \ne \widehat{K}_2$.
	\end{remark}

	\item[]\textbf{Step 2}: Standardizing factor models to the canonical form.
	
	With $\widehat{L}_{\tau}^{(m)}$ and $\widehat{\Sigma}_\epsilon^{(m)}$, we can now standardize the estimated factor models (\ref{3:estf}) to fulfill the canonical form. Firstly, we define a $N_m \times N_m$ matrix $\pM_\tau^{(m)} = \left(\widehat{L}_{\tau}^{(m)},\zero_{p_m+K_m-\widehat{K}_m} \right)$. Then we can define $\widetilde{\yy}_t^{(m)} \coloneqq \left(\widehat{\Sigma}_\epsilon^{(m)}\right)^{-1/2} \pM_\tau^{(m)\top}  \yy_t^{(m)}$ for the normalized data and $\tu_t^{(m)} \coloneqq \left(\widehat{\Sigma}_\epsilon^{(m)}\right)^{-1/2} \hu_t^{(m)}$ for the normalized residuals. By left multiplying $\left(\widehat{\Sigma}_\epsilon^{(m)}\right)^{-1/2} \pM_\tau^{(m)\top}$, the estimated factor model is reduced to
	\begin{align*}
		\widetilde{\yy}_t^{(m)} = \left(\widehat{\Sigma}_\epsilon^{(m)}\right)^{-1/2} \pM_\tau^{(m)\top} \widehat{L}_{\tau}^{(m)} \widehat{\ff}_{t}^{(m)} + \tu_t^{(m)},
	\end{align*}
	where note that
	\begin{align*}
		\pM_\tau^{(m)\top} \widehat{L}_{\tau}^{(m)} =
		\begin{pmatrix}
			I_{\widehat{K}_m}     \\
			\zero_{(p_m+K_m - \widehat{K}_m) \times \widehat{K}_m} \\
		\end{pmatrix}.
	\end{align*}
	
	%Secondly, 
	To normalize $\widehat{\ff}_t^{(m)}$, we estimate the variance of $\widehat{f}_{i,t}^{(m)}$ by $\left(\widehat{\sigma}_i^{(m)}\right)^2 = \frac{1}{T-1}\sum_{t=1}^{T} \left(\widehat{f}_{i,t}^{(m)} - \overline{\widehat{f}}_{i,t}^{(m)}\right)^2$, for $i=1,2,...,\widehat{K}_m$. Hence the covariance of $\widehat{\ff}_t^{(m)}$ can be obtained as $$\widehat{\Sigma}_\ff^{(m)} = diag \left( \left(\widehat{\sigma}_{1}^{(m)}\right)^2,\left(\widehat{\sigma}_{2}^{(m)}\right)^2,...,\left(\widehat{\sigma}_{\widehat{K}_m}^{(m)}\right)^2 \right).$$
	Write $\widetilde{\ff}_t^{(m)} = \left(\widehat{\Sigma}_\epsilon^{(m)}\right)^{-1/2} \widehat{\ff}_t^{(m)} \left(\widehat{\Sigma}_\ff^{(m)}\right)^{-1/2}$ for the normalized estimates of factors, and $\widetilde{L}_\tau^{(m)} = \begin{pmatrix}
		\mathrm{diag}(\widehat{\sigma}_1^{(m)}, \ldots, \widehat{\sigma}_{\widehat{K}_m}^{(m)}) \\ \boldsymbol{0}_{(p_m+K_m - \widehat{K}_m) \times \widehat{K}_m}
	\end{pmatrix}$ for the estimates of loading matrices. Then we have standardized the estimated factor models to
	\begin{align}\label{3:econf}
		\widetilde{\yy}_t^{(m)} = \widetilde{L}_{\tau}^{(m)} \widetilde{\ff}_{t}^{(m)} + \tu_t^{(m)},
	\end{align}
	which follows the canonical form defined by (\ref{canonical form 4}).
	
	\item[]\textbf{Step 3}: \label{3:s3} Estimation of unknown parameters in the test statistic.
	
	For standardized data $\left\{\widetilde{\yy}_t^{(m)}\right\}$ following the estimated factor model (\ref{3:econf}), ${\lambda}_{i,\tau}^{(m)}$ can be computed as the $i$-th largest eigenvalue of the symmetrized lag-$\tau$ sample autocovariance matrix $\tGa_{\widetilde{\yy}}^{(m)}(\tau) \tGa_{\widetilde{\yy}}^{(m)}(\tau)^\top$, where $\tGa_{\widetilde{\yy}}^{(m)}(\tau) = \frac{1}{T-\tau-1} \sum_{t=1}^{T-\tau} \widetilde{\yy}_t^{(m)}\widetilde{\yy}_{t+\tau}^{(m)\top}$ and ${\gamma}_{i,\tau}^{(m)}$ can be estimated from the sample lag-$\tau$ autocovariance of the $i$-th estimated factor $\left\{\widetilde{f}_{i,t}^{(m)}\right\}$. Besides, we also need to estimate the quantities ${v}_{i,\tau}^{(m)}$ and ${\theta}_{i,\tau}^{(m)}$, as defined in the autocovariance test, for each sample to compute the test statistic. However, since $\left({v}_{i,\tau}^{(m)}\right)^2 = \frac{1}{T-\tau} Var\left( \sum_{t=1}^{T-\tau} f_{i,t}^{(m)} f_{i,t+\tau}^{(m)} \right)$ depends on the variance of $\sum_{t=1}^{T-\tau} f_{i,t}^{(m)} f_{i,t+\tau}^{(m)}$ and ${\theta}_{i,\tau}^{(m)}$ is the asymptotic centering of ${\lambda}_{i,\tau}^{(m)}$, they cannot be directly estimated from original sample observations.
	Instead, we can use bootstrap to estimate both quantities. It is worth noting that since the bootstrap is conducted on the estimated low-dimensional factor time series $\left\{\widetilde{\ff}_t^{(m)}\right\}$, the bootstrap estimators are not affected by the increasing dimensions.
	
	%However, since $\widetilde{v}_{i,\tau}^{(m)}$ estimates the square root of , and $\widetilde{\theta}_{i,\tau}^{(m)}$ estimates the asymptotic centering of ${\lambda}_{i,\tau}^{(m)}$, and they cannot be obtained directly from the sample observations, we need to use bootstrap to compute $\widetilde{v}_{i,\tau}^{(m)}$ and $\widetilde{\theta}_{i,\tau}^{(m)}$. Note that, 
	
	Therefore, bootstrap methods for time series such as the sieve bootstrap can be conducted on the estimated factors for estimating ${v}_{i,\tau}^{(m)}$ and ${\theta}_{i,\tau}^{(m)}$. Next, we will apply the AR-sieve bootstrap method in \cite{bi2021arsieve} to get a bootstrap estimation for the unknown parameters. 
	In specific, an AR($p$) model can be fitted for each estimated factor $\widetilde{\ff}_i^{(m)}$ and the residuals can be taken as
	\begin{align*}
		\widetilde{u}_{i,t}^{(m)} = \widetilde{f}_{i,t}^{(m)} - \sum_{l=1}^{p} \widetilde{\phi}_{i,l}^{(m)} \widetilde{f}_{i,t-l}^{(m)},
	\end{align*}
	where $\left\{ \widetilde{\phi}_{i,l}^{(m)},\ l=1,2,...,\widehat{K}_m \right\}$ are the AR coefficients. Then by resampling from the empirical distribution of the centralized residual $\left( \widetilde{u}_{i,t}^{(m)} - \overline{\widetilde{u}}_{i}^{(m)} \right)$, the bootstrap factors can be generated as
	\begin{align*}
		{f}_{i,t}^{(m)b} = \sum_{l=1}^{p} \widetilde{\phi}_{i,l}^{(m)} {f}_{i,t-l}^{(m)b} + {u}_{i,t}^{(m)b}, 
	\end{align*}
	where $b = 1,2,...,B$ for $B$ bootstrap samples of $\left\{{f}_{i,t}^{(m)b}\right\}$ and ${u}_{i,t}^{(m)b}$ is the bootstrap residual. Hence, we can estimate ${v}_{i,\tau}^{(m)}$ by
	\begin{align*}
		\widetilde{v}_{i,\tau}^{(m)\ast} = \sqrt{\frac{1}{T-\tau} \left(  \frac{1}{B-1} \sum_{b=1}^{B} \left( \sum_{t=1}^{T-\tau} {f}_{i,t}^{(m)b} {f}_{i,t+\tau}^{(m)b} -     \frac{1}{B} \sum_{b=1}^{B} \left( \sum_{t=1}^{T-\tau} {f}_{i,t}^{(m)b} {f}_{i,t+\tau}^{(m)b}\right) \right)^2 \right) }.
	\end{align*}
	
	In addition, since $\widetilde{\theta}_{i,\tau}^{(m)}$ is an estimate of the asymptotic centering of ${\lambda}_{i,\tau}^{(m)}$, we can also bootstrap $\left\{\widetilde{\yy}_{t}^{(m)}\right\}$ by ${\yy}_{t}^{(m)b} = \widetilde{L}_\tau^{(m)} {\ff}_{t}^{(m)b}$
%	\begin{align*}
%		{\yy}_{t}^{(m)b} = \widetilde{L}_\tau^{(m)} {\ff}_{t}^{(m)b},
%	\end{align*}
	for $B$ times and estimate ${\theta}_{i,\tau}^{(m)}$ by $\widetilde{\theta}_{i,\tau}^{(m)\ast} = \frac{1}{B} \sum_{b=1}^{B} {\lambda}_{i,\tau}^{(m)b}$,
%	\begin{align*}
%		\widetilde{\theta}_{i,\tau}^{(m)\ast} = \frac{1}{B} \sum_{b=1}^{B} {\lambda}_{i,\tau}^{(m)b},
%	\end{align*}
	where ${\lambda}_{i,\tau}^{(m)b}$ is the $i$-th largest eigenvalue of the symmetrized lag-$\tau$ sample autocovariance matrices of $\left\{{\yy}_{t}^{(m)b}\right\}$. Meanwhile, since the sieve bootstrap is conducted to estimate ${v}_{i,\tau}^{(m)}$ and ${\theta}_{i,\tau}^{(m)}$, an alternative estimate of ${\gamma}_{i,\tau}^{(m)}$ can also be computed based on $B$ bootstrap samples, as
	\begin{align*}
		\widetilde{\gamma}_{i,\tau}^{(m)\ast} =
		\frac{1}{B} \sum_{b=1}^{B}       \left(  \frac{1}{T-\tau-1} \sum_{t=1}^{T-\tau} \left( f_{i,1}^{(m)b} - \frac{1}{T} \sum_{t=1}^{T} f_{i,t}^{(m)b} \right)  \left( f_{i,\tau+1}^{(m)b} - \frac{1}{T} \sum_{t=1}^{T} f_{i,t}^{(m)b} \right)   \right).
	\end{align*}
	
	\item[]\textbf{Step 4}: Computing the test statistic and $p$-value.
	
	When the first three steps have been conducted on both high-dimensional times series $\left\{\yy_t^{(1)}\right\}$ and $\left\{\yy_t^{(2)}\right\}$, we can estimate the unknown parameters in (\ref{3:stat0}) by
	\begin{align*}
		\widetilde{\theta}_{i,\tau}^\ast \coloneqq \frac{T_1 \widetilde{\theta}_{i,\tau}^{(1)\ast} + T_2 \widetilde{\theta}_{i,\tau}^{(2)\ast}}{T_1+T_2},\
		\widetilde{v}_{i,\tau}^\ast \coloneqq \frac{T_1 \widetilde{v}_{i,\tau}^{(1)\ast} + T_2 \widetilde{v}_{i,\tau}^{(2)\ast}}{T_1+T_2},\
		\widetilde{\gamma}_{i,\tau}^\ast \coloneqq \frac{T_1 \widetilde{\gamma}_{i,\tau}^{(1)\ast} + T_2 \widetilde{\gamma}_{i,\tau}^{(2)\ast}}{T_1+T_2},
	\end{align*}
	where $\widetilde{\theta}_{i,\tau}^{(m)\ast}$, $\widetilde{v}_{i,\tau}^{(m)\ast}$ and $\widetilde{\gamma}_{i,\tau}^{(m)\ast}$ are computed from two high-dimensional time series following the procedure in Step 3. Then, the test statistic can be computed as
	\begin{align} \label{3:teststat}
		\widetilde{Z}_{i,\tau} \coloneqq \left({\lambda}_{i,\tau}^{(1)} - {\lambda}_{i,\tau}^{(2)}\right) \sqrt{\frac{T_1 T_2 }{T_1+T_2}} \frac{\widetilde{\gamma}_{i,\tau}^\ast} {2 \widetilde{v}_{i,\tau}^\ast \widetilde{\theta}_{i,\tau}^\ast},
	\end{align}
	where ${\lambda}_{i,\tau}^{(1)}$ and ${\lambda}_{i,\tau}^{(2)}$ are the $i$-th ($1\le i \le \widehat{K}$) largest eigenvalues of the symmetrized lag-$\tau$ sample autocovariance matrices for the standardized data $\left\{\widetilde{\yy}_{t}^{(1)}\right\}$ and $\left\{\widetilde{\yy}_{t}^{(2)}\right\}$, respectively.
	At last, the $p$-values of the test statistic $\widetilde{Z}_{i,\tau}$ are computed as $\Pr \left(z > \left|\widetilde{Z}_{i,\tau}\right| \right) = 2\left( 1-\Phi\left(|\widetilde{Z}_{i,\tau}|\right) \right)$ for a two-sided test, and $\Pr \left(z > \widetilde{Z}_{i,\tau}\right) = 1-\Phi\left(\widetilde{Z}_{i,\tau}\right)$ or $\Pr \left(z < \widetilde{Z}_{i,\tau}\right) = \Phi\left(\widetilde{Z}_{i,\tau}\right)$ for one-sided tests, where $\Phi\left(\cdot\right)$ denotes the cumulative distribution function (CDF) of a standard normal random variable.
	%	\begin{remark}\label{3:re:2}
	%		It is worth to be noted that under the null hypothesis, two high-dimensional time series share the same spiked eigenvalues of their autocovariance matrices, thus $\widetilde{\gamma}_{i,\tau}$ is a consistent estimator of ${\gamma}_{i,\tau}$, as long as both $\widetilde{\gamma}_{i,\tau}^{(1)}$ and $\widetilde{\gamma}_{i,\tau}^{(2)}$ are consistent estimators of ${\gamma}_{i,\tau}$. However, when two high-dimensional time series come from two different factor models where $sign\left({\gamma}_{i,\tau}^{(1)}\right) \ne sign\left({\gamma}_{i,\tau}^{(2)}\right) $, it is clear that $ \left| \widetilde{\gamma}_{i,\tau} \right| < \min \left( \left|\widetilde{\gamma}_{i,\tau}^{(1)}\right|, \left|\widetilde{\gamma}_{i,\tau}^{(2)}\right| \right)$ and $\widetilde{\gamma}_{i,\tau}$ is closer to 0 when the signs of $\widetilde{\gamma}_{i,\tau}^{(1)}$ and $\widetilde{\gamma}_{i,\tau}^{(2)}$ differ. Consequently, when $sign\left({\gamma}_{i,\tau}^{(1)}\right) \ne sign\left({\gamma}_{i,\tau}^{(2)}\right)$, the test statistic $Z_{i,\tau}$ and the power of test will be extremely small, therefore the test fail to tell the difference between the signs of autocorrelations. To resolve this issue, one can simply check the signs of $\widetilde{\gamma}_{,\tau}^{(1)}$ and $\widetilde{\gamma}_{i,\tau}^{(2)}$ before computing $Z_{i,\tau}$, and reject the null hypothesis when $sign\left(\widetilde{\gamma}_{i,\tau}^{(1)}\right) \ne sign\left(\widetilde{\gamma}_{i,\tau}^{(2)}\right)$.
	%	\end{remark}
\end{enumerate}

To clarify the four steps mentioned above, a flow chart is provided in Figure~\ref{new1}, which summarizes the basic logic and procedure for the autocovariance test.
\begin{figure}[!htbp]
	\centering
	\includegraphics[scale=0.8]{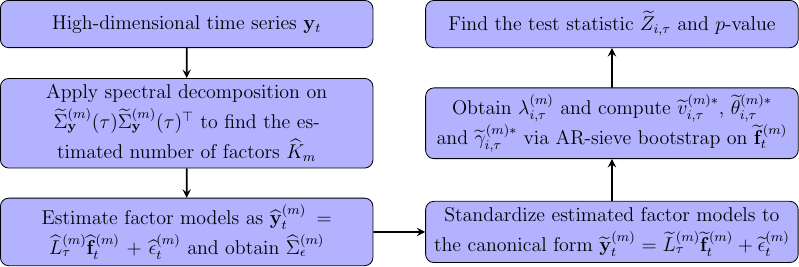}
	\caption{Flow chart for the autocovariance test}\label{new1}
\end{figure} 

	\section{Simulation studies}\label{3:sec:4}

This section uses numerical simulations to investigate the empirical sizes and powers of the proposed autocovariance test.

To start, we first of all explore the empirical sizes of the autocovariance test under various scenarios, including various orders of factor strength and ratios between the sample size and the data dimension. In this section of simulation studies, we again write $N_m \coloneqq K_m+p_m$ as the data dimension of $\left\{\yy_t^{(m)}\right\}$ and consider only the case that $N_1 = N_2 \eqqcolon N$ for simplicity. We assume the high-dimensional observations $\left\{\yy_t^{(1)}\right\}$ and $\left\{\yy_t^{(2)}\right\}$ are generated from the one-factor model $\yy_t^{(m)} = L^{(m)} \ff_t^{(m)} + \epsilonb_t^{(m)}$ in the canonical form (\ref{canonical form 4}). Moreover, we assume that the factors $\left\{f_{1,t}^{(m)}\right\}$ for both time series follow AR($1$) models with zero means, AR coefficients $\phi_1^{(m)} = 0.5$ and variances equal to 1. In other words, the factors for both time series are generated by
\begin{align}\label{3:dgpf}
	f_{1,t}^{(m)} = \phi_1^{(m)} f_{1,t-1}^{(m)} + z_{1,t}^{(m)},\ m=1,2,
\end{align}
where $\phi_1^{(m)} = 0.5$ and $\left\{z_{1,t}^{(m)}\right\}$ are i.i.d \ $\mathcal{N}\left(0,\left(\sigma_z^{(m)}\right)^2\right)$ with $\left(\sigma_z^{(m)}\right)^2 = 1/{\left(1-\left(\phi_1^{(m)}\right)^2\right)} = 3/4$, so that $Var\left(f_{1,t}^{(m)}\right) = 1$. As discussed for the canonical form of factor models, the variance $\left(\sigma_i^{(m)}\right)^2$ of unnormalized factors are contained in the loading matrix $L^{(m)}$. To study the empirical sizes of the autocovariance test under various factor strengths, we consider the case $\left(\sigma_1^{(m)}\right)^2 \asymp N^{1-\delta}$ for $\delta \in [0,1)$ utilized in \citet{LamYaoBathia2011}. Using this definition, $\delta = 0$ refers to the strongest factors with the pervasiveness, and factor strengths drop when $\delta$ increases from $0$ to $1$. In this section, we consider four different cases for factor strengths, where $\delta = 0, 0.1, 0.3,$ and $0.5$. Specifically, $\left(\sigma_1^{(m)}\right)^2$ in the loading matrix $L^{(m)}$ that follows canonical form (\ref{canonical form 4}) is assumed to be $N, N^{0.9}, N^{0.7},$ and $N^{0.5}$, respectively, and $\left\{\epsilon_{j,t}^{(m)}\right\}$ are assumed to be i.i.d.\ $\mathcal{N}(0,1)$. 

In summary, both $N$-dimensional time series observations are generated by %We assume the variance of factors are in the order of $N$ which implies the pervasiveness of both factors in the strong factor case, whereas the use of $1$ and $0.5$ in $\sigma_1^2$ and $\sigma_2^2$ ensures the inequity of first two spiked eigenvalues of symmetrized lag-$\tau$ autocovariance matrix of $\yy_t$.
\begin{align}\label{3:dgpy}
	\yy_{t}^{(m)} =  \begin{pmatrix}
		\sigma_{1}^{(m)} \\
		\zero_{N-1}      \\
	\end{pmatrix}
	f_{1,t}^{(m)} + \epsilonb_t^{(m)},\ m=1,2,
\end{align}
where $\sigma_1^{(m)} = N^{1-\delta}$, $\left\{\epsilon_{j,t}\right\}$ are i.i.d.\ $\mathcal{N}(0,1)$, and $\left\{f_{1,t}^{(m)}\right\}$ are generated by (\ref{3:dgpf}).

To explore the impact of ratios between sample size $T$ and data dimension $N$, we generate data with $T=400, 800$ and $N = 100, 200, 400, 800, 1600$. 
To compute the empirical sizes, for each combination of $T,N$ and $\delta$, the observations of two high-dimensional time series are first of all generated. %as $\left\{\yy_t^{(1)}\right\}$ and $\left\{\yy_t^{(2)}\right\}$. 
Then, by utilizing the estimation and testing procedures in Section~\ref{3:sec:3}, the test statistic $\widetilde{Z}_{i,\tau}$ can be computed by (\ref{3:teststat}), where $B=500$ bootstrap samples are generated to compute $\widetilde{\theta}_{i,\tau}^{(m)\ast}$, $\widetilde{v}_{i,\tau}^{(m)\ast}$ and $\widetilde{\gamma}_{i,\tau}^{(m)\ast}$, and the numbers of factors are assumed to be known (i.e., $\widetilde{K}_m=1$) for both samples. The empirical sizes of a one-sided autocovariance test for $i=1$, $\tau=1,$ and significant level $\alpha= 0.1$ %under various combinations of $T,N$ and $\delta$ 
are computed as the empirical probabilities that $\widetilde{Z}_{1,1}$ is less than $z_{\alpha}$ or greater than $z_{1-\alpha}$, i.e.,
\begin{align*}
	\frac{1}{M} \sum_{m=1}^M \mathbf{1}_{\left\{\widetilde{Z}_{1,1}(m) < z_{\alpha}\right\}},\ \text{or} \ \frac{1}{M} \sum_{m=1}^M \mathbf{1}_{\left\{\widetilde{Z}_{1,1}(m) > z_{1-\alpha}\right\}} ,
\end{align*}
for $M = 500$ Monte Carlo simulations, where $\widetilde{Z}_{1,1}(m)$ is the test statistic computed from the $m$-th simulation.

\begin{figure}[!htbp]
	\centering
	\includegraphics[scale=0.45]{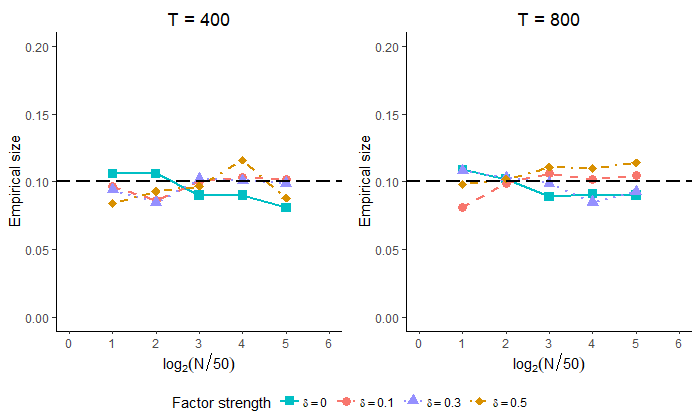}
	\caption{Empirical sizes of the autocovariance test in the first scenario with $T=400, 800,$ $N=100,200,400,800,1600,$ and $\delta = 0, 0.1, 0.3, 0.5.$}\label{3:f1}
\end{figure}

As presented in Figure~\ref{3:f1}, despite some minor fluctuations, the empirical sizes of the autocovariance test are close to the nominal significant level $\alpha = 0.1$ for all choices of $N, T$ and $\delta$. That is, when the numbers of factors are known or can be correctly estimated, the nominal type-I errors of the autocovariance test can be verified via empirical simulation studies for $\delta = 0, 0.1, 0.3, 0.5$, $T=400, 800,$ and $N = 100, 200, 400, 800, 1600$. The choice of $\tau=1$ for the autocovariance test is to acquire the most information on temporal dependence of the observations %$\left\{\yy_t^{(m)}\right\}$ 
and to achieve the best accuracy on corresponding estimators $\widetilde{\theta}_{i,\tau}^{(m)\ast}$, $\widetilde{v}_{i,\tau}^{(m)\ast}$ and $\widetilde{\gamma}_{i,\tau}^{(m)\ast}$, while other choices of finite $\tau$ may be considered with cautions as the temporal correlation ${\gamma}_{i,\tau}^{(m)}$ tends to $0$ when $\tau$ increases. %a cost on less accurate type-I errors, especially when $T$ is relatively small.
%To study the impact of factor strength or dimensionality and signal to noise ratio, we simulate data from two cases with different factor strengths. In the first case, we assume the error terms $e_{i,t}$ in AR($1$) model of both factors $\ff_t$ are independent $\mathcal{N}(0,N)$ and $\mathcal{N}(0,0.5N)$, respectively, where $N$ is the dimension of original data. The use of order $N$ in the variance of error terms in the AR($1$) model of both factors reflects both factors' pervasiveness, or equivalently, that the strength of factors is set to be strong. The use of different scales $1$ and $0.5$ in the variance of $e_{i,t}$ for $i=1,2$ is to ensure that the first two largest eigenvalues of accumulated symmetrized autocovariance matrix that are associated with the two factors are spiked and unequal.

To study the empirical powers of the autocovariance test, we notice that for two high-dimensional time series following factor models that are normalized to the canonical form (\ref{3:econf}), the difference between spiked eigenvalues $\mu_{i,\tau}^{(m)}, m=1, 2$ may arise from either the difference between factor strength $\left(\sigma_{i}^{(m)}\right)^2, m=1, 2$ or the difference between temporal autocorrelation $\gamma_{i, \tau}^{(m)}, m=1, 2$. Therefore, to empirically investigate the autocovariance test's power, we study two typical scenarios where either variances or autocorrelations of factors are different between two factor models. We are particularly interested in whether the autocovariance test's empirical power grows with the difference between variances or autocorrelations for two high-dimensional time series. %On the other hand, the two scenarios summarize two typical reasons why high-dimensional time series differ across populations, i.e., the difference between two high-dimensional time series is caused by the difference in variances or auto-correlations of two factor models. 

Specifically, to explore the impacts of $\delta$, $N$ and $T$ on empirical powers, we again generate observations from two populations with $T = 400,800$, $N=200,400,800,$ and $\delta = 0, 0.1, 0.3, 0.5$. The data in the first population is generated by (\ref{3:dgpy}), which is precisely the same as we study the empirical sizes, while the data in the second population is generated with a different $\sigma_{1}^{(2)}$ or $\phi_{1}^{(2)}$ in the factor model. %We consider the following two scenarios.
In the current work, we study the impact of variance numerically while left the investigation on the autocorrelations to the supplement. 
%\begin{enumerate}
	%\item[]\textbf{Scenario 1: Different Factor Strengths}. \\ 
We keep the temporal autocorrelation unchanged, i.e. AR coefficient $\phi_{1}^{(2)}$ is the same as $\phi_{1}^{(1)}$ (i.e. $\phi_{1}^{(2)} = \phi_{1}^{(1)} = 0.5$), and set
$\left(\sigma_{1}^{(2)}\right)^2=1.1\left(\sigma_{1}^{(1)}\right)^2, 1.3\left(\sigma_{1}^{(1)}\right)^2, 1.5\left(\sigma_{1}^{(1)}\right)^2, 1.7\left(\sigma_{1}^{(1)}\right)^2, 1.9\left(\sigma_{1}^{(1)}\right)^2,$ respectively.
	%\item[]\textbf{Scenario 2: Different Temporal Autocorrelations}. \\
	%Let $\left(\sigma_{1}^{(2)}\right)^2 = \left(\sigma_{1}^{(1)}\right)^2 = N^{1-\delta}$, but set the AR coefficients for the second population to be 
	%$$\phi_{1}^{(2)} = 0.9\phi_{1}^{(1)}, 0.8\phi_{1}^{(1)}, 0.7\phi_{1}^{(1)},0.6\phi_{1}^{(1)}, 0.5\phi_{1}^{(1)},$$ respectively.
%\end{enumerate}
By doing that, we can investigate how the empirical powers of the autocovariance test are affected by the difference between variances of factors in two factor models. Moreover, it is worth to mention that when generating $\left\{f_{i,t}^{(1)}\right\}$ and $\left\{f_{i,t}^{(2)}\right\}$, $\left\{z_{1,t}^{(1)}\right\}$ are i.i.d.\ $\mathcal{N}\left(0,\left(\sigma_z^{(1)}\right)^2\right)$ with $\left(\sigma_z^{(1)}\right)^2 = 1/{\left(1-\left(\phi_1^{(1)}\right)^2\right)}$, whereas $\left\{z_{1,t}^{(2)}\right\}$ are i.i.d.\ $\mathcal{N}\left(0,\left(\sigma_z^{(2)}\right)^2\right)$ with $\left(\sigma_z^{(2)}\right)^2 = 1/{\left(1-\left(\phi_1^{(2)}\right)^2\right)}.$

To compute the empirical powers, for each combination of $T,N$ and $\delta$, two high-dimensional time series observations are generated %as $\left\{\yy_t^{(1)}\right\}$ and $\left\{\yy_t^{(2)}\right\}$ 
first. Then, we can follow the estimation and testing procedures in Section~\ref{3:sec:3} and compute the test statistic $\widetilde{Z}_{i,\tau}$ by (\ref{3:teststat}), where again $B=500$ bootstrap samples are generated to find $\widetilde{\theta}_{i,\tau}^{(m)\ast}$, $\widetilde{v}_{i,\tau}^{(m)\ast}$, and $\widetilde{\gamma}_{i,\tau}^{(m)\ast}$ for both samples with the number of factors assumed to be known (i.e., $\widetilde{K}_{m} = 1$). Lastly, based on $M=500$ Monte Carlo simulations, the empirical powers of a one-sided autocovariance test for $i=1,$ $\tau=1$, and $\alpha=0.1$ can be estimated by the empirical probability that $\widetilde{Z}_{1,1}$ is less than $z_{\alpha}$, i.e.,
$$\frac{1}{M} \sum_{m=1}^M \mathbf{1}_{\left\{\widetilde{Z}_{1,1}(m) < z_{\alpha}\right\}},$$
%for the first scenario, and the empirical probability that $\widetilde{Z}_{1,1}$ is greater than $z_{1-\alpha}$, i.e.,
%\begin{align*}
%	\frac{1}{M} \sum_{m=1}^M \mathbf{1}_{\left\{\widetilde{Z}_{1,1}(m) > z_{1-\alpha}\right\}},
%\end{align*}
%for the second scenario, 
where we have assumed $\mu_{1,1}^{(1)} < \mu_{1,1}^{(2)}$ for various choices of $\sigma_{1}^{(2)}$.% in the first scenario, and $\mu_{1,1}^{(1)} > \mu_{1,1}^{(2)}$ for various choices of $\phi_{1}^{(2)}$ in the second scenario.

Empirical powers of the autocovariance test% in both scenarios
with various choices of $N$, $T$, and $\delta$ are presented in Figures~\ref{3:f2}~to~\ref{3:f3}. %As shown in Figures~\ref{3:f2}~and~\ref{3:f3}, 
It is clear that for all combinations of $N$ and $T$, empirical powers% in the first scenario 
increase towards $1$ when $\left(\sigma_{1}^{(2)}\right)^2$ increases from $1.1\left(\sigma_{1}^{(1)}\right)^2$ to $1.9\left(\sigma_{1}^{(1)}\right)^2$. Therefore, numerical results in Figure~\ref{3:f2}~and~\ref{3:f3} suggest that the autocovariance test can correctly reject the null hypothesis when two high-dimensional time series follow factor models with different variances of factors. %$\left(\sigma_{1}^{(2)}\right)^2 \ne \left(\sigma_{1}^{(1)}\right)^2$.  
Besides, despite the common temporal autocorrelation, for the same amount of increase in $\sigma_{1}^{(2)}/\sigma_{1}^{(1)}$, %difference between the variances of factors,% $\left(\sigma_{1}^{(2)}\right)^2$ and  $\left(\sigma_{1}^{(1)}\right)^2$, 
the empirical powers of one-sided autocovariance tests for $T = 800$ are generally higher than those associated with $T = 400$, which can be justified by the order $\sqrt{T}$ in (\ref{3:stat0}). In detail, a larger value of $T$ could incur a larger power. Also, the powers of stronger factor models with smaller $\delta$ are slightly higher than those of weaker factor models with larger $\delta$, especially for $T=400$.

\begin{figure}[!htbp]
	\centering
	\includegraphics[width=0.75\textwidth]{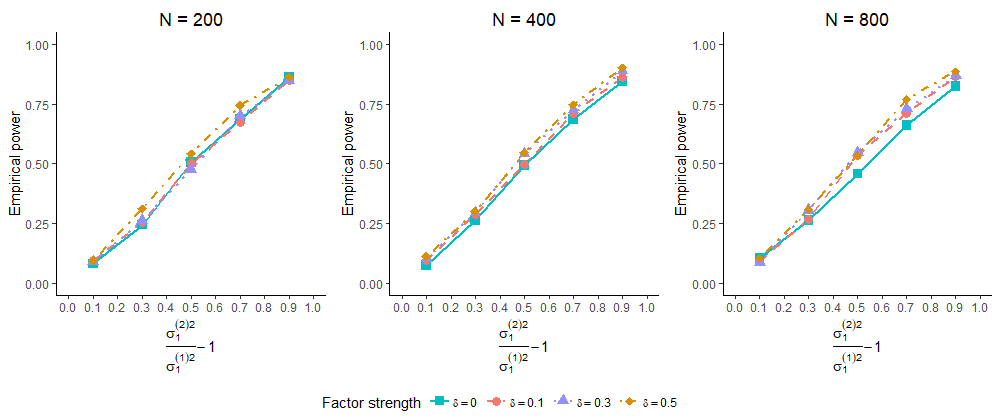}
	\caption{Empirical powers of the autocovariance test % in the first scenario 
		with $T=400$, $N=200,400,800,$ and $\delta = 0, 0.1, 0.3, 0.5.$}\label{3:f2}
\end{figure}

\begin{figure}[!htbp]
	\centering
	\includegraphics[width=0.75\textwidth]{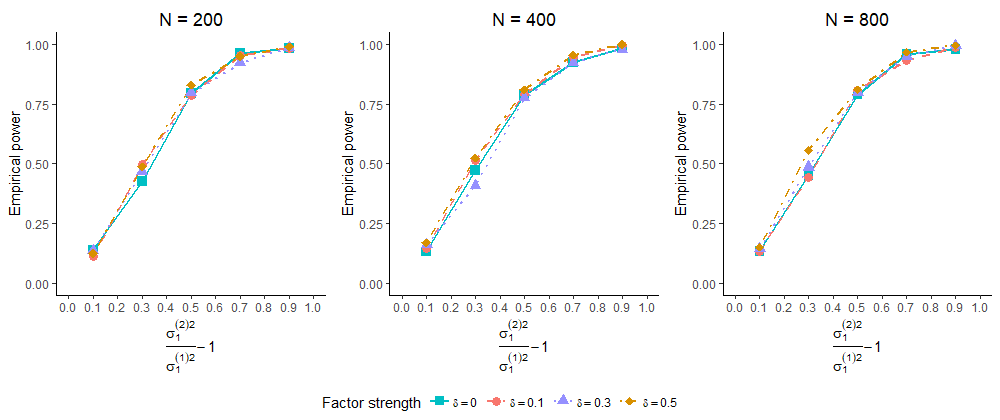}
	\caption{Empirical powers of the autocovariance test % in the first scenario
		with $T=800$, $N=200,400,800,$ and $\delta = 0, 0.1, 0.3, 0.5.$}\label{3:f3}
\end{figure}

%\begin{figure}[!htbp]
%	\centering
%	\includegraphics[width=0.75\textwidth]{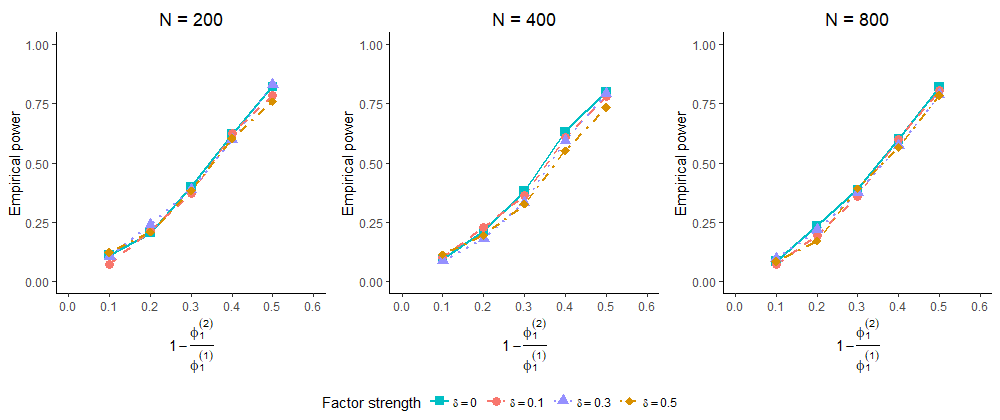}
%	\caption{Empirical powers of the autocovariance test in the second scenario with $T=400$, $N=200,400,800,$ and $\delta = 0, 0.1, 0.3, 0.5.$}\label{3:f4}
%\end{figure}
%
%\begin{figure}[!htbp]
%	\centering
%	\includegraphics[width=0.75\textwidth]{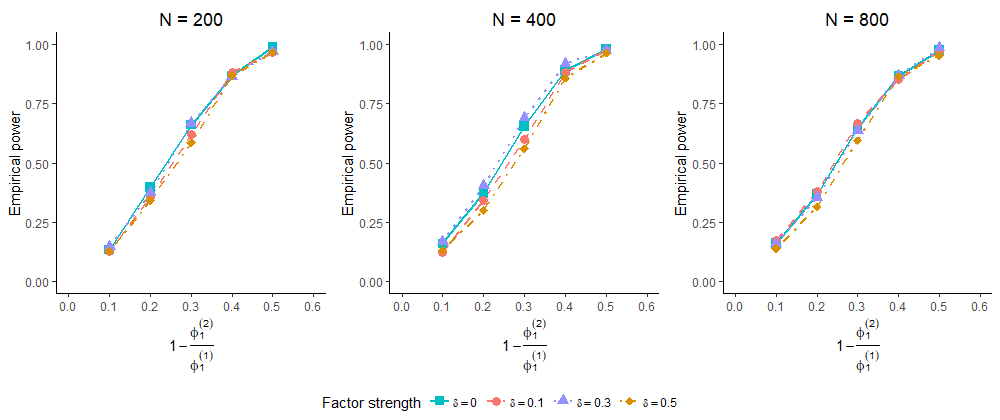}
%	\caption{Empirical powers of the autocovariance test in the second scenario with $T=800$, $N=200,400,800,$ and $\delta = 0, 0.1, 0.3, 0.5.$}\label{3:f5}
%\end{figure}
%
%Similarly, as presented in Figure~\ref{3:f4}~and~\ref{3:f5}, for all ratios of $N$ and $T$, empirical powers in the second scenario also increase towards 1, while $\phi_1^{(2)}$ drops from $0.9\phi_1^{(1)}$ to $0.5\phi_1^{(1)}$. As a consequence, Figure~\ref{3:f4}~and~\ref{3:f5} suggest that the autocovariance test can correctly reject the null hypothesis when two high-dimensional time series have different temporal autocorrelations $\phi_1^{(2)} \ne \phi_1^{(1)}$. However, unlike the first scenario, empirical powers of the one-sided autocovariance test for relatively weak factor models with large $\delta$, especially $\delta = 0.5$, are slightly lower than those of relatively strong factor models with small $\delta$. In other words, compared with strong factor models (i.e. high factor strength), the autocovariance test for weak factor models is slightly less potent in detecting the same proportional changes in autocorrelations of factors for two different factor models.
	\section{Hierarchical Clustering for Multi-country Mortality Data}\label{3:sec:5}
To incorporate the proposed autocovariance test into hierarchical clustering analysis on real-world data, we study age-specific mortality rates from countries worldwide. In the past century, age-specific mortality rates have received massive attention, especially by insurance companies and governments, as accurate forecasting of mortality rates is crucial for the pricing of life insurance products and is highly related to social and economic policies. Among many works on forecasting age-specific mortality rates, the Lee-Carter model \citep{lee_modeling_1992} is prevalent and has been used globally. Despite some extensions on the original model \citep[see, e.g.,][]{hyndman_robust_2007, li_extending_2013}, one drawback of the Lee-Carter model is that it only focuses on the death rates of a single country, therefore may produce quite different long-run forecasts of mortality rates from different countries.

%In specific, we test whether the mortality rates for different countries share the same spiked eigenvalues of their autocovariance matrices and use the p-values as the measure of dissimilarities across all countries to implement the hierarchical clustering on the mortality rates across countries. 

%Recently, joint modeling of mortality rates for multiple countries has become more attractive since the common features extracted from multiple populations can further improve forecasting accuracy. In this sense, correctly classifying countries with similar patterns of mortality rates into the same group for joint modeling and combined statistical analysis becomes critical. In addition to the traditional grouping methods based on socioeconomic status or ethnic group, \citet{tang_clustering_2020} emphasize the use of statistical clustering methods on determining the grouping of countries.

This section uses the proposed autocovariance test to explore multiple countries' mortality data, especially the spiked eigenvalues of the autocovariance matrices, and proposes a novel hierarchical clustering method for mortality data from different countries. To achieve this, we collect the total death rates for various countries from the Human Mortality Database \citep{HMD}. The plots of log mortality rates for Australia and Belgium are shown in Figure~\ref{3:f6} as an example, where similar patterns are observed for mortality rates in both countries. For the best quality of data, we choose the death rates from age 0 to 90 and require each country's sample size to be relatively large. %For some countries such as the Republic of Korea and Chile, the data are only available for a short period, while for some other countries, the data quality cannot be guaranteed due to some historical reasons. 
As a consequence, we study countries with total death rates available from 1957 to 2017. Besides, %as seen in Figure~\ref{3:f6} most countries share a similar pattern in their log mortality rates
%first graph of Figure~\ref{3:f6}, the age-specific mortality rates are not stationary for most ages; therefore, they have been pre-processed by taking the logarithm and then differenced.%since our method is developed for stationary time series.
%The second graph of Figure~\ref{3:f6} illustrates the pattern of age-specific mortality rates for Australia and a similar pattern is commonly seen in many other developed countries. However, for countries such as Norway and Iceland, there are quite a few zero death rates for young children due to the relatively small population; hence they are excluded from our study. 
for countries with small populations, zero death rates are replaced by the averages of death rates in adjacent years. In summary, the data we study has dimension $N=91$ and sample size $T=60$ for each country. 

\begin{figure}[!htbp]
	\centering
	\includegraphics[width=0.65\textwidth]{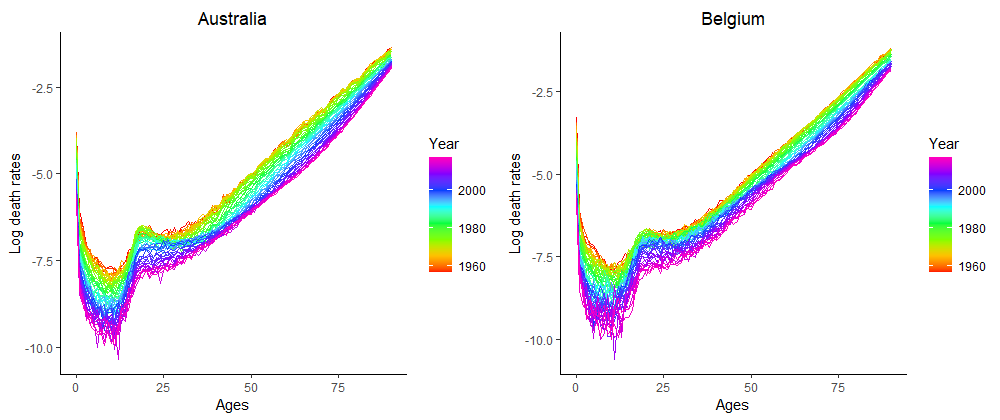}
	\caption{Observed time series of log death rates in Australia}\label{3:f6}
\end{figure}

According to the estimation and testing procedure in Section~\ref{3:sec:3}, factor models in canonical form (\ref{canonical form 4}) are firstly estimated and normalized from the differenced log death rates for each country. In the meantime, the number of factors in the factor model for each country is estimated and compared. As shown in Table~\ref{3:tab:1factor}, for most countries, there is only one factor estimated from the differenced log death rates, while there are some exceptions where two, three, and five factors are estimated. For countries with the same number of factors, we can compute the test statistic $\widetilde{Z}_{i,\tau}$ to test the equivalence of the temporal covariance in the temporal subspace. For the best accuracy in estimating the number of factors and temporal dependence among death rates, the autocovariance test is performed based on $\tau = 1$ throughout this section.

{\renewcommand{\arraystretch}{1.5}
	\begin{table}[!htbp]
		\centering
		\caption{Estimated number of factors in the factor model for each country}
		\label{3:tab:1factor}
		\resizebox{0.8\textwidth}{!}{%
			\begin{tabular}{@{}cl@{}}
				\toprule
				Estimated number of factors & \multicolumn{1}{c}{Countries}   \\ \midrule
				1                           & \begin{tabular}[c]{@{}l@{}}Australia, Belgium, Bulgaria, Czechia, Finland, Greece, Hungary,\\ Japan, Netherlands, Sweden, Switzerland, U.K. \end{tabular}     \\
				2                           & Denmark                         \\
				3                           & Canada, France, Italy, Portugal \\
				5                           & Poland                          \\ \bottomrule
			\end{tabular}%
		}
	\end{table}
}

For countries with one factor, the test statistic $\widetilde{Z}_{1,1}$ for each pair of countries can be computed by following the procedure in Section~\ref{3:sec:3}.
%Meanwhile, the signs of $\widetilde{\gamma}_{1,1}^{(m)}$ are checked for all countries where it is positive for the U.S.A. but negative for all the other countries. Consequently, the factor model for the U.S.A. should be considered different from the rest countries with one factor. 
For all other countries with one factor, the $p$-values associated with all test statistics are computed. As illustrated in Figure~\ref{3:f7}, the spiked eigenvalues of the autocovariance matrices in the majority of European countries are similar as most $p$-values of test statistics between two European countries are greater than $0.1$. However, the $p$-values between Finland and Bulgaria, the U.K., and Finland are relatively small. Following the results of the autocovariance test between each pair of countries with the same number of factors, a hierarchical clustering method can be proposed where the dissimilarity can be measured using the $p$-value, such as ($1$-$p$)-value or ($1$/$p$)-value. For the analysis of mortality data, we define the dissimilarity for all countries with one factor as ($1$-$p$)-value, and the result of hierarchical clustering using average linkage for all countries with one factor is presented in Figure~\ref{3:f7}.   
%As a result, extra caution needs to be taken when these countries are included in a combined statistical analysis. Besides, some $p$-values associated with either Japan or Australia are also relatively small, which can be considered statistical evidence of geographic impacts.

\begin{figure}[!htbp]
	\begin{subfigure}{.5\textwidth}
	\centering
	\includegraphics[width=0.9\textwidth]{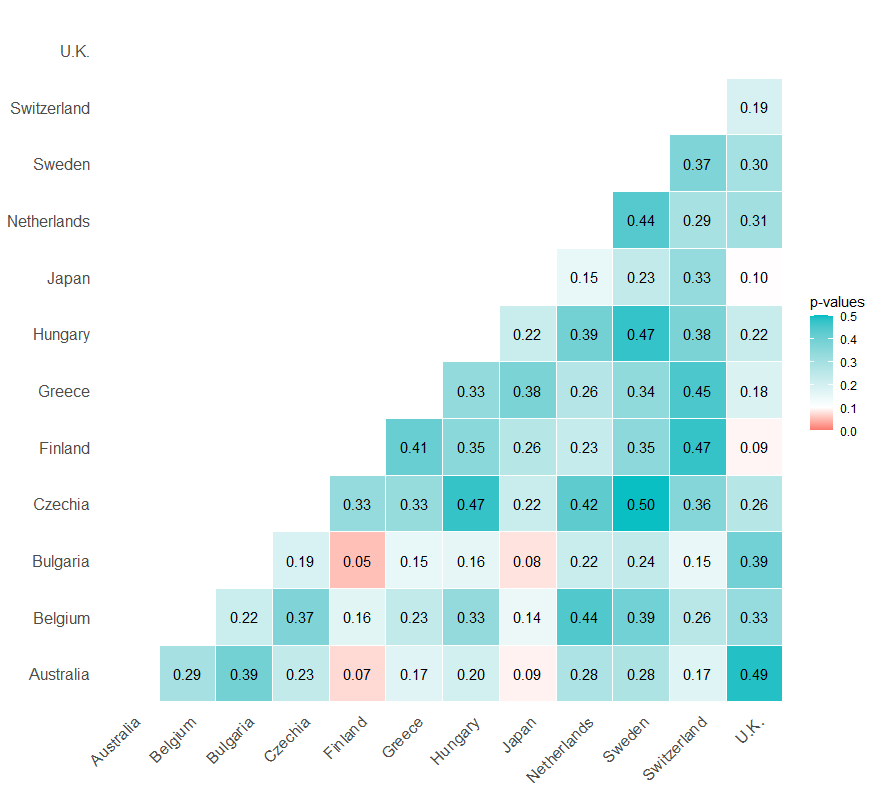}
	%\caption{$p$-values of the autocovariance test for each pair of countries that have one factor in the estimated factor model}
	%\label{3:f7}
	\end{subfigure}%
	\begin{subfigure}{.5\textwidth}
	\centering
	\includegraphics[width=0.9\textwidth]{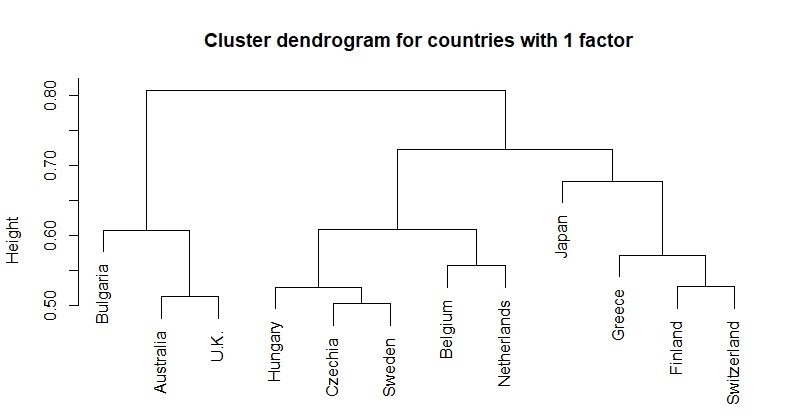}
	%\caption{Cluster dendrogram for countries that have one factor in the estimated factor model}
	%\label{3:f10}
	\end{subfigure}
	\caption{$p$-values of the autocovariance test and the cluster dendrogram for countries that have one factor in the estimated factor model}
	\label{3:f7}	
\end{figure}

For countries with more than one factor, a more sophisticated testing and clustering procedure can be developed to incorporate multiple test statistics for different factors. This procedure and the corresponding empirical results are, to some extent, beyond the scope of the current work hence is discussed in the supplement. %can be computed for each factors,   to test on the equivalence of autocovariances through factor models, test statistics between each pair of countries are computed for all three factors as $\widetilde{Z}_{1,1}, \widetilde{Z}_{2,1}$ and $\widetilde{Z}_{3,1}$. As depicted in Figure~\ref{3:f8}, the $p$-values for $\widetilde{Z}_{1,1}$ and $\widetilde{Z}_{2,1}$ between all pairs of countries are relatively large, which suggests that the differences of the first two factors between each pair of countries are not significant (at $\alpha = 0.1$). Nonetheless, $p$-values for $\widetilde{Z}_{1,3}$ are relatively small between Canada and France, Canada and Italy, and Italy and Portugal. 

	\section*{ACKNOWLEDGEMENTS}
	This work was partially supported by the National Natural Science Foundation of China (No. 72201093, 12001518).
	
	\section*{DATA AVAILABILITY STATEMENT}
	The data that support the findings of this study are openly available in the Human Mortality Database (HMD) at \url{https://mortality.org/}.
	%The data underlying this article were accessed from the Human Mortality Database (HMD) at \url{https://mortality.org/}. The derived data generated in this research will be shared on reasonable request to the corresponding author.
	
	\section*{CONFLICTS OF INTEREST STATEMENT}
	The authors declared that they have no conflicts of interest to this work.

	\bibliographystyle{chicago}
	\bibliography{references}

\begin{thebibliography}{}

\bibitem[\protect\citeauthoryear{Alahi, Goel, Ramanathan, Robicquet, Fei-Fei,
  and Savarese}{Alahi et~al.}{2016}]{Alexandre2016}
Alahi, A., K.~Goel, V.~Ramanathan, A.~Robicquet, L.~Fei-Fei, and S.~Savarese
  (2016).
\newblock Social lstm: Human trajectory prediction in crowded spaces.
\newblock In {\em 2016 IEEE Conference on Computer Vision and Pattern
  Recognition (CVPR)}, pp.\  961--971.

\bibitem[\protect\citeauthoryear{Bai and Li}{Bai and Li}{}]{BaiLi2012}
Bai, J. and K.~Li.
\newblock Statistical analysis of factor models of high dimension.
\newblock {\em Ann. Statist.\/}~{\em 40\/}(1), 436--465.

\bibitem[\protect\citeauthoryear{Bai and Ng}{Bai and Ng}{2002}]{BaiNg2002}
Bai, J. and S.~Ng (2002).
\newblock Determining the {{Number}} of {{Factors}} in {{Approximate Factor
  Models}}.
\newblock {\em Econometrica\/}~{\em 70\/}(1), 191--221.

\bibitem[\protect\citeauthoryear{Bai and Yao}{Bai and Yao}{2008}]{BaiYao2008}
Bai, Z. and J.-f. Yao (2008).
\newblock Central limit theorems for eigenvalues in a spiked population model.
\newblock {\em Annales de l'Institut Henri Poincar\'e, Probabilit\'es et
  Statistiques\/}~{\em 44\/}(3), 447--474.

\bibitem[\protect\citeauthoryear{Bai and Silverstein}{Bai and
  Silverstein}{1998}]{BaiSilverstein1998}
Bai, Z.~D. and J.~W. Silverstein (1998).
\newblock No eigenvalues outside the support of the limiting spectral
  distribution of large-dimensional sample covariance matrices.
\newblock {\em The Annals of Probability\/}~{\em 26\/}(1), 316--345.

\bibitem[\protect\citeauthoryear{Bao, Ding, and Wang}{Bao
  et~al.}{2018}]{BaoDingWang2018}
Bao, Z., X.~Ding, and K.~Wang (2018).
\newblock Singular vector and singular subspace distribution for the matrix
  denoising model.
\newblock {\em arXiv preprint arXiv:1809.10476\/}.

\bibitem[\protect\citeauthoryear{Bao, Pan, and Zhou}{Bao
  et~al.}{2015}]{bao2015universality}
Bao, Z., G.~Pan, and W.~Zhou (2015).
\newblock Universality for the largest eigenvalue of sample covariance matrices
  with general population.

\bibitem[\protect\citeauthoryear{Berk}{Berk}{1973}]{Berk1973}
Berk, K.~N. (1973).
\newblock {A Central Limit Theorem for $m$-Dependent Random Variables with
  Unbounded $m$}.
\newblock {\em The Annals of Probability\/}~{\em 1\/}(2), 352 -- 354.

\bibitem[\protect\citeauthoryear{Bi, Shang, Yang, and Zhu}{Bi
  et~al.}{2021}]{bi2021arsieve}
Bi, D., H.~L. Shang, Y.~Yang, and H.~Zhu (2021).
\newblock Ar-sieve bootstrap for high-dimensional time series.
\newblock {\em arXiv preprint arXiv:2112.00414\/}.

\bibitem[\protect\citeauthoryear{Bose and Bhattacharjee}{Bose and
  Bhattacharjee}{2018}]{BoseBhattacharjee2018}
Bose, A. and M.~Bhattacharjee (2018).
\newblock {\em Large {{Covariance}} and {{Autocovariance Matrices}}}.
\newblock {CRC Press}.

\bibitem[\protect\citeauthoryear{Bose and Hachem}{Bose and
  Hachem}{2021}]{bose2021spectral}
Bose, A. and W.~Hachem (2021).
\newblock Spectral measures of empirical autocovariance matrices of high
  dimensional gaussian stationary processes.
\newblock {\em arXiv preprint arXiv:2110.08523\/}.

\bibitem[\protect\citeauthoryear{B{\"o}ttcher and Silbermann}{B{\"o}ttcher and
  Silbermann}{1999}]{BottcherSilbermann1999}
B{\"o}ttcher, A. and B.~Silbermann (1999).
\newblock {\em Introduction to {{Large Truncated Toeplitz Matrices}}}.
\newblock Universitext. {New York: Springer }.

\bibitem[\protect\citeauthoryear{Box and Tiao}{Box and
  Tiao}{1977}]{Box_Tiao1977}
Box, G. E.~P. and G.~C. Tiao (1977).
\newblock {A canonical analysis of multiple time series}.
\newblock {\em Biometrika\/}~{\em 64\/}(2), 355--365.

\bibitem[\protect\citeauthoryear{Brockwell, Davis, and Fienberg}{Brockwell
  et~al.}{1991}]{BrockwellDavisFienberg1991}
Brockwell, P.~J., R.~A. Davis, and S.~E. Fienberg (1991).
\newblock {\em Time Series: Theory and Methods}.
\newblock {New York: Springer}.

\bibitem[\protect\citeauthoryear{Cai, Han, and Pan}{Cai
  et~al.}{2020}]{CaiHanPan2017}
Cai, T., X.~Han, and G.~Pan (2020).
\newblock {Limiting laws for divergent spiked eigenvalues and largest nonspiked
  eigenvalue of sample covariance matrices}.
\newblock {\em The Annals of Statistics\/}~{\em 48\/}(3), 1255 -- 1280.

\bibitem[\protect\citeauthoryear{Donoho}{Donoho}{2000}]{Donoho2000}
Donoho, D.~L. (2000).
\newblock High-dimensional data analysis: the curses and blessings of
  dimensionality.
\newblock {\em AMS Math Challenges Lecture\/}, 1--32.

\bibitem[\protect\citeauthoryear{Fan, Fan, Han, and Lv}{Fan
  et~al.}{2022}]{FanFanHanLv2022}
Fan, J., Y.~Fan, X.~Han, and J.~Lv (2022).
\newblock Asymptotic theory of eigenvectors for random matrices with diverging
  spikes.
\newblock {\em Journal of the American Statistical Association: Theory and
  Methods\/}~{\em 117\/}(538), 996--1009.

\bibitem[\protect\citeauthoryear{Fan, Liao, and Mincheva}{Fan
  et~al.}{2013}]{FanLiaoMincheva2013}
Fan, J., Y.~Liao, and M.~Mincheva (2013).
\newblock Large {{Covariance Estimation}} by {{Thresholding Principal
  Orthogonal Complements}}.
\newblock {\em Journal of the Royal Statistical Society: Series B (Statistical
  Methodology)\/}~{\em 75\/}(4).

\bibitem[\protect\citeauthoryear{Hyndman and Shahid~Ullah}{Hyndman and
  Shahid~Ullah}{2007}]{hyndman_robust_2007}
Hyndman, R.~J. and M.~Shahid~Ullah (2007).
\newblock Robust forecasting of mortality and fertility rates: {A} functional
  data approach.
\newblock {\em Computational Statistics \& Data Analysis\/}~{\em 51\/}(10),
  4942--4956.

\bibitem[\protect\citeauthoryear{Johnstone}{Johnstone}{2001}]{Johnstone2001}
Johnstone, I.~M. (2001).
\newblock On the distribution of the largest eigenvalue in principal components
  analysis.
\newblock {\em The Annals of statistics\/}~{\em 29}, 295--327.

\bibitem[\protect\citeauthoryear{Lam and Yao}{Lam and Yao}{2012}]{LamYao2012}
Lam, C. and Q.~Yao (2012).
\newblock Factor modeling for high-dimensional time series: Inference for the
  number of factors.
\newblock {\em The Annals of Statistics\/}~{\em 40\/}(2), 694--726.

\bibitem[\protect\citeauthoryear{Lam, Yao, and Bathia}{Lam
  et~al.}{2011}]{LamYaoBathia2011}
Lam, C., Q.~Yao, and N.~Bathia (2011).
\newblock Estimation of latent factors for high-dimensional time series.
\newblock {\em Biometrika\/}~{\em 98\/}(4), 901--918.

\bibitem[\protect\citeauthoryear{Lee and Schnelli}{Lee and
  Schnelli}{2016}]{lee2016tracy}
Lee, J.~O. and K.~Schnelli (2016).
\newblock Tracy--widom distribution for the largest eigenvalue of real sample
  covariance matrices with general population.

\bibitem[\protect\citeauthoryear{Lee and Carter}{Lee and
  Carter}{1992}]{lee_modeling_1992}
Lee, R.~D. and L.~R. Carter (1992).
\newblock Modeling and {Forecasting} {U}. {S}. {Mortality}.
\newblock {\em Journal of the American Statistical Association: Applications
  and Case Studies\/}~{\em 87\/}(419), 659--671.

\bibitem[\protect\citeauthoryear{Li and Lee}{Li and Lee}{2005}]{LiLee2005}
Li, N. and R.~Lee (2005).
\newblock Coherent mortality forecasts for a group of populations: An extension
  of the lee-carter method.
\newblock {\em Demography\/}~{\em 42\/}(3), 575--594.

\bibitem[\protect\citeauthoryear{Li, Lee, and Gerland}{Li
  et~al.}{2013}]{li_extending_2013}
Li, N., R.~Lee, and P.~Gerland (2013).
\newblock Extending the {Lee}-{Carter} method to model the rotation of age
  patterns of mortality-decline for long-term projection.
\newblock {\em Demography\/}~{\em 50\/}(6), 2037--2051.

\bibitem[\protect\citeauthoryear{Li, Pan, and Yao}{Li
  et~al.}{2015}]{LiPanYao2015}
Li, Z., G.~Pan, and J.~Yao (2015).
\newblock On singular value distribution of large-dimensional autocovariance
  matrices.
\newblock {\em Journal of Multivariate Analysis\/}~{\em 137}, 119--140.

\bibitem[\protect\citeauthoryear{Li, Wang, and Yao}{Li
  et~al.}{2017}]{LiWangYao2017}
Li, Z., Q.~Wang, and J.~Yao (2017).
\newblock Identifying the number of factors from singular values of a large
  sample auto-covariance matrix.
\newblock {\em The Annals of Statistics\/}~{\em 45\/}(1), 257--288.

\bibitem[\protect\citeauthoryear{Pena and Box}{Pena and
  Box}{1987}]{Pena_Box1987}
Pena, D. and G.~E.~P. Box (1987).
\newblock Identifying a simplifying structure in time series.
\newblock {\em Journal of the American Statistical Association: Theory and
  Methods\/}~{\em 82\/}(399), 836--843.

\bibitem[\protect\citeauthoryear{Riordan}{Riordan}{2012}]{Riordan2012}
Riordan, J. (2012).
\newblock {\em Introduction to Combinatorial Analysis}.
\newblock {New York: Courier Corporation}.

\bibitem[\protect\citeauthoryear{Silin and Fan}{Silin and
  Fan}{2020}]{silin2020hypothesis}
Silin, I. and J.~Fan (2020).
\newblock Hypothesis testing for eigenspaces of covariance matrix.
\newblock {\em arXiv preprint arXiv:2002.09810\/}.

\bibitem[\protect\citeauthoryear{Tang, Shang, and Yang}{Tang
  et~al.}{2022}]{tang_clustering_2020}
Tang, C., H.~L. Shang, and Y.~Yang (2022).
\newblock {Clustering and forecasting multiple functional time series}.
\newblock {\em The Annals of Applied Statistics\/}~{\em 16\/}(4), 2523 -- 2553.

\bibitem[\protect\citeauthoryear{Tiao and Tsay}{Tiao and
  Tsay}{1989}]{Tiao_Tsay1989}
Tiao, G.~C. and R.~S. Tsay (1989).
\newblock Model specification in multivariate time series.
\newblock {\em Journal of the Royal Statistical Society. Series B
  (Methodological)\/}~{\em 51\/}(2), 157--213.

\bibitem[\protect\citeauthoryear{{University of California, Berkeley (USA) and
  Max Planck Institute for Demographic Research (Germany)}}{{University of
  California, Berkeley (USA) and Max Planck Institute for Demographic Research
  (Germany)}}{2018}]{HMD}
{University of California, Berkeley (USA) and Max Planck Institute for
  Demographic Research (Germany)} (2018).
\newblock \emph{Human Mortality Database}.
\newblock \url{www.mortality.org} [Accessed: 2018.07.10].

\bibitem[\protect\citeauthoryear{Wang and Yao}{Wang and
  Yao}{2015}]{WangYao2015}
Wang, Q. and J.~Yao (2015).
\newblock On singular values distribution of a large auto-covariance matrix in
  the ultra-dimensional regime.
\newblock {\em Random Matrices: Theory Appl.\/}~{\em 04\/}(04), 1550015.

\bibitem[\protect\citeauthoryear{Wang and Yao}{Wang and
  Yao}{2016}]{WangYao2016}
Wang, Q. and J.~Yao (2016).
\newblock Moment approach for singular values distribution of a large
  auto-covariance matrix.
\newblock {\em Ann. Inst. H. Poincar\'e Probab. Statist.\/}~{\em 52\/}(4),
  1641--1666.

\bibitem[\protect\citeauthoryear{Wang and Yao}{Wang and
  Yao}{2017}]{WangYao2017}
Wang, Q. and J.~Yao (2017).
\newblock Extreme eigenvalues of large-dimensional spiked {{Fisher}} matrices
  with application.
\newblock {\em The Annals of Statistics\/}~{\em 45\/}(1), 415--460.

\bibitem[\protect\citeauthoryear{Yao and Yuan}{Yao and
  Yuan}{2021}]{yao2021eigenvalue}
Yao, J. and W.~Yuan (2021).
\newblock On eigenvalue distributions of large auto-covariance matrices.
\newblock {\em arXiv preprint arXiv:2011.09165\/}.

\bibitem[\protect\citeauthoryear{Zhang, Wu, Xu, Han, and Pan}{Zhang
  et~al.}{2022}]{ZHANG2022151}
Zhang, J., P.~Wu, X.~Xu, M.~Han, and B.~Pan (2022).
\newblock Pcs-lstm: A hybrid deep learning model for multi-stations joint
  temperature prediction based on periodicity and closeness.
\newblock {\em Neurocomputing\/}~{\em 501}, 151--161.

\end{thebibliography}
	
	%\input{supplement}
	% \begin{supplement}
				\pagebreak
				\pagenumbering{arabic}
				\setcounter{page}{1}

				\begin{center}
						\title{ \textbf{\Large Supplement to ``Spiked eigenvalues of high-dimensional sample autocovariance matrices: CLT and applications''}  \newline}
						\author{Daning Bi, Xiao Han, Adam Nie, Yanrong Yang}
					\end{center}
				\maketitle
				
				This supplementary material contains technical proofs of results in the original paper `Spiked eigenvalues of high-dimensional sample autocovariance matrices: CLT and applications'. In Appendix \ref{section - proofs}, proofs of Lemma \ref{lemma - high prob event}, Theorem \ref{theorem - 2.1} and Proposition \ref{proposition - solution for theta} are presented. The proof of Theorem \ref{theorem - CLT} is located in Appendix \ref{CLT - proofs} and technical lemmas are collected in Appendixes~\ref{section - clt lemmas} and \ref{section - resolvents}. Lastly, Appendix \ref{3:sec:Appendix_A}~to~\ref{Appendix_app} collects technical proof and additional numerical results for the autocovariance test and its application on the multi-country mortality data.
				
				\begin{appendices}
						%!TEX root = ../main_withsup.tex
\section{Proofs of Lemma~\ref{lemma - high prob event}, Theorem~\ref{theorem - 2.1} and Proposition~\ref{proposition - solution for theta}}
\label{section - proofs}

% We start with the proof of Lemma~\ref{lemma - high prob event} in the main paper.
\begin{proof}[Proof of Lemma~\ref{lemma - high prob event}]
	\eqref{lemma - high prob event - EE} of the lemma can be found in \cite{CaiHanPan2017}, here we give a proof for \eqref{lemma - high prob event XX}.
	Since $X_0\tp X_0$ is symmetric and positive definite, we have
	\begin{align*}
		\norm{X_0\tp X_0} \le \tr(X_0 \tp X_0)
		=
		\frac{1}{T}\sum_{i=1}^K \sum_{t=1}^{T- \tau} x_{it}^2
		\le
		\sum_{i=1}^K\frac{1}{T} \sum_{t=1}^{T} x_{it}^2.
	\end{align*}
	By (\ref{lemma- concentration of xBx (XBX) }) of Lemma~\ref{lemma - concentration of xBx} (whose proof
	% we note that the proof of (\ref{lemma- concentration of xBx (XBX) }) of Lemma~\ref{lemma - concentration of xBx}
	does not depend the current lemma) we have
	\begin{align*}
		\Em\left[ \frac{1}{T}\sum_{t=1}^T x_{it}^2\right] = \sigma_i^2+1,
		\quad
		\Var \left(\frac{1}{T}\sum_{t=1}^{T} x_{it}^2
		\right)
		=O\left(\frac{\sigma_i^4}{ T}\right).
		\numberthis\label{equation - proof 1.1 1}
	\end{align*}
	Taking a union bound we obtain
	\begin{align*}
		\Pm\Big(  \norm{X_0\tp X_0} >   2\sum_{i=1}^K \sigma_i^2 \Big)
		&\le
		\Pm\Big(  \sum_{i=1}^K\frac{1}{T} \sum_{t=1}^{T} x_{it}^2
		>   2\sum_{i=1}^K \sigma_i^2 \Big)
		\le
		\sum_{i=1}^K \Pm\left( \frac{1}{T} \sum_{t=1}^{T} x_{it}^2  >  2 \sigma_i^2 \right)
		\\
		         & =
		\sum_{i=1}^K \Pm\left( \frac{1}{T} \sum_{t=1}^{T} x_{it}^2 - (\sigma_i^2+1)  > \sigma_i^2 -1 \right).
		% = o \left(\frac{1}{\rt T}\right).
	\end{align*}
	Finally by Chebyshev's inequality and \eqref{equation - proof 1.1 1} we have
	\begin{align*}
		\Pm\Big(  \norm{X_0\tp X_0} >   2\sum_{i=1}^K \sigma_i^2 \Big)= O\left( \frac{K \sigma_i^4 }{(\sigma_i^2-1)^2 T} \right)
		= O\left(\frac{K}{T}\right).
	\end{align*}
	% \footnote{Adam: For readers' easy reference, could you provide reasons for the last two equalities above?}
	and the proof is complete.
\end{proof}

\begin{proof}[Proof of Theorem~\ref{theorem - 2.1}]
	We shall write $\Lambda_n(A)$ for the $n$-th largest eigenvalue of a matrix $A$.
	Note that the non-zero eigenvalues of $\hat \Sigma_\tau \hat \Sigma_\tau\tp = Y_\tau Y_0\tp Y_0 Y_\tau \tp  $ coincide with those of the matrix $Y_0\tp Y_0 Y_\tau\tp Y_\tau= (X_0\tp X_0 + E_0\tp E_0)(X_\tau\tp X_\tau + E_\tau\tp E_\tau)$.
	We first show that  the eigenvalue $\Lambda_n(\hat \Sigma_\tau \hat \Sigma_\tau\tp)$ is close to  $\Lambda_n(X_0\tp X_0 X_\tau\tp X_\tau)$.
	By Weyl's inequality (Lemma B.1 of \cite{FanLiaoMincheva2013}) we have
	\begin{align*}
		 & \left|\Lambda_n(\hat \Sigma_\tau \hat \Sigma_\tau\tp )
		-
		\Lambda_n(X_0\tp X_0 X_\tau\tp X_\tau) \right|
		=
		\left|\Lambda_n(Y_0\tp Y_0 Y_\tau\tp Y_\tau)
		-
		\Lambda_n(X_0\tp X_0 X_\tau\tp X_\tau) \right|
		\\&\quad\qquad\le
		\norm{X_0\tp X_0 E_\tau\tp E_\tau  + E_0\tp E_0 X_\tau\tp X_\tau + E_0\tp E_0 E_\tau\tp E_\tau}
		= O_p(K \sigma_1^2		)
		,
	\end{align*}
	where the last equality follows from \eqref{equation - high prob event Op}. Dividing by $\mu_{n,\tau} = \sigma_n^4 \gamma_n(\tau)^2$ we have
	\begin{align*}
		\numberthis\label{equation - proof of theorem 2.1 -1}
		\frac{\Lambda_n(\hat \Sigma_\tau \hat \Sigma_\tau\tp )
			-
			\Lambda_n(X_\tau X_0\tp X_0 X_\tau\tp )}{\sigma_n^4 \gamma_n(\tau)^2}
		= O_p\left( \frac{K\sigma_1^2}{\sigma_n^4\gamma_n(\tau)^2}\right).
	\end{align*}
	Next we compute $\Lambda_n(X_\tau X_0\tp X_0 X_\tau\tp )$ in more details.
	It is shown in Lemma~\ref{lemma - concentration of xBx} that
	\begin{align*}
		(X_0 X_\tau\tp)_{ij} = \Em[(X_0 X_\tau\tp)_{ij}]
		+ O_{L^2}({\sigma_i \sigma_j}{ T^{-1/2}}),
		\numberthis\label{equation - place1}
	\end{align*}
	where from equation \eqref{equation - expectation of xoxtau 2} we know $\Em[(X_0 X_\tau\tp)_{ij}] = 1_{i=j}\sigma_i^2 \gamma_i(\tau)$.
	Therefore for any $i\ne j$, the off-diagonal elements of $X_\tau X_0\tp X_0 X_\tau\tp$  can be written into
	\begin{align*}
		(X_\tau & X_0\tp X_0 X_\tau\tp)_{ij}
		=
		\sum_{k=1}^K (X_0 X_\tau\tp)_{ki} (X_0 X_\tau\tp)_{kj}
		\\
		        & = (X_0 X_\tau\tp)_{ii} (X_0 X_\tau\tp)_{ij}
		+(X_0 X_\tau\tp)_{ji} (X_0 X_\tau\tp)_{jj}+
		\sum_{k\ne i,j} (X_0 X_\tau\tp)_{ki} (X_0 X_\tau\tp)_{kj}
		\\
		& = 
		O_{L^1}\left(\frac{\sigma_i^3 	\sigma_j\gamma_i(\tau) + \sigma_i\sigma_j^3 \gamma_j(\tau)}{\rt T}
		+ \frac{\sigma_i^3 \sigma_j + \sigma_i\sigma_j^3 + \sigma_i \sigma_j\norm{\sigmab}_{\ell_2}^2}{T}
		\right)
		  \\
		& = O_{L^1}\left(\frac{\sigma_i \sigma_j(\sigma_i^2 \gamma_i(\tau)+ \sigma_j^2 \gamma_j(\tau))}{\rt T}\right)
		\numberthis\label{equation - place1.2}
	\end{align*}
	where  the last line follows from Assumptions \ref{assumptions - tau fixed} and \ref{assumptions - tau div}. This gives
	\begin{align*}
		\mu_{i, \tau}^{-1/2} \mu_{j, \tau}^{-1/2}(X_\tau & X_0\tp X_0 X_\tau\tp)_{ij}
		=
		O_{L^1}\left(\frac{\sigma_i}{\sigma_j\gamma_j(\tau)\rt T}+
		\frac{\sigma_j }{\sigma_i \gamma_i(\tau)\rt T}
		\right)
	\end{align*}
	Similarly, the diagonal elements of  $X_\tau X_0\tp X_0 X_\tau\tp$ satisfy
	\begin{align*}
		(X_\tau X_0\tp  X_0 X_\tau\tp)_{ii}
		& = 
		(X_0 X_\tau\tp )_{ii}^2 + \sum_{k\ne i} (X_0 X_\tau\tp)_{ki}^2
		\\
		 & =\left(\sigma_i^2\gamma_i(\tau) + O_{L^2}(\sigma_i^2 T^{-1/2}) \right)^2 + O_{L^1}( \sigma_i^2 \norm{\sigmab}_{\ell_2}^2 T^{-1})
		   \\
		& =  \mu_{i, \tau} + O_{L_1} \left(\frac{\sigma_i^4 \gamma_i(\tau)}{\rt T}
		+ \frac{\sigma_i^4 + \sigma_i^2 \norm{\sigmab}_{\ell_2}^2}{T}
		\right).\numberthis\label{equation - place1.3}
	\end{align*}
	Using Assumptions \ref{assumptions - tau fixed} and \ref{assumptions - tau div} again we have
	\begin{align*}
		\mu_{i,\tau}^{-1}(X_\tau X_0\tp  X_0 X_\tau\tp)_{ii}
		= 1 + O_{L_1}\left(\frac{1}{\gamma_i(\tau)\rt T}\right).
	\end{align*}
	Using \eqref{equation - place1.2}, \eqref{equation - place1.3} and taking a union bound over $i,j$ we obtain
	\begin{align*}
		\mathrm {diag}(\mu_{i, \tau}^{-1/2}) X_\tau X_0\tp X_0 X_\tau\tp \mathrm {diag}(\mu_{i, \tau}^{-1/2}) =I_K
		+ O_{p, \norm{\cdot}_\infty} \left(\alpha_T\right),
		\numberthis\label{equation - proof of 2.1, infty estimate}
	\end{align*}
	where
	\begin{align*}
		\alpha_T:= \frac{K^2}{\rt T}\sup_{ij} \frac{\sigma_j}{\sigma_i \gamma_i(\tau)}.
	\end{align*}
	Let $\omega_1,\ldots, \omega_K$ be the eigenvalues of $X_\tau X_0\tp X_0 X_\tau\tp$ arranged in decreasing order. Let $\omega$ be one of these eigenvalues.
	Define the function
	\begin{align*}
		G(\omega):= \mathrm {diag}(\mu_{i, \tau}^{-1/2})\left(X_\tau X_0\tp X_0 X_\tau\tp  - \omega I_K\right)
		\mathrm {diag}(\mu_{i, \tau}^{-1/2}),
	\end{align*}
	then clearly we have $0 = |X_\tau X_0\tp X_0 X_\tau\tp - \omega I_K|
		= |G(\omega)|$.
	From \eqref{equation - proof of 2.1, infty estimate} we get
	\begin{align*}
		0  =  \left|G(\omega)\right|
		 & =
		\left|I_K + O_{p, \norm{\cdot}_\infty} \left(\alpha_T\right)
		- \omega \mathrm {diag}\big(\mu_{i, \tau}^{-1}\big)  \right|
		\\
		 & =  \left| I_K-  \mathrm {diag}
		(\omega \mu_i^{-1})
		+ O_{p, \norm{\cdot}_\infty} \left(\alpha_T\right) \right|,
	\end{align*}
	and using Leibniz's formula analogous to the derivation of \eqref{equation - diagonal is dominant in leibniz formula} we obtain
	\begin{align*}
		\numberthis\label{equation - detQ is small proof of 2.1}
		0= |G(\omega)| =
		\prod_{i=1}^K G(\omega)_{ii} + O_p\left(\alpha_T^2\right).
	\end{align*}

	Since $\prod_i G(\omega)_{ii} = o_p(1)$, there is at least one $i\in\{1, \ldots, K\}$ such that $G(\omega)_{ii} = o_p(1)$. Now we claim that in fact there can be only one such $i$. Indeed, suppose for a contradiction that for some $i<j$ we have $G(w)_{ii} =o_p(1)$ and $G(w)_{jj} = o_p(1)$. By Assumption \ref{assumptions - tau fixed} or \ref{assumptions - tau div} we know that
	\begin{align*}
		G(\omega)_{ii}- G(\omega)_{jj}
		= \omega (\mu_i^{-1} - \mu_j^{-1})
		\ge {\omega}{\mu^{-1}_i} \epsilon
		\numberthis\label{equation - Gii-Gjj in first Leibniz formula}
	\end{align*}
	for some $\epsilon>0$, which implies $\omega \mu_i^{-1} = o_p(1)$. However, this is clearly impossible since $G(\omega)_{ii}$ is assumed to be $o_p(1)$. 
	
	Therefore, for \eqref{equation - detQ is small proof of 2.1} to hold,
	there must exist
	some $i\in\{1,\ldots, K\}$ such that
	\begin{align*}
		0= G(\omega)_{ii} + O_p(\alpha_T^2),
	\end{align*}
	which gives one solution to $|G(\omega)|=0$. On the other hand, note that $|G(w)|=0$ has $K$ solutions in total. By the above arguments, it should be clear that each solution then corresponds to a particular $i\in \{1, \ldots, K\}$, i.e.
	 the $K$ solutions to $|G(\omega)|=0$ satisfy the system of equations
	\begin{align*}
		0= G(\omega)_{ii} + O_p( \alpha_T^2),
		\quad
		i=1,\ldots, K.
	\end{align*}
	% \footnote{Adam: How to get for all $i=1, 2, \ldots, K$ satisfy this equation, from the existence of some $i$ above? }
	Using \eqref{equation - place1.3}, we see that each $G(\omega)_{ii}$ satisfies
	\begin{align*}
		G(\omega)_{ii} = \frac{(X_\tau X_0\tp X_0 X_\tau\tp)_{ii}- \omega}{\mu_{i, \tau}} =  1-  \frac{\omega}{\mu_{i, \tau}}
		+O_{p}\left(\frac{1}{\gamma_i(\tau)\rt T}\right),
	\end{align*}
	which implies that the $K$ solutions to $|G(\omega)|=0$ satisfy the system of equations
	\begin{align*}
		\frac{\omega}{\mu_{i, \tau} } - 1 = O_{p}\left(\frac{1}{\gamma_i(\tau)\rt T}\right), \quad i=1,\ldots, K.
		\numberthis\label{equation - place 1.5}
	\end{align*}
	Note that by definition there are $K$ possible choices of $\omega$, which are the order eigenvalues of $X_\tau X_0\tp X_0 X_\tau\tp$. Since $\{\mu_{i, \tau}\}$ are ordered under Assumption \ref{assumptions - tau fixed} or asymptotically ordered under Assumption \ref{assumptions - tau div}, we can easily conclude that
	\begin{align*}
		\frac{\Lambda_i(X_\tau X_0\tp X_0 X_\tau\tp ) }{\mu_{i, \tau}}-1  = O_{p} \left(\frac{1 }{\gamma_i(\tau) \rt T }\right).
	\end{align*}
	Combining this result with \eqref{equation - proof of theorem 2.1 -1} we get
	\begin{align*}
		\frac{\Lambda_i(\hat \Sigma_\tau \hat \Sigma_\tau\tp )
		}{\mu_{i, \tau}}
		-1
		= O_p\left(\frac{1 }{\gamma_i(\tau)\rt T } +  \frac{K \sigma_1^2}{\sigma_i^4 \gamma_i(\tau)^2}\right)
	\end{align*}
	which completes the proof.
\end{proof}

\begin{proof}[proof of Proposition~\ref{proposition - solution for theta}]
		We first consider the invertibility of the matrix $Q(a)$ defined in \eqref{equation - definition of Q}. Recall the matrix $R(a)$ from \eqref{equation - definition of R} and the event $\cB_2$ from \eqref{equation - high prob event}.
		Since $a\asymp \sigma_n^4 \gamma_n(\tau)^2\to\infty$ and $\norm{E_\tau\tp E_\tau E_0\tp E_0 1_{\cB_2}}$ is bounded by definition of $\cB_2$, the matrix $I - a^{-1}E_\tau\tp E_\tau E_0\tp E_0$ is invertible under $\cB_2$ and
		we have $\norm{R(a)}1_{\cB_2}=O(1)$.
		Therefore we have
		$\norm{a^{-1} X_0 R_{a} E_\tau \tp E_\tau  X_0\tp}1_{\cB_2} = O(\sigma_n^{-2} \gamma_n(\tau)^{-2}) = o(1)$ and thus $Q(a)$ is invertible under $\cB_2$ for $T$ large enough.

		It will be shown in  Lemma~\ref{lemma - expectation of ABQ}  that
		\begin{align*}
			\Em[A(a)_{nn}1_{\cB_0}] = \frac{\sigma_n^2 \gamma_n(\tau)}{\rt a} + o(1),
			\quad
			\Em[B(a)_{nn}1_{\cB_0}] = o(1),
			\quad
			\Em[Q(a)_{nn}^{-1}1_{\cB_2}] = 1+ o(1).
		\end{align*}
		From \eqref{equation - interval for a around true theta} we have $a^{-1/2}{\sigma_n^2 \gamma_n(\tau)} \asymp O(1)$, using which we can obtain
		\begin{align*}
			g(a) =1- \Em[A(a)1_{\cB_0}]_{nn}^2 \Em[Q(a)^{-1}_{nn}1_{\cB_2}] + o(1) =1-  \frac{\sigma_n^4 \gamma_n(\tau)^2}{a} + o(1).
		\end{align*}
		Substituting the endpoints of the interval \eqref{equation - interval for a around true theta} into the function $g$, we have
		\begin{align*}
			g((1 \pm \epsilon) \sigma_n^4 \gamma_n(\tau)^2)
			= 1-  \frac{1}{1 \pm \epsilon} + o(1)=
			\frac{\mp \epsilon}{1 \pm \epsilon} + o(1).
		\end{align*}
		For $T$ large enough,
		the signs of $g$ differ at the two endpoints of the interval \eqref{equation - interval for a around true theta} and therefore $g$ has a root inside the interval. It is not difficult to observe that $g$ is a monotone function in $a$ for $T$ large enough which implies the root is unique.
\end{proof}

\section{Proof of Theorem~\ref{theorem - CLT}}\label{CLT - proofs}
We begin with the statements and proofs of the four propositions described in Section~\ref{section - main clt} of the paper. The proof of our main result Theorem~\ref{theorem - CLT} is given at the end of this appendix.

We first give an expression for $\delta:= \delta_{n, \tau}$. Recall the matrix $M$ from \eqref{equation - definition of M}
\begin{align*}
	% \numberthis\label{equation - definition of M}
	M  := I_K - \frac{1}{\theta}X_\tau  E_0 \tp E_0  R X_\tau \tp  - \frac{1}{\theta} X_\tau  R \tp X_0 \tp Q^{-1} X_0  R X_\tau \tp.
\end{align*}
\begin{proposition}\label{proposition - detequation }
	Suppose Assumption \ref{assumptions} and either Assumption \ref{assumptions - tau fixed} or \ref{assumptions - tau div} hold. Then the ratio $\delta$ is the solution to the following equation
	\begin{align*}
		\det \left(   M   + \frac{\delta}{\theta} X_\tau  X_0\tp  X_0   X_\tau\tp   + \delta o_{p,\norm{\cdot}}\big(1\big) \right) =0.
		\numberthis\label{equation - determinant equation final}
	\end{align*}
	% where $\gamma_*(\tau):= \sup_i \gamma_i(\tau)$.

	\begin{proof}
		Suppose $\lambda$ is an eigenvalue of $\widehat\Sigma_\tau  \widehat\Sigma_\tau \tp$, then $\rt{\lambda}$ is a singular value of the matrix $\widehat \Sigma_\tau $, or equivalently an eigenvalue of the $(2p+2K)\times (2p+2K)$ matrix
		\begin{align*}
			\begin{pmatrix}
				0 & \widehat \Sigma_\tau \\ \widehat \Sigma_\tau \tp  &0
			\end{pmatrix}
			=
			\begin{pmatrix}
				0               & 0               & X_\tau X_0 \tp & X_\tau  E_0\tp
				\\
				0               & 0               & E_\tau X_0 \tp & E_\tau  E_0\tp
				\\
				X_0  X_\tau \tp & X_0  E_\tau \tp & 0              & 0
				\\
				E_0  X_\tau \tp & E_0  E_\tau \tp & 0              & 0
			\end{pmatrix}.
		\end{align*}
		By definition the eigenvalue $\lambda$ satisfies
		\begin{align*}
			0=\left|
			\begin{pmatrix}
				\rt \lambda I_{K+p} & \0 \\  \0 & \rt \lambda I_{K+p} \tp
			\end{pmatrix}
			-
			\begin{pmatrix}
				0 & \widehat \Sigma_\tau \\ \widehat \Sigma_\tau \tp  &0
			\end{pmatrix}
			\right|
			=\left|
			\begin{pmatrix}
				\rt{\lambda}I_K & 0                 & -X_\tau X_0 \tp  & -X_\tau E_0 \tp
				\\
				0               & \rt{\lambda}I_{p} & -E_\tau X_0 \tp  & -E_\tau E_0 \tp
				\\
				-X_0 X_\tau \tp & -X_0 E_\tau \tp   & \rt{\lambda} I_K & 0
				\\
				-E_0 X_\tau \tp & -E_0 E_\tau \tp   & 0                & \rt{\lambda}I_{p}
			\end{pmatrix}
			\right|,
		\end{align*}
		which, after interchanging the columns and rows, becomes
		\begin{align*}
			0 & =
			\left|
			\begin{pmatrix}
				\rt{\lambda}I_K & -X_\tau X_0 \tp  & 0                 & -X_\tau E_0 \tp
				\\
				-X_0 X_\tau \tp & \rt{\lambda} I_K & -X_0 E_\tau \tp   & 0
				\\
				0               & -E_\tau X_0 \tp  & \rt{\lambda}I_{p} & -E_\tau E_0 \tp
				\\
				-E_0 X_\tau \tp & 0                & -E_0 E_\tau \tp   & \rt{\lambda}I_{p}
			\end{pmatrix}
			\right|.
			\numberthis\label{equation - proof of first prop, last placeholder}
		\end{align*}
		From Theorem~\ref{theorem - 2.1} we know that the spiked eigenvalue
		$\lambda\to\infty$ as $T\to\infty$.  From Lemma~\ref{lemma - high prob event} we recall that
		the spectral norm of $E_\tau E_0\tp$ is bounded with probability tending to 1 as $T\to\infty$.
		Therefore
		the bottom right sub-matrix
		$
			\left(\begin{smallmatrix}
					\rt \lambda I_p & -E_\tau E_0\tp \\
					-E_0E_\tau\tp & \rt \lambda I_p
				\end{smallmatrix}\right)
		$
		is invertible with probability tending to $1$. Using the matrix identity
		\begin{align*}
			\begin{pmatrix}
				A & B \\C&D
			\end{pmatrix}^{-1}
			=
			\begin{pmatrix}
				(A-B D^{-1}C)^{-1}          & - A^{-1}B(D- CA^{-1}B)^{-1}
				\\
				-D^{-1} C (A-BD^{-1}C)^{-1} & (D-CA^{-1}B)^{-1})
			\end{pmatrix}
		\end{align*}
		we can compute the inverse of the submatrix $
			\left(\begin{smallmatrix}
					\rt \lambda I_p & -E_\tau E_0\tp \\
					-E_0E_\tau\tp & \rt \lambda I_p
				\end{smallmatrix}\right)
		$ and get
		\begin{align*}
			 & \begin{pmatrix}
				\rt{\lambda}I_{p} & -E_\tau E_0 \tp
				\\
				-E E_\tau \tp     & \rt{\lambda}I_{p}
			\end{pmatrix}^{-1}
			% \numberthis	\label{equation - resolvent invertibility}
			\\ &\qquad=
			\begin{pmatrix}
				\left(\rt{\lambda}I_{p}- 	\frac{1}{\rt{\lambda}}E_\tau E_0 \tp  	E_0 E_\tau \tp \right)^{-1}
				 &
				\frac{1}{\rt{\lambda}}
				E_\tau E_0 \tp
				\left(\rt{\lambda}I_{p}- 	\frac{1}{\rt{\lambda}}E_0 E_\tau \tp  	E_\tau E_0 \tp \right)^{-1}
				\\
				\frac{1}{\rt{\lambda}} E_0 E_\tau \tp
				\left(\rt{\lambda}I_{p}- 	\frac{1}{\rt{\lambda}}E_\tau E_0 \tp  	E_0 E_\tau \tp \right)^{-1}
				 &
				\left(\rt{\lambda}I_{p}- 	\frac{1}{\rt{\lambda}}E_0 E_\tau \tp  	E_\tau E_0 \tp \right)^{-1}
			\end{pmatrix}
			\\
			 & \qquad=
			\begin{pmatrix}
				\rt{\lambda}\left({\lambda}I_{p}- E_\tau E_0 \tp  	E_0 E_\tau \tp \right)^{-1}
				 &
				E_\tau E_0 \tp \left({\lambda}I_{p}- 	E_0 E_\tau \tp  	E_\tau E_0 \tp \right)^{-1}
				\\
				E_0 E_\tau \tp
				\left({\lambda}I_{p}- 	E_\tau E_0 \tp  	E_0 E_\tau \tp \right)^{-1}
				 &
				\rt{\lambda}	\left({\lambda}I_{p}- 	E_0 E_\tau \tp  	E_\tau E_0 \tp \right)^{-1}
			\end{pmatrix}
			.
		\end{align*}
		Observe that
		\begin{align*}
			\begin{pmatrix}
				0 & \alpha \\ \beta&0
			\end{pmatrix}
			\begin{pmatrix}
				A & B \\ C &D
			\end{pmatrix}
			\begin{pmatrix}
				0 & \beta\tp \\ \alpha\tp &0
			\end{pmatrix}
			=
			\begin{pmatrix}
				\alpha D \alpha\tp & \alpha C \beta\tp \\ \beta B \alpha\tp  & \beta A \beta\tp
			\end{pmatrix}.
		\end{align*}
		Substituting the above computations back into \eqref{equation - proof of first prop, last placeholder} we have
		\scriptsize
		\begin{align*}
			0 & =
			\left|
			\begin{pmatrix}
				\rt{\lambda}I_K & -X_\tau X_0 \tp
				\\
				-X_0 X_\tau \tp & \rt{\lambda} I_K
			\end{pmatrix}
			-
			\begin{pmatrix}
				0               & -X_\tau E_0 \tp
				\\
				-X_0 E_\tau \tp & 0
			\end{pmatrix}
			\begin{pmatrix}
				\rt{\lambda}I_{p} & -E_\tau E_0 \tp
				\\
				-E_0 E_\tau \tp   & \rt{\lambda}I_{p}
			\end{pmatrix}^{-1}
			\begin{pmatrix}
				0               & -E_\tau X_0 \tp
				\\
				-E_0 X_\tau \tp & 0
			\end{pmatrix}
			\right|
			\\
			  & =
			\left|
			\begin{pmatrix}
				\rt{\lambda}I_K & -X_\tau X_0 \tp
				\\
				-X_0 X_\tau \tp & \rt{\lambda} I_K
			\end{pmatrix}
			-
			\begin{pmatrix}
				X_\tau E_0 \tp \rt{\lambda}	\left({\lambda}I_{p}- 	E_0 E_\tau \tp  	E_\tau E_0 \tp \right)^{-1}	E_0  X_\tau \tp
				 &
				X_\tau E_0 \tp E_0 E_\tau \tp  \left({\lambda}I_{p}- 	E_\tau E_0 \tp  	E_0 E_\tau \tp \right)^{-1}	E_\tau X_0 \tp
				\\
				X_0 E_\tau \tp E_\tau E_0 \tp \left({\lambda}I_{p}- 	E_0 E_\tau \tp  	E_\tau E_0 \tp \right)^{-1}	E_0 X_\tau \tp
				 &
				X_0 E_\tau \tp \rt{\lambda}\left({\lambda}I_{p}- E_\tau E_0 \tp  	E_0 E_\tau \tp \right)^{-1}	E_\tau X_0 \tp
			\end{pmatrix}
			\right|
			\\
			  & =
			\left|
			\begin{pmatrix}
				\rt{\lambda} (I_K	- X_\tau E_0 \tp
				({\lambda}I_{p}- 	E_0 E_\tau \tp  	E_\tau E_0 \tp)^{-1}	E_0 X_\tau \tp )
				 &
				-X_\tau
				(I_{T- \tau}+E_0 \tp E_0 E_\tau \tp  ({\lambda}I_{p}- 	E_\tau E_0 \tp  	E_0 E_\tau \tp )^{-1}	E_\tau
				)
				X_0 \tp
				\\
				-X_0
				(I_{T- \tau}+E_\tau \tp E_\tau E_0 \tp ({\lambda}I_{p}- 	E_0 E_\tau \tp  	E_\tau E_0 \tp )^{-1}	E_0
				)X_\tau \tp
				 &
				\rt{\lambda}(I_K - X_0 E_\tau \tp ({\lambda}I_{p}- E_\tau E_0 \tp  	E_0 E_\tau \tp )^{-1}	E_\tau X_0 \tp )
			\end{pmatrix}
			\right|,
			\\
			  & =
			\left|
			\begin{pmatrix}
				\rt{\lambda} (I_K	- X_\tau
				E_0 \tp E_0 ({\lambda}I_{T- \tau}- 	E_\tau \tp  	E_\tau E_0 \tp E_0 )^{-1}	X_\tau \tp )
				 &
				-X_\tau
				(I_{T- \tau}+E_0 \tp E_0 E_\tau \tp  E_\tau ({\lambda}I_{T- \tau}- 	E_0 \tp E_0 E_\tau \tp E_\tau )^{-1}
				)
				X_0 \tp
				\\
				-X_0
				(I_{T- \tau}+({\lambda}I_{T- \tau}- 	E_\tau \tp E_\tau E_0 \tp E_0 )^{-1}E_\tau \tp  	E_\tau E_0 \tp 	E_0
				)X_\tau \tp
				 &
				\rt{\lambda}(I_K - X_0 ({\lambda}I_{T- \tau}- E_\tau \tp E_\tau E_0 \tp  	E_0 )^{-1}	E_\tau \tp E_\tau X_0 \tp )
			\end{pmatrix}
			\right|,
		\end{align*}
		\normalsize
		where the last equality holds by \eqref{equation - bullet pencil identity}.
		Recalling the notations we introduced in Section~\ref{section - setting} and identity \eqref{equation - first resolvent identity}, we obtain
		\begin{align*}
			\numberthis\label{equation - det equation first form}
			0 & =  \left|
			\begin{pmatrix}
				\rt{\lambda}\ \overline Q_{\lambda}
				 &
				-X_\tau  R_{\lambda} \tp   X_0 \tp
				\\
				-X_0   R_{\lambda}  X_\tau \tp
				 &
				\rt{\lambda}  Q_{\lambda}
			\end{pmatrix}
			\right|
			=
			\left| \overline Q_\lambda
			- \lambda^{-1} X_\tau  R_{\lambda} \tp   X_0 \tp
			Q_\lambda^{-1}
			X_0   R_{\lambda}  X_\tau \tp
			\right|.
		\end{align*}

		Next, we center $\lambda$ around the quantity $\theta$ defined in \eqref{equation - definition of thetak}. Since $\lambda$ and $\theta$ diverge, they are outside of the spectrum of $E_\tau \tp E_\tau E_0 \tp E_0 $ with probability tending to 1.
		Then
		\begin{align*}
			\frac{1}{\lambda}R_\lambda & - \frac{1}{\theta}R
			=  (\lambda I_{T- \tau} - E_\tau \tp E_\tau  E_0 \tp E_0 )^{-1} - (\theta I_{T- \tau} - E_\tau \tp E_\tau  E_0 \tp E_0 )^{-1}
			% \\
			% & =  \frac{\theta - \lambda}{ \theta \lambda} R_{\lambda} R
			\\
			                           & = (\theta - \lambda) (\lambda I_{T- \tau} - E_\tau \tp E_\tau  E_0 \tp E_0 )^{-1} (\theta I_{T- \tau} - E_\tau \tp E_\tau  E_0 \tp E_0 )^{-1}
			=   -  \frac{\delta}{\lambda} R_\lambda R.
		\end{align*}
		Substituting back into itself, we obtain
		\begin{align*}
			\frac{1}{\lambda}R_\lambda
			% & = - \frac{\delta}{\rt{T}}
			% (R+ O_{a.s.}(\lambda^{-1}))
			% R
			% \\&
			 & =  \frac{1}{\theta}R - \delta \left[\frac{1}{\theta}R -  \frac{\delta}{\lambda} R_\lambda R  \right]R
			=  \frac{1}{\theta}R -\frac{ \delta }{\theta} R^2 +   \frac{ \delta^2}{\lambda}R_{\lambda}R^2.
			\numberthis\label{equation - centering R/lambda}
		\end{align*}
		Using the bounds in \eqref{equation - high prob event Op} and \eqref{equation - norm of R 1cB} we have
		\begin{align*}
			\numberthis\label{equation - centering resolvent R-I is small}
			R - I_{T- \tau} = \frac{1}{\theta} E_\tau\tp E_\tau E_0\tp E_0 R = O_{p,\norm{\cdot}}(\theta^{-1}),
			\quad
			R^2 =I_{T- \tau}+ O_{p,\norm{\cdot}}(\theta^{-1}),
		\end{align*}
		where the second equation follows from expanding $(R - I)^2$.
		By Theorem~\ref{theorem - 2.1} we have $\delta = o_p(1)$. Substituting back into  \eqref{equation - centering R/lambda} we get
		\begin{align*}
			\frac{1}{\lambda}R_{\lambda}
			 & = \frac{1}{\theta}R-  \frac{\delta}{\theta} R^2 +  \delta o_{p,\norm{\cdot}}(\lambda^{-1})
			= \frac{1}{\theta}R- \frac{\delta }{\theta} I_{T- \tau} +  \delta o_{p,\norm{\cdot}}(\lambda^{-1}).
			\numberthis\label{equation - centering resolvent R/lambda}
		\end{align*}
		Using this we can get
		\begin{align*}
			\overline Q_{\lambda} - \overline Q
			 & =
			(I_K - X_\tau  E_0 \tp E_0  \lambda^{-1} R_\lambda X_\tau \tp ) - (I_K - X_\tau  E_0 \tp E_0  \theta^{-1}R X_\tau \tp )
			\\
			 & = X_\tau  E_0 \tp E_0   ( \theta^{-1}R - \lambda^{-1} R_\lambda   )X_\tau  \tp
			= X_\tau  E_0 \tp E_0   \left[ \frac{\delta}{\theta}I_{T - \tau}
			+  \delta o_{p,\norm{\cdot}}(\lambda^{-1})\right] X_\tau  \tp.
		\end{align*}
		From \eqref{equation - high prob event Op}
		we recall that $\norm{X_\tau}^2 = O_p(K\sigma_1^2)$. Using $\delta = o_p(1)$ again we get
		\begin{align*}
			% \numberthis\label{equation - centering resolvent Qbar}
			\overline Q_{\lambda} & = \overline Q +   \frac{\delta}{\theta}X_\tau  E_0 \tp E_0  X_\tau \tp
			+\delta o_{p,\norm{\cdot}}(K\sigma_1^2\lambda^{-1})
			= \overline Q +  \delta o_{p,\norm{\cdot}}(1)
			,
		\end{align*}
		and similarly $Q_{\lambda}
			=  Q +  \delta o_{p,\norm{\cdot}}(1).$
		% \begin{align*}
		% % \numberthis\label{equation - centering resolvent Q}
		% Q_{\lambda}
		% =  Q +  \delta o_{p,\norm{\cdot}}(\lambda^{-1/2}).
		% \end{align*}
		Finally, since $\norm{Q_\lambda^{-1}} = O_{p}(1)$, we have
		\begin{align*}
			 & Q_{\lambda}^{-1} - Q^{-1}
			=
			Q_{\lambda}^{-1}(Q - Q_{\lambda})  Q^{-1}
			% =
			% \delta Q_{\lambda}^{-1}
			% o_{p,\norm{\cdot}}(\sigma_1^2\lambda^{-1})
			% Q^{-1}
			=o_{p,\norm{\cdot}}(1).
			\numberthis\label{equation - centering resolvent Q inv}
		\end{align*}

		Next we consider the matrix $X_0 R_\lambda X_\tau\tp$ appearing in \eqref{equation - det equation first form}.
		From \eqref{equation - centering R/lambda} we have
		% \begin{subequations}
		\begin{align*}
			\numberthis\label{equation - centering fo XRX}
			\frac{\rt \theta}{\lambda}X_0  R_\lambda   X_\tau \tp
			=
			\frac{1}{\rt \theta} X_0 R X_\tau \tp-\frac{ \delta }{\rt \theta} X_0 R^2X_\tau \tp
			+   \frac{ \delta^2\rt \theta}{\lambda} X_0 R_{\lambda} R^2X_\tau \tp.
		\end{align*}
		% Using (\ref{lemma- concentration of xBx (XRX) }) of Lemma~\ref{lemma - concentration of xBx} and Lemma~\ref{lemma - expectation of ABQ} we have
		% \begin{align*}
		% 	\numberthis\label{equation - centering fo XRX entries}
		% 	\frac{1}{\rt \theta} &\left(X_0 R X_\tau \tp  \right)_{ij}
		% 	= 1_{i=j}  \frac{\sigma_i^2\gamma_i(\tau)}{\rt \theta} + 1_{i=j} o(1) + \frac{\sigma_i \sigma_j}{\rt \theta} O_{L^2}(  T^{-1/2}).
		% 	\end{align*}
		% 	Using \eqref{equation - centering fo XRX entries} we can easily get $\norm{\frac{1}{\rt \theta} X_0 R X_\tau\tp} \le \norm{\frac{1}{\rt \theta} X_0 R X_\tau\tp}_F = O_{p}(K^{1/2}). $
		% Repeating this computation for the remaining terms in \eqref{equation - centering fo XRX}, we have
		For the second term on the right hand side of
		\eqref{equation - centering fo XRX}, using \eqref{equation - centering resolvent R-I is small} and \eqref{equation - high prob event Op} we have
		\begin{align*}
			\frac{ \delta }{\rt \theta} X_0 R^2X_\tau \tp
			 & =
			\frac{ \delta }{\rt \theta} X_0 X_\tau \tp +
			\frac{ \delta }{\rt \theta} X_0 (R^2-I)X_\tau\tp
			\\
			 & = \frac{ \delta }{\rt \theta} X_0 X_\tau \tp
			+
			\delta O_{p, \norm{\cdot}}\left(\frac{K \sigma_1^2}{\theta^{3/2}}\right)
			=\frac{ \delta }{\rt \theta} X_0 X_\tau \tp +\delta o_{p, \norm{\cdot}}(1).
		\end{align*}
		Similarly the last term in \eqref{equation - centering fo XRX} satisfies
		$\frac{ \delta^2\rt \theta}{\lambda} X_0 R_{\lambda} R^2X_\tau \tp
			=\delta o_{p,\norm{\cdot}}(1)$. Therefore
		\begin{align*}
			\frac{\rt \theta}{\lambda}X_0  R_\lambda   X_\tau \tp
			=
			\frac{1}{\rt \theta} X_0 R X_\tau \tp
			-\frac{ \delta }{\rt \theta} X_0 X_\tau \tp
			+   \delta o_{p, \norm{\cdot}}(1).
			\numberthis\label{equation - centering fo XRX order of XRX 2}
		\end{align*}

		To deal with the second term appearing in the determinant in \eqref{equation - det equation first form}, we first make the following computations.
		Using \eqref{equation - centering fo XRX}-\eqref{equation - centering fo XRX order of XRX 2} as well as  \eqref{equation - centering resolvent Q inv} we have
		\begin{align*}
			\frac{\theta}{\lambda^2} & X_\tau  R_\lambda\tp   X_0 \tp
			Q_\lambda^{-1} X_0  R_\lambda   X_\tau \tp
			= \Big(
			\frac{1}{\rt \theta} X_\tau R\tp  X_0\tp
			- \frac{\delta}{\rt \theta} X_\tau   X_0\tp
			+  \delta o_{p,\norm{\cdot}}(1)
			\Big)
			\\
			                         & \qquad\times
			\Big(
			Q^{-1}
			+ \delta o_{p,\norm{\cdot}}(1)
			\Big)
			\Big(
			\frac{1}{\rt \theta} X_0 R X_\tau\tp
			-  \frac{\delta}{\rt \theta} X_0    X_\tau\tp
			+  \delta o_{p,\norm{\cdot}}(1)
			\Big)
			\\
			                         & = \Big(
			\frac{1}{\rt \theta} X_\tau R\tp  X_0\tp
			- \frac{\delta}{\rt \theta} X_\tau   X_0\tp
			\Big)
			Q^{-1}
			\Big(
			\frac{1}{\rt \theta} X_0 R X_\tau\tp
			-  \frac{\delta}{\rt \theta} X_0    X_\tau\tp
			\Big)
			+  \delta o_{p,\norm{\cdot}}(1).
		\end{align*}
		Expanding the expression above and using \eqref{equation - centering fo XRX}-\eqref{equation - centering fo XRX order of XRX 2} again we obtain
		\begin{align*}
			\frac{\theta}{\lambda^2} & X_\tau  R_\lambda\tp   X_0 \tp
			Q_\lambda^{-1} X_0  R_\lambda   X_\tau \tp
			=\frac{1}{ \theta} X_\tau R\tp  X_0\tp Q^{-1} X_0 R X_\tau\tp
			\\
			                         & \qquad
			- \frac{\delta}{\theta}
			\Big(X_\tau R\tp  X_0\tp  Q^{-1} X_0   X_\tau\tp
			+X_\tau   X_0\tp Q^{-1}  X_0 R X_\tau\tp  \Big)
			+  \delta o_{p,\norm{\cdot}}(1)
			\\
			                         & =
			\frac{1}{ \theta} X_\tau R\tp  X_0\tp Q^{-1} X_0 R X_\tau\tp
			- \frac{2\delta}{\theta} X_\tau  X_0\tp  Q^{-1} X_0   X_\tau\tp
			+  \delta o_{p,\norm{\cdot}}(1)
		\end{align*}
		Finally, recalling $\lambda/\theta = 1+ \delta$, we can conclude
		\begin{align*}
			\frac{1 }{\lambda}X_\tau & R_{\lambda}\tp   X_0 \tp Q_{\lambda}^{-1}X_0   R_{\lambda} X_\tau \tp
			=
			(1+ \delta) \frac{\theta }{\lambda^2} X_\tau   \tp R_\lambda\tp   X_0 \tp Q_{\lambda}^{-1} X_0   R_\lambda X_\tau \tp
			\\
			                         & = \frac{1}{ \theta} X_\tau R\tp  X_0\tp Q^{-1} X_0 R X_\tau\tp
			- \frac{2\delta}{\theta} X_\tau  X_0\tp  Q^{-1} X_0   X_\tau\tp
			\\
			                         & \qquad+ \delta \left(\frac{1}{ \theta} X_\tau R\tp  X_0\tp Q^{-1} X_0 R X_\tau\tp
			- \frac{2\delta}{\theta} X_\tau  X_0\tp  Q^{-1} X_0   X_\tau\tp\right)
			+  \delta o_{p,\norm{\cdot}}(1)
			\\
			                         & = \frac{1}{ \theta} X_\tau R\tp  X_0\tp Q^{-1} X_0 R X_\tau\tp
			- \frac{\delta}{\theta} X_\tau  X_0\tp   X_0   X_\tau\tp
			+  \delta o_{p,\norm{\cdot}}(1),
		\end{align*}
		where in the last line we have used \eqref{equation - centering resolvent R-I is small}-\eqref{equation - centering fo XRX order of XRX 2} again.
		To conclude, we have shown
		\begin{align*}
			\overline Q_{\lambda} & = I_K - \frac{1}{\theta} X_\tau E_0\tp E_0 R X_\tau\tp +  \delta o_{p,\norm{\cdot}}(1),
		\end{align*}
		for the first term in the right hand side of \eqref{equation - det equation first form} and
		\begin{align*}
			% \numberthis\label{equation - second term in det eq centering}
			\frac{1 }{\lambda} & X_\tau  R_{\lambda}\tp   X_0 \tp Q_{\lambda}^{-1}X_0   R_{\lambda} X_\tau \tp
			= \frac{1}{ \theta} X_\tau R\tp  X_0\tp Q^{-1} X_0 R X_\tau\tp
			- \frac{\delta}{\theta} X_\tau  X_0\tp   X_0   X_\tau\tp
			+  \delta o_{p,\norm{\cdot}}(1)
		\end{align*}
		for the second term. The claim then follows.
	\end{proof}
\end{proposition}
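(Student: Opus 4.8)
The plan is to begin from the eigenvalue equation for $\lambda := \lambda_{n,\tau}$ and rewrite it step by step until it takes the form \eqref{equation - determinant equation final}. Since $\lambda$ is an eigenvalue of $\widehat\Sigma_\tau\widehat\Sigma_\tau\tp$, the value $\rt\lambda$ is a singular value of $\widehat\Sigma_\tau$, hence an eigenvalue of the symmetric block matrix $\left(\begin{smallmatrix} 0 & \widehat\Sigma_\tau \\ \widehat\Sigma_\tau\tp & 0\end{smallmatrix}\right)$ of dimension $2(p+K)$. Writing out the characteristic equation of this matrix and permuting rows and columns so that the $2K$ ``factor'' coordinates precede the $2p$ ``noise'' coordinates, the lower-right noise block becomes $\left(\begin{smallmatrix} \rt\lambda\, I_p & -E_\tau E_0\tp \\ -E_0 E_\tau\tp & \rt\lambda\, I_p\end{smallmatrix}\right)$, which is invertible with probability tending to one because $\lambda\to\infty$ by Theorem \ref{theorem - 2.1} while $\norm{E_\tau E_0\tp}$ stays bounded on the high-probability event of Lemma \ref{lemma - high prob event}. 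A Schur-complement reduction then eliminates the noise coordinates.

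The next step is to evaluate this Schur complement explicitly. Inverting the noise block with the $2\times 2$ block-inverse formula, substituting back, and repeatedly applying the pencil identity \eqref{equation - bullet pencil identity} to move matrix factors through resolvents, the equation collapses, after a second Schur reduction of the resulting $2K\times 2K$ block $\left(\begin{smallmatrix} \rt\lambda\,\overline Q_\lambda & -X_\tau R_\lambda\tp X_0\tp \\ -X_0 R_\lambda X_\tau\tp & \rt\lambda\, Q_\lambda\end{smallmatrix}\right)$, to the determinant identity $\det\big(\overline Q_\lambda - \lambda^{-1} X_\tau R_\lambda\tp X_0\tp Q_\lambda^{-1} X_0 R_\lambda X_\tau\tp\big) = 0$, where $R_\lambda = R(\lambda)$ and $Q_\lambda,\overline Q_\lambda$ are as in \eqref{equation - definition of Q}.

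Finally I would center $\lambda$ at $\theta := \theta_{n,\tau}$. The resolvent identity gives $\lambda^{-1} R_\lambda - \theta^{-1} R = -\delta\lambda^{-1} R_\lambda R$; iterating once and using $R - I_{T-\tau} = O_{p,\norm{\cdot}}(\theta^{-1})$, $R^2 = I_{T-\tau} + O_{p,\norm{\cdot}}(\theta^{-1})$ and $\delta = o_p(1)$ from Theorem \ref{theorem - 2.1}, one obtains $\lambda^{-1} R_\lambda = \theta^{-1} R - \delta\theta^{-1} I_{T-\tau} + \delta\, o_{p,\norm{\cdot}}(\lambda^{-1})$. Propagating this expansion through $\overline Q_\lambda$, through $Q_\lambda$ and hence $Q_\lambda^{-1}$ (using $\norm{Q_\lambda^{-1}} = O_p(1)$), and through the bilinear form $X_0 R_\lambda X_\tau\tp$, and using $\norm{X_\tau}^2 = O_p(K\sigma_1^2)$ together with $\lambda/\theta = 1+\delta$, every replacement of a $\lambda$-object by its $\theta$-counterpart costs only an error of the form $\delta\, o_{p,\norm{\cdot}}(1)$. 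This yields $\overline Q_\lambda = I_K - \theta^{-1} X_\tau E_0\tp E_0 R X_\tau\tp + \delta\, o_{p,\norm{\cdot}}(1)$ and $\lambda^{-1} X_\tau R_\lambda\tp X_0\tp Q_\lambda^{-1} X_0 R_\lambda X_\tau\tp = \theta^{-1} X_\tau R\tp X_0\tp Q^{-1} X_0 R X_\tau\tp - \theta^{-1}\delta\, X_\tau X_0\tp X_0 X_\tau\tp + \delta\, o_{p,\norm{\cdot}}(1)$; substituting both into the determinant identity above and recognizing $M$ from \eqref{equation - definition of M} produces exactly \eqref{equation - determinant equation final}.

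I expect the main difficulty to lie in the error bookkeeping of this last step: one must verify that \emph{every} approximation error genuinely carries a factor of $\delta$, since otherwise the resulting equation would fail to pin down $\delta$. This requires particular care with the triple product $X_\tau R_\lambda\tp X_0\tp Q_\lambda^{-1} X_0 R_\lambda X_\tau\tp$, where expanding a product of three approximate factors generates several cross terms and, after accounting for the prefactor $\lambda/\theta = 1+\delta$, a spurious coefficient $2$ in front of $\delta\theta^{-1} X_\tau X_0\tp Q^{-1} X_0 X_\tau\tp$, which must be reabsorbed using $\lambda/\theta = 1+\delta$ together with $\delta = o_p(1)$.
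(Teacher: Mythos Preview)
Your proposal is correct and follows essentially the same approach as the paper's proof: the same singular-value/block-matrix reformulation, the same two Schur reductions leading to $\det\big(\overline Q_\lambda - \lambda^{-1} X_\tau R_\lambda\tp X_0\tp Q_\lambda^{-1} X_0 R_\lambda X_\tau\tp\big)=0$, and the same resolvent-centering argument with the iterated expansion of $\lambda^{-1}R_\lambda$. You have also correctly anticipated the one delicate point, namely that the triple-product expansion first produces $-2\delta\theta^{-1} X_\tau X_0\tp Q^{-1} X_0 X_\tau\tp$, and that the extra factor is cancelled by the $(1+\delta)$ coming from $\lambda/\theta$ together with $R\to I$ and $Q^{-1}\to I$ up to $o_{p,\norm{\cdot}}(1)$.
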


We now work towards establishing the asymptotic distribution of the matrix $M$ from \eqref{equation - definition of M} with the help of Lemma  \ref{lemma - concentration of xBx}-\ref{lemma - conditional expectations}.
For notational convenience we define
\begin{align*}
	\numberthis\label{equation - definition of A B}
	A:= \frac{1}{\rt{\theta}}X_0 R X_\tau\tp,
	\quad B:= \frac{1}{\theta} X_\tau E_0\tp E_0 R X_\tau\tp,
\end{align*}
so that $M = I_K - B - A\tp Q^{-1}A$. For each $i=1,\ldots, K$, define
\begin{align*}
	\overline M_{ii} :=1- \Em[B_{ii}1_{\cB_0}] - \Em[A_{ii}1_{\cB_0}]^2 \Em[Q_{ii}^{-1}1_{\cB_2}]
	\numberthis\label{equation - decomposition of Mii}
\end{align*}
which serves as a deterministic centering for the $i$-th diagonal entry of  $M$. We first give an approximation for $M_{ii} - \overline M_{ii}$ up to the scaling of $T^{-1/2}$.

\begin{proposition}
	\label{proposition - M}
	Under Assumption \ref{assumptions} and either Assumption \ref{assumptions - tau fixed} or \ref{assumptions - tau div},
	we have
	\begin{align*}
		\numberthis\label{equation - expression for Mii - Miibar}
		{M_{ii}  - \overline M_{ii}}
		 & =
		- 2\big(A_{ii} - \underline\Em [A_{ii}]\big)
		{\Em[A_{ii}1_{\cB_0}]}{\Em [Q_{ii}^{-1} 1_{\cB_2}]}
		+ O_{p}\left(\frac{\sigma_i^2 \norm{\sigmab}_{\ell_1}^2}{\theta T}+ \frac{\sigma_i^2}{\theta \rt T} +KT^{-1}\right),
	\end{align*}
	for all $i=1,\ldots, K$,
	where $\underline \Em[\ \cdot \ ]$ is  defined in \eqref{equation - Emcond}.
	Furthermore,
	% the off-diagonals of $M$ satisfy
	\begin{align*}
		\max_{i\ne j} |M_{ij}|
		=  O_p\left( \frac{K^4 \sigma_1^4 \sigma_i \sigma_j}{\theta^2 \rt T}\right).
	\end{align*}

	\begin{proof}
		We first recall from Lemma~\ref{lemma - expectation of ABQ}
		% , Proposition~\ref{proposition - solution for theta}
		and Assumption \ref{assumptions} that
		\begin{align*}
			\Em[A_{ij}1_{\cB_0}] & =  1_{i=j}\left(\frac{\sigma_i^2 \gamma_i(\tau)}{\theta^{1/2}} + o(1)\right),
			% \\
			\quad
			\Var(A_{ij} 1_{\cB_0})  =   O\left(\frac{\sigma_{i}^2 \sigma_j^2}{\theta T}\right) .
			% = O\left(\frac{1}{\gamma_1(\tau)^2 T}\right) .
			\numberthis\label{equation - proof of AQA 1}
		\end{align*}
		We also recall from Lemma~\ref{lemma - concentration of Q inverse} that
		% \begin{align*}
		% 	\numberthis\label{equation - proof of AQA 2}
		% 	\Em \Big|  Q_{kk}^{-1}  - \underline \Em  [Q_{kk}^{-1}]\Big|^2 = O\left(\frac{1}{\theta T}\right),
		% 	\quad
		% 	\Em|Q_{ij}^{-1} |^2  = O\left(\frac{z1}{\theta T}\right),
		% \end{align*}
		\begin{align*}
			\numberthis\label{equation - proof of AQA 2}
			Q_{kk}^{-1}1_{\cB_2}  = \underline \Em  [Q_{kk}^{-1}1_{\cB_2}] + O_{L^1}\left(\frac{\sigma_k^2 }{\theta \rt T}\right),
			\quad
			Q_{ij}^{-1} 1_{\cB_2} = O_{L^2}\left(\frac{\sigma_i \sigma_j}{\theta \rt T}\right).
		\end{align*}
		% since $\norm{\ee_i \tp X_0}^2 = O(\sigma_1^2) = O(\rt \theta)$.
		Recall that $M = I_K - B - A\tp Q^{-1}A$.
		We first consider the $i$-th diagonal of $A\tp Q^{-1} A$  and show that it is close to $A_{ii}^2 \underline E[Q_{ii}^{-1} 1_{\cB_2}]$ under the event $\cB_2$. Note that we can write
		\begin{align*}
			(A\tp & Q^{-1} A)_{ii}
			=  \sum_{m,n} A_{mi} A_{ni} Q^{-1}_{mn}
			\\
			      & =   A_{ii}^2 Q^{-1}_{ii} + \sum_{\substack{m,n\ne i}}A_{mi} A_{ni} Q^{-1}_{mn}
			% A_{ii}^2 Q^{-1}_{ii}
			+ A_{ii}\Big(\sum_{n\ne i} A_{ni} Q^{-1}_{in} + \sum_{m\ne i} A_{mi} Q^{-1}_{mi}\Big).
			\numberthis\label{equation -equation - proof of AQA 2.5 }
			% +\sum_{\substack{m,n\ne i, m\ne n }}A_{mi} A_{ni} Q^{-1}_{mn}.
		\end{align*}
		We will consider each term in \eqref{equation -equation - proof of AQA 2.5 } separately.
		Recall from \eqref{equation - norm of Q 1cB} that $\norm{Q^{-1} 1_{\cB_2}} = 1+ o(1)$ which implies $Q^{-1}_{ij}1_{\cB_2} = 1+o(1)$ for all $i,j\le K$. Using the triangle inequality followed by the Cauchy Schwarz inequality we have
		\begin{align*}
			\Em\big|\sum_{\substack{m,n\ne i}}&A_{mi} A_{ni} Q^{-1}_{mn}1_{\cB_2}\big|
			\lesssim
			\sum_{\substack{m,n\ne i}}
			{ \Em [A_{mi}^2 1_{\cB_0}]}^{\frac{1}{2}}
			\Em  [A_{ni}^2 1_{\cB_0}]^{\frac{1}{2}}
			\lesssim \frac{\sigma_i^2 }{\theta T} \norm{\sigmab}_{\ell_1}^2,
		\end{align*}
		where the first inequality follows since $1_{\cB_2}\le 1_{\cB_0}$ and the last equality follows from \eqref{equation - proof of AQA 1}.
		
		For the last term in \eqref{equation -equation - proof of AQA 2.5 }, we note that $1_{\cB_2} = 1_{\cB_2}1_{\cB_0}$ by definition.  Using $\norm{Q^{-1} 1_{\cB_2}} = 1+ o(1)$ and the triangle inequality we have
		\begin{align*}
			\Em \big|\sum_{n\ne i}A_{ii} A_{ni} Q^{-1}_{in}1_{\cB_2}\big|
			&\lesssim
			\sum_{n\ne i}  \Em \big| (A_{ii} 1_{\cB_0}- \Em[A_{ii}1_{\cB_0}])  A_{ni} 1_{\cB_2}\big|+
			\big| \Em[A_{ii}1_{\cB_0}] \big|\sum_{n\ne i}\Em \big| A_{ni}Q^{-1}_{in}1_{\cB_2}\big|.
		\end{align*}
		By the Cauchy Schwarz inequality and \eqref{equation - proof of AQA 1} we have
		\begin{align*}
			\Em \Big|A_{ii}\sum_{n\ne i} A_{ni} Q^{-1}_{in}1_{\cB_2}\Big|
			  &\lesssim
			\sum_{n\ne i}  \Var(A_{ii}1_{\cB_0})^{1/2} \Em [A_{ni}^21_{\cB_0}]^{1/2 }
			+
			\Em|A_{ii}^21_{\cB_0}|^{1/2}\left(\sum_{n\ne i} \Em[A_{ni}^21_{\cB_0}]^{1/2} \Em[(Q^{-1})_{in}^21_{\cB_2}]^{1/2}\right)
			  \\
			&\lesssim  
			\sum_{n\ne i}\frac{ \sigma_i^3 \sigma_n }{\theta T}
			+
			\left(\frac{\sigma_i^2}{\theta^{1/2}} + \frac{\sigma_i^2}{\theta^{1/2} \rt T}\right)
			\sum_{n\ne i}
			\frac{\sigma_i^2 \sigma_n^2 }{\theta T}
			\lesssim 
			\frac{ \sigma_i^3  \norm{\sigmab}_{\ell_1}}{\theta T}
			+
			\frac{\sigma_i^4 \norm{\sigmab}_{\ell_2}^2}{\theta^{3/2} T}
			.
			% =
			% O \left(   \frac{ K}{ \gamma_1(\tau)^2 T}  \right)
			% + O \left(   \frac{ K}{ \gamma_1(\tau)\rt{\theta} T}  \right)
			% = o\left(  \frac{1}{\rt T} \right).
		\end{align*}
		% Similar to \eqref{equation - proof of M - Mbar decomp conc 1}, we may conclude that
		% \begin{align*}
		% 	A_{ii}\Big(\sum_{n\ne i} A_{ni} Q^{-1}_{in} + \sum_{m\ne i} A_{mi} Q^{-1}_{mi}\Big) = o_{p} \left(\frac{1}{\rt T}\right).
		% \end{align*}
		Substituting back into \eqref{equation -equation - proof of AQA 2.5 } we obtain
		\begin{align*}
			(A\tp 	Q^{-1} A)_{ii}  1_{\cB_2}
			 & =   A_{ii}^2 1_{\cB_2}\underline\Em[Q^{-1}_{ii}1_{\cB_2}]   + O_{L^1}\left(\frac{\sigma_i^2 \norm{\sigmab}_{\ell_1}^2}{\theta T} \right).
		\end{align*}
		From Lemma~\ref{lemma - high prob event} we know that $1_{\cB_2} = 1+o_p(1)$, therefore
		\begin{align*}
			(A\tp 	Q^{-1} A)_{ii}
			 & =   A_{ii}^2 \underline\Em[Q^{-1}_{ii}1_{\cB_2}]+ O_{p}\left(\frac{\sigma_i^2 \norm{\sigmab}_{\ell_1}^2}{\theta T} +KT^{-1}\right).
			\numberthis\label{equation - first approx  of AQA - proof of M - Mbar}
		\end{align*}
		Next, we expand $A_{ii}^2$ around the conditional mean $\underline\Em[A_{ii}1_{\cB_0}]^2$. Write
		\begin{align*}
			A_{ii}^21_{\cB_0} = \underline\Em[A_{ii}1_{\cB_0}]^2 &+ 2 \underline\Em[A_{ii}1_{\cB_0}](A_{ii}1_{\cB_0}- \underline\Em[A_{ii}1_{\cB_0}]) 
			\\&+ (A_{ii}1_{\cB_0}- \underline\Em[A_{ii}1_{\cB_0}])^2.
			\numberthis\label{equation -  approx  of AQA expanding A2 - proof of M - Mbar}
		\end{align*}
		% \begin{align*}
		% 	A_{ii}^21_{\cB_0} = \Em[A_{ii}1_{\cB_0}]^2 + 2 \Em[A_{ii}1_{\cB_0}](A_{ii}1_{\cB_0}- \Em[A_{ii}1_{\cB_0}]) + (A_{ii}1_{\cB_0}- \Em[A_{ii}1_{\cB_0}])^2,
		% \end{align*}
		Recall from (\ref{lemma- concentration of xBx (XRX) }) of Lemma~\ref{lemma - concentration of xBx} that the last term satisfies
		\begin{align*}
			\left(A_{ii}1_{\cB_0} - \underline\Em[A_{ii}1_{\cB_0}]\right)^2
			= O_{L^1} \left(   \frac{\sigma_i^4}{\theta T}  \right).
		\end{align*}
		Next, we note that by definition of $\underline\Em$ and $\cB_0$, we have
		\begin{align*}
			A_{ii}1_{\cB_0}- \underline\Em[A_{ii}1_{\cB_0}]
			=
			\left(A_{ii}- \underline\Em[A_{ii}]\right)1_{\cB_0}
			=
			A_{ii}- \underline\Em[A_{ii}] + O_p(KT^{-1}),
		\end{align*}
		where the last equality follows from Lemma~\ref{lemma - high prob event}. Therefore from \eqref{equation -  approx  of AQA expanding A2 - proof of M - Mbar} we may obtain
		\begin{align*}
			A_{ii}^2 = \underline\Em[A_{ii}1_{\cB_0}]^2 + 2 \underline\Em[A_{ii}1_{\cB_0}](A_{ii}- \underline\Em[A_{ii}]) + O_p\left(   \frac{\sigma_i^4}{\theta T}+KT^{-1}  \right).
		\end{align*}
		Substituting back into \eqref{equation - first approx  of AQA - proof of M - Mbar} we have
		\begin{align*}
			% \numberthis\label{equation - A in expression for M}
			(A\tp  Q^{-1} A)_{ii}
			 & =
			\underline\Em[A_{ii}1_{\cB_0}]^2\underline\Em[Q_{ii}^{-1}1_{\cB_2}] + 2 \underline\Em[A_{ii}1_{\cB_0}]\underline\Em[Q_{ii}^{-1}1_{\cB_2}](A_{ii}- \underline\Em[A_{ii}])
			+ O_{p}\left(\frac{\sigma_i^2 \norm{\sigmab}_{\ell_1}^2}{\theta T} +KT^{-1}\right)
			\\
			 & = \Em[A_{ii}1_{\cB_0}]^2\Em[Q_{ii}^{-1}1_{\cB_2}] + 2 \Em[A_{ii}1_{\cB_0}]\Em[Q_{ii}^{-1}1_{\cB_2}](A_{ii}- \underline\Em[A_{ii}])
			+ O_{p}\left(\frac{\sigma_i^2 \norm{\sigmab}_{\ell_1}^2}{\theta T} +KT^{-1}\right),
		\end{align*}
		where in the last equality, Lemma~\ref{lemma - conditional expectations} is used to replace the conditional expectations with the unconditional ones (except for the centering of $A_{ii}$ where the conditional expectation is intentionally kept).

		Finally, we recall $M_{ii} = 1 - B_{ii} - (A\tp Q^{-1} A)_{ii}$, so it remains to consider the matrix $B= \frac{1}{\theta} X_\tau E_0\tp E_0 R X_\tau\tp$ in the same manner as above.
		By Lemma~\ref{lemma - concentration of xBx},
		we have
		\begin{align*}
			\numberthis\label{equation - B in expression for M}
			\Em \big|B_{ij}1_{\cB_0} - 1_{i=j}\underline\Em[B_{ii}1_{\cB_0}] \big|^2
			 & \lesssim \frac{1}{\theta^2 T^2}O(\sigma_i^2 \sigma_j^2 T)
			 =
			 \frac{\sigma_i^2 \sigma_j^2}{\theta^2 T}.
			% =  O\left(\frac{1}{ \theta \gamma_1(\tau)^2 T}\right).
		\end{align*}
		Using Lemma~\ref{lemma - conditional expectations} to replace $\underline \Em[B_{ii}1_{\cB_0}]$ with $\Em[B_{ii}1_{\cB_0}]$ and $1_{\cB_0} = 1- o_p(1)$,
		% \footnote{Adam: How to interprete this result?}
		 we get
		\begin{align*}
			B_{ij} = 1_{i=j}\Em[{B_{ii}}1_{\cB_0}] + O_p\left( \frac{\sigma_i \sigma_j}{\theta \rt T}\right).
		\end{align*}
		Combining the above computations, we get
		\begin{align*}
			M_{ii} = \overline M_{ii} - 2 \Em[A_{ii}1_{\cB_0}]\Em[Q_{ii}^{-1}1_{\cB_2}](A_{ii}- \underline\Em[A_{ii}]) 
			+ O_{p}\left(\frac{\sigma_i^2 \norm{\sigmab}_{\ell_1}^2}{\theta T}+ \frac{\sigma_i^2 }{\theta \rt T} +KT^{-1}\right)
		\end{align*}
		and the first claim follows.

		For the off-diagonal elements, write
		\begin{align*}
			(A\tp & Q^{-1} A)_{ij}
			=  \sum_{m,n} A_{mi} A_{nj} Q^{-1}_{mn}
			=A_{ii}A_{jj} Q^{-1}_{ij} + A_{ii} A_{ij} Q_{ii}^{-1} +A_{ji}A_{jj} Q_{jj}^{-1}
			\\&
			\quad
			+ \sum_{{m\ne i,n\ne j, m\ne n}}A_{mi} A_{nj} Q^{-1}_{mn}
			+ A_{ii} \sum_{n\ne i, j}  A_{nj} Q^{-1}_{in}
			+ A_{jj}\sum_{m\ne i,j} A_{mi} Q^{-1}_{mj}.
			\numberthis\label{equation - offdiag of AQA decomp}
		\end{align*}
		Observe that
		by definition of $A_{ii}$, $Q_{ii}$ and the event $\cB_2$ we have
		\begin{align*}
			A_{ii}1_{\cB_2}
			=O\left(\frac{K \sigma_1^2}{ \rt \theta}\right) ,
			\quad Q^{-1}_{ij} 1_{\cB_2} = O(1).
			\numberthis\label{equation - high prob bound of Aii Qii}
		\end{align*}
		Recall
		from \eqref{equation - proof of AQA 1},
		Lemma~\ref{lemma - concentration of Q inverse}
		and Lemma~\ref{lemma - expectation of ABQ}
		that
		\begin{align*}
			A_{ij}1_{\cB_2} = O_{L^2}\left(\frac{\sigma_i \sigma_j}{\theta^{1/2}\rt T}\right),
			\quad
			Q_{ij}^{-1}1_{\cB_2} = O_{L^2}\left(\frac{\sigma_i \sigma_j}{\theta\rt T}\right),
			\quad \forall i\ne j.
			\numberthis\label{equation - L2 bound of Aij Qij}
		\end{align*}
		Substituting \eqref {equation - high prob bound of Aii Qii} and \eqref{equation - L2 bound of Aij Qij} back into the terms in \eqref{equation - offdiag of AQA decomp} we have
		\begin{align*}
			% \numberthis\label{equation - proof of AQA 7}
			A_{ii}A_{jj} Q^{-1}_{ij}1_{\cB_2} =
			% o_{p}\left( \frac{1}{\rt T} \right)
			O_{L^2} \left(\frac{K^2 \sigma_1^4 \sigma_i \sigma_j}{\theta^2 \rt T}\right)
			,
			\quad
			A_{ii} A_{ij} Q^{-1}_{ii}1_{\cB_2}
			=
			O_{L^1} \left(\frac{K \sigma_1^2 \sigma_i^2 \sigma_j^2}{\theta^2   T}\right).
		\end{align*}
		With similar computation, the rest of \eqref{equation - offdiag of AQA decomp} are all of smaller order than the above. 
		Substituting the above four estimates back into
		\eqref{equation - offdiag of AQA decomp} we get
		\begin{align*}
			(A\tp Q^{-1} A)_{ij} = O_{L^1}\left(\frac{K^2 \sigma_1^4 \sigma_i \sigma_j}{\theta^2 \rt T}\right),
		\end{align*}
		which is uniform in $i,j$.
		Taking a union bound we obtain
		\begin{align*}
			\Pm\left(\max_{i\ne j} (A\tp Q^{-1} A)_{ij}  >\epsilon\right)
			\le \frac{1}{\epsilon}
			\sum_{i\ne j}  \Em|(A\tp Q^{-1} A)_{ij}| = \frac{1}{\epsilon} O\left(\frac{K^4 \sigma_1^4 \sigma_i \sigma_j}{\theta^2 \rt T}\right),
		\end{align*}
		or in other words we have the bound
		\begin{align*}
			\max_{i\ne j}|(A\tp Q^{-1} A)_{ij}|
			=
			O_{p}\left(\frac{K^4 \sigma_1^4 \sigma_i \sigma_j}{\theta^2 \rt T}\right)
		\end{align*}
		Lastly, since $M = I_K - B - A\tp Q^{-1 } A$, it remains to bound the off-diagonals of $B$. Routine computations similar to \eqref{equation - proof of AQA 1} and the union bound above show that $B_{ij}$ is of high order compared to $(A\tp Q^{-1}A)_{ij}$ and is thus negligible. This completes the proof.
		% which implies $\max_{i\ne j}(A\tp Q^{-1} A)_{ij} =  O_p( K^2/(\gamma_{1}(\tau)\rt T))$.
		% Lastly by \eqref{equation - B in expression for M} we have
		% 		\begin{align*}
		% 		\Pm\Big(\max_{i\ne j} B_{ij} > \epsilon\Big)
		% 		\le \sum_{i\ne j} \Pm(B_{ij} > \epsilon)
		% 		\le \frac{1}{\epsilon^2} \sum_{i\ne j}  \Em [B_{ij}^2] = \frac{1}{\epsilon^2} O\left(\frac{K^2}{\gamma_1(\tau)^2\theta T}\right),
		% 	\end{align*}
		% 	which implies $\max_{i\ne j} B_{ij} = o_p(T^{-1/2})$ and the claim follows.
	\end{proof}
\end{proposition}
From Proposition~\ref{proposition - M} we can conclude that the CLT of $M_{ii}$ is given by the CLT of $A_{ii}$, up to centering and scaling. This is what we compute next.

\begin{proposition}\label{proposition - CLT for fii}
	Suppose Assumption \ref{assumptions} and either Assumption \ref{assumptions - tau fixed} or \ref{assumptions - tau div} hold.
	For any $i=1,\ldots,K$ and $\tau\ge 0$, define
	\begin{align*}
		v^2_{i, \tau}:= \frac{1}{T- \tau}\Var\left(\sum_{t=1}^{T- \tau} f_{i,t} f_{i,t+ \tau}\right).
	\end{align*}
	\begin{enumerate}
		\item For any $i$ and $\tau$, the quantity $v_{i, \tau}^2$ satisfies
		      \begin{align*}
			      v^2_{i, \tau}
			       & =
			      % \frac{- 2 \tau T + \tau^2}{T^2}  \gamma(\tau)^2
			      % +
			      \sum_{|k|< T- \tau} \left(1 - \frac{|k|}{T - \tau} \right)u_{i,k}(\tau),
			      \numberthis\label{equation - expression for var in brockwell clt}
		      \end{align*}
		      where $(u_{i,k}(\tau))_k$ is given by
		      \begin{align*}
			      u_{i,k}(\tau) := \gamma_i(k)^2 + \gamma_i(k+\tau)\gamma_i(k- \tau)
			      + (\Em[z_{11}^4]-3)
			      \sum_{l=0}^\infty \phi_{i,l} \phi_{i,l+ \tau} \phi_{i,l+k} \phi_{i,l+k+ \tau}.
		      \end{align*}

		\item As $T\to\infty$, the sequence $(v_{i,\tau}^2)$ tends to a limit
		      \begin{align*}
			      \lim_{T\to\infty}
			      v_{i,\tau}^2
			      = (\Em[z_{11}^4]-3) \gamma_i(\tau)^2+
			      \sum_{k\in\Z} \left(  \gamma_i(k)^2 + \gamma_i(k+\tau) \gamma_i(k - \tau) \right)
		      \end{align*}
		      in the case where $\tau$ is a fixed constant, and
		      \begin{align*}
			      \lim_{T\to\infty}
			      v_{i,\tau}^2
			      = \sum_{k\in\Z} \gamma_i(k)^2
		      \end{align*}
		      in the case where $\tau = \tau_T \to\infty$ as $T\to\infty$.
		      \vspace{1mm}
		\item In both cases where $\tau$ is fixed and where $\tau\to\infty$, we have
		      \begin{align*}
			      \frac{1}{\rt T v_{i, \tau}}
			      \left(\sum_{t=1}^{T- \tau} f_{i,t} f_{i,t+ \tau}- \Em\left[\sum_{t=1}^{T- \tau} f_{i,t} f_{i,t+ \tau}\right]\right)
			      \weakly
			      N(0,1), \quad T\to\infty.
		      \end{align*}
	\end{enumerate}

	\begin{proof}
		In the case where $\tau$ is fixed, the proof can be adapted from the arguments in section 7.3 of \cite{BrockwellDavisFienberg1991} so it remains to consider the case where $\tau\to\infty$. For concreteness, the following proof covers both the case where $\tau$ is finite and fixed and where $\tau$ is diverging as $T\to\infty$.
		For brevity of notation we will drop the subscript $i$ (denoting the $i$-th factor) within the proof and write for instance $f_{t}:= f_{it}$ and $\phi_{l}:= \phi_{il}$.

		% We first note that $\Em[f_t f_{t+\tau}] = \sum_{l=0}^\infty \phi_{l} \phi_{l+ \tau} = \gamma(\tau)$.
		With some adaptations to the computations in page 226-227 of \cite{BrockwellDavisFienberg1991}, we may obtain
		\begin{align*}
			v_T^2:= \frac{1}{T- \tau}\Var\left(\sum_{t=1}^{T- \tau} f_{t} f_{t+ \tau}\right)
			 & =
			\frac{1}{T- \tau}\Em   \left[\sum_{t=1}^{T - \tau}\sum_{s=1}^{T - \tau} f_t f_{t+ \tau} f_s  f_{s+ \tau}\right]
			-
			(T-	\tau)\gamma(\tau)^2
			\\
			% & \hspace{-15mm} =
			 & =
			% \frac{- 2 \tau T + \tau^2}{T^2}  \gamma(\tau)^2
			% +
			\sum_{|k|< T- \tau} \left(1 - \frac{|k|}{T - \tau} \right)u_k(\tau),
			\numberthis\label{equation - expression for var in brockwell clt}
		\end{align*}
		where $(u_k(\tau))_k$ is given by
		\begin{align*}
			u_k(\tau) := \gamma(k)^2 + \gamma(k+\tau)\gamma(k- \tau)
			+ (\Em[z_{11}^4]-3)
			\sum_{l=0}^\infty \phi_l \phi_{l+ \tau} \phi_{l+k} \phi_{l+k+ \tau}.
		\end{align*}
		% \begin{align*}
		% 	\lim_{T\to\infty}\sum_{|k|< T- \tau} \left(\frac{T - \tau}{T} - \frac{|k|}{T} \right)\gamma(k+\tau)\gamma(k- \tau)
		% \end{align*}
		Note that the sequence $(\phi_l)_l$ is summable and so is the sequence $(u_k(\tau))_k$.
		Taking the limit of \eqref{equation - expression for var in brockwell clt} and invoking the dominated convergence theorem we conclude
		\begin{align*}
			v^2:= \lim_{T\to\infty} v_T^2
			=
			\sum_{k\in\Z} \lim_{T\to\infty} \left(1 - \frac{|k|}{T - \tau} \right)u_k(\tau).
		\end{align*}
		In the case where $\tau$ is a fixed constant, we have, as in Proposition 7.3.1 of \cite{BrockwellDavisFienberg1991},
		\begin{align*}
			v^2
			= (\Em[z_{11}^4]-3) \gamma(\tau)^2+
			\sum_{k\in\Z} \left(  \gamma(k)^2 + \gamma(k+\tau) \gamma(k - \tau) \right),
			\numberthis\label{equation - limit of variance in clt tau finite}
		\end{align*}
		and in the case where $\tau$ is diverging, i.e. $\gamma(\tau)\to0$ as $T\to\infty$, we easily see that
		\begin{align*}
			v^2 = \lim_{T\to\infty}v_T^2
			=
			\sum_{k\in\Z} \gamma(k)^2.
			\numberthis\label{equation - limit of variance in clt tau inf}
		\end{align*}
		This settles the first two claims of the proposition.

		We first prove a version of the CLT for a truncated version of the factor $(f_{t})_{t\ge 0}$.  The truncation will be justified further below.
		Fix $L>0$ and define $(f^{(L)}_t)_{t=1, \ldots, T}$ by
		\begin{align*}
			f_t^{(L)} :=  \sum_{l=0}^L \phi_{l}z_{t-l},
		\end{align*}
		Consider the stochastic process $(f_t^{(L)} f_{t+\tau}^{(L)})_{t=1, \ldots, T- \tau}$. Clearly $(f_t^{(L)} f_{t+\tau}^{(L)})_t$ is an $(L+\tau)$-dependent process, i.e. $f_t^{(L)} f_{t+\tau}^{(L)}$ is independent from $f_s^{(L)} f_{s+\tau}^{(L)}$ whenever $|s-t|> L+ \tau$.
		The mean is given by $\Em[f_t^{(L)} f_{t+\tau}^{(L)}] = \gamma_L(\tau)$, where $\gamma_L(\cdot)$ is the auto-covariance function of the truncated process $(f^{(L)}_t)$.
		Similar to \eqref{equation - expression for var in brockwell clt}-\eqref{equation - limit of variance in clt tau inf} we may compute
		\begin{align*}
			v^2_{T,(L)}:=\frac{1}{T- \tau}\Var\left(
			\sum_{t=1}^{T- \tau}f_t^{(L)}f_{t+\tau}^{(L)}\right),
		\end{align*}
		which has limits, in the case where $\tau$ is fixed:
		\begin{align*}
			v^2_{(L)}:=\lim_{T\to\infty}v^2_{T,(L)}
			= (\Em[z_{11}^4]-3) \gamma_L(\tau)^2+
			\sum_{k\in\Z} \left(  \gamma_L(k)^2 + \gamma_L(k+\tau) \gamma_L(k - \tau) \right)
			% \numberthis\label{equation - TRUNCATED limit of variance in clt tau finite}
		\end{align*}
		and in the case where $\tau\to\infty$:
		\begin{align*}
			v^2_{(L)} = \lim_{T\to\infty}v^2_{T,(L)}
			=
			\sum_{k\in\Z} \gamma_L(k)^2.
			% \numberthis\label{equation -  TRUNCATE limit of variance in clt tau inf}
		\end{align*}
		Note that in either case $V^{(L)}$ is a non-zero constant.
		It can easily be checked that, under Assumption \ref{assumptions} and either Assumption \ref{assumptions - tau fixed} or \ref{assumptions - tau div},  the process $(f_t^{(L)} f_{t+\tau}^{(L)})_{t=1, \ldots, T- \tau}$ (after centering) satisfies the conditions in \cite{Berk1973}, whose main theorem can be applied here to obtain
		\begin{align*}
			\sqrt{T- \tau}
			\left(\frac{1}{T- \tau}\sum_{t=1}^{T- \tau}f_t^{(L)}f_{t+\tau}^{(L)}
			-\gamma_L(\tau)\right) \weakly N(0,v^2_{(L)}), \quad T\to\infty.
			% \numberthis\label{equation - CLT for truncated ACVF}
		\end{align*}
		We now justify the  truncation. Since $(\phi_l)_l \in \ell_1$ and $(z_t)_t$ is uniformly bounded in $L^4$, it is easy to conclude that, for each fixed $T$, we have
		\begin{align*}
			\Big\|\sum_{t=1}^{T- \tau} f_t^{(L)} f_{t+ \tau}^{(L)}
			-
			\sum_{t=1}^{T- \tau} f_t f_{t+ \tau}
			\Big\|_{L^2}\to0,
			\quad L\to\infty.
			\numberthis\label{equation - L2 truncation}
		\end{align*}
		Consequently, we may conclude that $\gamma_L(\tau)\to \gamma(\tau)$ and  $v^2_{(L)}\to v^2$ as $L\to\infty$, since they are the first and second moments of the sums in \eqref{equation - L2 truncation}. We may then follow the arguments on page 229 of \cite{BrockwellDavisFienberg1991} and apply Proposition 6.3.9 of \cite{BrockwellDavisFienberg1991} to obtain
		\begin{align*}
			\sqrt{T- \tau}
			\left(\frac{1}{T- \tau}\sum_{t=1}^{T- \tau}f_tf_{t+\tau}
			-\gamma(\tau)\right) \weakly N(0,v^2), \quad T\to\infty.
			\numberthis\label{equation - CLT for untruncated ACVF}
		\end{align*}
		Finally, using \eqref{equation - limit of variance in clt tau finite}, \eqref{equation -  limit of variance in clt tau inf} and $\frac{T- \tau}{T}\to1$,
		by Slutsky's theorem we may conclude that
		\begin{align*}
			\frac{1}{\rt{T} v_T}
			\left(\sum_{t=1}^{T- \tau}f_tf_{t+\tau}
			-(T- \tau)\gamma_L(\tau)\right) \weakly N(0,1).
		\end{align*}
		It remains to observe that  $(T- \tau)\gamma_L(\tau)$ is exactly the expectation of  $\sum f_tf_{t+\tau}$ and the last claim of the proposition follows.
	\end{proof}
\end{proposition}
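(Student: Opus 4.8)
The plan is to view $(f_{i,t}f_{i,t+\tau})_{t}$ as a stationary sequence built from the linear process $f_{i,t}=\sum_{l}\phi_{i,l}z_{i,t-l}$ and to carry out the three parts in order, handling the fixed-$\tau$ and diverging-$\tau$ regimes together; throughout I would drop the factor index $i$. For part (a), I would expand $\Var\big(\sum_{t=1}^{T-\tau}f_tf_{t+\tau}\big)=\sum_{t,s}\Cov(f_tf_{t+\tau},f_sf_{s+\tau})$ by substituting the moving-average representation, so that each covariance becomes a weighted sum of fourth moments $\Em[z_{a}z_{b}z_{c}z_{d}]$ of the i.i.d.\ innovations. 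Splitting these fourth moments into the three pair-matchings (which, after reindexing by $k=s-t$, produce $\gamma(k)^2+\gamma(k+\tau)\gamma(k-\tau)$) and the fully diagonal term (which produces the $(\Em[z_{11}^4]-3)\sum_l\phi_l\phi_{l+\tau}\phi_{l+k}\phi_{l+k+\tau}$ correction) yields exactly $\sum_{|k|<T-\tau}(1-|k|/(T-\tau))u_{k}(\tau)$. This is the computation on pp.~226--227 of \cite{BrockwellDavisFienberg1991} carried out for a general lag $\tau$.

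For part (b), I would invoke Assumption \ref{assumptions}(e), which forces $(\phi_l)_l\in\ell_1$ and hence makes $(\gamma(k))_k$, and therefore $(u_k(\tau))_k$, summable with a bound uniform in $\tau$. Dominated convergence then gives $v_{i,\tau}^2\to\sum_{k\in\Z}u_k(\tau)$ as $T\to\infty$, since $1-|k|/(T-\tau)\to1$ for each $k$. When $\tau$ is fixed this is the stated Brockwell--Davis limit. When $\tau=\tau_T\to\infty$, for each fixed $k$ one has $\gamma(\tau)\to0$, $\gamma(k\pm\tau)\to0$ and $\sum_l\phi_l\phi_{l+\tau}\phi_{l+k}\phi_{l+k+\tau}\to0$, each dominated by a summable sequence, so only $\sum_{k\in\Z}\gamma(k)^2$ survives.

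For part (c), I would use an $m$-dependent approximation. Fix $L$, set $f_t^{(L)}:=\sum_{l=0}^{L}\phi_l z_{t-l}$, and note that $(f_t^{(L)}f_{t+\tau}^{(L)})_t$ is $(L+\tau)$-dependent with mean $\gamma_L(\tau)$ and normalized variance $v^2_{T,(L)}$ converging (as $T\to\infty$) to a positive limit $v^2_{(L)}$, bounded away from $0$ because $\gamma_L(0)\to1$. A central limit theorem for row-wise $m_T$-dependent triangular arrays — I would cite \cite{Berk1973}, whose hypotheses allow the dependence range $L+\tau$ to grow with $T$ — then gives $\sqrt{T-\tau}\big(\tfrac1{T-\tau}\sum_{t}f_t^{(L)}f_{t+\tau}^{(L)}-\gamma_L(\tau)\big)\weakly N(0,v_{(L)}^2)$. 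To remove the truncation, I would use that $(\phi_l)\in\ell_1$ and $(z_t)$ is uniformly $L^4$-bounded to get $\tfrac1{T-\tau}\Var\big(\sum_{t=1}^{T-\tau}(f_t^{(L)}f_{t+\tau}^{(L)}-f_tf_{t+\tau})\big)\to0$ as $L\to\infty$ uniformly in $T$, hence $\gamma_L(\tau)\to\gamma(\tau)$ and $v_{(L)}^2\to v^2$; the standard approximation lemma (Proposition 6.3.9 of \cite{BrockwellDavisFienberg1991}) then upgrades the truncated statement to $\sqrt{T-\tau}\big(\tfrac1{T-\tau}\sum_t f_tf_{t+\tau}-\gamma(\tau)\big)\weakly N(0,v^2)$. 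Finally Slutsky's theorem, together with $(T-\tau)/T\to1$ and the identity $\Em[\sum_t f_tf_{t+\tau}]=(T-\tau)\gamma(\tau)$, converts this into the normalized CLT claimed in (c).

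The main obstacle is the diverging-$\tau$ regime. When $\tau\to\infty$ the $m$-dependence range $L+\tau$ of the truncated products diverges with $T$, so no classical fixed-$m$ $m$-dependent CLT applies; one genuinely needs a triangular-array $m_T$-dependent central limit theorem, and its Lindeberg-type condition together with the variance lower bound must be checked uniformly in $T$. One must also verify that the truncation-error bound is uniform in $T$ and that $\sqrt{T-\tau}\sim\sqrt{T}$ (i.e.\ $\tau=o(T)$), so that the normalization in the stated conclusion is genuinely $\sqrt{T}$.
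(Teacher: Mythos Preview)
Your proposal is correct and follows essentially the same approach as the paper's proof: the variance computation via the Brockwell--Davis fourth-moment expansion, dominated convergence for the limits, the $L$-truncation to an $(L+\tau)$-dependent sequence, the Berk (1973) triangular-array $m$-dependent CLT, the removal of truncation via Proposition~6.3.9 of \cite{BrockwellDavisFienberg1991}, and the final Slutsky step are all exactly what the paper does. If anything, your discussion of the obstacles is slightly more careful than the paper's in flagging that the truncation-error control must be uniform in $T$ and that $\tau=o(T)$ is needed for $\sqrt{T-\tau}\sim\sqrt{T}$.
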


\begin{proposition}\label{proposition - CLT of Mii}
	Under Assumption \ref{assumptions} and either Assumption \ref{assumptions - tau fixed} or \ref{assumptions - tau div}, we have
	\begin{align*}
		\numberthis\label{equation - CLT of Mii}
		\rt T
		\frac{\theta}{2\sigma_i^4 \gamma_i(\tau	) v_{i, \tau	}}
		\big(M_{ii} - \overline M_{ii} \big)
		 = Z_T + O_p\left(\frac{\norm{\sigmab}_{\ell_1}^2}{\sigma_i^2 \gamma_i(\tau)^2 \rt T}+ \frac{1}{ \sigma_i^2 \gamma_i(\tau) } +\frac{\sigma_n^4\gamma_n(\tau)^2 K }{\sigma_i^4\gamma_i(\tau)^2 \rt T}\right)
	\end{align*}
	where $Z_T\weakly N(0,1)$, the centering $\overline M_{ii}$ is as defined in \eqref{equation - decomposition of Mii} and $v_{i, \tau}$ is defined as in \eqref{equation - definition of v i tau}.

	\begin{proof}
		For simplicity, within the current proof we will denote
		$\xx_{i0}:=\xx_{i,[1:T- \tau]}, \xx_{i \tau}:=\xx_{i,[\tau+1:T]}$
		and similarly $
			\ff_{i0}:=\ff_{i,[1:T- \tau]},
			\ff_{i \tau}:=\ff_{i,[\tau+1:T]},
			\epsilonb_{i0} := \epsilonb_{i,[1:T- \tau]}
			,\epsilonb_{i \tau}:=\epsilonb_{i,[\tau+1:T]}$.

		Observe from \eqref{equation - expression for Mii - Miibar} that the asymptotic distribution of $M$ depends crucially on that of $A$.
		We first give the asymptotic distribution of $A_{ii}$.
		Recall from \eqref{equation - first resolvent identity} that
		% $R = (I_T - \theta^{-1} E_\tau\tp E_\tau E_0\tp E_0)^{-1}$ so that
		$R - I_{T- \tau} = \theta^{-1} E_\tau\tp E_\tau E_0\tp E_0 R$, using which we can write
		\begin{align*}
			A_{ii} & =  \frac{1}{\rt \theta T} \xx_{i0}\tp  \xx_{i\tau}
			+
			\frac{1}{\rt \theta T} \xx_{i0}\tp  (R - I_{T- \tau}) \xx_{i\tau}
			\\
			       & =  \frac{1}{\rt \theta T} \xx_{i0}\tp  \xx_{i\tau} +
			\frac{1}{\theta^{3/2} T} \xx_{i0}\tp E_\tau\tp E_\tau E_0\tp E_0 R \xx_{i\tau}.
			\numberthis\label{equation - Aii - EAii first}
		\end{align*}
		Applying Lemma~\ref{lemma - concentration of xBx} to the last term in \eqref{equation - Aii - EAii first} we have
		\begin{align*}
			\frac{1}{\theta^{3/2} T} \xx_{i0}\tp E_\tau\tp E_\tau E_0\tp E_0 R \xx_{i\tau}1_{\cB_0} - \frac{1}{\theta^{3/2} T}\underline\Em[ \xx_{i0}\tp E_\tau\tp E_\tau E_0\tp E_0 R \xx_{i\tau}1_{\cB_0}]
			=O_{L^2}\left(  \frac{\sigma_i^2}{\theta^{3/2} \rt T} \right).
		\end{align*}
		which gives
		\begin{align*}
			A_{ii}1_{\cB_0} - \underline \Em[A_{ii}1_{\cB_0}]
			\numberthis\label{equation - Aii - EAii into more terms in proof of clt}
			 & = \frac{1}{\rt \theta T} \left(\xx_{i0}\tp  \xx_{i\tau}
			- \Em [ \xx_{i0}\tp  \xx_{i\tau}]\right)1_{\cB_0}
			+ O_{L^2}\left(  \frac{\sigma_i^2}{\theta^{3/2} \rt T} \right).
		\end{align*}
		Next, we recall that $\xx_{i0} = \frac{1}{\rt {\theta T}} (\sigma_i\ff_{i0} + \epsilonb_{i0})$ and $\xx_{i\tau} = \frac{1}{\rt {\theta T}} (\sigma_i\ff_{i\tau} + \epsilonb_{i\tau})$  so that
		\begin{align*}
			\xx_{i0}\tp  \xx_{i\tau} = \sigma_i^2\ff_{i0}\tp \ff_{i\tau} + (\sigma_i\ff_{i0}\tp\epsilonb_{i\tau} + \sigma_i\epsilonb_{i0}\tp\ff_{i\tau} + \epsilonb_{i0}\tp \epsilonb_{i\tau}).
		\end{align*}
		Applying Lemma~\ref{lemma - bai silverstein concentration-} to the three terms in parenthesis on the right hand we get
		\begin{align*}
			\xx_{i0}\tp  \xx_{i\tau} & - \Em[\xx_{i0}\tp  \xx_{i\tau}]
			% =  \ff_{i0}\tp \ff_{i\tau} - \Em[\ff_{i0}\tp \ff_{i\tau}]
			% \\
			% &\quad +
			% \ff_{i0}\tp\epsilonb_{i\tau} + \epsilonb_{i0}\tp\ff_{i\tau} + \epsilonb_{i0}\tp \epsilonb_{i\tau} - \Em[\ff_{i0}\tp\epsilonb_{i\tau} + \epsilonb_{i0}\tp\ff_{i\tau} + \epsilonb_{i0}\tp \epsilonb_{i\tau}]
			% \\
			% &
			=  \sigma_i^2\ff_{i0}\tp \ff_{i\tau} - \sigma^2_i\Em[\ff_{i0}\tp \ff_{i\tau}]
			+ O_{L^2}( \sigma_i  \rt T).
			\numberthis\label{equation - xx is close to ff}
		\end{align*}
		Substituting back into \eqref{equation - Aii - EAii into more terms in proof of clt}  we obtain
		\begin{align*}
			A_{ii} 1_{\cB_0}- \underline \Em[A_{ii}1_{\cB_0}]
			 & =
			\frac{\sigma_i^2}{\rt \theta T} \left(\ff_{i0}\tp \ff_{i\tau}
			- \Em[\ff_{i0}\tp \ff_{i\tau}] \right)1_{\cB_0}
			+ O_{L^2}\left(  \frac{\sigma_i}{ {\theta^{1/2}\rt T} }   \right)
			+ O_{L^2}\left(  \frac{\sigma_i^2}{\theta^{3/2} \rt T} \right)
			\\
			 & =
			\numberthis\label{equation - Aii - xx only differnt in mean}
			\frac{\sigma_i^2}{\rt \theta T} \left(\ff_{i0}\tp \ff_{i\tau}
			- \Em[\ff_{i0}\tp \ff_{i\tau}] \right)1_{\cB_0}
			+ 		O_{L^2}\left(  \frac{\sigma_i}{ {\theta^{1/2}\rt T} }   \right).
		\end{align*}
		Rescaling and recalling that $1_{\cB_0} = 1-o_p(T^{-l})$ for any $l\ge 1$, we have
		\begin{align*}
			\rt T\frac{\rt \theta}{\sigma_i^2}\left(A_{ii} - \underline\Em[A_{ii}] \right)
			=
			\frac{1}{\rt T} \left(\ff_{i0}\tp \ff_{i\tau}
			- \Em[\ff_{i0}\tp \ff_{i\tau}] \right) + O_{p}(\sigma_i^{-1}).
		\end{align*}
		From this we observe that we can obtain a CLT for $A_{ii}$ from a CLT for the auto-covariance function of $\ff_i$.
		Indeed, by Proposition~\ref{proposition - CLT for fii} we have
		\begin{align*}
			\numberthis\label{equation - clt of Aii}
			\rt T\frac{\rt \theta}{\sigma_i^2 v_{i, \tau}}\left(A_{ii} - \underline\Em[A_{ii}] \right)
			\weakly
			N(     0,  1  ),
			\quad p,T\to\infty,
		\end{align*}
		where $v_{i, \tau}$ is specified in the statement of Proposition~\ref{proposition - CLT for fii}.

		Finally, we recall from Proposition~\ref{proposition - M} that
		\begin{align*}
			{M_{ii}  - \overline M_{ii}}
			 & =
			- 2\big(A_{ii} - \underline\Em [A_{ii}]\big)
			{\Em[A_{ii}1_{\cB_0}]}{\Em [Q_{ii}^{-1}1_{\cB_2}]}
			+ 
			O_p\left(\frac{\sigma_i^2 \norm{\sigmab}_{\ell_1}^2}{\theta T}+ \frac{\sigma_i^2}{\theta \rt T} +KT^{-1}\right)
			\numberthis\label{equation - M- Mii, show coef has lower bound}.
		\end{align*}
		To apply the CLT in \eqref{equation - clt of Aii} to \eqref{equation - M- Mii, show coef has lower bound}, we need to divide \eqref{equation - M- Mii, show coef has lower bound}
		by the coefficient of $A_{ii} - \underline\Em[A_{ii}]$.
		We recall from \eqref{equation - norm of Q 1cB} that $Q_{ii}^{-1}1_{\cB_2} = 1+ o(1)$. Furthermore, from Lemma~\ref{lemma - expectation of ABQ}  we have
		\begin{align*}
			\numberthis\label{equation - order of Aii}
			\Em[A_{ii}1_{\cB_0}]
			 & =
			\frac{\sigma_i^2 \gamma_i(\tau)}{\rt \theta } + o(1).
		\end{align*}
		Therefore from \eqref{equation - M- Mii, show coef has lower bound} we get
		\begin{align*}
			\rt T\frac{\theta}{- 2\sigma_i^4 \gamma_i(\tau) v_{i,\tau}}
			&\big(M_{ii}  - \overline M_{ii}\big)
			=
			\rt T \frac{\rt \theta}{\sigma_i^2 v_{i,\tau}}\big(A_{ii} - \underline\Em [A_{ii}]\big)
			(1+o(1))
			\\
			&+ O_p\left(\frac{\norm{\sigmab}_{\ell_1}^2}{\sigma_i^2 \gamma_i(\tau)^2 \rt T}+ \frac{1}{ \sigma_i^2 \gamma_i(\tau) } +\frac{\sigma_n^4\gamma_n(\tau)^2 K }{\sigma_i^4\gamma_i(\tau)^2 \rt T}\right),
		\end{align*}
		and the claim follows then from the CLT in
		\eqref{equation - clt of Aii}.
	\end{proof}
\end{proposition}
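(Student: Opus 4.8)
The plan is to reduce the asymptotic normality of $M_{ii} - \overline M_{ii}$ to the central limit theorem for the sample autocovariance $\ff_{i0}\tp\ff_{i\tau}$ of the $i$-th factor time series, for which Proposition \ref{proposition - CLT for fii} already provides the answer. The starting point is the approximation from Proposition \ref{proposition - M},
\begin{align*}
	M_{ii} - \overline M_{ii} = -2\big(A_{ii} - \underline\Em[A_{ii}]\big)\,\Em[A_{ii}1_{\cB_0}]\,\Em[Q_{ii}^{-1}1_{\cB_2}] + o_p(T^{-1/2}),
\end{align*}
which exhibits $M_{ii} - \overline M_{ii}$, up to a negligible term, as a \emph{deterministic} multiple of the centered bilinear form $A_{ii} - \underline\Em[A_{ii}]$. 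The first thing to check is that this scalar coefficient stays bounded away from zero: by \eqref{equation - norm of Q 1cB} we have $\Em[Q_{ii}^{-1}1_{\cB_2}] = 1 + o(1)$, and Lemma \ref{lemma - expectation of ABQ} gives $\Em[A_{ii}1_{\cB_0}] = \sigma_i^2\gamma_i(\tau)/\rt\theta + o(1)$; since $\theta\asymp\sigma_i^4\gamma_i(\tau)^2$ by Theorem \ref{theorem - 2.1}, the coefficient is of exact order $\sigma_i^2\gamma_i(\tau)/\rt\theta$, which is bounded both above and below. Dividing through is therefore harmless, and the claim reduces to showing $\rt T\,\rt\theta\,\sigma_i^{-2}v_{i,\tau}^{-1}\big(A_{ii} - \underline\Em[A_{ii}]\big)\weakly N(0,1)$.

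The second step is to strip the resolvent out of $A_{ii} = (\rt\theta\,T)^{-1}\xx_{i0}\tp R\,\xx_{i\tau}$. Using the first resolvent identity $R = I_{T-\tau} + \theta^{-1}E_\tau\tp E_\tau E_0\tp E_0 R$, I split off the term $\theta^{-1}\xx_{i0}\tp E_\tau\tp E_\tau E_0\tp E_0 R\,\xx_{i\tau}$; on $\cB_0$ the extra factor $\theta^{-1}\norm{E_\tau\tp E_\tau E_0\tp E_0}$ is $O(\theta^{-1})$, and after centering by its conditional mean and applying Lemma \ref{lemma - concentration of xBx} the resulting fluctuation is $o(T^{-1/2})$ once rescaled by $\rt T\rt\theta\sigma_i^{-2}$. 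What remains is $(\rt\theta\,T)^{-1}\big(\xx_{i0}\tp\xx_{i\tau} - \Em[\xx_{i0}\tp\xx_{i\tau}]\big)$. Substituting $x_{it} = \sigma_i f_{it} + \epsilon_{it}$ expands $\xx_{i0}\tp\xx_{i\tau}$ into the main term $\sigma_i^2\ff_{i0}\tp\ff_{i\tau}$ plus three cross terms involving $\epsilonb_{i0}$ and $\epsilonb_{i\tau}$; by Lemma \ref{lemma - bai silverstein concentration-} each of these, after centering, is $O_{L^2}(\sigma_i\rt T)$, which is of strictly smaller order than the main term $\sigma_i^2\rt T$ because $\sigma_i\to\infty$. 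Hence $A_{ii} - \underline\Em[A_{ii}] = \sigma_i^2(\rt\theta\,T)^{-1}\big(\ff_{i0}\tp\ff_{i\tau} - \Em[\ff_{i0}\tp\ff_{i\tau}]\big) + o_p\big((\rt\theta\,T)^{-1}\sigma_i^2\rt T\big)$, i.e. the error is $o_p$ of the correct scale.

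Combining the two steps, $\rt T\,\rt\theta\,\sigma_i^{-2}v_{i,\tau}^{-1}\big(A_{ii} - \underline\Em[A_{ii}]\big)$ equals $(\rt T\,v_{i,\tau})^{-1}\big(\ff_{i0}\tp\ff_{i\tau} - \Em[\ff_{i0}\tp\ff_{i\tau}]\big)$ up to an $o_p(1)$ term, and part (c) of Proposition \ref{proposition - CLT for fii} shows the latter converges weakly to $N(0,1)$. An application of Slutsky's theorem, together with the coefficient computation of the first paragraph which turns the factor $\Em[A_{ii}1_{\cB_0}]\Em[Q_{ii}^{-1}1_{\cB_2}]$ into the normalization $\theta/(2\sigma_i^4\gamma_i(\tau)v_{i,\tau})$, then gives the stated CLT (the sign being irrelevant for a centered normal limit). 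I expect the main obstacle to be the bookkeeping in the second paragraph: one must verify, using the size conditions on $K$ in Assumption \ref{assumptions - tau fixed}(b) or \ref{assumptions - tau div}(b) together with $\theta\asymp\sigma_i^4\gamma_i(\tau)^2$, that \emph{every} error introduced by replacing $R$ with the identity and $\xx_{i\cdot}$ with $\sigma_i\ff_{i\cdot}$ is genuinely $o_p(T^{-1/2})$ at the scale $\rt\theta/\sigma_i^2$ — this is precisely where the interplay among the diverging factor strength $\sigma_i$, the possibly diverging lag $\tau$, and the number of factors $K$ must be tracked carefully.
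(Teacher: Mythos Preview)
Your proposal is correct and follows essentially the same route as the paper's proof: start from the approximation in Proposition \ref{proposition - M}, verify via Lemma \ref{lemma - expectation of ABQ} and \eqref{equation - norm of Q 1cB} that the deterministic coefficient $\Em[A_{ii}1_{\cB_0}]\Em[Q_{ii}^{-1}1_{\cB_2}]$ has exact order $\sigma_i^2\gamma_i(\tau)/\rt\theta$, strip the resolvent from $A_{ii}$ using the identity $R=I+\theta^{-1}E_\tau\tp E_\tau E_0\tp E_0 R$ and Lemma \ref{lemma - concentration of xBx}, reduce $\xx_{i0}\tp\xx_{i\tau}$ to $\sigma_i^2\ff_{i0}\tp\ff_{i\tau}$ via Lemma \ref{lemma - bai silverstein concentration-}, and finish with Proposition \ref{proposition - CLT for fii}. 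The only difference is that the paper establishes the CLT for $A_{ii}$ first and applies the coefficient analysis afterward, whereas you present them in the reverse order; the substance and the error bookkeeping (including the use of $\theta\asymp\sigma_i^4\gamma_i(\tau)^2$, which strictly speaking comes from Proposition \ref{proposition - solution for theta}) are identical.
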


The asymptotic distribution of $M_{ii}$ proved in Proposition~\ref{proposition - CLT of Mii} is the last piece of ingredient we need to prove the main result of the paper, which we present below.

\begin{proof}[Proof of Theorem~\ref{theorem - CLT}]
	Recall from Section~\ref{section - main clt} that up to now we have dealt with, without loss of generality, the $n$-th largest eigenvalue $\lambda:=\lambda_n$ and the corresponding $\theta:= \theta_n$ and $\delta:= \delta_n:= \lambda_n/\theta_n-1$. Recall from Proposition~\ref{proposition - detequation } that $\delta_n$ satisfies
	\begin{align*}
		\numberthis\label{equation - deteq in the proof of clt}
		\det\left( M + \frac{\delta_n}{\theta_n} X_\tau X_0\tp X_0 X_\tau\tp + \delta o_{p, \norm{\cdot}}(1)\right)=0
	\end{align*}
	We first consider the asymptotic properties of the elements of the matrix $M$.
	From Proposition~\ref{proposition - solution for theta} and the definition of $\overline M_{ii}$ in \eqref{equation - decomposition of Mii}, clearly we see
	$\overline M_{nn} =0$.
	From Theorem~\ref{theorem - 2.1} and Proposition~\ref{proposition - solution for theta} we also recall that  ${\theta_n}/({\sigma_n^4 \gamma_n(\tau)^2})=\theta_n / \mu_{n, \tau}^2=1+o(1)$.
	By Proposition~\ref{proposition - CLT of Mii} we have
	\begin{align*}
		\rt T  \frac{\gamma_n(\tau)}{2 v_{n, \tau}}  M_{nn}
		 = N(0,1) 
		 + O_p\left(\frac{\norm{\sigmab}_{\ell_1}^2}{\sigma_n^2 \gamma_n(\tau)^2 \rt T}+ \frac{1}{ \sigma_n^2 \gamma_n(\tau) } +\frac{ K }{\rt T}\right),
	\end{align*}
	where the last term is $o_p(1)$ by the Assumptions.

	% This gives the asymptotic distribution of $M_{nn}$.
	For $i\ne n$, we recall from \eqref{equation - decomposition of Mii} and Lemma~\ref{lemma - expectation of ABQ} that
	\begin{align*}
		\overline M_{ii} = 1 - \frac{\sigma_i^4 \gamma_i (\tau)^2}{\theta_n} + o(1)
		=
		1 - \frac{\mu_{i,\tau}^2}{\mu_{n, \tau}^2} + o(1) \gtrsim 1,
	\end{align*}
	where the last inequality is due to Assumption \ref{assumptions}.
	Using Proposition~\ref{proposition - M} we have
	\begin{align*}
		M_{ii}\gtrsim 1 + o_p(1), \quad \forall i\ne n,
		\quad \max_{i\ne j}|M_{ij}| = O_p\left(\frac{K^4 \sigma_1^6}{\theta_n^2 \rt T}\right).
	\end{align*}
	% Next, from \eqref{equation - limit of XXXX} we recall recall that
	% \begin{align*}
	% 	\norm{\theta^{-1}X_\tau X_0\tp X_0 X_\tau\tp - \theta^{-1}\mathrm{diag}(\sigma_i^4 \gamma_i(\tau)^2)}_\infty = O_p\left(\frac{K^2 \sigma_1^4 \gamma_1(\tau)}{\theta\rt T}\right) = o_p(1),
	% \end{align*}
	% which in particular implies that for any $i\ne j$,
	% \begin{align*}
	% 	\frac{\delta}{\theta}(X_\tau X_0\tp X_0 X_\tau\tp)_{ij} & =
	% 	O_p\left(\frac{K^2 \sigma_1^4 \gamma_1(\tau)}{\theta\rt T}\right)
	% 	O_p\left( \frac{1}{\gamma_1(\tau)\rt T} + \frac{K }{\sigma_1^2 \gamma_1(\tau)^2}\right)
	% 	%    \\
	% 	% & =
	% 	% O_p\left(\frac{K^2 \sigma_1^4 \gamma_1(\tau)}{\theta\rt T}\right)
	% 	% O_p\left( \frac{1}{\gamma_1(\tau)\rt T} + \frac{K }{\sigma_1^2 \gamma_1(\tau)^2}\right)
	% \end{align*}
	% which follows from Theorem~\ref{theorem - 2.1}.
	Next, recall $\delta = o_p(1)$ from Theorem~\ref{theorem - 2.1}.
	Recall that
	\begin{align*}
		(X_\tau  X_0\tp X_0 X_\tau\tp)_{ij}
		& =  O_{L^1}\left(\frac{\sigma_i \sigma_j(\sigma_i^2 \gamma_i(\tau)+ \sigma_j^2 \gamma_j(\tau))}{\rt T}\right),
		\\
		(X_\tau X_0\tp  X_0 X_\tau\tp)_{ii}& =   \mu_{i, \tau} + O_{L_1} \left(\frac{\sigma_i^4 \gamma_i(\tau)}{\rt T}
		+ \frac{\sigma_i^4 + \sigma_i^2 \norm{\sigmab}_{\ell_2}^2}{T}
		\right).
	\end{align*}
	This in particular implies
	\begin{align*}
		\frac{\delta_n}{\theta_n}(X_\tau X_0\tp X_0 X_\tau\tp)_{ii} =
		\delta_n \frac{\sigma_i^4 \gamma_i(\tau)^2}{\theta_n}
		+o_p(\delta_n), 
		\quad
		\frac{\delta_n}{\theta_n}(X_\tau X_0\tp X_0 X_\tau\tp)_{ij} =
		o_p(\delta),\quad \forall i\ne j.
	\end{align*}
	% where the last equality follows from either Assumption \ref{assumptions - tau fixed} or Assumption \ref{assumptions - tau div}.
	Combining the above, equation \eqref{equation - deteq in the proof of clt}
	becomes $\det(Q)=0$, where $Q$ is a matrix satisfying
	\begin{align*}
		Q_{nn} = M_{nn} + \delta_n \frac{\sigma_n^4 \gamma_n(\tau)^2}{\theta_n} + \delta_n o_p(1),
	\end{align*}
	for its $n$-th diagonal element, $Q_{ii} \gtrsim 1, \quad \forall i\ne n$ and
	\begin{align*}
		\sup_{ij} Q_{ij}  = O_p\left(\frac{K^4 \sigma_1^6}{\theta_n^2 \rt T}\right)
		+ \delta o_p(1).
		\numberthis\label{equation - estimates on Q in final proof}
	\end{align*}
	Using Leibniz's formula to compute  $\det(Q)$, we have
	\begin{align*}
		\numberthis\label{equation - Leibniz formula determinant}
		0 = \det (Q) = \sum_{\pi\in  S_K} \mathrm{sgn}(\pi) \prod_{i=1}^K Q_{i, \pi(i)},
	\end{align*}
	where $\mathrm{sgn}(\pi)$ is the sign of a permutation $\pi$ in the symmetry group $S_K$.
	Next we show that $\prod_i Q_{ii}$ is the leading term in the sum in \eqref{equation - Leibniz formula determinant}.
	Write $S_{K,k}$ for the subgroup of permutations that has exactly $K-k$ fixed points, i.e.
	\begin{align*}
		S_{K,k} = \{\pi\in S_K, i=\pi(i) \text{ for exactly $K-k$ such $i$'s}\}.
	\end{align*}
	Using this notation we can rewrite \eqref{equation - Leibniz formula determinant} into
	\begin{align*}
		0 = \det(Q) & =  \sum_{k=0}^K \sum_{\pi\in S_{K,k}}\mathrm{sgn}(\pi) \prod_{i=1}^K Q_{i, \pi(i)}.\numberthis\label{equation - Leibniz formula determinant 1.5}
	\end{align*}
	We recall that the order of $S_{K,k}$ is given by the rencontres numbers (see \cite{Riordan2012})
	\begin{align*}
		|S_{K,k}| = D_{K,K-k}:= \frac{K!}{(K-k)!}\sum_{i=0}^{k}\frac{(-1)^i}{i!}.
	\end{align*}
	Observe that $|S_{K,0}|=1$ since $S_{K,0}$ contains only the identity permutation  and $|S_{K,1}|=0$ since for any non-identity permutation $\pi$, there exists at least two indices $i,j\in \{1,\ldots,K\}$, $i\ne j$ such that $i\ne \pi(i)$ and $j\ne \pi(j)$.
	% We can also show that $|S_{K,k}|= O (K^k)$.
	Therefore \eqref{equation - Leibniz formula determinant 1.5} becomes
	\begin{align*}
		0 = \det(Q) & =
		\prod_{i=1}^K Q_{ii} +\sum_{k=2}^K
		\sum_{\pi\in S_{K,k}}\mathrm{sgn}(\pi) \prod_{i=1}^K Q_{i, \pi(i)}.
		\numberthis\label{equation - Leibniz formula determinant 2}
	\end{align*}
	Note that for any $k\ge 2$ and any permutation $\pi\in S_{K,k}$, the product $\prod_{i=1}^K Q_{i, \pi(i)}$ contains exactly $k$ off-diagonal elements of $Q$. By \eqref{equation - estimates on Q in final proof} we have the estimate
	\begin{align*}
		\prod_{i=1}^K Q_{i, \pi(i)} = \left(O_p\left(\frac{K^4 \sigma_1^6}{\theta_n^2 \rt T}\right)
		+ \delta o_p(1)\right)^k.
	\end{align*}
	Finally, after substituting back into \eqref{equation - Leibniz formula determinant 2} and a rather tedious computation we have
	\begin{align*}
		0 = \det (Q)
		% \sum_{k=0}^K &
		% \sum_{\pi\in S_{K,k}}\mathrm{sgn}(\pi) \prod_{i=1}^K Q_{i, \pi(i)}
		% % =
		% % \prod_{i=1}^K Q_{ii} +
		% % \sum_{k=2}^K
		% % \sum_{\pi\in S_{K,k}}\mathrm{sgn}(\pi) \prod_{i=1}^K Q_{i, \pi(i)}
		%    % \\
		% & =
		% \prod_{i=1}^K Q_{ii} +\sum_{k=2}^K K^k O_p(K^{3k} \gamma_1(\tau)^{-2k} T^{-k/2})
		%    \\
		 & = \prod_{i=1}^K Q_{ii} + \delta o_p(1) +  o_p(T^{-1/2}),
		\numberthis\label{equation - diagonal is dominant in leibniz formula}
	\end{align*}
	which shows that the product $\prod_{i=1}^K Q_{ii}$ is the leading term of $\det (Q)$.
	% where the last equality is due to Assumption \ref{assumptions}.

	Next,
	using \eqref{equation - estimates on Q in final proof} again we  see that $\prod_{i=1}^K Q_{ii}$ can be written into
	\begin{align*}
		\prod_{i=1}^K Q_{ii} & =\left(M_{nn} + \delta_n \frac{\sigma_n^4 \gamma_n(\tau)^2}{\theta_n} + \delta_n o_p(1)\right)\left(1+o_p(1)\right)
		=
		(M_{nn} + \delta_n) \left(1+o_p(1)\right),
	\end{align*}
	which can be
	substituted back into \eqref{equation - diagonal is dominant in leibniz formula} to obtain
	\begin{align*}
		0=M_{nn} \left(1+o_p(1)\right) + \delta_n \left(1+o_p(1)\right) + \delta_n o_p(1)+ o_p(T^{-1/2}).
	\end{align*}
	Rearranging (and recalling $\theta_n\asymp \sigma_{n}^4 \gamma_n(\tau)^2$ by Proposition~\ref{proposition - solution for theta}), we finally get
	\begin{align*}
		-\rt T\frac{\gamma_n(\tau)}{2  v_{n, \tau}}\delta_n \left(1+o_p(1)\right)
		=\rt T\frac{\theta_n}{2\sigma_n^4 \gamma_n(\tau) v_{n, \tau}}M_{nn}\left(1+o_p(1)\right)
		+ o_p(1).
	\end{align*}
	Applying Proposition~\ref{proposition - CLT of Mii} we immediately have
	\begin{align*}
		\rt T\frac{\gamma_n(\tau)}{2  v_{n, \tau}}\delta_n \weakly N(0,1),\quad T\to\infty
	\end{align*}
	and the proof is complete.
\end{proof} 
						%!TEX root = ../main_withsup.tex
%\section*{Appendix C. Estimates on bilinear forms}\label{section - clt lemmas}
\section{Estimates on bilinear forms}\label{section - clt lemmas}
The following linear algebraic result will be useful throughout.
\begin{lemma}[Sherman-Morrison formula]\label{Lemma - shermanMorrison}
		Suppose $A$ and $B$ are invertible matrices of the same dimension, such that $A-B$ is of rank one. Then
		\begin{align*}
			\numberthis\label{equation - sherman 1}
			A^{-1} - B^{-1}  = - \frac{B^{-1} (A-B) B^{-1}}{1+ \tr(B^{-1}(A-B))}.
		\end{align*}
		Further more, if $A - B = \uu \vvv\tp$, then
		\begin{align*}
			A^{-1} \uu  = \frac{B^{-1} \uu}{ 1+ \vvv\tp B^{-1}\uu},
			\quad 
			\vvv\tp A^{-1}  = \frac{\vvv\tp B^{-1}}{ 1+ \vvv\tp B^{-1}\uu}.
			\numberthis\label{equation - sherman 2}
		\end{align*}
\end{lemma}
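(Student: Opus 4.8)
The plan is to reduce everything to a rank-one update and then verify the claimed formula by a direct multiplication. First I would factor the rank-one difference as $A - B = \uu\vv\tp$ for suitable column vectors $\uu,\vv$, which is always possible since $A-B$ has rank one. With this factorization $\tr(B^{-1}(A-B)) = \tr(B^{-1}\uu\vv\tp) = \vv\tp B^{-1}\uu$ is a scalar, so the first display in the statement is equivalent to $A^{-1} - B^{-1} = -(1+\vv\tp B^{-1}\uu)^{-1}\, B^{-1}\uu\vv\tp B^{-1}$. Before dividing by $1+\vv\tp B^{-1}\uu$ I would record that this scalar does not vanish: by the matrix determinant lemma $\det A = \det(B + \uu\vv\tp) = \det(B)\,(1+\vv\tp B^{-1}\uu)$, and both $\det A$ and $\det B$ are nonzero by hypothesis, so $1+\vv\tp B^{-1}\uu \ne 0$ and the right-hand side is well defined.

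Next I would simply check that the proposed expression is the inverse. Set $C := B^{-1} - (1+\vv\tp B^{-1}\uu)^{-1}\, B^{-1}\uu\vv\tp B^{-1}$ and compute $AC = (B+\uu\vv\tp)C$. Expanding, $(B+\uu\vv\tp)B^{-1} = I + \uu\vv\tp B^{-1}$, while $(B+\uu\vv\tp)B^{-1}\uu\vv\tp B^{-1} = \bigl(\uu + \uu(\vv\tp B^{-1}\uu)\bigr)\vv\tp B^{-1} = (1+\vv\tp B^{-1}\uu)\,\uu\vv\tp B^{-1}$, so the correction term contributes exactly $-\uu\vv\tp B^{-1}$ and therefore $AC = I$. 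Since $A$ is invertible this forces $A^{-1} = C$, which is precisely the first identity after substituting $\uu\vv\tp = A-B$ back in.

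Finally, for the two one-sided identities I would apply $A^{-1} = C$ to $\uu$ and, symmetrically, multiply $\vv\tp$ on the left: $A^{-1}\uu = B^{-1}\uu - (1+\vv\tp B^{-1}\uu)^{-1}(B^{-1}\uu)(\vv\tp B^{-1}\uu) = (1+\vv\tp B^{-1}\uu)^{-1}B^{-1}\uu$ after collecting the common factor $B^{-1}\uu$, and likewise $\vv\tp A^{-1} = (1+\vv\tp B^{-1}\uu)^{-1}\vv\tp B^{-1}$. There is no genuine obstacle in this lemma; the only point deserving a moment's care is the non-vanishing of the scalar $1+\vv\tp B^{-1}\uu$, which I would dispatch via the determinant identity above, after which every remaining step is a one-line algebraic verification.
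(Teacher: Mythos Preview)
Your proof is correct and complete. The paper states this lemma as a standard linear-algebraic fact without proof, so there is nothing to compare against; your direct verification via the factorization $A-B=\uu\vv\tp$, the matrix determinant lemma to ensure $1+\vv\tp B^{-1}\uu\ne 0$, and multiplication $AC=I$ is exactly the expected argument.
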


We first establish some concentration inequalities for quadratic forms of the random vector $\xx$. To do so we will need to introduce some notations.  We recall from \eqref{equation - fit = timeseries} and \eqref{equation - def of xit} that
\begin{align*}
	x_{it} = \sigma_if_{it} +\epsilon_{it}=  \sigma_i  \sum_{l=0}^{\infty} \phi_{il} z_{i, t-l} + \epsilon_{it}, \quad i = 1,\ldots, K, \quad t = 1, \ldots, T.
\end{align*}
We truncate the series and define an approximation
\begin{subequations}
\begin{align*}
	\numberthis\label{equation - definition of xit truncated}
	x_{it}^{(L)} :=  \sigma_if_{it}^{(L)} + \epsilon_{it}:= \sigma_i  \sum_{l=0}^{L} \phi_{il} z_{i, t-l} + \epsilon_{it}, \quad L\ge 1,
\end{align*}
and write $\xx_{i,[1:T]}^{(L)}$, $\ff_{i,[1:T]}^{(L)}$ for $(x_{it}^{(L)})_ {t=1,\ldots, T}$ and $(f_{it}^{(L)})_ {t=1,\ldots, T}$. 
For each $L$,  We write 
\begin{align*}
	\numberthis\label{equation - definition of phi truncated}
	\underline\phib_{i}\tp := \Big(\phi_{iL}, \ldots, \phi_{i0}, \0_{T-1}\tp\Big)  \in\Rm^{T+L}.
\end{align*}
Let $S$ be the right-shift operator on $\Rm^{T+L}$, i.e. $S \ee_i = \ee_{i+1}$. Define
\begin{align*}
	\numberthis\label{equation - definition of Phi matrix truncated}
	\Phi_i := \Big( \underline\phib_{i}, S  \underline\phib_{i}, \ldots, S^{T-1} \underline\phib_{i}\Big) \in \Rm^{(T+L)\times T},
\end{align*}
then clearly we can write the approximation $\ff_i^{(L)}$ as
\begin{align*}
	\numberthis\label{equation - definition of xxL = zz PHIL}
	\ff_{i,[1:T]}^{(L)} =   \zz_{i,[1-L, T]}\tp    \Phi_i. 
\end{align*}
\end{subequations}

We note that the spaces of $n\times n$ matrices equipped with the Frobenius norm is isometrically isomorphic to $\Rm^{n\times n}$ with the Euclidean norm. 
For each $1\le i,j\le K$, we define linear operators $\Psi_n^{ij}$, $n=0,1,2$,
\begin{align*}
	\Psi_n^{ij} : \Rm^{(T- \tau)\times (T- \tau)}\to \Rm^{(2T+L)\times (2T+L)}
\end{align*}
by sending a $(T- \tau)\times (T- \tau)$ matrix $B$ to the $(2T+L)\times (2T+L)$ matrices
\begin{align*}
	&\Psi_0^{ij} B := \begin{pmatrix}
			\sigma_i \Phi_i \\ I_{T}
		\end{pmatrix}
		\begin{pmatrix}
			I_{T - \tau}
			\\
			\0_{\tau\times (T- \tau)}
		\end{pmatrix}
		B
		\begin{pmatrix}
			I_{T - \tau}
			\\
			\0_{\tau\times (T- \tau)}
		\end{pmatrix}\tp
		\begin{pmatrix}
			\sigma_j \Phi_j \\ I_{T}
		\end{pmatrix}\tp,
	\\
	&\Psi^{ij}_1 B := \begin{pmatrix}\numberthis\label{equation - definition of Psi}
			\sigma_i \Phi_i \\ I_{T}
		\end{pmatrix}
		\begin{pmatrix}
			I_{T - \tau}
			\\
			\0_{\tau\times (T- \tau)}
		\end{pmatrix}
		B
		\begin{pmatrix}
			\0_{\tau\times (T- \tau)}
			\\
			I_{T - \tau}
		\end{pmatrix}\tp
		\begin{pmatrix}
			\sigma_j \Phi_j \\ I_{T}
		\end{pmatrix}\tp,
		\\
	&\Psi^{ij}_2 B := \begin{pmatrix}
			\sigma_i \Phi_i \\ I_{T}
		\end{pmatrix}
		\begin{pmatrix}
			\0_{\tau\times (T- \tau)}
			\\
			I_{T - \tau}
		\end{pmatrix}
		B
		\begin{pmatrix}
			\0_{\tau\times (T- \tau)}
			\\
			I_{T - \tau}
		\end{pmatrix}\tp
		\begin{pmatrix}
			\sigma_j \Phi_j \\ I_{T}
		\end{pmatrix}\tp,{}
\end{align*}
where $\Phi_i := (\underline\phib_{i}, S \underline\phib_{i} \ldots, S^{T-1}\underline\phib_{i}) \in\Rm^{(T+L)\times T}$ is as defined in \eqref{equation - definition of Phi matrix truncated}. We first give some estimates on the operators $\Psi^{ij}_n$. 

\begin{lemma}\label{lemma - properties of Psi}
	The following estimates hold uniformly in $L\in \N$. 
	\begin{enumerate}
		\item \label{lemma - properties of Psi - norm of PHIPHI} 
		The matrix  $\Phi_i\tp \Phi_i$ is symmetric and (banded) Toeplitz with
		\begin{align*}
			\sup_{i} \norm{\Phi_i\tp \Phi_i} \le 1+ \sup_i \norm{\phib_{i}}_{\ell_1}^2 = O(1).
		\end{align*}

		\item \label{lemma - properties of Psi - norm of Psi^ij} 
		For $n=0,1,2$, the operator norms  of $\Psi_n^{ij}$be bounded by 
		\begin{align*}
			\norm{\Psi_n^{ij}}^2\le \Big(1+ \sigma_i^2 \norm{\phib_i}_{\ell_1}^2\Big)\Big(1+ \sigma_j^2 \norm{\phib_j}_{\ell_1}^2\Big) = O(\sigma_i^2 \sigma_j^2).
		\end{align*}
		\item \label{lemma - properties of Psi - trace of Psiij} 
		For any $B \in \Rm^{T\times T}$, the trace of $\Psi_n^{ii} B$ can be bounded by
		\begin{align*}
			\big|\tr(\Psi_n^{ii}B)\big|\le (T- \tau)(1+ \sigma_i^2 \norm{\phib_i}_{\ell_1}^2) \norm{B} = O(\sigma_i^2 (T- \tau) \norm{B}) .
		\end{align*}
	\end{enumerate}

	\begin{proof}\pf{\ref{lemma - properties of Psi - norm of PHIPHI}}
		From the definitions \eqref{equation - definition of phi truncated} and \eqref{equation - definition of Phi matrix truncated} we immediately have
		\begin{align*}
			(\Phi_i\tp \Phi_i)_{s,t} = 1_{|s-t|\le L} \underline\phib_{i}\tp S^{|s-t|} \underline\phib_{i}
			=
			1_{|s-t|=k\le L}\sum_{l=0}^{L-k} \phi_{i,l+k} \phi_{i,l}.
		\end{align*}
		It is clear that $\Phi_i\tp \Phi_i$ is a banded, symmetric Toeplitz matrix. The operator norm of $\Phi_i\tp \Phi_i$ is controlled by the supremum of its symbol over $\C$ (see \cite{BottcherSilbermann1999}) and we have
			\begin{align*}
				\norm{\Phi_i\tp \Phi_i} &\le  \sup_{\lambda\in \C} 
				\left|\sum_{|k|=0}^{L}  \underline\phib_{i}\tp S^{|k|} \underline\phib_{i} e^{\rt{-1}k \lambda}\right|
				\le \norm{\underline\phib_{i}}_{\ell_2}^2
				+ \sum_{k=1}^{L}  \sum_{l=0}^{L-k} |\phi_{i,l+k} \phi_{i,l}|
				\le 1
				+ \norm{\phib_{i}}_{\ell_1}^2,
			\end{align*}
			which is bounded 
			uniformly in $i=1,	\ldots, K$, due to Assumption \eqref{assumptions}. 
			% Uniformity in $L$ is clear from the proof.
		% 

		\pfspace
		\pf{\ref{lemma - properties of Psi - norm of Psi^ij}}
		By the cyclic property of the trace  and Cauchy-Schwarz inequality we get
		\begin{align*}
			&\norm{\Psi^{ij}_1 B }_F^2  =  \tr \left(   (\Psi^{ij}_1 B)  (\Psi^{ij}_1 B )\tp    \right)
			   \\
			& \quad = \tr \left(
				(I_{T} + \sigma_i^2 \Phi_i\tp  \Phi_i )
					(I_{T- \tau}, \0)\tp
					B
					(\0, I_{T- \tau})
				(I_{T} + \sigma_j^2 \Phi_j\tp  \Phi_j )
					(\0, I_{T- \tau})\tp
					B\tp(I_{T- \tau}, \0)\right).
			   \\
			& \quad \le 
				\normm{(I_{T} + \sigma_i^2 \Phi_i\tp  \Phi_i )
					(I_{T- \tau}, \0)\tp
					B
					(I_{T- \tau}, \0)}_{\rm F}
				\normm{
				(I_{T} + \sigma_j^2 \Phi_j\tp  \Phi_j )
				(\0, I_{T- \tau})\tp
					B\tp(I_{T- \tau}, \0)}_{\rm F}.
		\end{align*}
		Since $\norm{AB}_F \le \norm{A} \norm{B}_F$, we have
		\begin{align*}
			\norm{\Psi^{ij}_1 B }^2_{\rm F} 
			\le \norm{I_{T} + \sigma_i^2 \Phi_i\tp \Phi_i }
			\norm{I_{T} + \sigma_j^2 \Phi_j\tp \Phi_j }
			\norm{ B}_F^2,
		\end{align*}
		where $\norm{I_{T} +\sigma_i^2 \Phi_i\tp \Phi_i }
			\le
			1+ \sigma_i^2 \norm{\phib_i}_{\ell_1}^2$ by the first claim of the Lemma. By identifying $\Psi_{1}^{ij}$ as an operator between spaces of matrices equipped with the Frobenius norm, this translates to a bound on its spectral norm. The case of $\Psi_0$ and $\Psi_2$ hold analogously.

			\pfspace
		\pf{\ref{lemma - properties of Psi - trace of Psiij} }
		For the last bound, similar computations give
		\begin{align*}
			|\tr ( \Psi_0^{ii} B )| & =  
			\left|\tr \left(
					(I_{T} +\sigma_i^2 \Phi_i\tp \Phi_i )
					(I_{T- \tau}, \0)\tp
					B
					(I_{T- \tau}, \0)
				\right)\right|
			\\
			&\le 
			\norm{I_{T} +\sigma_i^2 \Phi_i\tp \Phi_i} \norm{B}_F
			\le (T- \tau)(1+ \sigma_i^2 \norm{\phib_i}^2_{\ell_1}) \norm{B}
		\end{align*}
		The rest of the claims hold similarly.
	\end{proof}
\end{lemma}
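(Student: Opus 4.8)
The plan is to reduce all three estimates to two elementary ingredients — a spectral-norm bound for the banded Toeplitz matrix $\Phi_i\tp\Phi_i$, and sub-multiplicativity of the spectral and Frobenius norms — while making sure every constant is uniform both in the truncation level $L$ and in the indices $i,j\le K$.

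For \textbf{(a)}, I would read the entries of $\Phi_i\tp\Phi_i$ off directly from the definition of $\Phi_i$ in \eqref{equation - definition of Phi matrix truncated} as the matrix whose columns are the successive right-shifts $S^{t-1}\underline\phib_i$. This gives $(\Phi_i\tp\Phi_i)_{s,t}=1_{|s-t|\le L}\sum_{l=0}^{L-|s-t|}\phi_{i,l}\phi_{i,l+|s-t|}$, which depends on $(s,t)$ only through $|s-t|$ and vanishes as soon as $|s-t|>L$; hence $\Phi_i\tp\Phi_i$ is symmetric, banded, and Toeplitz. For the norm I would invoke the classical estimate that the spectral norm of a finite Toeplitz matrix is bounded by the supremum over the unit circle of its symbol (see \cite{BottcherSilbermann1999}); evaluating, the symbol is dominated by $\norm{\underline\phib_i}_{\ell_2}^2+\sum_{k\ge1}\sum_l|\phi_{i,l}\phi_{i,l+k}|\le 1+\norm{\phib_i}_{\ell_1}^2$, where the leading $1$ uses the normalization $\norm{\phib_i}_{\ell_2}=1$ and uniformity in $i$ is exactly Assumption \ref{assumptions}(e). (Alternatively one can bound $\norm{\Phi_i}\le\norm{\phib_i}_{\ell_1}$ by regarding $\vv\mapsto\Phi_i\vv$ as a truncated convolution and applying Young's inequality, which also makes the uniformity in $L$ transparent.)

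For \textbf{(b)}, I would write $\Psi_1^{ij}B$ as the block product in \eqref{equation - definition of Psi} and compute $\norm{\Psi_1^{ij}B}_F^2=\tr\big((\Psi_1^{ij}B)(\Psi_1^{ij}B)\tp\big)$ using the cyclic property of the trace. Contracting each ``tall'' block factor (the one with $\sigma_i\Phi_i$ stacked above an identity) against its transpose turns the two outer factors into $I_{T}+\sigma_i^2\Phi_i\tp\Phi_i$ and $I_{T}+\sigma_j^2\Phi_j\tp\Phi_j$, leaving the two $(T-\tau)$-index selection matrices and two copies of $B$; repeated use of $\norm{AB}_F\le\norm{A}\,\norm{B}_F$ together with the Cauchy--Schwarz inequality for the trace pairing then bounds the expression by $\norm{I_{T}+\sigma_i^2\Phi_i\tp\Phi_i}\,\norm{I_{T}+\sigma_j^2\Phi_j\tp\Phi_j}\,\norm{B}_F^2$, after which (a) supplies the claimed constant. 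The cases $n=0,2$ are identical up to which block of the identity padding is selected. For \textbf{(c)}, the same manipulation applied to $\tr(\Psi_0^{ii}B)$ rather than a squared Frobenius norm leaves $\big|\tr(\Psi_0^{ii}B)\big|=\big|\tr\big((I_{T}+\sigma_i^2\Phi_i\tp\Phi_i)\,PBP\tp\big)\big|$ with $P$ the $T\times(T-\tau)$ padding $\begin{pmatrix}I_{T-\tau}\\ \0\end{pmatrix}$; since $PBP\tp$ has rank at most $T-\tau$, one gets $\big|\tr(\Psi_0^{ii}B)\big|\le(T-\tau)\,\norm{I_{T}+\sigma_i^2\Phi_i\tp\Phi_i}\,\norm{B}$, and (a) finishes it.

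I expect the only point needing genuine care to be the spectral-norm estimate in (a): one must check that the Toeplitz symbol bound applies to the \emph{finite}, banded matrices $\Phi_i\tp\Phi_i$ and, crucially, yields a constant independent of $L$ (and of $i$); the convolution viewpoint noted above is one clean way to see this. Given that, parts (b) and (c) amount to routine block-matrix bookkeeping and norm sub-multiplicativity.
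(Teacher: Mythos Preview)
Your proposal is correct and follows essentially the same route as the paper: part (a) via the Toeplitz symbol bound, part (b) via the trace/Cauchy--Schwarz reduction to $\norm{I_T+\sigma_i^2\Phi_i\tp\Phi_i}\norm{I_T+\sigma_j^2\Phi_j\tp\Phi_j}\norm{B}_F^2$, and part (c) by the same trace manipulation followed by a rank-$(T-\tau)$ bound. The only cosmetic difference is that in (c) the paper passes through the intermediate inequality $|\tr(\Psi_0^{ii}B)|\le\norm{I_T+\sigma_i^2\Phi_i\tp\Phi_i}\norm{B}_F$ before absorbing the $(T-\tau)$ factor, whereas you invoke the rank bound directly; both are equivalent.
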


Next, we state an easy extension to Lemma 2.7 of \cite{BaiSilverstein1998} suited to our needs. 
\begin{lemma}\label{lemma - bai silverstein concentration-}
	Let $\zz = (\zz_1\tp, \zz_2\tp)\tp$, where $\zz_1=(z_{1}, \ldots, z_{m})$ and $\zz_2 = (\tilde z_{1}, \ldots, \tilde  z_{n})$ are independent random vectors each with i.i.d. entries satisfying $\Em[z_{1}] = \Em[\tilde z_{1}] =0$, $\Em[z_{1}^2] = \Em[\tilde z_{1}^2] =1$, $\nu_{q}:= \Em|z_{1}|^q <\infty$ and  $\tilde \nu_{q}:= \Em|\tilde z_{1}|^q <\infty$
	for some $q\in [1,\infty)$.
	\begin{enumerate}
		\item \label{lemma - bai silverstein concentration- independent}
		Let $C$ be a deterministic $m\times n$ matrix, then
		\begin{align*}
			\zz_1\tp C\zz_2 = O_{L^q} (\norm{C}_F),
		\end{align*}
		where the constant in the estimate depends only on $q$ and $\nu_q, \td \nu_q$.
		\pfspace

		\item \label{lemma - bai silverstein concentration- main term}
		Let $M$ be a deterministic $(m+n)\times (m+n)$ matrix, then
		\begin{align*}
			\zz\tp M\zz - \tr M = O_{L^q} (\norm{M}_F),
		\end{align*}
		where the constant in the estimate depends only on $q$ and $\nu_k, \td \nu_k$ for $k\le 2q$. 
	\end{enumerate}

	\begin{proof}\pf{\ref{lemma - bai silverstein concentration- independent}}
		By Lemma 2.2 and Lemma 2.3 of \cite{BaiSilverstein1998} we have
		\begin{align*}
			\Em\big| \zz_1\tp C\zz_2  \big|^q
			& = 
			\Em\big| \sum_{i,j} z_{i}\td z_{j} C_{ij}  \big|^q
			\lesssim
			\Em\big| \sum_{i,j} z_{i}^2\td z_{j}^2 C_{ij}^2  \big|^{q/2}
			   \\
			& \lesssim
			\big(\sum_{i,j} \Em [z_{i}^2\td z_{j}^2 C_{ij}^2] \big)^{q/2}
			+ 
			\sum_{i,j} \Em [|z_{i}|^q|\td z_{j}|^q |C_{ij}|^q] 
			   \\
			& = \big(\sum_{i,j} M_{ij}^2\big)^{q/2} + \nu_{q}\td \nu_q \sum_{i,j} |C_{ij}|^q \le (1+ \nu_q \td \nu_q) \norm{C}_F^q,
		\end{align*}
		where the last inequality holds since $\sum |C_{ij}|^q \le (\sum |C_{ij}|^2)^{q/2}$ for $q\ge 2$. 

		\pf{\ref{lemma - bai silverstein concentration- main term}}
		Write $M = \begin{pmatrix}
			A& B \\C &D
		\end{pmatrix}$ 
		where $A,B,C,D$ are of dimensions such  that
		\begin{align*}
			\zz\tp
			% \begin{pmatrix}
			% 	B_{11}& B_{12}\\B_{21} & B_{22}
			% \end{pmatrix}
			M
			\zz
			=
			\zz_1\tp A \zz_1 + \zz_1\tp B \zz_2 + \zz_2\tp C\zz_1+ \zz_2\tp D \zz_2.
		\end{align*}
		By Lemma 2.7 of \citet{BaiSilverstein1998} we have
		\begin{align*}
			&\Em\big| \zz_1\tp A\zz_1  - \tr A \big|^q
			\lesssim
			(\nu_{4}^{q/2} + \nu_{2q}) \tr(AA\tp)^{q/2} 
			\le 
			(\nu_{4}^{q/2} + \nu_{2q}) \norm{M}_F^q,
			\\
			&\Em\big| \zz_2\tp D\zz_2  - \tr D\big|^q
			\lesssim
			(\tilde \nu_{4}^{q/2} + \tilde \nu_{2q}) \tr(DD\tp)^{q/2}
			\le(\nu_{4}^{q/2} + \nu_{2q}) \norm{M}_F^q.
		\end{align*}
		Then we can write
		\begin{align*}
			\Em|\zz\tp M \zz - \tr M|^{q} &\lesssim
			\Em\big| \zz_1\tp A\zz_1  - \tr A \big|^q
			+
			\Em\big| \zz_2\tp B\zz_2  - \tr B \big|^q
			\\&\quad+\Em\big| \zz_1\tp C\zz_2  \big|^q
			+\Em\big| \zz_2\tp D\zz_1  \big|^q.
		\end{align*}
		and the claim follows from (\ref{lemma - bai silverstein concentration- independent}) of the lemma.
	\end{proof}
\end{lemma}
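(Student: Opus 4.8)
The plan is to establish part (a) of the lemma directly and then deduce part (b) from it via a block decomposition of $M$, in both cases relying on (and lightly extending) the moment inequalities of \cite{BaiSilverstein1998}.

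For part (a), I would condition on $\zz_2$ and write $\zz_1\tp C\zz_2 = \sum_{i} z_i b_i$ with $b_i := (C\zz_2)_i$ deterministic given $\zz_2$. Applying a Rosenthal/Marcinkiewicz--Zygmund inequality (Lemma 2.2 and Lemma 2.3 of \cite{BaiSilverstein1998}) conditionally on $\zz_2$ gives, for $q\ge 2$,
\[
\Em\big[\,|\zz_1\tp C\zz_2|^q \,\big|\, \zz_2\,\big]
\;\lesssim\;
\Big(\sum_i b_i^2\Big)^{q/2} + \nu_q \sum_i |b_i|^q
\;\lesssim\;
(1+\nu_q)\,\norm{C\zz_2}^{q},
\]
where the last step uses the elementary inequality $\sum_i|b_i|^q\le(\sum_i b_i^2)^{q/2}$, valid for $q\ge 2$. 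Taking expectations over $\zz_2$, then bounding $\Em\,\norm{C\zz_2}^q$ by Minkowski's inequality in $L^{q/2}$ followed by a second application of the same concentration bound to each row sum $\sum_j C_{ij}\td z_j$, yields $\Em\,\norm{C\zz_2}^q\lesssim(1+\td\nu_q)\,\norm{C}_F^q$ and hence $\Em|\zz_1\tp C\zz_2|^q\lesssim(1+\nu_q\td\nu_q)\,\norm{C}_F^q$, with a constant depending only on $q,\nu_q,\td\nu_q$. For $q\in[1,2)$ I would simply use $\norm{\,\cdot\,}_{L^q}\le\norm{\,\cdot\,}_{L^2}$ and the exact variance identity $\Em|\zz_1\tp C\zz_2|^2=\norm{C}_F^2$.

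For part (b), partition $M=\begin{pmatrix}A&B\\ C&D\end{pmatrix}$ conformably with $(\zz_1,\zz_2)$, so that $\tr M=\tr A+\tr D$ and
\[
\zz\tp M\zz - \tr M
= (\zz_1\tp A\zz_1 - \tr A) + (\zz_2\tp D\zz_2 - \tr D) + \zz_1\tp B\zz_2 + \zz_2\tp C\zz_1 .
\]
The two diagonal quadratic forms are controlled by Lemma 2.7 of \cite{BaiSilverstein1998}, which gives $\Em|\zz_1\tp A\zz_1 - \tr A|^q\lesssim(\nu_4^{q/2}+\nu_{2q})\,\tr(AA\tp)^{q/2}$, and then one uses $\tr(AA\tp)=\norm{A}_F^2\le\norm{M}_F^2$ (and the analogue for $D$); the two off-diagonal bilinear forms are bounded by part (a) together with $\norm{B}_F,\norm{C}_F\le\norm{M}_F$. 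Assembling the four pieces through Minkowski's inequality in $L^q$ gives the claim, with a constant depending only on $q$ and on $\nu_k,\td\nu_k$ for $k\le 2q$ --- the $2q$-th moment entering precisely through Lemma 2.7.

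I do not expect a substantive obstacle here; the statement is a routine strengthening of classical quadratic-form concentration, and the cited results of \cite{BaiSilverstein1998} do most of the work. The only points demanding care are (i) the bookkeeping of which moments actually appear --- the Rosenthal step in (a) needs only the $q$-th moment, whereas the diagonal blocks in (b) require up to the $2q$-th moment, which is why the hypothesis is phrased as it is; and (ii) the sub-quadratic regime $q\in[1,2)$, where the inequality $\sum_i|b_i|^q\le(\sum_i b_i^2)^{q/2}$ fails and one must instead fall back on monotonicity of $L^q$-norms to reduce to the exact $q=2$ computation.
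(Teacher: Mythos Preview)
Your proposal is correct and, for part (b), follows exactly the paper's argument: block decomposition of $M$, Lemma 2.7 of \cite{BaiSilverstein1998} for the diagonal blocks, and part (a) for the cross terms. For part (a) there is a minor difference in execution: the paper applies the Bai--Silverstein moment inequalities directly to the double-indexed family $\{z_i\tilde z_j C_{ij}\}_{i,j}$ in one stroke, whereas you condition on $\zz_2$ first and then take an outer expectation via Minkowski. Both routes are standard and yield the same bound with the same moment dependence; your version has the small advantage of making the $q\in[1,2)$ case explicit, which the paper's written proof handles only implicitly.
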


Using Lemma ~\ref{lemma - properties of Psi} and Lemma ~\ref{lemma - bai silverstein concentration-} we can derive the following concentration inequalities for quadratic forms involving certain high probability events.

\begin{lemma}\label{lemma - concentration of xBx}
	Let $\xx,\ff,\epsilonb$ be defined as in \eqref{equation - definition of xx} and $q\le 2$. Under Assumptions \ref{assumptions} and either Assumptions \ref{assumptions - tau fixed} or \ref{assumptions - tau div} we have
	\begin{enumerate}
		\item For any (deterministic) square matrix $B$ of size $T- \tau$, we have
		\label{lemma- concentration of xBx (XBX) }
		\begin{align*}
			% &\xx_{i,[1:T- \tau]}\tp B \xx_{j, [1:T -\tau]} - \Em[\xx_{i,[1:T- \tau]}\tp B \xx_{j, [1:T- \tau]}]
			% % & =  O(\sigma_i^2 \sigma_j^2 T^{-2}) \norm{B}_F^2 
			% % & =  O\left(\frac{\sigma_i^2 \sigma_j^2}{ T^q}\right) \norm{B}_F^q
			% =
			% O_{L^q}
			% \left({\sigma_i \sigma_j }{\rt {T}} \norm{B} \right),
			% \\
			&\xx_{i,[1: T - \tau]}\tp B \xx_{j, [\tau+1:T]} - \Em[\xx_{i,[1: T - \tau]}\tp B \xx_{j, [\tau+1:T]}]
			% & =  O(\sigma_i^2 \sigma_j^2 T^{-2}) \norm{B}_F^2 
			% & =  O\left(\frac{\sigma_i^2 \sigma_j^2}{ T^q}\right) \norm{B}_F^q
			=
			O_{L^q}
			\left({\sigma_i \sigma_j }{\rt {T}} \norm{B} \right),
		\end{align*}
		where the expectation is satisfies
		\begin{align*}
			% &\Em[\xx_{i,[1:T -\tau]}\tp B \xx_{j, [1:T -\tau]}] = 1_{i=j}\tr \big(\Psi_0^{ii}(B) \big)
			% = 1_{i=j}O(\sigma_i^2 T\norm{B}) ,
			% \\
			&\Em[\xx_{i,[1: T - \tau]}\tp B \xx_{j, [\tau+1:T]}] 
			= 1_{i=j}\tr \big(\Psi_1^{ii}(B) \big)
			= 1_{i=j}O(\sigma_i^2 T\norm{B}) .
		\end{align*}

		\item \label{lemma- concentration of xBx (FBE) }
		For all $i,j$ we have 
		$\Em[\ff_{i, [1:T- \tau]}\tp B \epsilonb_{j,[\tau+1: T]}] =0$ 
		and
		\begin{align*}
			% &\ff_{i, [1:T- \tau]}\tp B \epsilonb_{j,[1: T-\tau]}= O_{L^{2q}}(\sigma_i \rt T \norm{B} ),
			% \\
			&\ff_{i, [1:T- \tau]}\tp B \epsilonb_{j,[\tau+1: T]}= O_{L^{2q}}(\sigma_i \rt T \norm{B} ).
		\end{align*}
		
		% \vspace{2mm}
		\item \label{lemma- concentration of xBx (XRX) }
		Suppose $n\in \{1, \ldots, K\}$ and $c_1,c_2$ are positive constants with $c_1<c_2$. Pick any
		\begin{align*}
			a \in \left[c_1   , c_2  \right]\mu_{n,\tau}^2.
		\end{align*} 
		Recall from \eqref{equation - definition of R} the resolvent $R(a) := (I_{T- \tau} - a^{-1} E_\tau\tp E_\tau E\tp E)^{-1}$, then
		\begin{align*}
			\xx_{i, [1, T- \tau]}\tp R(a)^k  \xx_{j, [\tau+1 : T]}1_{\cB_0}
			- \underline\Em[\xx_{i, [1, T- \tau]}\tp R(a)^k \xx_{j, [\tau+1 : T]}1_{\cB_0}]=
			O_{L^q}(\sigma_i \sigma_j \rt T),
		\end{align*}
		for all $k\in\N$, 
		where $\underline\Em[\ \cdot \ ] := \Em[\ \cdot\ |\cF_p]$ is defined in \eqref{equation - Emcond}. In particular, 
		\begin{align*}
			\xx_{i, [1, T- \tau]}\tp R(a)^k  \xx_{j, [\tau+1 : T]}
			- \underline\Em[\xx_{i, [1, T- \tau]}\tp R(a)^k \xx_{j, [\tau+1 : T]}1_{\cB_0}]=
			O_{p}(\sigma_i \sigma_j \rt T).
		\end{align*}

		\item \label{lemma- concentration of xBx part d }
		Parts	(\ref{lemma- concentration of xBx (XBX) })-(\ref{lemma- concentration of xBx (XRX) }) of the lemma remain true if the vector  $\xx_{j,[\tau+1:T]}$ is replaced by $\xx_{j,[1:T- \tau]}$ and the operator $\Psi^{ii}_1$ is replaced by $\Psi^{ii}_0$. 
	\end{enumerate}
	
		%%%%%%%%%%%%%%%%%%%%%%%%%%%%%%%%%%%%%%%%%%%%%%%%%%%%%%%%%
	\begin{proof}
		\pf{\ref{lemma- concentration of xBx (XBX) }}
		We apply the truncation procedure as described in \eqref{equation - definition of xit truncated}. 
		Recalling  \eqref{equation - definition of xit truncated}, \eqref{equation - definition of xxL = zz PHIL} and \eqref{equation - definition of Psi} we may write
		\begin{align*}
			\xx_{i,[1:T- \tau]}^{(L)\top} B \xx_{j, [\tau+1 :T]}^{(L)}
			   % =   (\sigma_i\ff_{i, [1:T- \tau]} + \epsilonb_{i, [1:T- \tau]})\tp
			% B
			% (\sigma_j \ff_{j, [\tau+1:T]} + \epsilonb_{j, [\tau+1:T]} )
			% \\
			& =  
			(\sigma_i\ff_{i, [1:T]}^{(L)} + \epsilonb_{i, [1:T]})\tp
				\begin{pmatrix}
					I_{T - \tau}
					\\
					\0
				\end{pmatrix}
				B
				\begin{pmatrix}
					\0
					\\
					I_{T - \tau}
				\end{pmatrix}\tp
				(\sigma_j \ff_{j, [1:T]}^{(L)} + \epsilonb_{j, [1:T]} )
				   \\
			&= 
			(\zz_{i,[1-L: T]}\tp , \epsilonb_{i,[1:T]}\tp)
			(\Psi_1^{ij}(B))
			(\zz_{i,[1-L: T]}\tp , \epsilonb_{i,[1:T]}\tp)\tp.
			% \numberthis\label{equation - truncated quad from}
		\end{align*}
		Applying (\ref{lemma - bai silverstein concentration- main term}) of Lemma ~\ref{lemma - bai silverstein concentration-} to the above quadratic form gives
		\begin{align*}
			\numberthis\label{equation - DCT in proof xBx concentration}
			\Em\left|\xx_{i,[1:T- \tau]}^{(L)\top} B \xx_{j, [\tau+1 :T]}^{(L)}
			- \Em\left[\xx_{i,[1:T- \tau]}^{(L)\top} B \xx_{j, [\tau+1 :T]}^{(L)}\right]\right|^q
			% & =  O(\sigma_i^2 \sigma_j^2 T^{-2}) \norm{B}_F^2 
			&\lesssim \norm{\Psi^{ij}_1(B)}_F^q,
		\end{align*}
		where $\Em[\xx_{i,[1:T]}^{(L)\top} B \xx_{j, [1:T]}^{(L)}] 
			= 1_{i=j}\tr (\Psi_1^{ii}(B) ).$
		Using 
		Lemma  \ref{lemma - properties of Psi} we see that
		\begin{align*}
			\norm{\Psi^{ij}_0(B)}_F^q = O\left({\sigma_i^q \sigma_j^q}\right)\norm{B}_F^q
			=(T- \tau)^{q/2} O\left({\sigma_i^q \sigma_j^q}\right)\norm{B}^{q}
			=T^{q/2} O\left({\sigma_i^q \sigma_j^q}\right)\norm{B}^{q},
		\end{align*}
		and
			$$\Em[\xx_{i,[1:T]}^{(L)\top} B \xx_{j, [1:T]}^{(L)}] = 1_{i=j} O(\sigma_i^2 (T- \tau)) \norm{B}
			= 1_{i=j}  O(\sigma_i^2 T) \norm{B},$$
		both of which are uniform in $L$.

		Since $(\phi_{il})_l$ is summable and $(z_{it})$ have uniformly bounded $4$-th moments, it is clear that $\xx_{i}^{(L)}/\sigma_i $ converges to $\xx_{i}/\sigma_i$ in $L^4$ as $L\to\infty$, for each fixed $T$. By the dominated convergence theorem with \eqref{equation - DCT in proof xBx concentration} as an upper-bound, we can take the limit as $L\to\infty$ inside the expectation in \eqref{equation - DCT in proof xBx concentration} and the claim follows. 
	
		\pfspace
		\pf{\ref{lemma- concentration of xBx (FBE) }} follows from similar computations as in (a) and is omitted.

		\pfspace
		\pf{\ref{lemma- concentration of xBx (XRX) }}
		Note that $E_\tau\tp E_\tau E\tp E$ has bounded operator norm under the event $\cB_0$ defined in \eqref{equation - high prob event}. 
		Since $a \asymp \sigma_n^4 \gamma_n(\tau)^2$ diverges as $T\to\infty$, the resolvent $R(a)$ is well-defined under $\cB_0$ and $\norm{R(a)^k1_{\cB_0}} = O(1)$. 
		After conditioning on the $\sigma$-algebra $\cF$ defined in \eqref{equation - definition of sigalg CFp},  we can then apply (\ref{lemma- concentration of xBx (XBX) }) of the Lemma and get
		\begin{align*}
			&\underline\Em\left|\xx_{i,[1:T- \tau]}\tp R(a)^k \xx_{j, [\tau+1 :T]} 1_{\cB_0}
			- \underline\Em\left[\xx_{i,[1:T- \tau]}\tp R(a)^k \xx_{j, [\tau+1 :T]}1_{\cB_0}\right]  \right|^q
			% & =  O(\sigma_i^2 \sigma_j^2 T^{-2}) \norm{B}_F^2 
			\lesssim T^{q/2}O(\sigma_i^q \sigma_j^q).
		\end{align*}
		Taking expectations again to remove the conditioning, we obtain
		\begin{align*}
			&\Em\left|\xx_{i,[1:T- \tau]}\tp R(a)^k \xx_{j, [\tau+1 :T]} 1_{\cB_0}
			- \underline\Em\left[\xx_{i,[1:T- \tau]}\tp R(a)^k \xx_{j, [\tau+1 :T]}1_{\cB_0}\right]  \right|^q
			% & =  O(\sigma_i^2 \sigma_j^2 T^{-2}) \norm{B}_F^2 
			\lesssim T^{q/2}O(\sigma_i^q \sigma_j^q).
		\end{align*}
		Note that $\underline\Em[\xx_{i,[1:T- \tau]}\tp R(a)^k \xx_{j, [\tau+1 :T]}1_{\cB_0}]=0$ for all $i\ne j$ by (\ref{lemma- concentration of xBx (XBX) }) of the Lemma. So
		\begin{align*}
			\xx_{i, [1, T- \tau]}\tp R(a)^k \xx_{j, [\tau+1 : T]}1_{\cB_0}
			=
			1_{i=j}\underline\Em[\xx_{i, [1, T- \tau]}\tp R(a)^k \xx_{i, [\tau+1 : T]}1_{\cB_0}] + O_{L^q}(\sigma_i \sigma_j \rt T ).
		\end{align*}
		By Lemma ~\ref{lemma - high prob event} we have $1_{\cB_0} = 1 - o_p(1)$, from which the last claim follows. 

		\pfspace
		\pf{\ref{lemma- concentration of xBx part d }} follows from similar computations to the above and is omitted.
	\end{proof}
\end{lemma}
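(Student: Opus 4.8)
The plan is to reduce every bilinear form in the statement to a quadratic or bilinear form in the underlying i.i.d.\ innovations, apply the Bai--Silverstein type estimate of Lemma \ref{lemma - bai silverstein concentration-}, and then control the resulting traces and Frobenius norms using the operator bounds of Lemma \ref{lemma - properties of Psi}. The only genuine wrinkle is that the representation $x_{it}=\sigma_i\sum_{l\ge 0}\phi_{il}z_{i,t-l}+\epsilon_{it}$ is an infinite moving average, so one first argues with the truncated process $x^{(L)}_{it}$ of \eqref{equation - definition of xit truncated} and then passes to the limit $L\to\infty$.

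For part (a), first I would use \eqref{equation - definition of xxL = zz PHIL} together with the definition \eqref{equation - definition of Psi} of $\Psi^{ij}_1$ to rewrite $\xx^{(L)\top}_{i,[1:T-\tau]}\,B\,\xx^{(L)}_{j,[\tau+1:T]}$ as $\zz\tp\big(\Psi^{ij}_1(B)\big)\zz$, where $\zz$ stacks the relevant truncated innovation vectors (and idiosyncratic noise); when $i\ne j$ the two sides involve disjoint blocks of $\zz$, so this is in fact a bilinear form in independent subvectors. Lemma \ref{lemma - bai silverstein concentration-} then yields $\Em\big|\,\cdot-\Em[\,\cdot\,]\,\big|^q\lesssim\norm{\Psi^{ij}_1(B)}_F^q$ with $\Em[\,\cdot\,]=1_{i=j}\tr\Psi^{ii}_1(B)$, and Lemma \ref{lemma - properties of Psi} bounds $\norm{\Psi^{ij}_1(B)}_F\lesssim\sigma_i\sigma_j\rt T\,\norm B$ and $|\tr\Psi^{ii}_1(B)|\lesssim\sigma_i^2 T\norm B$, \emph{uniformly in} $L$. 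Because $\phib_i\in\ell_1$ and the $z_{it}$ have uniformly bounded fourth moments (Assumption \ref{assumptions}), $x^{(L)}_{it}/\sigma_i\to x_{it}/\sigma_i$ in $L^4$ for each fixed $T$; the uniform-in-$L$ bound serves as a dominating function, so by dominated convergence we may pass $L\to\infty$ inside the expectation and obtain the claim for the untruncated form. Part (b) is the same argument, only easier: $\ff_i$ and $\epsilonb_j$ are independent, so the form has mean zero and one applies the independent bilinear case of Lemma \ref{lemma - bai silverstein concentration-}, with the relevant matrix having Frobenius norm $O(\sigma_i\rt T\norm B)$ by Lemma \ref{lemma - properties of Psi}.

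For part (c), the point is that on the event $\cB_0$ of \eqref{equation - high prob event} the matrix $E_\tau\tp E_\tau E_0\tp E_0$ has operator norm bounded by a \emph{deterministic} constant, and since $a\asymp\mu_{n,\tau}^2\to\infty$ the resolvent $R(a)$ from \eqref{equation - definition of R} is then well defined with $\norm{R(a)^k1_{\cB_0}}=O(1)$ non-random. Both $\cB_0$ and $R(a)$ are $\cF_p$-measurable (see \eqref{equation - definition of sigalg CFp}), whereas all innovations entering $\xx_{i,[\cdot]}$ (the $z$'s and the idiosyncratic terms $\epsilon_{it}$, $i\le K$) are independent of $\cF_p$. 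Hence, conditioning via $\underline\Em[\,\cdot\,]=\Em[\,\cdot\,|\cF_p]$ as in \eqref{equation - Emcond}, the matrix $R(a)^k1_{\cB_0}$ plays exactly the role of the deterministic $B$ in part (a): applying part (a) conditionally gives $\underline\Em\big|\xx_i\tp R(a)^k\xx_j1_{\cB_0}-\underline\Em[\,\cdot\,]\big|^q\lesssim T^{q/2}\sigma_i^q\sigma_j^q$, and integrating out $\cF_p$ (noting the conditional cross-mean vanishes for $i\ne j$) gives the stated $O_{L^q}(\sigma_i\sigma_j\rt T)$ bound. Since $1_{\cB_0}=1-o_p(1)$ by Lemma \ref{lemma - high prob event}, the factor $1_{\cB_0}$ inside the first argument can be discarded at the cost of an $o_p$ term, yielding the $O_p$ version. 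Part (d) repeats verbatim with $\xx_{j,[1:T-\tau]}$ replacing $\xx_{j,[\tau+1:T]}$: this only swaps $\Psi^{ij}_1$ for $\Psi^{ij}_0$, and Lemma \ref{lemma - properties of Psi} gives identical bounds for $\Psi^{ij}_0$.

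I expect the main obstacle to be the truncation/dominated-convergence step: arranging the quadratic-form representation so that the norm and trace bounds from Lemma \ref{lemma - properties of Psi} are genuinely uniform in $L$, and keeping the $i=j$ and $i\ne j$ cases straight (mean $\tr\Psi^{ii}_1(B)$ versus mean zero). The secondary subtlety is in part (c): verifying that $1_{\cB_0}R(a)^k$ is $\cF_p$-measurable with a deterministic operator-norm bound, which is precisely what lets the deterministic-$B$ estimate of part (a) be applied after conditioning.
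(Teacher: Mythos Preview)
Your proposal is correct and follows essentially the same approach as the paper: truncate to $x^{(L)}$, rewrite the bilinear form via the operator $\Psi_1^{ij}$ of \eqref{equation - definition of Psi}, invoke Lemma \ref{lemma - bai silverstein concentration-} for the concentration and Lemma \ref{lemma - properties of Psi} for the uniform-in-$L$ norm/trace bounds, then pass $L\to\infty$ by dominated convergence; part (c) is handled identically by conditioning on $\cF_p$ so that $R(a)^k1_{\cB_0}$ becomes a deterministic matrix with bounded norm. Your treatment is in fact slightly more careful than the paper's in distinguishing the $i=j$ (quadratic, mean $\tr\Psi_1^{ii}(B)$) and $i\ne j$ (bilinear in independent blocks, mean zero) cases.
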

Note that the expectations appearing in the previous lemma are conditional on the noise series $\epsilonb$. The following lemma gives a preliminary computation on the unconditional moments of certain quadratic forms. 
Recall matrices $B(a)$, $A(a)$ and $Q(a)$:
	\begin{align*}
		A(a):= \frac{1}{\rt a}&{ X_0 R(a) X_\tau\tp},
		\quad
		B(a):= \frac{1}{a}{ X_\tau  E_0\tp E_0 R(a) X_\tau\tp},
		\\
		&Q(a) := I_K - a^{-1}X_0 R_{a} E_\tau \tp E_\tau  X_0\tp.
	\end{align*}
\begin{lemma}\label{lemma - expectation of ABQ}
	Under the same setting as (\ref{lemma- concentration of xBx (XRX) }) of Lemma  \ref{lemma - concentration of xBx}, we have
	\begin{align*}
		\Em[&A(a)_{ij}1_{\cB_0}]  =  1_{i=j}\left(\frac{\sigma_i^2 \gamma_i(\tau)}{a^{1/2}} + o(1)\right)\\
		&\Var(A(a)_{ij} 1_{\cB_0})  =   O\left(\frac{\sigma_{i}^2 \sigma_j^2}{a T}\right) ,
		\\
		\Em[B(a)_{ij}&1_{\cB_0}]  =  1_{i=j}o(1),
		\quad\Em[Q(a)^{-1}_{ij}1_{\cB_2}]  =  1_{i=j}+ o(1).
	\end{align*}

	\begin{proof}
		Since $\xx_i = \sigma_i\ff_i + \epsilonb_i$, we first observe that
		\begin{align*}
			\numberthis\label{equation - expectation of xoxtau 2}
			\frac{1}{\rt a T}\Em [\xx_{i, [1, T- \tau]}\tp \xx_{j, [\tau+1 : T]}]
			=
			\frac{1}{\rt a T}\Em [\sigma_i^2 \ff_{i, [1, T- \tau]}\tp \ff_{j, [\tau+1 : T]}] = 1_{i=j}\frac{\sigma_i^2\gamma_i(\tau) }{\rt a}.
		\end{align*}
		By definition, the event $\cB_0$ is independent from the vector $\xx$. Therefore 
		\begin{align*}
			\Em[A(a)_{ij}1_{\cB_0}] & =  
			\frac{1}{\rt a T}\Em [\xx_{i, [1, T- \tau]}\tp \xx_{j, [\tau+1 : T]}1_{\cB_0}]
			+
			\frac{1}{\rt a T}\Em [\xx_{i, [1, T- \tau]}\tp (R(a) - I)1_{\cB_0} \xx_{j, [\tau+1 : T]}]
			\\
			& = 
			1_{i=j}\frac{\sigma_i^2\gamma_i(\tau) }{\rt a} \Pm(\cB_0)
			+
			\frac{1}{\rt a T}\Em [\xx_{i, [1, T- \tau]}\tp (R(a) - I)1_{\cB_0} \xx_{j, [\tau+1 : T]}]
			\\
			& = 
			1_{i=j}\left(\frac{\sigma_i^2\gamma_i(\tau) }{\rt a} + o(1)\right)
			+
			\frac{1}{\rt a T}\Em [\xx_{i, [1, T- \tau]}\tp (R(a) - I)1_{\cB_0} \xx_{j, [\tau+1 : T]}],
		\end{align*}
		where the last equality follows since $\Pm(\cB_0) = 1+ o(1)$ by Lemma ~\ref{lemma - high prob event}. 
		It remains to compute the last expectation above. 
		Recall from \eqref{equation - first resolvent identity} that the resolvent $R(a)$ satisfies $R(a) - I = a^{-1} E_\tau\tp E_\tau E\tp E R(a)$. By definition of $\cB_0$ we have $\norm{E_\tau\tp E_\tau E\tp E 1_{\cB_0}} = O(1)$ and $\norm{R(a) 1_{\cB_0}}=O(1)$. Therefore
		\begin{align*}
		 	(R(a) - I)1_{\cB_0} = O_{\norm{\cdot}}(a^{-1}).
		 	% = O_{\norm{\cdot}}(\sigma_n^{-4} \gamma_n(\tau)^{-2}) = o_{\norm{\cdot}}(1).
		 	\numberthis\label{euqation - (Ra-I) 1cF is o(1)}
		\end{align*} 
		Using \eqref{euqation - (Ra-I) 1cF is o(1)} and (\ref{lemma- concentration of xBx (XBX) }) of Lemma ~\ref{lemma - concentration of xBx} and taking iterated expectations we obtain
		\begin{align*}
			\frac{1}{\rt a T}\Em [\xx_{i, [1, T- \tau]}\tp (R(a)& - I)1_{\cB_0} \xx_{j, [\tau+1 : T]}]
			=
			\frac{1}{\rt a T}\Em\big[ \underline\Em[\xx_{i, [1, T- \tau]}\tp (R(a) - I)1_{\cB_0} \xx_{j, [\tau+1 : T]}]\big]
			\\
			& =  
			1_{i=j}\frac{1}{\rt a T} O({\sigma_i^2}T)\Em [\norm{R(a) - I }1_{\cB_0}]
			=
			1_{i=j} o(1).
		\end{align*}

		For the second moment, 
		% we first note that
		% \begin{align*}
		% 	\Em[A(a)_{ij}^2 1_{\cB_0}]
		% 	& = 
		% 	\Em[A(a)_{ij} 1_{\cB_0}]^2 + \Em\left[  \left(A(a)_{ij} 1_{\cB_0} - \Em[A(a)_{ij} 1_{\cB_0}]\right)^2    \right]
		% 	\\
		% 	& = 1_{i=j}\left(\frac{\sigma_i^4 \gamma_i(\tau)^2}{a} + o(1)\right)
		% 	+ \Em\left[  \left(A(a)_{ij} 1_{\cB_0} - \Em[A(a)_{ij} 1_{\cB_0}]\right)^2    \right],
		% \end{align*}
		% where the last equality follows from the computations in the first part of the lemma. It remains to bound the last term above. 
		using $(a-b)^2  = (a-c)^2 + (c-b)^2 +2(a-c)(c-b)$, we write
		\begin{align*}
			(A(a)_{ij}& 1_{\cB_0} - \Em[A(a)_{ij} 1_{\cB_0}])^2
			\numberthis\label{equation - proof of 2nd moment of Aij decomp}
			\\
			& =  
			(A(a)_{ij} 1_{\cB_0} - \underline\Em[A(a)_{ij} 1_{\cB_0}])^2
			+ (\underline\Em[A(a)_{ij} 1_{\cB_0}] -\Em[A(a)_{ij} 1_{\cB_0}])^2
			\\
			&
			\quad+ 2 (A(a)_{ij} 1_{\cB_0} - \underline\Em[A(a)_{ij} 1_{\cB_0}])(\underline\Em[A(a)_{ij} 1_{\cB_0}] -\Em[A_{ij} 1_{\cB_0}]).
		\end{align*}
		where by \eqref{lemma- concentration of xBx (XRX) } of
		Lemma ~\ref{lemma - concentration of xBx} we have
		\begin{align*}
			\Em \left[(A(a)_{ij} 1_{\cB_0} - \underline\Em[A(a)_{ij} 1_{\cB_0}])^2\right] = 
			\frac{1}{a T^2}O \left({\sigma_i^2 \sigma_j^2T}\right) = O\left(\frac{\sigma_i^2 \sigma_j^2}{a T}\right),
		\end{align*}		and from Lemma ~\ref{lemma - conditional expectations} (whose proof does not depend on the current lemma) we recall 
		\begin{align*}
			\Em \left[(\underline\Em[A(a)_{ij} 1_{\cB_0}] -\Em[A(a)_{ij} 1_{\cB_0}])^2\right] = O\left(\frac{1}{a T}\right).
		\end{align*}
		Taking expectation of \eqref{equation - proof of 2nd moment of Aij decomp} and using the Cauchy Schwarz inequality we have
		\begin{align*}
			\Em[(A(a)_{ij} 1_{\cB_0} - &\Em[A(a)_{ij} 1_{\cB_0}])^2] = O\left(\frac{\sigma_i^2 \sigma_j^2}{a T}\right).
		\end{align*}
		The expectation of $B(a)$ can be computed based on the same ideas and is omitted. 

		Lastly, under the event ${\cB_2}$, the matrix $Q(a)$ is invertible with $\norm{Q(a)1_{{\cB_2}}} = O(1)$. We recall from \eqref{equation - first resolvent identity for Q} that the inverse of $Q(a)$ satisfies
		\begin{align*}
			Q(a)^{-1} = I_K + \frac{1}{a}Q(a)^{-1} X_0 R(a) E_\tau\tp E_\tau X_0\tp.
		\end{align*}
		By definition of ${\cB_2}$ we know $1_{\cB_2} Q(a)^{-1} X_0 R(a) E_\tau\tp E_\tau X_0\tp = O_{\norm{\cdot}}(\sigma_1^2)$ and therefore 
		\begin{align*}
			Q(a)^{-1} 1_{\cB_2} = 1_{\cB_2} I_K  + o_{\norm{\cdot}}(1)
		\end{align*}
		and the last claim follows after taking expectations.
	\end{proof}
\end{lemma}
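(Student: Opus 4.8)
The plan is to establish the four estimates separately, in each case exploiting three structural features. First, the event $\cB_0$ (hence $\cB_2$) is measurable with respect to the noise $\sigma$-algebra $\cF_p$, so it is independent of $\xx$ and is frozen under the conditional expectation $\underline\Em[\,\cdot\,]=\Em[\,\cdot\mid\cF_p]$. Second, because $a$ is of the order of the centering $\theta_{n,\tau}\asymp\sigma_n^4\gamma_n(\tau)^2\to\infty$ and $\norm{E_\tau\tp E_\tau E_0\tp E_0\,1_{\cB_0}}$ is bounded by a non-random constant, the resolvent identity \eqref{equation - first resolvent identity} gives the \emph{deterministic} bound $(R(a)-I_{T-\tau})1_{\cB_0}=O_{\norm{\cdot}}(a^{-1})$ together with $\norm{R(a)1_{\cB_0}}=O(1)$. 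Third, conditionally on $\cF_p$ the matrices $R(a)1_{\cB_0}$, $E_0\tp E_0R(a)1_{\cB_0}$, etc.\ are deterministic, so the trace formula and the concentration bound of Lemma \ref{lemma - concentration of xBx} apply in conditional form; combined with the second feature, this forces the contribution of $R(a)-I$ to be of strictly smaller order than the leading term.

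For $\Em[A(a)_{ij}1_{\cB_0}]$ (abbreviating $\xx_{i,[1:T-\tau]},\xx_{j,[\tau+1:T]}$ as $\xx_{i0},\xx_{j\tau}$), I would write $A(a)_{ij}=\frac1{\sqrt aT}\xx_{i0}\tp\xx_{j\tau}+\frac1{\sqrt aT}\xx_{i0}\tp(R(a)-I)\xx_{j\tau}$. The first term has expectation $1_{i=j}\frac{T-\tau}{T}\frac{\sigma_i^2\gamma_i(\tau)}{\sqrt a}\Pm(\cB_0)$ by independence of $\cB_0$ from $\xx$ and the elementary identity $\Em[\xx_{i0}\tp\xx_{j\tau}]=1_{i=j}(T-\tau)\sigma_i^2\gamma_i(\tau)$ (the noise and cross-factor parts vanish since $\tau\ge1$ and distinct factors are independent); as $\Pm(\cB_0)=1-o(1)$ by Lemma \ref{lemma - high prob event}, $(T-\tau)/T\to1$, and $\sigma_i^2\gamma_i(\tau)/\sqrt a\asymp\sigma_i^2\gamma_i(\tau)/(\sigma_n^2\gamma_n(\tau))=O(1)$ by Assumption \ref{assumptions} and the comparability of the factor lags, this is $1_{i=j}\big(\sigma_i^2\gamma_i(\tau)/\sqrt a+o(1)\big)$. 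For the second term I would condition on $\cF_p$ and apply the trace formula of part~(a) of Lemma \ref{lemma - concentration of xBx} to the deterministic matrix $(R(a)-I)1_{\cB_0}$, giving $\underline\Em[\xx_{i0}\tp(R(a)-I)\xx_{j\tau}1_{\cB_0}]=1_{i=j}O\big(\sigma_i^2T\,\norm{(R(a)-I)1_{\cB_0}}\big)$; iterated expectations together with the bound $(R(a)-I)1_{\cB_0}=O_{\norm{\cdot}}(a^{-1})$ then make this term $1_{i=j}O(\sigma_i^2a^{-3/2})=1_{i=j}o(1)$. The estimate $\Em[B(a)_{ij}1_{\cB_0}]=1_{i=j}o(1)$ is obtained the same way: write $B(a)_{ij}=\frac1{aT}\xx_{i\tau}\tp\big(E_0\tp E_0R(a)1_{\cB_0}\big)\xx_{j\tau}$, note the middle matrix is $\cF_p$-measurable with norm $O(1)$ under $\cB_0$, condition, and apply the trace bound to get $1_{i=j}\frac1{aT}O(\sigma_i^2T)=1_{i=j}O(\sigma_i^2/a)=1_{i=j}o(1)$ by part~(b) of Assumption \ref{assumptions - tau fixed} or \ref{assumptions - tau div}.

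For $\Var(A(a)_{ij}1_{\cB_0})$ I would decompose $A(a)_{ij}1_{\cB_0}-\Em[A(a)_{ij}1_{\cB_0}]=\big(A(a)_{ij}1_{\cB_0}-\underline\Em[A(a)_{ij}1_{\cB_0}]\big)+\big(\underline\Em[A(a)_{ij}1_{\cB_0}]-\Em[A(a)_{ij}1_{\cB_0}]\big)$. The first bracket has second moment $O(\sigma_i^2\sigma_j^2/(aT))$ by the conditional concentration bound of part~(c) of Lemma \ref{lemma - concentration of xBx}; the second bracket, a difference of conditional and unconditional expectations, is $O(1/(aT))$ in squared mean by Lemma \ref{lemma - conditional expectations} (whose proof does not use the present lemma). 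A Cauchy--Schwarz cross-term then yields $O(\sigma_i^2\sigma_j^2/(aT))$. The last claim, $\Em[Q(a)^{-1}_{ij}1_{\cB_2}]=1_{i=j}+o(1)$, does not need the quadratic-form machinery: iterating the resolvent identity \eqref{equation - first resolvent identity for Q} and using $\norm{Q(a)^{-1}1_{\cB_2}}=1+o(1)$ from \eqref{equation - norm of Q 1cB}, $\norm{X_0\tp X_0}1_{\cB_1}=O(K\sigma_1^2)$ from \eqref{equation - high prob event}, and $\norm{R(a)E_\tau\tp E_\tau1_{\cB_0}}=O(1)$, one gets $\frac1aQ(a)^{-1}X_0R(a)E_\tau\tp E_\tau X_0\tp1_{\cB_2}=O_{\norm{\cdot}}(K\sigma_1^2/a)=O_{\norm{\cdot}}\big(K/(\sigma_1^2\gamma_1(\tau)^2)\big)=o_{\norm{\cdot}}(1)$ by part~(b) of Assumption \ref{assumptions - tau fixed} or \ref{assumptions - tau div}; hence $Q(a)^{-1}1_{\cB_2}=I_K1_{\cB_2}+o_{\norm{\cdot}}(1)$, and taking expectations with $\Pm(\cB_2)=1-o(1)$ finishes.

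The main obstacle is the second bracket in the variance bound, i.e.\ controlling how far the conditional mean $\underline\Em[A(a)_{ij}1_{\cB_0}]$ strays from its unconditional value: by the first two steps this reduces to bounding the fluctuations of $\tr\big(\Psi_1^{ii}((R(a)-I)1_{\cB_0})\big)$, which is a trace of a product of sample-covariance-type matrices built from the Gaussian noise --- precisely the content of Lemma \ref{lemma - conditional expectations}. Everything else reduces to the deterministic resolvent bound of the first step combined with the already-established estimates in Lemma \ref{lemma - high prob event}, Lemma \ref{lemma - properties of Psi}, and Lemma \ref{lemma - concentration of xBx}.
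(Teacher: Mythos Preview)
Your proposal is correct and follows essentially the same route as the paper: the same split $R(a)=I+(R(a)-I)$ with the deterministic bound $(R(a)-I)1_{\cB_0}=O_{\norm{\cdot}}(a^{-1})$, the same conditioning-on-$\cF_p$ plus trace formula from Lemma~\ref{lemma - concentration of xBx} for the expectations, the same two-bracket variance decomposition via $\underline\Em$ with Lemma~\ref{lemma - concentration of xBx}(c) and Lemma~\ref{lemma - conditional expectations} handling the two pieces, and the same resolvent identity \eqref{equation - first resolvent identity for Q} for $Q(a)^{-1}$. Your write-up is in fact slightly more careful in one place (you track the $(T-\tau)/T$ factor explicitly, which the paper absorbs directly into the $o(1)$).
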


%\section*{Appendix D. Estimates on resolvents}\label{section - resolvents}
\section{Estimates on resolvents}\label{section - resolvents}
Define the following families of $\sigma$-algebras $(\cF_i)_{i=1}^p$ and $(\underline\cF_i)_{i=1}^K$ by
\begin{align*}
	\cF_i:= \sigma\big( \epsilonb_{[K+1, K+i], [1:T]} \big),
	\quad 
	\underline \cF_i := \sigma\big(  \xx_{[1:i],[1:T]}, \epsilonb_{[K+1,K+p], [1:T]} \big),
\end{align*}
i.e. $\cF_i$ is the $\sigma$-algebra generated by first $i$ coordinates of the noise series $\epsilonb$ and $\underline\cF_i$ is generated by all $p$ coordinates of $\epsilonb$ plus the first $i$ coordinates of the series $\xx$. 

Throughout the appendix we will write
\begin{align*}
	\Em_i[\ \cdot \ ]:=\Em_i[\ \cdot \ |\cF_i],\quad \underline\Em_i[\ \cdot \ ]:=\Em_i[\ \cdot \ |\underline\cF_i].
	\numberthis\label{notations - conditional expectations}
\end{align*}
Note that by definition $\Em_0[\ \cdot \ ] = \Em [\ \cdot \ ]$ and
	$\Em_p[\ \cdot \ ] = \underline \Em_0[\ \cdot \ ] =\underline\Em[\ \cdot \ ]$. 
	% On top of the subscripts and underlines, we will keep using superscripts as defined in \eqref{equation - definition Em event}, e.g. we write
	% \begin{align*}
	% 	\Em_i^{\cB_k}[\ \cdot \ ]:= \Em[\ \cdot \ 1_{\cB_k} |\cF_i], \quad k=0,1,2.
	% 	\numberthis\label{notations - conditional expectations and event}
	% \end{align*}

We first develope a concentration inequality for normalized traces of the  resolvent $R$.
\begin{lemma}\label{lemma - concentration of R}
	For any matrix $B$ with $T - \tau$ columns, we have
%%%%%%%%%%%%%%%%%%%%%%%%%%%%%%%%%%%%%%%%%%%%
	% \newtagform{labelinglemmas}{}{()}
	% \usetagform{labelinglemmas}
	%%%%%%%%%%%%%%%%%%%%%%%%%%%%%%%%%%%%%%%%%%%%%%%%
	{\begin{align}
			\tag{a}\label{equation  - lemma concentration of R}
			\frac{1}{T}\tr( B (R1_{\cB_0} - \Em[R1_{\cB_0}])) & =  O_{L^2}\left(\frac{\norm{B}}{\theta\rt{ T}} \right),
			\\
			\tag{b}\label{equation  - lemma concentration of EER}
			\frac{1}{T}\tr( B (E_0\tp E_0 R1_{\cB_0} - \Em[E_0\tp E_0& R1_{\cB_0}]))  =  O_{L^2}\left(\frac{\norm{B}}{\rt{ T}} \right).
		\end{align}}
	% \usetagform{default}
	\vspace{-5mm}
	\begin{proof}\pf{\ref{equation  - lemma concentration of R}}
	Similar to Lemma ~\ref{lemma - concentration of Q inverse} the proof is based on a martingale difference decomposition of $R  1_{\cB_0}- \Em[R 1_{\cB_0}]$. We first setup the necessary notations and carry out some preliminary computations. 

		Recall that the $k$-th row of $E_0$ is equal to $T^{-1/2}\epsilonb_{K+k,[1:T - \tau]}\tp$. For brevity of notation we will adopt the following notation
		\begin{align*}
			\numberthis\label{equation - definition of underline epsilon k}
			\underline \epsilonb_{k0}:=\epsilonb_{K+k,[1:T - \tau]},
			\quad 
			\underline \epsilonb_{k \tau}:=\epsilonb_{K+k,[\tau+1 :T]}.
		\end{align*}
		Let $E_{k0}$ and $E_{k\tau}$ be the matrices $E_0$ and $E_\tau$ with the $k$-th row replaced by zeros, i.e. $E_{k0}:= E_0 - T^{-1/2} \ee_k \underline \epsilonb_{k 0}$ and
		$E_{k\tau}:= E_0 - T^{-1/2} \ee_k \underline \epsilonb_{k \tau}$. Define
		\begin{align*}
			\underline R_{k}:= \Big(I_T - \frac{1}{\theta} E_{k\tau}\tp E_{k\tau} E_0\tp E_0\Big)^{-1},
			\quad
			R_{k}:= \Big(I_T - \frac{1}{\theta} E_{k\tau}\tp E_{k\tau} E_{k0}\tp E_{k0}\Big)^{-1},
		\end{align*}
		where $R_k$ is not to be confused with $R_\theta$ and $R_\lambda$ defined previously.
		Then
		\begin{align*}
			E_{0}\tp E_0 -  E_{k0}\tp E_{k0}
			=\frac{1}{T} \underline \epsilonb_{k0} \underline \epsilonb_{k0}\tp,
			\quad
			E_{\tau}\tp E_\tau -  E_{k\tau}\tp E_{k\tau}
			=\frac{1}{T} \underline \epsilonb_{k \tau} \underline \epsilonb_{k \tau}\tp,
		\end{align*}
		from which we can compute
		\begin{align*}
			&R^{-1} - \underline R_{k}^{-1}
			=- \frac{1}{\theta} (E_\tau\tp E_\tau - E_{k\tau}\tp E_{k\tau}) E_0\tp E_0
			= - \frac{1}{\theta T} \underline \epsilonb_{k \tau} \underline \epsilonb_{k \tau}\tp E_0\tp E_0
			\\
			&\underline R_{k}^{-1} - R_{k}^{-1}
			=- \frac{1}{\theta}  E_{k\tau}\tp E_{k\tau} (E_0\tp E_0 - E_{k0}\tp E_{k0})
			= - \frac{1}{\theta T}  E_{k\tau}\tp E_{k\tau}
			\underline \epsilonb_{k0} \underline \epsilonb_{k0}\tp.
		\end{align*}
		We furthermore define scalars
		\begin{align*}
			 & \underline\beta_k = \frac{1}{1+ \tr (\underline R_{k}(R^{-1} - \underline R_{k}^{-1}))}
			=
			\frac{1}{1- \frac{1}{\theta T}\underline \epsilonb_{k \tau}\tp E_0\tp E_0 \underline R_{k} \underline \epsilonb_{k \tau} },
			\\
			 & \beta_{k} = \frac{1}{1+ \tr (R_{k}(\underline R_{k}^{-1} - R_{k}^{-1}))}
			=
			\frac{1}{1- \frac{1}{\theta T}\underline \epsilonb_{k0}\tp R_{k} E_{k\tau}\tp E_{k\tau}
				\underline \epsilonb_{k0}  },
		\end{align*}
		both of which are clearly of order $1+ o(1)$ under the event $\cB_0$. 
		Using \eqref{equation - sherman 1} we get
		\begin{subequations}
			\begin{align*}
				\numberthis\label{equation - R- Rbark}
				 & R - \underline R_{k} =
				-\underline\beta_k \underline R_{k}(R ^{-1} - \underline R_{k}^{-1})\underline R_{k}
				= \frac{\underline\beta_k}{\theta T} \underline R_{k}\underline \epsilonb_{k \tau} \underline \epsilonb_{k \tau}\tp E_0\tp E_0\underline R_{k},
				\\
				\numberthis\label{equation - Rbark- Rk}
				 & \underline R_{k}- R_{k}
				=-\beta_k  R_{k}(\underline R_k ^{-1} -  R_{k}^{-1}) R_{k}
				= \frac{\beta_{k}}{\theta T} R_{k}E_{k\tau}\tp E_{k\tau}
				\underline \epsilonb_{k0} \underline \epsilonb_{k0}\tp R_{k}.
			\end{align*}
		\end{subequations}
		Substituting \eqref{equation - Rbark- Rk} back into \eqref{equation - R- Rbark} we get
		\begin{align*}
			R - \underline R_{k}
			 & =  \frac{\underline\beta_k}{\theta T}
			% \underline R_{k}
			\Big(R_{k}
			+ \frac{\beta_k}{\theta T} R_{k}E_{k\tau}\tp E_{k\tau}
			\underline \epsilonb_{k0} \underline \epsilonb_{k0}\tp R_{k}\Big)
			% \\&\quad\times
			\underline \epsilonb_{k \tau} \underline \epsilonb_{k \tau}\tp E_0\tp E_0
			\Big(R_{k}
			+ \frac{\beta_k}{\theta T} R_{k}E_{k\tau}\tp E_{k\tau}
			\underline \epsilonb_{k0} \underline \epsilonb_{k0}\tp R_{k}\Big),
		\end{align*}
		and so we have
		\begin{align*}
			\numberthis\label{equation - long version of R -}
			R -  R_{k} =  (\underline R_{k}- R_{k}) + (R- \underline R_{k})
			& = : U_1 + U_2 + U_3 + U_4 + U_5,
		\end{align*}
		where we have defined
		\begin{align*}\numberthis\label{equation - defn of Un}
			&U_1  :=  \frac{\beta_{k}}{\theta T} R_{k}E_{k\tau}\tp E_{k\tau}
			\underline \epsilonb_{k0} \underline \epsilonb_{k0}\tp R_{k},
			\quad
			U_2  := \frac{\underline\beta_k}{\theta T}  R_{k}\underline \epsilonb_{k \tau} \underline \epsilonb_{k \tau}\tp E_0\tp E_0R_{k},
			\\
			&U_3  := \frac{\underline\beta_k \beta_k}{\theta^2 T^2}
			R_k\underline \epsilonb_{k \tau} \underline \epsilonb_{k \tau}\tp E_0\tp E_0
			R_{k}E_{k\tau}\tp E_{k\tau}
			\underline \epsilonb_{k0} \underline \epsilonb_{k0}\tp R_{k},
			\\
			&U_4  :=
			\frac{\underline\beta_k \beta_k}{\theta^2 T^2}
			R_{k}E_{k\tau}\tp E_{k\tau}
			\underline \epsilonb_{k0} \underline \epsilonb_{k0}\tp R_{k}
			\underline \epsilonb_{k \tau} \underline \epsilonb_{k \tau}\tp E_0\tp E_0
			R_{k},
			\\
			&U_5   :=  \frac{\underline\beta_k \beta_k^2}{\theta^3 T^3}
			R_{k}E_{k\tau}\tp E_{k\tau}
			\underline \epsilonb_{k0} \underline \epsilonb_{k0}\tp R_{k}
			\underline \epsilonb_{k \tau} \underline \epsilonb_{k \tau}\tp E_0\tp E_0
			R_{k}E_{k\tau}\tp E_{k\tau}
			\underline \epsilonb_{k0} \underline \epsilonb_{k0}\tp R_{k}.
		\end{align*}
		Recall the event $\cB_0$ from \eqref{lemma - high prob event}. Define
		\begin{align*}
			\cB_0^k := \left\{\norm{E_{k0} \tp E_{k0}} + \norm{	E_{k \tau}\tp E_{k\tau}} \le 4\left(1+ \frac{	p}{	T}	\right) \right\},
			\quad k=1, \ldots, p.\numberthis\label{equation - B0k}
		\end{align*}
		Clearly $\norm{E_{k0} \tp E_{k0}}\le \norm{E_{0} \tp E_{0}}$ which implies $\cB_0\subseteq \cB_0^k$ and so $1_{\cB_0}\le 1_{\cB_0^k}$.
		Recall the family of conditional expectations $\Em_i[\ \cdot\ ]$ defined in \eqref{notations - conditional expectations}. Then
		\begin{align*}
			\frac{1}{T}\tr (B &(R 1_{\cB_0} - \Em[R 1_{\cB_0}])) 
			% = \frac{1}{T}\sum_{k=1}^p (\Em_{k} - \Em_{k-1}) \tr(BR)
			  =   \frac{1}{T}\sum_{k=1}^p (\Em_{k} - \Em_{k-1}) \tr (BR 1_{\cB_0})
				\\
			& =  \frac{1}{T}\sum_{k=1}^p (\Em_{k} - \Em_{k-1}) \left(\tr (BR 1_{\cB_0}) - \tr (BR_k 1_{\cB_0^k}) \right)
			% \\
			% &  =  \frac{1}{T}\sum_{k=1}^p (\Em_{k} - \Em_{k-1}) \tr (B(R- R_k) 1_{\cB_0^k})  
			% - 
			% \frac{1}{T}\sum_{k=1}^p (\Em_{k} - \Em_{k-1}) \tr (BR (1_{\cB_0^k}- 1_{\cB_0 } ))
			\\
			& =  \frac{1}{T}\sum_{k=1}^p (\Em_{k} - \Em_{k-1}) \tr (B(R- R_k) 1_{\cB_0})  
			-
			\frac{1}{T}\sum_{k=1}^p (\Em_{k} - \Em_{k-1}) \tr (BR_k (1_{\cB_0^k}- 1_{\cB_0 } ))
			% \\&\qquad- 
			% \frac{1}{T}\sum_{k=1}^p (\Em_{k} - \Em_{k-1}) \tr (B(R- R_k) (1_{\cB_0^k}- 1_{\cB_0 } ))
			\\
			& =: I_1 + I_2, 
			\numberthis\label{equation - conc Rk 1}
		\end{align*}
		where the second equality holds
		since $\Em_{k} [\tr (BR_k 1_{\cB_0^k})] =  \Em_{k-1}[\tr (BR_k 1_{\cB_0^k})]$ and the third equality is purely algebraic computations. We first deal with the second term in \eqref{equation - conc Rk 1}. Using $\tr(B R_k)\le p \norm{B R_k} $ and $\norm{B R_k 1_{\cB_0^k}} = O(\norm{B})$ we have
		\begin{align*}
			\Em|I_2|^2
			&=
			\frac{1}{T^2}\sum_{k=1}^p \Em\left|(\Em_k - \Em_{k-1}) \tr(B R_k( 1_{\cB_0^k}-1_{\cB_0 })) \right|^2
			\le 
			\frac{4p^2}{T^2}\sum_{k=1}^p \Em\left| \norm{B R_k } ( 1_{\cB_0^k}-1_{\cB_0 }) \right|^2
			\\
			& =
			O\left(\frac{p^2 }{T^2}\norm{B}^2\right)\sum_{k=1}^p\Em\left| 1_{\cB_0^k}-1_{\cB_0 }\right|^2
			=
			O\left(\frac{p^2}{T^2} \norm{B}^2\right) \sum_{k=1}^p \Pm(\cB_0^c) = o(T^{-l} \norm{B}^2),
		\end{align*}
		for any $l\in\N$ by Lemma ~\ref{lemma - high prob event}.
		For the first term in \eqref{equation - conc Rk 1}, since $I_1$ is a sum of  a martingale difference sequence, using \eqref{equation - long version of R -} and $\cB_0\subseteq \cB_0^k$ we have
		\begin{align*}
			% \numberthis
			% \label{equation - variance of trace of BR_R}
			\Em|I_1|^2
			& \le \frac{1}{T^2}  \sum_{k=1}^p \Em \big|(\Em_{k} - \Em_{k-1}) \tr(B(R - R_k)1_{\cB_0^k})\big|^2
			\\
			         & \le 
			        \frac{4}{T^2}\sum_{k=1}^p \Em \big|\tr(B(R - R_k)1_{\cB_0^k})\big|^2
			\le \frac{20}{T^2}\sum_{k=1}^p\sum_{n=1}^5 \Em \big|\tr(BU_n1_{\cB_0^k})\big|^2,
		\end{align*}
		and it remains to bound the second moment of each $\tr(B U_n 1_{\cB_0^k})$. 
		Since  $\{\epsilon_{it}\}$ are assumed to be i.i.d. standard Gaussian, we have the following moment estimate
		\begin{align*}
			\numberthis\label{equation - moment estimate of gaussians}
			\Em \left[\norm{\underline \epsilonb_{k0}}^n\right]
			=
			\Em \Bigg[ \Big(\sum_{t=1}^{T- \tau} \epsilon_{kt}^2 \Big)^{n/2} \Bigg]
			\lesssim (T- \tau)^{n/2-1} \sum_{t=1}^{T- \tau} \Em|\epsilon_{kt}|^n = O(T^{n/2}).
		\end{align*}
		Using $\beta_k 1_{\cB_0^k}=1+o(1)$ and the trivial inequality $x\tp A x \le \norm{x}^2 \norm{A}$ we obtain
		% {\adam Under the high probability event $\beta_k$ is bounded with say $\beta_k \le \square$ } so
		\begin{subequations}
		\begin{align*}
			\numberthis\label{equation - bound on trBU1}
			\Em & \big|\tr(BU_11_{\cB_0^k})\big|^2
			 \lesssim
			\frac{1}{\theta^2 T^2} \Em \left [(\underline \epsilonb_{k0}\tp R_{k}B R_{k}E_{k\tau}\tp E_{k\tau}
			\underline \epsilonb_{k0} )^21_{\cB_0^k}\right]
			\\
			% &\quad =
			% \frac{\square^2}{\theta^2 T^2} \Em \big| \underline \epsilonb_{k0}\tp R_{k}B R_{k}E_{k\tau}\tp E_{k\tau}
			% \underline \epsilonb_{k0} \big|^2
			 & \le
			\frac{1}{\theta^2 T^2} \Em \left[
				\norm{\underline \epsilonb_{k0}}^4 \norm{R_{k}}^4\norm{E_{k\tau}\tp E_{k\tau}} ^2
				1_{\cB_0^k}
				\right] \norm{B}^2
				\lesssim
			\frac{1}{\theta^2}  \norm{B}^2.
		\end{align*}
		% then
		% \begin{align*}
		% 	\numberthis\label{equation - bound on trBU1}
		% 	\Em \big|\tr(BU_1)\big|^2 \le
		% 	\frac{\square^2\triangle^2}{\theta^2 T^2} \Em \left[\norm{\underline \epsilonb_{k0}}^4\right] = \frac{\square^2\triangle^2}{\theta^2 } O(\norm{B}^2),
		% \end{align*}
		The second term $U_2$ can be dealt in exactly the same way to obtain
		\begin{align*}
			\numberthis\label{equation - bound on trBU2}
			\Em \big|\tr(BU_21_{\cB_0^k})\big|^2 \lesssim
			\frac{1}{\theta^2}  \norm{B}^2,
		\end{align*}
		and we omit the details. For $U_3$, similar computations gives
		\begin{align*}
			\Em \big|\tr(BU_31_{\cB_0^k})\big|^2
			 & \lesssim
			\frac{1}{\theta^4 T^4}
			\Em \left[
			(\underline \epsilonb_{k0}\tp R_{k}B R_k\underline \epsilonb_{k \tau} \underline \epsilonb_{k \tau}\tp E_0\tp E_0
			R_{k}E_{k\tau}\tp E_{k\tau}
			\underline \epsilonb_{k0} )^21_{\cB_0^k}
			\right]
			\\
			 & \le
			\frac{1}{\theta^4 T^4}
			\Em \left[
			\norm{\underline \epsilonb_{k0}}^4 \norm{\underline \epsilonb_{k \tau}}^4
			\norm{R_{k}}^4   \norm{E_0\tp E_0 R_{k}E_{k\tau}\tp E_{k\tau}}^21_{\cB_0^k}
			\right]\norm{B}^2,
		\end{align*}
		since $x\tp Ay \le \norm{x} \norm{y} \norm{A}$. Therefore
		\begin{align*}
			\Em \big|\tr(BU_3)\big|^2 \lesssim
			\frac{1}{\theta^4 T^4}
			\Em \left[
				\norm{\underline \epsilonb_{k0}}^8 \right]^{1/2}
			\Em \left[ \norm{\underline \epsilonb_{k \tau}}^8
				\right]^{1/2} 
			\lesssim \frac{1}{\theta^4 } \norm{B}^2.
			\numberthis\label{equation - bound on trBU3}
		\end{align*}
		Once again $U_4$ can be bounded in the same way to obtain
		\begin{align*}
			\numberthis\label{equation - bound on trBU4}
			\Em \big|\tr(BU_4)1_{\cB_0^k}\big|^2
			\le \frac{1}{\theta^4 } \norm{B}^2.
		\end{align*}
		With the same approach but more laborious computations we can obtain
		\begin{align*}
			\numberthis\label{equation - bound on trBU5}
			\Em \big|\tr(BU_5)1_{\cB_0^k}\big|^2
			\lesssim \frac{1}{\theta^6 } \norm{B}^2.
		\end{align*}
		\end{subequations}
		Note that the estimates \eqref{equation - bound on trBU1}-\eqref{equation - bound on trBU5} are uniform in $k=1,\ldots, p$.  We then conclude
		\begin{align*}
			\Em\Big|\frac{1}{T} & \tr (B (R1_{\cB_0^k} - \Em[R1_{\cB_0^k}]))\Big|^2 = O\left(\frac{p}{T^2\theta^2}\right)\norm{B}^2 = O\left(\frac{1}{T \theta^2}\right)\norm{B}^2,
		\end{align*}
		and the conclusion follows.

		\pfspace
		\pf{\ref{equation  - lemma concentration of EER}}
		Similar to \eqref{equation  - lemma concentration of R}, via a martingale difference decomposition we obtain
		\begin{align*}
			\Em\Big|\frac{1}{T}\tr &(B (E_0\tp E_0R1_{\cB_0^k} - \Em[E_0\tp E_0R1_{\cB_0^k}])) \Big|^2
			\lesssim    
			\frac{1}{T^2} \sum_{k=1}^p \Em\big|\tr(B(E_0\tp E_0 R - E_{k0}\tp E_{k0} R_k))1_{\cB_0^k}\big|^2,
		\end{align*}
		where, recalling the $U_n$'s defined  in the proof of \eqref{equation  - lemma concentration of R}, we have
		\begin{align*}
		\numberthis\label{equation - mart decomp for EER}
			E_0\tp E_0 R - E_{k0}\tp E_{k0} R_k 
			 & =   
			\frac{1}{T}\underline\epsilonb_{k0}\underline\epsilonb_{k0}\tp R_k
			+ \frac{1}{T}\underline\epsilonb_{k0}\underline\epsilonb_{k0}\tp (R - R_k)
			+ E_{k0}\tp E_{k0}(R - R_k)
			   \\
			& = \frac{1}{T}\underline\epsilonb_{k0}\underline\epsilonb_{k0}\tp R_k
			+ \frac{1}{T}\sum_{n=1}^5 \underline\epsilonb_{k0}\underline\epsilonb_{k0}\tp U_n
			+ \sum_{n=1}^5 E_{k0}\tp E_{k0}U_n.
		\end{align*}
		We deal with the first two term in \eqref{equation - mart decomp for EER} to illustrate the ideas of the proof, the other terms can be dealth with similarly. Using \eqref{equation - moment estimate of gaussians} and $p\asymp T$, clearly we have
		\begin{align*}
			\numberthis\label{equation - leading term in mart decomp for EER}
			\frac{1}{T^2}  \sum_{k=1}^p  \Em \Big| \frac{1}{T}	\tr ( B \underline\epsilonb_{k0}\underline\epsilonb_{k0}\tp R_k)1_{\cB_0^k}\Big|^2
			\lesssim 
			\frac{1}{T^4}  \sum_{k=1}^p  T^2 \Em[\norm{B R_k1_{\cB_0^k}}^2] = O\left(\frac{1}{T}\right)\norm{B}^2.
		\end{align*}
		Similar to the computations in \eqref{equation - bound on trBU1}, we can get
		\begin{align*}
			\Em \Big|  \frac{1}{T}\tr(B ( &     \underline\epsilonb_{k0}\underline\epsilonb_{k0}\tp U_1     ))1_{\cB_0^k}
			   \Big|^2
			\lesssim 
			\frac{1}{\theta^2 T^2} \frac{1}{T^2}
			\Em [(\underline \epsilonb_{k0}\tp R_{k}B \underline\epsilonb_{k0}\underline\epsilonb_{k0} \tp 
						R_{k} E_{k\tau}\tp E_{k\tau}
						\underline \epsilonb_{k0} )^2 1_{\cB_0^k}]
						\\
			 & \le
			\frac{1}{\theta^2 T^4} \Em \left[
				\norm{\underline \epsilonb_{k0}}^8 \norm{R_{k}}^4\norm{E_{k\tau}\tp E_{k\tau}} ^2
				1_{\cB_0^k}
				\right] \norm{B}^2
				\lesssim
			\frac{1}{\theta^2}  \norm{B}^2,
		\end{align*}
		which immediately gives
		\begin{align*}
			\frac{1}{T^2} \sum_{k=1}^p \Em \Big|  \frac{1}{T}\tr(B ( &     \underline\epsilonb_{k0}\underline\epsilonb_{k0}\tp U_1     ))1_{\cB_0^k}
			   \Big|^2
			   = O\left(\frac{p}{\theta ^2 T^2}\right) \norm{B}^2
			   =O\left(\frac{1}{\theta ^2 T}\right) \norm{B}^2.
		\end{align*}
		Note that this term is negligible in comparison to \eqref{equation - leading term in mart decomp for EER}. 
		Using the same ideas, 
		it is routine to check that the other 9 terms in \eqref{equation - mart decomp for EER} are negligible as well, and we omit the details. The bound therefore follows from \eqref{equation - leading term in mart decomp for EER}. 
	\end{proof}
\end{lemma}

Next recall that 
		$
			Q = I_K - \frac{1}{\theta} X_0  R E_\tau \tp E_\tau X_0\tp.
		$
We now state a concentration inequality for entries of the matrix $Q^{-1}$, under the event $\cB_2$.
\begin{lemma}
	\label{lemma - concentration of Q inverse}
	Write $Q_{ij}^{-1}:= (Q^{-1})_{ij}$. Then
	\begin{enumerate}
		\item \label{lemma - concentration of Q inverse ---- diag}
		For all $k=1,\ldots, K$,  we have
		\begin{align*}
		  Q_{kk}^{-1}1_{\cB_2}  - \underline \Em  [Q_{kk}^{-1}1_{\cB_2}] = 
		  O_{L^1}\left(\frac{\sigma_k^2}{\theta \rt T}\right).
		\end{align*}

		\item \label{lemma - concentration of Q inverse ---- offdiag}
		The off-diagonal elements of $Q^{-1}$ satisfies
		\begin{align*}
			Q_{ij}^{-1}1_{\cB_{2}} =  O_{L^2}\left(\frac{\sigma_i \sigma_j}{	\theta \rt T}\right)
		\end{align*}
		uniformly in $i,j = 1,\ldots, K$, $i\ne j$.
	\end{enumerate}

	\begin{proof}
		\pf{\ref{lemma - concentration of Q inverse ---- diag}}
		Recalling the event $\cB_2$, we note that the matrix $Q$ is invertible with probability tending to 1. 
		The proof relies on expressing $Q_{kk}^{-1}1_{\cB_2}  - \underline \Em [Q_{kk}^{-1}1_{\cB_2}] $ as a sum of martingale differences. We first setup the notations necessary.

		Let $T^{-1/2}\xx_i :=T^{-1/2} \xx_{i, [1:T- \tau]}$ be the (column vector) of the $i$-th row of $X_0$, i.e. we can write $X_0 = T^{-1/2} \sum_{i=1}^K  \ee_i\xx_i\tp$. Define 
		$X_{i0} := X_0 - \frac{1}{\rt{T}} \ee_{i} \xx_i\tp,$
		% \begin{align*}
		% 	X_{i0} := X_0 - \frac{1}{\rt{T}} \ee_{i} \xx_i\tp,
		% \end{align*}
		and
		\begin{align*}
			Q_{(i)} := I_K - \frac{1}{\theta} X_{i0}  R E_\tau \tp E_\tau X_0\tp,
			\quad
			Q_{(ii)} := I_K - \frac{1}{\theta} X_{i0}  R E_\tau \tp E_\tau X_{i0}\tp,
		\end{align*}
		from which we can immediately compute
		\begin{align*}
			Q - Q_{(i)} 
			= - \frac{1}{\theta \rt T} \ee_{i} \xx_i\tp R E_\tau \tp E_\tau X_0\tp,
			\quad 
			Q_{(i)} - Q_{(ii)}
			=
			- \frac{1}{\theta\rt T} X_{i0}  R E_\tau \tp E_\tau \xx_i \ee_i\tp.
		\end{align*}
		Note that all elements on the $i$-th row of $Q_{(i)}$ are equal to zero except for the diagonal which is equal to 1, i.e. $Q_{(i)}$ is equal to the identity when restricted to the $i$-th coordinate. Then the inverse $Q_{(i)}^{-1}$, whenever it exists, must also equal to the identity when restricted to the $i$-th coordinate. A similar observation can be made for the matrix $Q_{(ii)}$ and it is not hard to observe that 
		\begin{align*}
			\numberthis\label{equation - zeroes in Qii}
			\ee_i\tp& (Q_{(ii)})^{-1} \ee_i = 1, 
			\quad \ee_i \tp (Q_{(i)})^{-1}\ee_j=  0, \quad \forall j\ne i,
			\\
			&\ee_i\tp (Q_{(ii)})^{-1} \ee_j =\ee_j\tp (Q_{(ii)})^{-1} \ee_i = 0, \quad \forall j\ne i.
		\end{align*}
		To compute the difference $Q^{-1} - Q_{(i)}^{-1}$, which will turn out to be the  central focus of the proof, we first define the following scalars
		\begin{align*}
			\numberthis\label{equation - bi in the proof of Q}
			b_i :& =  \frac{1}{1+ \tr( Q_{(i)}^{-1}(Q - Q_{(i)}) )} 
			=
			\frac{1}{1 - \frac{1}{\theta \rt T}  \xx_i\tp R E_\tau \tp E_\tau X_0\tp Q_{(i)}^{-1}\ee_{i}}
			   \\
			b_{ii} :& =  \frac{1}{1+ \tr( Q_{(ii)}^{-1}(Q_{(i)} - Q_{(ii)}) )} 
			=
			\frac{1}{1 - \frac{1}{\theta\rt T} \ee_i\tp Q_{(ii)}^{-1}X_{i0}  R E_\tau \tp E_\tau \xx_i }=1,
		\end{align*}
		where the last equality holds by \eqref{equation - zeroes in Qii}. Then using the identity \eqref{equation - sherman 1} we have
		\begin{subequations}
		\begin{align*}
			\numberthis\label{equation - Qinv - Qinvi}
			&Q^{-1} - Q_{(i)}^{-1} =  \frac{b_i}{\theta \rt T} 
			Q_{(i)}^{-1}\ee_{i} \xx_i\tp R E_\tau \tp E_\tau X_0\tp Q_{(i)}^{-1},
			\\
			\numberthis\label{equation - Qinvi - Qinvii}
			&Q_{(i)}^{-1} - Q_{(ii)}^{-1} =  \frac{1}{\theta \rt T} 
			Q_{(ii)}^{-1} X_{i0}  R E_\tau \tp E_\tau \xx_i \ee_i\tp Q_{(ii)}^{-1}.
		\end{align*}
		We observe that the matrices $Q_{(i)}^{-1}$ and $Q_{(ii)}^{-1}$ differ only on off-diagonal elements on the $i$-th column. Indeed, from
		\eqref{equation - zeroes in Qii} and \eqref{equation - Qinvi - Qinvii}, if $n\ne i$ or if $n=m=i$ then
		\end{subequations}
		\begin{align*}
			\numberthis\label{equation - Qinvi - Qinvii nonzeroes}
			\ee_m\tp (Q_{(i)}^{-1} - Q_{(ii)}^{-1})\ee_n =  \frac{1}{\theta \rt T} 
			\ee_m\tp Q_{(ii)}^{-1} X_{i0}  R E_\tau \tp E_\tau \xx_i \ee_i\tp Q_{(ii)}^{-1}
			\ee_n=0.
		\end{align*}
		Then, substituting \eqref{equation - Qinvi - Qinvii} back into \eqref{equation - Qinv - Qinvi} we obtain
		\begin{align*}
			\ee_k\tp(Q^{-1} -& Q_{(i)}^{-1})\ee_k
			 = 
			\frac{b_i}{\theta \rt T}\ee_k\tp   Q_{(i)}^{-1}\ee_{i} \xx_i\tp R E_\tau \tp E_\tau X_0\tp Q_{(i)}^{-1}  \ee_k
			%%%%%%%%%%%%%%%
			\\
			& = 
			\frac{b_i}{\theta \rt T}\ee_k\tp   Q_{(i)}^{-1}\ee_{i} \xx_i\tp R E_\tau \tp E_\tau X_0\tp Q_{(ii)}^{-1}  \ee_k
			   \\
			& \quad\quad\quad\quad + 
			\frac{b_i}{\theta^2 T}\ee_k\tp   Q_{(i)}^{-1}\ee_{i} \xx_i\tp R E_\tau \tp E_\tau X_0\tp 
			Q_{(ii)}^{-1} X_{i0}  R E_\tau \tp E_\tau \xx_i \ee_i\tp Q_{(ii)}^{-1}  \ee_k
			\\
			%%%%%%%%%%%%%%%%%%%%%
			& = 
			\frac{b_i}{\theta \rt T}\ee_k\tp   Q_{(ii)}^{-1}\ee_{i} \xx_i\tp R E_\tau \tp E_\tau X_0\tp Q_{(ii)}^{-1}  \ee_k
			   \\
			& \quad \quad\quad\quad+ 
			\frac{b_i}{\theta^2 T}\ee_k\tp   Q_{(i)}^{-1}\ee_{i} \xx_i\tp R E_\tau \tp E_\tau X_0\tp 
			Q_{(ii)}^{-1} X_{i0}  R E_\tau \tp E_\tau \xx_i \ee_i\tp Q_{(ii)}^{-1}  \ee_k
			\\
			& \quad \quad\quad\quad+ 
			\frac{b_i}{\theta^2 T}\ee_k\tp   Q_{(ii)}^{-1} X_{i0}  R E_\tau \tp E_\tau \xx_i \ee_i\tp Q_{(ii)}^{-1}\ee_{i} \xx_i\tp R E_\tau \tp E_\tau X_0\tp Q_{(ii)}^{-1}  \ee_k
			   \\
			& =: I_1 + I_2 + I_3. 
			\numberthis\label{equation - martingale Q into I123}
		\end{align*}
		To simplify this expression further, define the following quadratic forms
		\begin{align*}
			\xi_i: =  \frac{1}{\theta  T}&  \xx_i\tp R E_\tau \tp E_\tau \xx_i,
			\quad
			\eta_i :  =  \frac{1}{\theta^2 T}  \xx_i\tp R E_\tau \tp E_\tau X_{i0}\tp  
			Q_{(ii)}^{-1} X_{i0}  R E_\tau \tp E_\tau \xx_i ,
		\\
			&\zeta_{ik}:= \frac{1}{\theta^2 T}\xx_i\tp R E_\tau \tp E_\tau X_{i0}\tp Q_{(ii)}^{-1}  \ee_k\ee_k\tp   Q_{(ii)}^{-1} X_{i0}  R E_\tau \tp E_\tau \xx_i ,
			\numberthis\label{equation - definition of xi in proof of Q}
		\end{align*}
		then using \eqref{equation - zeroes in Qii}, we can easily write $I_1, I_2$ and $I_3$ into
		\begin{align*}
			\numberthis\label{equation - I123 in proof of Q}
			I_1 & =  1_{i=k}
			\frac{b_k}{\theta \rt T} \xx_k\tp R E_\tau \tp E_\tau X_0\tp Q_{(kk)}^{-1}  \ee_k
			% \\
			% & = 
			% 1_{i=k}
			% \frac{b_k}{\theta T} \xx_k\tp R E_\tau \tp E_\tau \xx_k 
			%    \\
			= 1_{i=k} b_k \xi_k,
			   \\
			I_2  
			& =  1_{i=k}\frac{b_k}{\theta^2 T} 
			\xx_k\tp R E_\tau \tp E_\tau X_0\tp 
			Q_{(kk)}^{-1} X_{k0}  R E_\tau \tp E_\tau \xx_k
			   % \\
			% & = 1_{i=k}
			% \frac{b_k}{\theta^2 T} \xx_k\tp R E_\tau \tp E_\tau X_{1k}\tp 
			% Q_{(kk)}^{-1} X_{1k}  R E_\tau \tp E_\tau \xx_k 
			   % \\
			= 1_{i=k} b_k \eta_k ,
			\\
			I_{3} & =  1_{i\ne k}
			\frac{b_i}{\theta^2 T}\ee_k\tp   Q_{(ii)}^{-1} X_{i0}  R E_\tau \tp E_\tau \xx_i 
			 \xx_i\tp R E_\tau \tp E_\tau X_0\tp Q_{(ii)}^{-1}  \ee_k
			%     \\
			%  & = 1_{i\ne k}
			% \frac{b_i}{\theta^2 T}\ee_k\tp   Q_{(ii)}^{-1} X_{i0}  R E_\tau \tp E_\tau \xx_i 
			%  \xx_i\tp R E_\tau \tp E_\tau X_{i0}\tp Q_{(ii)}^{-1}  \ee_k.
			= 1_{i\ne k} b_i \zeta_{ik}.
		\end{align*}
		We first state some estimates on $\xi$ and $ \eta$ under the appropriate events. Recall from \eqref{equation - high prob event} the event
		$\cB_1 := \left\{ \norm{X_0\tp X_0}  \le 2\sum_{i=1}^K \sigma_i^2  \right\}$. Define the event
		\begin{align*}
			\cB_1^i := \Big\{ \norm{X_{i0}\tp X_{i0}}\le 2\sum_{i=1}^K \sigma_i^2   \Big\},\quad i =1 ,\ldots, K
			\numberthis\label{equation - high prob event i in proof of Q}
		\end{align*}
		and write $\cB_2^i:=\cB_0\cap \cB_1^i$. Then clearly $\cB_2^i\subseteq \cB_2$.
		Define
		\begin{align*}
			\overline \xi_i:= \frac{1}{\theta  T}  \tr\big( \Psi_{0}^{ii}(R E_\tau \tp E_\tau)\big),
			\quad
			\overline \eta_i :  =  \frac{1}{\theta^2 T}  \tr\big( \Psi^{ii}_0(R E_\tau \tp E_\tau X_{i0}\tp Q_{(ii)}^{-1} X_{i0}  R E_\tau \tp E_\tau) \big),
		\end{align*}
		where $\Psi^{ii}_0$ is defined in \eqref{equation - definition of Psi}. Write 
		\begin{align*}
			\underline \xi_i := \xi_i - \overline \xi_i,\quad
			\underline \eta_i := \eta_i - \overline \eta_i.
			\numberthis\label{equation - definition of xii underline}
		\end{align*}
		Using Lemma ~\ref{lemma - concentration of xBx} and taking iterated expectations we have
		\begin{align*}
			&\hspace{3mm}
			\Em \big[\underline \xi_i^2 1_{\cB_2^i}\big]
			=\Em \big[\underline \Em [\underline \xi_i^2 1_{\cB_2^i}] \big]
			% \lesssim \frac{1}{\theta^2 T^2} \norm{\Psi^{ii}_0(R E_\tau\tp E)}_F^2 
			= \frac{1}{\theta^2 T^2} O(\sigma_i^4 T) \Em \norm{R E_\tau\tp E 1_{\cB_2^i}}^2
			= O\left(\frac{\sigma_i^4}{\theta^2 T}\right),
			   \\
			&
			\Em \big[\underline \eta_i^21_{\cB_2^i}\big]
			% =\Em \big[\underline \Em [\underline \eta_i^21_{\cB_2^i}]\big] 
			=
			\frac{1}{\theta^4 T^2} O(\sigma_i^4 T) 
			\Em \norm{R E_\tau \tp E_\tau X_{i0}\tp  
			Q_{(ii)}^{-1} X_{i0}  R E_\tau \tp E_\tau 1_{\cB_2^i}}^2 = O\left(\frac{\sigma_i^4 \sum_{j=1}^K  \sigma_j^4}{\theta^4 T}\right).
			 % \\
			% &\underline \Em [\underline \zeta_{ik}^2] \lesssim \frac{\sigma^4_1}{\theta^4 T}  \underline \Em \norm{R E_\tau \tp E_\tau X_{i0}\tp Q_{(ii)}^{-1}  \ee_k\ee_k\tp   Q_{(ii)}^{-1} X_{i0}  R E_\tau \tp E_\tau }^2 = O\left(\frac{1}{\theta^2 T}\right),
		\end{align*}
		By Lemma ~\ref{lemma - properties of Psi} we also have
		\begin{align*}
			\numberthis\label{equation - moment of underline xi 2}
			\overline \xi_i 1_{\cB^i_2} =  O(\sigma_i^2\theta^{-1}) &,\quad
			 \overline\eta_i 1_{\cB^i_2} = O\left(\frac{\sigma_i^2\sum_{j=1}^K\sigma_j^2}{\theta^2}\right) .
		\end{align*}
		We then consider the scalar $b_i$ defined in \eqref{equation - bi in the proof of Q}. From  \eqref{equation - zeroes in Qii} and \eqref{equation - Qinvi - Qinvii} we observe
		\begin{align*}
			\frac{1}{\theta \rt T}  &\xx_i\tp R E_\tau \tp E_\tau X_0\tp Q_{(i)}^{-1}\ee_{i}
			= 
			\frac{1}{\theta \rt T}  \xx_i\tp R E_\tau \tp E_\tau X_0\tp Q_{(ii)}^{-1}\ee_{i}
			\\&\quad+ \frac{1}{\theta^2 T}  \xx_i\tp R E_\tau \tp E_\tau X_0\tp  
			Q_{(ii)}^{-1} X_{i0}  R E_\tau \tp E_\tau \xx_i \ee_i\tp Q_{(ii)}^{-1}\ee_{i}
			=
			\xi_i + \eta_i.
		\end{align*}
		Substituting back into \eqref{equation - bi in the proof of Q} 
		we can simplify to obtain
		\begin{align*}
			b_i = (1- \xi_i - \eta_i)^{-1}
			\numberthis\label{equation - bi - new expression}. 
		\end{align*}
		Define 
		$\overline b_i = (1- \overline \xi_i - \overline \eta_i)^{-1}$ so that subtracting the two we get
		\begin{align*}
			b_i = (1- \xi_i - \eta_i)^{-1}
			& =  \overline b_i - b_i \overline b_i (\underline \xi_i + \underline \eta_i).
			% = \overline b_i 
			% - \left(\overline b_i - b_i \overline b_i (\underline \xi_i + \underline \eta_i)\right) 
			% \overline b_i (\underline \xi_i + \underline \eta_i)
			%    \\
			\numberthis\label{equation - bi - bibar}
			% & = \overline b_i - \overline b_i^2  (\underline \xi_i + \underline \eta_i)
			% - \overline b_i^2 b_i(\underline \xi_i + \underline \eta_i)^2.
		\end{align*}
		Finally, from the expression \eqref{equation - bi - new expression} and the bounds \eqref{equation - moment of underline xi 2} we clearly have
		\begin{align*}
			b_i 1_{\cB_2} = 1+ o(1), \quad 
			\overline b_i 1_{\cB_2^i}= 1 + o(1).
			\numberthis\label{equation - bi bbar bounded}
		\end{align*}

		We can now carry out the main idea of the proof. Recall notations $\underline\Em[\ \cdot \ ]$ and 
		$\underline\Em_i[\ \cdot\ ]$ from \eqref{notations - conditional expectations}. By definition of $Q_{(ii)}$ and $\cB_2^i$ we have
		\begin{align*}
			\ee_k\tp  \big(Q^{-1}1_{\cB_2} - \underline \Em[&Q^{-1}1_{\cB_2}]\big)\ee_k
			  =  
			\sum_{i=1}^K (\underline\Em_i - \underline\Em_{i-1})  
			\left(\ee_k\tp  Q^{-1}1_{\cB_2} \ee_k 
			- \ee_k\tp  Q_{(ii)}^{-1} 1_{\cB_2^i}\ee_k\right)
			\\
			& 
			=
			\sum_{i=1}^K (\underline\Em_i - \underline\Em_{i-1})  
			\left(\ee_k\tp  Q^{-1}1_{\cB_2} \ee_k 
			- \ee_k\tp  Q_{(i)}^{-1} 1_{\cB_2^i}\ee_k\right) ,
		\end{align*}
		where the last equality follows from \eqref{equation - Qinvi - Qinvii nonzeroes}.
		Similar to how we dealt with the second term in \eqref{equation - conc Rk 1} in the proof of Lemma ~\ref{lemma - concentration of R}, using Lemma ~\ref{lemma - high prob event} we may obtain
		\begin{align*}
			\ee_k\tp  \big(Q^{-1}1_{\cB_2} - \underline \Em &[Q^{-1}1_{\cB_2}]\big)\ee_k
			  = 
			\sum_{i=1}^K (\underline\Em_i - \underline\Em_{i-1})  \ee_k \left(Q^{-1}  - Q_{(ii)}^{-1} \right)1_{\cB_2} \ee_k
			+ 
			O_{L^2}(KT^{-1})
			\\
			& = 
			\sum_{i=1}^K (\underline\Em_i - \underline\Em_{i-1}) (I_1 + I_2 + I_3)1_{\cB_2} 
			+O_{L^2}(KT^{-1}),
			\numberthis\label{equation - conc Qi 1}
			% \\
			% &-
			% \sum_{i=1}^K (\underline\Em_i - \underline\Em_{i-1})  \ee_k  Q_{(i)}^{-1}\left(1_{\cB_2^i} - 1_{\cB_2}\right) \ee_k
			% -\sum_{i=1}^K (\underline\Em_i - \underline\Em_{i-1})  \ee_k  \left(Q^{-1} - Q_{(i)}^{-1}\right)\left(1_{\cB_2^i} - 1_{\cB_2}\right) \ee_k
			% \\
			% &\qquad\qquad =: J_1 + J_2 + J_3. \numberthis\label{equation - conc Qi 1}
		\end{align*}
		where the 
		second equality holds by \eqref{equation - Qinvi - Qinvii nonzeroes}.

		As will be shown, the term involving $I_1$ is the leading term of \eqref{equation - conc Qi 1}, this is what we consider now.
		Using the identity \eqref{equation - I123 in proof of Q} we simply have
		\begin{align*}
			\sum_{i=1}^K (\underline\Em_i - \underline\Em_{i-1})I_11_{\cB_2^i}
			=
			(\underline\Em_k - \underline\Em_{k-1})b_k \xi_k 1_{\cB_2},
		\end{align*}
		which, recalling \eqref{equation - definition of xii underline} and using \eqref{equation - bi - bibar}, can be written into
		\begin{align*}
			(\underline\Em_k - \underline\Em_{k-1})&b_k \xi_k1_{\cB_2}
			   =   
			(\underline\Em_k - \underline\Em_{k-1})
				\left(
				\overline b_k - b_k \overline b_k (\underline \xi_k + \underline \eta_k)\right)
			\left(
				\overline \xi_k +  \underline \xi_k\right)1_{\cB_2}
			   \\
			& = (\underline\Em_k- \underline\Em_{k-1}) \Big[ \overline b_k \overline \xi_k+ 
				\overline b_k \underline \xi_k
				- b_k \overline b_k (\underline \xi_k + \underline \eta_k)(
				\overline \xi_k +  \underline \xi_k)
				\Big]1_{\cB_2}.
				\numberthis\label{equation expression for I1 in Q}
			%    \\
			% & = \underline\Em_k [  
			% 	\overline b_k \underline \xi_k
			% 	]
			%  -
			% (\underline\Em_k- \underline\Em_{k-1})\big[ b_k \xi_k\overline b_k (\underline \xi_k + \underline \eta_k)
			% \big],
		\end{align*}
		We consider the three terms in the square bracket in \eqref{equation expression for I1 in Q} separately. For the first term, we note that $(\underline\Em_k- \underline\Em_{k-1})\overline b_k \overline \xi_k1_{\cB_2^k} =0$ by definition of $\overline b_k \overline \xi_k$ and $\cB_2^k$. Using this, we have
		\begin{align*}
			(\underline\Em_k- \underline\Em_{k-1}) \overline b_k \overline \xi_k 1_{\cB_2}
			=
			0
			- (\underline\Em_k- \underline\Em_{k-1}) \overline b_k \overline \xi_k (1_{\cB_2^k} - 1_{\cB_2}).
		\end{align*}
		Recalling \eqref{equation - moment of underline xi 2} and \eqref{equation - bi bbar bounded} and using Assumptions \ref{assumptions}  we have
		\begin{align*}
			\Em\big|(\underline\Em_k- \underline\Em_{k-1}) \overline b_k \overline \xi_k  1_{\cB_2}\big|
			&\le 2\Em \big|\overline b_k \overline \xi_k (1_{\cB_2^k} - 1_{\cB_2})\big|
			=	
			O(\sigma_k^2\theta^{-1}) \Em| 1_{\cB_2^k} - 1_{\cB_2}| = 
			O(\sigma_k^2 \theta^{-1 }KT^{-1}),
		\end{align*}
		where the last equality follows from the fact that ${\cB_2}\subseteq \cB_2^k$ and Lemma ~\ref{lemma - high prob event}. For the second term in \eqref{equation expression for I1 in Q}, using ${\cB_2}\subseteq \cB_2^k$, \eqref{equation - bi bbar bounded} and \eqref{equation - moment of underline xi 2} we have
		\begin{align*}
			\Em \big|(\underline\Em_k - \underline\Em_{k-1})& \overline b_k \underline \xi_k 1_{\cB_2}\big|^2
			\lesssim 
			4 \Em \big|\overline b_k \underline \xi_k 1_{\cB_2^{k}}\big|^2 
			= O\left(\frac{\sigma_k^4}{\theta^2 T}\right).
		\end{align*}
		Similarly the third term of \eqref{equation expression for I1 in Q} is bounded by
		\begin{align*}
			\Em\big| (\underline\Em_k- &\underline\Em_{k-1})\big[ b_k \overline b_k (\underline \xi_k + \underline \eta_k)(\overline\xi_k+ \underline \xi_k)
			1_{\cB_2}\big]\big|
			\lesssim 2\Em\big| (\underline \xi_k + \underline \eta_k)(\overline\xi_k+ \underline \xi_k)
			1_{\cB_2^k}\big|.
			% \\
			% & = 	
			% 2 \Em\big| b_k \overline b_k (\underline \xi_k + \underline \eta_k)(\overline\xi_k+ \underline \xi_k)
			% 1_{\cB_2}\big|
		\end{align*}
		Expanding, applying the Cauchy-Schwarz inequality and using \eqref{equation - bi bbar bounded} and \eqref{equation - moment of underline xi 2}, we may obtain a bound of order $o_{L^1}(T^{-1/2})$; we omit the repetitive details.
		% \begin{align*}
		% 	\Em\big| (\underline\Em_k- &\underline\Em_{k-1})\big[ b_k (\overline\xi_k+ \underline \xi_k)\overline b_k (\underline \xi_k + \underline \eta_k)
		% 	1_{\cB_2^k}\big]\big|
		% 	\\
		% 	&
		% 	\le 
		% 	2\Em\big|  b_k \overline\xi_k \overline b_k (\underline \xi_k + \underline \eta_k)
		% 	1_{\cB_2^k}\big|
		% 	+ 
		% 	2\Em\big| b_k \overline b_k (\underline \xi_k^2 + \underline \eta_k\underline \xi_k)
		% 	1_{\cB_2^k}\big|
		% 	% \le 4 \Em \big[ b_k^2 \xi_k^2\overline b_k^2 (\underline \xi_k + \underline \eta_k)^2
		% 	% \big]
		% 	= o(T^{-1/2}) + o(T^{-1}).
		% \end{align*}
		Substituting the above bounds back into equation  \eqref{equation expression for I1 in Q} we obtain
		\begin{align*}
			\Em\big| \sum_{i=1}^K (\underline\Em_i - \underline\Em_{i-1})I_1
			 \big|
			=
			\Em \Big| (\underline\Em_k - \underline\Em_{k-1})b_k \xi_k \Big|
			= O\left(\frac{\sigma_k^2}{\theta \rt T}\right). 
			 % \numberthis\label{equation - proof of Qinv concentration I1}
		\end{align*}
		The cases of $I_2$ and $I_3$ can be dealt with with similar approaches and we omit the details. In fact, from the definitions in \eqref{equation - I123 in proof of Q} it is not difficult to see that $\eta$ and $\zeta$ are higher order terms relative to $\xi$ under the event $\cB_2$. 
		It can therefore be shown that the term involving $I_1$ is the leading term in \eqref{equation - conc Qi 1}
		and the claim follows.

	\pfspace
	\pf{\ref{lemma - concentration of Q inverse ---- offdiag}}
	Define $\overline Q:= I_T - \theta^{-1} X_0\tp X_0 R E_\tau\tp E_\tau$ so that similar to \eqref{equation - first resolvent identity for Q} we have
	 % $Q^{-1} = (I_K - \theta^{-1} X_0 R E_\tau\tp E_\tau X_0\tp )^{-1}$ so that
	\begin{align*}
		Q^{-1}- I_K = \frac{1}{\theta}X_0 R E_\tau\tp E_\tau X_0\tp
		(I_K - \theta^{-1} X_0 R E_\tau\tp E_\tau X_0\tp )^{-1} 
		= \frac{1}{\theta}X_0 R E_\tau\tp E_\tau 
		\overline Q^{-1} X_0\tp.
	\end{align*}
	Recall that we have $X_0\tp X_0 = T^{-1}\sum_{i=1}^K \xx_i \xx_i\tp$, define the matrices
	\begin{align*}
		\overline Q_{(j)}:= I_T - \frac{1}{\theta T} \sum_{k\ne j} \xx_k \xx_k\tp R E_\tau\tp E_\tau,
		\quad
		\overline Q_{(ij)}:= I_T - \frac{1}{\theta T} \sum_{k\ne i,j} \xx_k \xx_k\tp R E_\tau\tp E_\tau,
	\end{align*}
	so that 
	$\overline Q - \overline Q_{(j)} = - \frac{1}{\theta T}\xx_j \xx_j\tp R E_\tau\tp E_\tau$ and
	$\overline Q_{(j)} - \overline Q_{(ij)} = - \frac{1}{\theta T}\xx_i \xx_i\tp R E_\tau\tp E_\tau$. Let
	\begin{align*}
		&a_j:= \frac{1}{1+ \tr  (\overline Q_{(j)}^{-1} (\overline Q - \overline Q_{(j)} ))}
		=
		\frac{1}{1 - \frac{1}{\theta T}
			\xx_j\tp R E_\tau\tp E_\tau \overline Q_{(j)}^{-1} \xx_j },
			\\
		&a_{ij}:= \frac{1}{1+ \tr  (\overline Q_{(ij)}^{-1} (\overline Q_{(j)} - \overline Q_{(ij)} ))}
		=
		\frac{1}{1 - \frac{1}{\theta T}
			\xx_i\tp R E_\tau\tp E_\tau \overline Q_{(ij)}^{-1} \xx_i },
	\end{align*}
	then by \eqref{equation - sherman 2} we have 
	\begin{align*}
		\overline Q^{-1} \xx_j = a_j \overline Q_{(j)}^{-1} \xx_j,
		\quad 
		\xx_i\tp R E_\tau\tp E_\tau\overline Q_{(i)}^{-1} = a_{ij} \xx_i\tp R E_\tau\tp E_\tau \overline Q_{(ij)}^{-1}.
	\end{align*}
	We can therefore write 
	\begin{align*}
		Q^{-1}_{ij} = \frac{1}{\theta T} \xx_i\tp R E_\tau\tp E_\tau \overline Q^{-1} \xx_j
		=
		\frac{a_j a_{ij}}{\theta T} \xx_i\tp R E_\tau\tp E_\tau \overline Q_{(ij)}^{-1} \xx_j.
	\end{align*}
	Now define $X_{ij0}:= X_0 - T^{-1/2}(\ee_i \xx_i\tp + \ee_j \xx_j)$ and 
	events
	$\cB_1^{ij}$ and $\cB_2^{ij}$ analogous to
	\eqref{equation - high prob event i in proof of Q} with $X_{i0}$ replaced by $X_{ij0}$. Similar to (a) of the Lemma we have
	$a_j= 1+ o(1)$ and $a_{ij}= 1+ o(1)$ under the event $\cB_2$. Therefore we have
	\begin{align*}
		\Em\big|Q_{ij}^{-1} 1_{\cB_{2}} \big|^2 
		\lesssim \frac{	1}{	\theta^2 T^2}	\Em\big|\xx_i\tp R E_\tau\tp E_\tau \overline Q_{(ij)}^{-1} \xx_j 1_{\cB_{2}} \big|^2
		\le \frac{	1}{	\theta^2 T^2}	\Em\big|\xx_i\tp R E_\tau\tp E_\tau \overline Q_{(ij)}^{-1} \xx_j 1_{\cB_{2}^{ij}} \big|^2.
		% \numberthis\label{sdflkjsdklofgjs;oaikdfj}
	\end{align*}
	By Lemma ~\ref{lemma - concentration of xBx} we have
	\begin{align*}
		\Em\big|Q_{ij}^{-1} 1_{\cB_{2}} \big| ^2
		= \frac{1}{\theta^2 T^2}
		O(\sigma_i^2 \sigma_j^2 T),
	\end{align*}
	which completes the whole proof.
	\end{proof}
\end{lemma}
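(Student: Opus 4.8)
The plan is to prove both parts via a martingale-difference decomposition along the rows of $X_0$, exploiting the fact that, once we condition on the idiosyncratic noise $\epsilonb$, the matrix $Q$ depends on $X_0$ only through its $K$ independent rows $\xx_1,\ldots,\xx_K$. Accordingly I would work with the filtration $(\underline\cF_i)$ generated by $\xx_{[1:i]}$ together with all of $\epsilonb$, and write
\begin{align*}
	Q_{kk}^{-1}1_{\cB_2} - \underline\Em[Q_{kk}^{-1}1_{\cB_2}] = \sum_{i=1}^K (\underline\Em_i - \underline\Em_{i-1})\, Q_{kk}^{-1}1_{\cB_2},
\end{align*}
and similarly for the off-diagonal entries. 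Since $(\underline\Em_i - \underline\Em_{i-1})$ annihilates anything $\underline\cF_{i-1}$-measurable, inside each increment I can replace $Q^{-1}1_{\cB_2}$ by $Q^{-1}1_{\cB_2} - Q_{(ii)}^{-1}1_{\cB_2^i}$, where $Q_{(ii)}$ is the leave-one-out version of $Q$ with the $i$-th row of $X_0$ deleted from both factors and $\cB_2^i$ is the correspondingly modified high-probability event. The mismatch $1_{\cB_2^i}-1_{\cB_2}$ is negligible to any polynomial order by Lemma \ref{lemma - high prob event}, so the crux is an $L^2$ bound, uniform in $i$, on the $(k,k)$ entry of $Q^{-1} - Q_{(ii)}^{-1}$.

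For that I would apply the Sherman--Morrison formula (Lemma \ref{Lemma - shermanMorrison}) twice, first passing from $Q^{-1}$ to the intermediate $Q_{(i)}^{-1}$ (row $i$ removed from one factor only) and then to $Q_{(ii)}^{-1}$. The key structural point is that $Q_{(i)}$ and $Q_{(ii)}$ act as the identity on the $i$-th coordinate, so their inverses inherit a sparsity pattern on the $i$-th row and column; this collapses the rank-one updates and lets me express the $(k,k)$ martingale increment as a combination of scalar quadratic forms
\begin{align*}
	\xi_i = \tfrac{1}{\theta T}\xx_i\tp R E_\tau\tp E_\tau \xx_i, \qquad
	\eta_i = \tfrac{1}{\theta^2 T}\xx_i\tp R E_\tau\tp E_\tau X_{i0}\tp Q_{(ii)}^{-1} X_{i0} R E_\tau\tp E_\tau \xx_i
\end{align*}
(plus a third form $\zeta_{ik}$ that appears only when $k\ne i$), multiplied by scalars $b_i = (1-\xi_i-\eta_i)^{-1}$ which are $1+o(1)$ on $\cB_2$. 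Subtracting the conditional means $\overline\xi_i,\overline\eta_i$ (expressible through the operators $\Psi^{ii}_0$ of Lemma \ref{lemma - properties of Psi}) and invoking the concentration inequality for quadratic forms in $\xx$ from Lemma \ref{lemma - concentration of xBx}, I would obtain $L^2$ bounds of order $\gamma_1(\tau)^{-2}(\theta T)^{-1}$ for $\underline\xi_i:=\xi_i-\overline\xi_i$ and $K\gamma_1(\tau)^{-4}(\theta^2 T)^{-1}$ for $\underline\eta_i$. Combining these with $\theta\asymp\sigma_1^4\gamma_1(\tau)^2$ (Proposition \ref{proposition - solution for theta}) and the constraints on $K$ in Assumption \ref{assumptions - tau fixed}(b) or \ref{assumptions - tau div}(b), then expanding $b_i$ around $\overline b_i=(1-\overline\xi_i-\overline\eta_i)^{-1}$ and summing over $i$, shows the total is $o_{L^1}(T^{-1/2})$, with the $\xi$-term the leading contribution and $\eta,\zeta$ of strictly smaller order on $\cB_2$.

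For part (b) the argument is shorter. I would first use $Q^{-1}-I_K = \theta^{-1}X_0 R E_\tau\tp E_\tau \overline Q^{-1} X_0\tp$ with $\overline Q = I_T - \theta^{-1}X_0\tp X_0 R E_\tau\tp E_\tau$, then peel off the $i$-th and $j$-th rows of $X_0$ from $\overline Q^{-1}$ using the vector form \eqref{equation - sherman 2} of Sherman--Morrison, obtaining $Q_{ij}^{-1} = \frac{a_j a_{ij}}{\theta T}\,\xx_i\tp R E_\tau\tp E_\tau \overline Q_{(ij)}^{-1}\xx_j$ with $a_j,a_{ij}=1+o(1)$ on $\cB_2$ and $\overline Q_{(ij)}^{-1}$ independent of $\xx_i,\xx_j$. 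Since $i\ne j$, this is a genuine bilinear form in two independent mean-zero vectors, so Lemma \ref{lemma - concentration of xBx}(a) gives it size $O_{L^2}(\sigma_i\sigma_j\sqrt T)$ after conditioning on $\cF_p$; dividing by $\theta T$ and inserting $\theta\asymp\sigma_1^4\gamma_1(\tau)^2$ yields $O_{L^2}(\gamma_1(\tau)^{-2}\sigma_1^{-2}T^{-1/2})$, uniformly in $i,j$ because all $\sigma_i$ are comparable. The main obstacle is part (a): correctly tracking the sparsity of the leave-one-out inverses, assembling the $(k,k)$ increment into the $\xi,\eta,\zeta$ pieces, and then verifying that every cross term produced by expanding $b_i$ and by the $\cB_2$/$\cB_2^i$ mismatch is genuinely of smaller order than $T^{-1/2}$ under the stated rates for $K$, $\sigma_i$ and $\gamma_i(\tau)$.
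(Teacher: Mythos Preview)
Your proposal is correct and follows essentially the same approach as the paper's proof: the same martingale decomposition along the rows of $X_0$ with the filtration $(\underline\cF_i)$, the same two-step Sherman--Morrison reduction through $Q_{(i)}^{-1}$ to $Q_{(ii)}^{-1}$, the same sparsity observation for the leave-one-out inverses, the identical scalar forms $\xi_i,\eta_i,\zeta_{ik}$ and $b_i=(1-\xi_i-\eta_i)^{-1}$, and the same treatment of part (b) via $\overline Q$ and the peeling argument leading to $Q_{ij}^{-1}=\tfrac{a_j a_{ij}}{\theta T}\xx_i\tp R E_\tau\tp E_\tau \overline Q_{(ij)}^{-1}\xx_j$. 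The orders you quote for $\underline\xi_i,\underline\eta_i$ and the final bounds match the paper's as well.
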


We finally show that the conditional expectations of diagonal elements of $A$, $B$ and $Q^{-1}$, defined in \eqref{equation - definition of A and B}, are sufficiently close to the unconditional expectation. 
\begin{lemma}\label{lemma - conditional expectations}
		For each $i=1,\ldots, K$, we have
		\begin{align*}
			\underline\Em[A_{ii}1_{\cB_0}] - \Em[A_{ii}1_{\cB_0}]
			=  O_{L^2}\left(  \frac{\sigma_i^2}{ {\theta^{3/2} \rt T}}  \right),
			\quad 
			\underline \Em[&B_{ii}1_{\cB_0}]  - \Em[B_{ii}1_{\cB_0}]
			 = O_{L^2}\left(  \frac{\sigma_i^2}{ {\theta\rt  T}}  \right),
			\\
			\underline\Em[Q^{-1}_{ii}1_{\cB_2}] - \Em[Q^{-1}_{ii}1_{\cB_2}]
			&=
			O_{L^2}\left(\frac{K \norm{\sigmab}_{\ell_2}^2}{\theta \rt T} \right).
		\end{align*}
		% \vspace{-7mm}
	\begin{proof}
		From (a) of Lemma ~\ref{lemma - concentration of xBx} we recall that
		\begin{align*}
			\underline\Em [A_{ii} 1_{\cB_0}] 
			=
			\frac{1}{\rt \theta T}\underline\Em [\xx_{i,[1:T-\tau]}\tp R1_{\cB_0} \xx_{i,[\tau+1:T]} ] 
			=
			\frac{1}{\rt \theta T}\tr (\Psi^{ii}_1(R))1_{\cB_0}
			\numberthis\label{equation - proof of EtrR 0}
		\end{align*}
		where, using \eqref{equation - definition of Psi} and the cyclic property of the trace, we have
		\begin{align*}
			\tr (\Psi^{ii}_1(R)) 
			= \tr \left(  (\0,I_{T- \tau})(\sigma_i^2 \Phi_i\tp \Phi_i + I_{T})(I_{T- \tau},\0)\tp R  \right)=:\tr(GR).
			\numberthis\label{equation - proof of EtrR 1}
		\end{align*}
		Furthermore, using (a) of Lemma ~\ref{lemma - properties of Psi}, we see that
		\begin{align*}
			G:= (\0,I_{T- \tau})(\sigma_i^2 \Phi_i\tp \Phi_i + I_{T})(I_{T- \tau},\0)\tp = O_{\norm{\cdot}}(\sigma_i^2). 
			\numberthis\label{equation - proof of EtrR 2}
		\end{align*}
		From \eqref{equation - proof of EtrR 0} we have $\Em[A_{ii}1_{\cB_0}] = \Em[\underline \Em[A_{ii}1_{\cB_0}]] = \frac{1}{\rt \theta T}\Em[\tr (\Psi^{ii}_1(R))1_{\cB_0}]$ and so
		\begin{align*}
			\underline \Em[A_{ii}1_{\cB_0}]&  - \Em[A_{ii}1_{\cB_0}]
			 = 
			\frac{1}{\rt \theta T}\left( \tr (\Psi^{ii}_1(R))1_{\cB_0} - \Em[\tr (\Psi^{ii}_1(R))1_{\cB_0}]\right)
			\\
			& = 
			\frac{1}{\rt \theta T}\left( \tr (GR)1_{\cB_0} - \Em[\tr (GR)1_{\cB_0}]\right)
			= \frac{1}{\rt \theta T} \tr \big(
				G (R 1_{\cB_0} - \Em[R1_{\cB_0}] )
			\big),
		\end{align*}
		by linearity of the expectation and the trace.
		By \eqref{equation  - lemma concentration of R} of Lemma ~\ref{lemma - concentration of R} we have
		\begin{align*}
			\underline \Em[A_{ii}1_{\cB_0}]  - \Em[A_{ii}1_{\cB_0}]
			& = \frac{1}{\rt \theta}O_{L^2}   \left(\frac{\norm{G}}{\theta \rt T} \right)
			=O_{L^2}   \left(\frac{\sigma_i^2}{ { \theta^{3/2} \rt T}} \right),
		\end{align*}
		where the last equality follows from \eqref{equation - proof of EtrR 2}. 
		For the case of $B$, similar computations and \eqref{equation  - lemma concentration of EER} of Lemma ~\ref{lemma - concentration of R} give
		\begin{align*}
			\underline \Em[B_{ii}1_{\cB_0}] & - \Em[B_{ii}1_{\cB_0}]
			 = 
			\frac{1}{\theta T}\left( \tr (\Psi^{ii}_1(E_0\tp E_0 R))1_{\cB_0} - \Em[\tr (\Psi^{ii}_1(E_0\tp E_0 R))1_{\cB_0}]\right)
			\\
			& 
			=\frac{1}{\theta T}\tr\left(  G (E_0\tp E_0 R1_{\cB_0} - \Em[E_0\tp E_0 R1_{\cB_0}]\right)
			=
			\frac{1}{\theta }O_{L^2}\left(\frac{ \norm{G}}{\rt T}\right) =
			O_{L^2}\left(  \frac{\sigma_i^{2}}{{\theta \rt T}}  \right).
		\end{align*}
		Finally we consider $\underline\Em [Q_{ii}^{-1}]$. We recall from \eqref{equation - first resolvent identity for Q} that 
		\begin{align*}
			Q := I_K - \frac{1}{\theta} X_{0}  R E_\tau \tp E_\tau X_{0}\tp.
			\numberthis\label{equation - Q = I - ? in proof of last EQ}
		\end{align*}
		The strategy of the proof, similar to that of Lemma ~\ref{lemma - concentration of R} and Lemma ~\ref{lemma - concentration of Q inverse}, is express $\underline\Em[Q^{-1}_{ii}1_{\cB_2}] - \Em[Q^{-1}_{ii}1_{\cB_2}]$ into a sum of martingale difference sequence. We first introduce the necessary notations and carry out some algebraic computations.

		Similar to \eqref{equation - definition of underline epsilon k},
		we will define $\underline\epsilonb_{k0} := \epsilonb_{K+k,[1:T-\tau]}$ and $\underline\epsilonb_{k\tau} := \epsilonb_{K+k,[\tau+1:T]}$. 
		Recall from \eqref{equation - long version of R -} that $R - R_k = \sum_{n=1}^5 U_n$, where
		the $U_n$'s  are defined in \eqref{equation - defn of Un}. 
		Similar to the computations in \eqref{equation - mart decomp for EER}, we may obtain
		\begin{align*}
			R E_\tau\tp E_\tau &-  R_k E_{k\tau}\tp E_{k\tau}
			=  \frac{1}{T}R_k\underline \epsilonb_{k\tau}\underline \epsilonb_{k\tau}\tp
			% + \frac{1}{T}\sum_{n=1}^5  U_n \epsilonb_{k, [\tau+1:T]}\epsilonb_{k, [\tau+1:T]}\tp 
			+ \sum_{n=1}^5 U_n E_{\tau}\tp E_{\tau} =: V + W,
		\end{align*}
		where we defined 
		\begin{align*}
			&V  : =  \frac{1}{T}R_k\underline \epsilonb_{k\tau}\underline \epsilonb_{k\tau}\tp 
			+ (U_2 + U_3) E_\tau\tp E_\tau, 
			   \\
			&W : =  (U_1 + U_4 + U_5)E_\tau\tp E_\tau.
		\end{align*}
		Define matrices $V_{1}, V_{2}, V_3, W_{1}, W_{2}, W_{3}$ by,
		\begin{align*}
			V_1 := I_T,
			\quad
			&V_2:=\frac{\underline\beta_k}{\theta} E_0\tp E_0R_{k} E_\tau\tp E_\tau,
			 \quad
			V_3 :=  \frac{\underline\beta_k \beta_k}{\theta^2 T}
			E_0\tp E_0R_{k}E_{k\tau}\tp E_{k\tau}\epsilonb_{k0} \epsilonb_{k0}\tp R_{k} E_\tau\tp E_\tau
				\\
			&W_1  := \beta_{k}R_{k}E_\tau\tp E_\tau,
			\quad 
			W_2 :=\frac{\underline\beta_k \beta_{k}}{\theta T}
			R_{k}
			\underline \epsilonb_{k\tau} \underline \epsilonb_{k\tau}\tp E_0\tp E_0R_{k}E_\tau\tp E_\tau,
				\\
			&W_3:= \frac{\underline\beta_k \beta_k^2}{\theta^2 T^2}
			R_{k}
			\underline \epsilonb_{k\tau} \underline \epsilonb_{k\tau}\tp E_0\tp E_0
			R_{k}E_{k\tau}\tp E_{k\tau}
			\underline\epsilonb_{k0} \underline\epsilonb_{k0}\tp R_{k}E_\tau\tp E_\tau,
		\end{align*}
		so that using \eqref{equation - defn of Un} we can decompose $V$ and $W$ into
		\begin{subequations}
			\begin{align*}
			\numberthis\label{equation - V = V123}
			V & =  \frac{1}{T}R_k\underline \epsilonb_{k\tau}\underline \epsilonb_{k\tau}\tp  (V_1 + V_2 + V_3)
			   \\
			   \numberthis\label{equation - W = W123}
			W& = \frac{1}{\theta T} R_{k}E_{k\tau}\tp E_{k\tau}
			\underline\epsilonb_{k0} \underline\epsilonb_{k0}\tp (W_1 + W_2 + W_3).
		\end{align*}
		\end{subequations}
		It is clear that $V$ and $W$ are matrices of rank one. 
		We define
		\begin{align*}
			\underline Q_{(k)} :& =  I_K -\frac{1}{\theta} X_0 (R E_\tau\tp E_\tau - V) X_0\tp,
			\\
			\underline Q_{(kk)} :&= I_K -\frac{1}{\theta} X_0 (R E_\tau\tp E_\tau - V- W) X_0\tp,
		\end{align*}
		then from \eqref{equation - Q = I - ? in proof of last EQ}  we can write $Q - \underline Q_{(k)} = - \theta^{-1} X_0VX_0\tp$ and 
		$Q_{(k)} - \underline Q_{(kk)} = - \theta^{-1} X_0WX_0\tp$.
		Define the following scalars quantities
		\begin{align*}
			\alpha_k := \frac{1}{1 - \theta^{-1}\tr (Q_{(k)}^{-1} X_0  V X_0\tp)},
			\quad
			\alpha_{kk} := \frac{1}{1 - \theta^{-1}\tr (Q_{(kk)}^{-1} X_0  W X_0\tp)},
		\end{align*}
		then using \eqref{equation - sherman 1} we obtain
		\begin{align*}
			Q^{-1} = \underline Q_{(k)}^{-1}
			+ 
			\frac{1}{\theta}\underline Q_{(k)}^{-1} X_0 V X_0\tp \underline Q_{(k)}^{-1},
			\quad 
			\underline Q_{(k)}^{-1} = \underline Q_{(kk)}^{-1}
			+ 
			\frac{1}{\theta}\underline Q_{(kk)}^{-1} X_0 W X_0\tp \underline Q_{(kk)}^{-1}.
		\end{align*}
		Substituting the second identity into the first gives
		\begin{align*}
			&Q^{-1} -  \underline Q_{(kk)}^{-1}
			=  
			\frac{1}{\theta}
			\underline Q_{(kk)}^{-1} X_0 W X_0\tp \underline Q_{(kk)}^{-1}
			\\
			&\quad+ 
			\frac{1}{\theta} 
			\Big(\underline Q_{(kk)}^{-1}
				+ 
				\frac{1}{\theta}\underline Q_{(kk)}^{-1} X_0 W X_0\tp \underline Q_{(kk)}^{-1}\Big) 
				X_0 V X_0\tp 
			\Big(\underline Q_{(kk)}^{-1}
				+ 
				\frac{1}{\theta}\underline Q_{(kk)}^{-1} X_0 W X_0\tp \underline Q_{(kk)}^{-1}\Big),
		\end{align*}
		which after simplifying becomes 
		\begin{align*}
			\numberthis\label{equation - terms in conditional mean Q - Qkk}
			&Q^{-1} -  \underline Q_{(kk)}^{-1}
			 = 
			\frac{1}{\theta} \underline Q_{(kk)}^{-1}X_0 V X_0\tp  \underline Q_{(kk)}^{-1} 
			+
			\frac{1}{\theta}
			\underline Q_{(kk)}^{-1} X_0 W X_0\tp \underline Q_{(kk)}^{-1}
			   \\
			& +
			\frac{1}{\theta^2} \underline Q_{(kk)}^{-1}X_0 V X_0\tp\underline Q_{(kk)}^{-1} X_0 W X_0\tp \underline Q_{(kk)}^{-1}
			+
			\frac{1}{\theta^2}\underline Q_{(kk)}^{-1} X_0 W X_0\tp \underline Q_{(kk)}^{-1}X_0 V X_0\tp \underline Q_{(kk)}^{-1}.
		\end{align*}
		% Note that by definition $Q_{(kk)}$ is independent from $\epsilon_{kt}, t=1,\ldots, T$. 
		% % We can now carry out the proof.
		% Recall the family of conditional expectations $\Em_k[\ \cdot\ ], k=1,\ldots, p$ defined in \eqref{notations - conditional expectations}. We can then write 
		% \begin{align*}
		% 	\underline\Em[ \ee_i\tp Q^{-1}\ee_i 1_{\cB_2}] &- \Em[\ee_i \tp Q^{-1}\ee_i1_{\cB_2}]
		% 	=
		% 	\sum_{k=1}^p(\Em_k - \Em_{k-1})
		% 	( \ee_i\tp Q^{-1}\ee_i 1_{\cB_2} - \ee_i\tp Q_{(kk)}^{-1}\ee_i 1_{\cB_2^k})
		% 	\\
		% 	& = 
		% 	\sum_{k=1}^p(\Em_k - \Em_{k-1})
		% 	 \ee_i\tp (Q^{-1} - Q_{(kk)}^{-1})\ee_i 1_{\cB_2}  + o_{L^2}(T^{-1/2})
		% \end{align*}
		% \eqref{equation - conc Rk 1}

		Before we proceed with the proof we first prove some moment estimates for the terms  in \eqref{equation - terms in conditional mean Q - Qkk}.
		We start with some informal observations. By comparing \eqref{equation - V = V123} and \eqref{equation - V = V123}, we see that the matrix $W$ is smaller in magnitude in comparison to $V$ by a factor of $\theta	^{-1}$. This suggests that the first term in \eqref{equation - terms in conditional mean Q - Qkk} is the leading term while the rest are high order terms in comparison and we will therefore only deal with first term in detail below.  The same arguments can be applied to the rest of \eqref{equation - terms in conditional mean Q - Qkk} to make the above argument rigorous, but we omit the repetitive details.

		Recall the family of event $\{\cB_0^k, k=1,\ldots, p\}$ from \eqref{equation - B0k} and define $\cB_2^k:= \cB_0^k\cap \cB_1$. 
			From definition we note that $\cB_2\subseteq\cB_2^k$.
		Furthermore, from Lemma ~\ref{lemma - high prob event} we have
		\begin{align*}
		 	1_{\cB_2^k} - 1_{\cB_2} \le 1- 1_{\cB_2}  = o_p(T^{-l}),\quad\forall l\in\N.
		 	\numberthis\label{equation - two indicators proof oo EQ}
		\end{align*}
		In the computations below, we will often substitute $1_{\cB_2^k}$ with $1_{\cB_2}$ and vice versa in expectations. Whenever we do so, we may use \eqref{equation - two indicators proof oo EQ} and a similar argument to how we dealt with \eqref{equation - conc Rk 1} to show that the error term of such a substitution is negligible for the purpose of the proof.
		Hence from now on we will use the two indicators $1_{\cB_2^k}$ and $1_{\cB_2}$ interchangeably below without further justifications. 

		Since we can write $X_0\tp = \frac{1}{\rt T} \sum_{l=1}^K\xx_l \ee_l\tp$, the first term in \eqref{equation - terms in conditional mean Q - Qkk} can be expressed as
		\begin{align*}
			% \numberthis\label{equation - QXVXQ}
			\frac{1}{\theta} \ee_i\tp&\underline Q_{(kk)}^{-1}X_0 V X_0\tp  \underline Q_{(kk)}^{-1} \ee_i
			 =  	
			\frac{1}{\theta T} \sum_{l=1}^K\sum_{m=1}^K \ee_i\tp\underline Q_{(kk)}^{-1} \ee_l  (\xx_l\tp V  \xx_m )\ee_m\tp \underline Q_{(kk)}^{-1} \ee_i,
			\numberthis\label{equation - proof of EQ first term}
		\end{align*}
		where, recalling \eqref{equation - V = V123},  we have
		\begin{align*}
			% \numberthis\label{equation - QXVXQ - xVx}
			\xx_l\tp V  \xx_m & =  
			\frac{1}{T}\sum_{n=1}^3  \xx_l\tp R_k\underline \epsilonb_{k\tau}\underline \epsilonb_{k\tau}\tp V_n  \xx_m.
			\numberthis\label{equation - proof of EQ first term 2}
		\end{align*}
		Using \eqref{equation - proof of EQ first term}-\eqref{equation - proof of EQ first term 2} and the inequality
		 $(\sum_{i=1}^n x_i)^p \lesssim n^{p-1} \sum_{i=1}^n x_i^p$ we have
		\begin{align*}
			\Em\Big|
			\frac{1}{\theta} \ee_i\tp \underline Q_{(kk)}^{-1}X_0 &V X_0\tp  \underline Q_{(kk)}^{-1} \ee_i  				1_{\cB_2}	\Big|^2
			\lesssim 
			\frac{K^2}{\theta^2 T^2}
			\sum_{l=1}^K\sum_{m=1}^K 
			\Em 
			\Big| 
				\ee_i\tp\underline Q_{(kk)}^{-1} \ee_l  (\xx_l\tp V  \xx_m )\ee_m\tp \underline Q_{(kk)}^{-1} \ee_i		1_{\cB_2}		\Big|^2
			\\
			&\lesssim 
			\frac{3K^2}{\theta^2 T^4}
			\sum_{l=1}^K\sum_{m=1}^K \sum_{n=1}^3
			\Em 
			\Big| 
				\xx_l\tp R_k\underline \epsilonb_{k\tau}\underline \epsilonb_{k\tau}\tp V_n  \xx_m \norm{\underline Q_{(kk)}^{-1}}^2 1_{\cB_2}\Big|^2
				.
				\numberthis\label{equation - QXVXQ}
		\end{align*}
		Note that under the event $\cB_2$, we can easily see that $\norm{Q^{-1}_{(kk)}1_{\cB_2}} = O(1)$. Therefore by the Cauchy Schwarz inequality we can obtain
		\begin{align*}
			\mathrm{\eqref{equation - QXVXQ}}
			\lesssim 
			\frac{K^2}{\theta^2 T^4}
			\sum_{l=1}^K\sum_{m=1}^K \sum_{n=1}^3
			\Em \big[(\xx_l\tp R_k\underline \epsilonb_{k\tau})^4 1_{\cB_2}\big]^{1/2}
			\Em \big[(\underline \epsilonb_{k\tau}\tp V_n  \xx_m)^4 1_{\cB_2}\big]^{1/2}.
			\numberthis\label{equation - QXVXQ 1}
		\end{align*}
		Note that $\cB_2 = \cB_1\cap \cB_0 \subseteq \cB_0\subseteq\cB_0^k$ 
		 so that $1_{\cB_2}\le 1_{\cB_0^k}$. We can then condition on $R_k$ and 
		apply (\ref{lemma - bai silverstein concentration- independent})  of Lemma ~\ref{lemma - bai silverstein concentration-} to
		% (and replacing $1_{\cB_2}$ by $1_{\cB_2^k}$) 
		the first quadratic form in \eqref{equation - QXVXQ}  to get
		\begin{align*}
			\Em |\xx_l\tp R_k \underline\epsilonb_{k\tau}1_{\cB_2}|^4 
			& \le  
			\Em \left|
			\begin{pmatrix}
				\zz_{l,[1:T]}\\ \epsilonb_{l,[1:T]}
			\end{pmatrix}\tp
			\begin{pmatrix}
			 	\sigma_l \Phi_l \\I_{T}
			\end{pmatrix}
			\begin{pmatrix}
			 	I_{T- \tau} \\ \0_{\tau\times(T- \tau)}
			\end{pmatrix}
			 R_k \underline\epsilonb_{k\tau}1_{\cB_0^k}\right|^4
			 \\
			&\lesssim 
			\Em 
			 \big[\tr(   R_k^2  (\sigma_l^2 \Phi_l\tp \Phi_l + I_T)  )^2 1_{\cB_0^k}\big]
			= O( \sigma_l^4 T^2),
		\end{align*}
		where the last equality follows from using $tr(R)\le T \norm{R}$ and applying Lemma ~\ref{lemma - properties of Psi}. Similarly, for the quadratic involving $V_1$ in \eqref{equation - QXVXQ 1}, we have
		\begin{align*}
			\Em |\underline\epsilonb_{k\tau}\tp V_1 \xx_m|^4  = \Em |\underline\epsilonb_{k\tau}\tp \xx_m|^4
			\lesssim \tr(\sigma_m^2 \Phi_m\tp \Phi_m + I_T) ^2
			= O(\sigma_m^4 T^2).
		\end{align*}
		
		We observe here that that the matrices $V_2$ and $V_3$ are smaller in magnitude in comparison to $V_1$ by a factor of $\theta^{-1}$ under the event $\cB_2$. Hence it is to be expected that the quadratic forms involving $V_2$ and $V_3$ in \eqref{equation - QXVXQ 1} should be negligible in comparison to the one involving $V_1$. To be more concrete, we sketch here how bound the quadratic form involving $V_2$; the case of $V_3$ can be dealt with in a similar manner.
		Recall that the matrix $E_0\tp E_0$ can be written as $E_0\tp E_0 = E_{k0} \tp E_{k0} + \frac{1}{T}\underline\epsilonb_{k0} \underline\epsilonb_{k0}\tp$. 
		Then we can write
		\begin{align*}
			\Em |\underline\epsilonb_{k\tau}\tp &V_2 \xx_m 1_{\cB_2}|^4 
			 = 
			\frac{1}{\theta^4} \Em \big|\underline \beta_k \underline\epsilonb_{k\tau}\tp E_0\tp E_0R_{k} E_\tau\tp E_\tau\xx_m1_{\cB_2}\big|^4
			   \\
			& \lesssim 
			\frac{1}{\theta^4} \Em \big| \underline\epsilonb_{k\tau}\tp E_{k0}\tp E_{k0}R_{k} E_{k\tau}\tp E_{k\tau}\xx_m1_{\cB_2}\big|^4
			+ 
			\frac{1}{\theta^4 } \Em \big| \frac{1}{T}\underline\epsilonb_{k\tau}\tp \underline\epsilonb_{k0}\frac{1}{T}\underline\epsilonb_{k0}\tp R_{k} \underline\epsilonb_{k0}\underline\epsilonb_{k0}\tp\xx_m1_{\cB_2}\big|^4
			\\&\quad
			+ \frac{1}{\theta^4} \Em \big| \frac{1}{T}\underline\epsilonb_{k\tau}\tp E_{k0}\tp E_{k0}R_{k} \underline\epsilonb_{k\tau}\underline\epsilonb_{k\tau}\tp\xx_m1_{\cB_2}\big|^4
			+ \frac{1}{\theta^4} \Em \big| \frac{1}{T}\underline\epsilonb_{k\tau}\tp \underline\epsilonb_{k0}\underline\epsilonb_{k0}\tp R_{k} E_{k0}\tp E_{k0}\xx_m1_{\cB_2}\big|^4.
		\end{align*}
		At this point we recognize that the four terms above has a similar structure as the case of $V_1$. Namely they all involve quadratic forms where the matrix in the middle is independent from the vectors on each side. Using the same approach as we did in the case of $V_1$ we can indeed show that this is a negligible term in comparison. The case of $V_3$ is similar albeit more tedious, and we omit the details.
		We may then conclude that
		\begin{align*}
			\Em\big|
			\frac{1}{\theta} \ee_i\tp &\underline Q_{(kk)}^{-1}X_0 V X_0\tp  \underline Q_{(kk)}^{-1} \ee_i1_{\cB_2}\big|^2
			\lesssim 
			\frac{K^2}{\theta^2 T^4}
			\sum_{l=1}^K\sum_{m=1}^K 
			\sigma_l^2 \sigma_m^2 T^2  
			\numberthis\label{equation - 2nd moment fo QXVXQ}.
		\end{align*}
		The same strategy described above can then be repeated for each of the remaining three terms in \eqref{equation - terms in conditional mean Q - Qkk} to show that they are higher order terms compared to the term in \eqref{equation - 2nd moment fo QXVXQ} (see the remark below \eqref{equation - terms in conditional mean Q - Qkk}).
		We may therefore conclude that
		% The same strategy can be employed to deal with the remaining three terms in \eqref{equation - terms in conditional mean Q - Qkk}. In fact, 
		% comparing \eqref{equation - V = V123} with \eqref{equation - W = W123}, we see that $W$ essentially has the same structure as $V$, while having an extra scaling of $\theta^{-1}$, which implies that \eqref{equation - 2nd moment fo QXVXQ} is the leading order in \eqref{equation - terms in conditional mean Q - Qkk}. Indeed, repeating the above computations we would get
		\begin{align*}
			\Em|\ee_i\tp( Q^{-1} - \underline Q_{(kk)}^{-1})\ee_i1_{\cB_2}|^2
			\lesssim 
			\frac{K^2 }{\theta^2 T^2} \left(\sum_{m=1}^K \sigma_m^2\right)^2.
			\numberthis\label{equation - last?}
		\end{align*}
		Finally, we can decompose  $\underline\Em[\ee_i\tp Q^{-1}\ee_i 1_{\cB_2}] - \Em[\ee_i\tp Q^{-1}\ee_i 1_{\cB_2}]$ into
		\begin{align*}
			\underline\Em[\ee_i\tp Q^{-1}\ee_i1_{\cB_2}] - \Em[\ee_i\tp Q^{-1}\ee_i1_{\cB_2}]
			& = 
			\sum_{k=1}^p (\Em_i - \Em_{i-1})\ee_i\tp Q^{-1}\ee_i1_{\cB_2}
			\\
			& = \sum_{k=1}^p (\Em_i - \Em_{i-1})\ee_i\tp (Q^{-1}- Q^{-1}_{(kk)})\ee_i1_{\cB_2},
		\end{align*}
		where for the last equality we refer to \eqref{equation - two indicators proof oo EQ} and the remark immediately below it. Using the bound \eqref{equation - last?} we immediately have
		\begin{align*}
		\Em\big|\underline\Em[&\ee_i\tp Q^{-1}\ee_i1_{\cB_2}] - \Em[\ee_i\tp Q^{-1}\ee_i1_{\cB_2}]\big|^2
			\\
		&\le 4\sum_{k=1}^p \Em |\ee_i\tp( Q^{-1} - \underline Q_{(kk)}^{-1}1_{\cB_2})\ee_i|^2
		\lesssim
		\frac{K^2 }{\theta^2 T} \left(\sum_{m=1}^K \sigma_m^2\right)^2,
		\end{align*}
		from which the claim follows.
	\end{proof}
\end{lemma}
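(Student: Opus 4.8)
The plan is to handle the three estimates separately, since $A_{ii}$ and $B_{ii}$ are amenable to a direct trace identity while $Q^{-1}_{ii}$ requires a martingale argument.

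\textbf{The cases of $A$ and $B$.} Here the conditional expectation can be written out in closed form. Applying (\ref{lemma- concentration of xBx (XBX) }) of Lemma~\ref{lemma - concentration of xBx} conditionally on the noise (using that $R$ and $1_{\cB_0}$ are $\cF_p$-measurable) gives $\underline\Em[A_{ii}1_{\cB_0}] = (\rt\theta\, T)^{-1}\tr\big(\Psi_1^{ii}(R)\big)1_{\cB_0}$ and $\underline\Em[B_{ii}1_{\cB_0}] = (\theta T)^{-1}\tr\big(\Psi_1^{ii}(E_0\tp E_0 R)\big)1_{\cB_0}$, both functions of the noise series alone. Taking a further unconditional expectation and subtracting, each difference reduces to a centered normalized trace, $\tfrac1T\tr\big(G(R1_{\cB_0}-\Em[R1_{\cB_0}])\big)$ in the first case and $\tfrac1T\tr\big(G(E_0\tp E_0 R1_{\cB_0}-\Em[E_0\tp E_0 R1_{\cB_0}])\big)$ in the second, where $G := (\0,I_{T-\tau})(\sigma_i^2\Phi_i\tp\Phi_i + I_T)(I_{T-\tau},\0)\tp$ satisfies $\norm{G} = O(\sigma_i^2)$ by (\ref{lemma - properties of Psi - norm of PHIPHI}) of Lemma~\ref{lemma - properties of Psi}. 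I would then invoke the resolvent concentration bounds \eqref{equation  - lemma concentration of R} and \eqref{equation  - lemma concentration of EER} of Lemma~\ref{lemma - concentration of R} to obtain orders $\norm{G}/(\theta\rt T)$ and $\norm{G}/\rt T$ respectively; restoring the prefactors and substituting $\theta\asymp\sigma_i^4\gamma_i(\tau)^2$ (Proposition~\ref{proposition - solution for theta}) together with the comparability of factor strengths from Assumption~\ref{assumptions} yields the claimed $O_{L^2}\big((\theta T)^{-1/2}\big)$ in both cases.

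\textbf{The case of $Q^{-1}$.} Because $Q = I_K - \theta^{-1}X_0 R E_\tau\tp E_\tau X_0\tp$ involves both the noise and the factor part through $X_0$, no trace identity is available, so I would instead write $\underline\Em[Q^{-1}_{ii}1_{\cB_2}] - \Em[Q^{-1}_{ii}1_{\cB_2}] = \sum_{k=1}^p(\Em_k - \Em_{k-1})Q^{-1}_{ii}1_{\cB_2}$ as a sum of martingale differences along the noise coordinates, $\Em_k[\,\cdot\,] := \Em[\,\cdot\,|\cF_k]$. For each $k$, introduce the leave-one-out matrix $Q_{(kk)}$ formed from the resolvent with the $k$-th noise row of $E_0,E_\tau$ deleted; since $Q_{(kk)}$ and $\cB_2^k$ are measurable with respect to the remaining noise coordinates, $(\Em_k - \Em_{k-1})(Q_{(kk)}^{-1})_{ii}1_{\cB_2^k} = 0$, and (exchanging $1_{\cB_2}\leftrightarrow 1_{\cB_2^k}$, which costs only $o(T^{-l})$ by Lemma~\ref{lemma - high prob event}) each increment is controlled by $\big\|Q^{-1}_{ii} - (Q_{(kk)}^{-1})_{ii}\big\|_{L^2}$ on $\cB_2$. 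The crux is to expand this difference: writing $R E_\tau\tp E_\tau - R_k E_{k\tau}\tp E_{k\tau} = V + W$ as a sum of rank-one pieces, with $W$ a factor $\theta^{-1}$ smaller than $V$, and applying the Sherman--Morrison formula (Lemma~\ref{Lemma - shermanMorrison}) twice through an intermediate matrix $\underline Q_{(k)}$, one obtains $Q^{-1} - Q_{(kk)}^{-1}$ as a short list of explicit terms whose leading one is $\theta^{-1}Q_{(kk)}^{-1}X_0 V X_0\tp Q_{(kk)}^{-1}$. Expanding $X_0\tp = T^{-1/2}\sum_l \xx_l\ee_l\tp$, every term becomes a combination of bilinear forms in $\xx_l$ and $\underline\epsilonb_{k\tau}$ whose central matrix, after conditioning on the appropriate noise $\sigma$-algebra, is independent of the flanking vectors; the Bai--Silverstein estimate ((\ref{lemma - bai silverstein concentration- independent}) of Lemma~\ref{lemma - bai silverstein concentration-}), the operator bounds in Lemma~\ref{lemma - properties of Psi}, the high-probability bounds \eqref{equation - high prob event Op}, and $\theta\asymp\sigma_i^4\gamma_i(\tau)^2$ then give $\big\|Q^{-1}_{ii} - (Q_{(kk)}^{-1})_{ii}\big\|_{L^2}^2 = o\big(1/(\rt\theta\,T^2)\big)$ uniformly in $k$. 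Summing over $k=1,\ldots,p$ with $p\asymp T$ produces the required $o_{L^2}(T^{-1/2})$.

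\textbf{Main obstacle.} The genuinely laborious part is the $Q^{-1}$ estimate: keeping track of the rank-one perturbations $V$, $W$ and their sub-pieces $V_1,V_2,V_3$ (and the $W$-analogues), showing that everything except the $V_1$-contribution to the leading term is negligible --- which forces one to split $E_0\tp E_0 = E_{k0}\tp E_{k0} + T^{-1}\underline\epsilonb_{k0}\underline\epsilonb_{k0}\tp$ and re-condition repeatedly so that central matrices and flanking vectors decouple --- and verifying that the cross-terms generated by the double Sherman--Morrison expansion carry enough powers of $\theta^{-1}$ to be absorbed. By contrast the $A$ and $B$ bounds follow almost immediately once the trace identities and Lemma~\ref{lemma - concentration of R} are in place.
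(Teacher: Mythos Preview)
Your proposal is correct and follows essentially the same approach as the paper's proof: the $A$ and $B$ cases via the trace identity $\underline\Em[A_{ii}1_{\cB_0}]=(\rt\theta\,T)^{-1}\tr(\Psi_1^{ii}(R))1_{\cB_0}$ combined with Lemma~\ref{lemma - concentration of R}, and the $Q^{-1}$ case via the martingale decomposition over noise rows, the rank-one splitting $RE_\tau\tp E_\tau - R_kE_{k\tau}\tp E_{k\tau}=V+W$, double Sherman--Morrison, and Bai--Silverstein bounds on the resulting bilinear forms. Your identification of the leading term, the $\theta^{-1}$ hierarchy among the $V_n$ and $W$ pieces, and the need to split $E_0\tp E_0$ to decouple central matrices from flanking vectors all match the paper's argument.
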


%\section*{Appendix E. Proof of Theorem~\ref{3:th:2}}\label{3:sec:Appendix_A}
\section{Proof of Theorem~\ref{3:th:2}}\label{3:sec:Appendix_A}
\begin{proof}[Proof of Theorem~\ref{3:th:2}]
	Without loss of generality, we only consider the case for $Z_{i,\tau} > 0$ since the case for $Z_{i,\tau} < 0$ can be considered in precisely the same way. For a constant significant level $\alpha$, to see $Pr(Z_{i,\tau} > z_{\alpha} | H_1) \rightarrow 1$ as $T,p \to \infty$, it is sufficient to show that $Z_{i,\tau} \rightarrow \infty$ as $T,p \to \infty$.
	
	To start, we firstly notice that for any $i \in \{1,2,...,K\}$ and a finite time lag $\tau$, $\frac{\gamma_{i,\tau}}{2\sqrt{2}v_{i,\tau}}$ does not divergent with $T$ and $p$, since both $\gamma_{i,\tau}$ and $v_{i,\tau}$ are some constants when $T,p \to \infty$. It then suffices to show $\sqrt{T} \frac{\lambda_{i,\tau}^{(1)}-\lambda_{i,\tau}^{(2)}}{\theta_{i,\tau}} \to \infty$ when $T,p \to \infty$. Note that by the definition of $\theta_{i,\tau}$ in (\ref{3:para}), we can show that
	\begin{align}\label{3:ap1}
		\frac{\lambda_{i,\tau}^{(1)}-\lambda_{i,\tau}^{(2)}}{\theta_{i,\tau}}
		= \frac{\lambda_{i,\tau}^{(1)}}{\theta_{i,\tau}^{(1)}} \frac{\theta_{i,\tau}^{(1)}}{\theta_{i,\tau}}
		- \frac{\lambda_{i,\tau}^{(2)}}{\theta_{i,\tau}^{(2)}} \frac{\theta_{i,\tau}^{(2)}}{\theta_{i,\tau}}
		= \frac{\lambda_{i,\tau}^{(1)}}{\theta_{i,\tau}^{(1)}} \frac{2+2c}{2+c} -
		\frac{\lambda_{i,\tau}^{(2)}}{\theta_{i,\tau}^{(2)}} \frac{2}{2+c},
	\end{align}
	where the second equation follows from the fact that $\theta_{i,\tau}^{(1)} = (1+c) \theta_{i,\tau}^{(2)}$ and $\theta_{i,\tau} = \frac{\theta_{i,\tau}^{(1)}+ \theta_{i,\tau}^{(2)}}{2} = \frac{2+c}{2}\theta_{i,\tau}^{(2)}$. Moreover, under Assumptions \ref{assumptions} and \ref{assumptions - tau fixed}, we know from Theorem~\ref{theorem - CLT} that for $m = 1$ and $2$,
	\begin{align*}
		\sqrt{T} \frac{\gamma_{i,\tau}^{(m)}}{2v_{i,\tau}^{(m)}} \frac{\lambda_{i,\tau}^{(m)}-\theta_{i,\tau}^{(m)}}{\theta_{i,\tau}^{(m)}} \Rightarrow \mathcal{N}(0,1),
	\end{align*}
	as $T,p \to \infty$ where $\theta_{i,\tau}^{(m)}$ is the asymptotic centering of $\lambda_{i,\tau}^{(m)}$. As a result, 
	\begin{align*}
		\frac{\lambda_{i,\tau}^{(m)}}{\theta_{i,\tau}^{(m)}} = 1+ o_P\left(\frac{1}{\sqrt{T}}\right),
	\end{align*}
	as $T,p \to \infty$, where we stress the fact that $\gamma_{i,\tau}^{(m)}$ and $v_{i,\tau}^{(m)}$ are constant when $T,p \to \infty$.
	Therefore, (\ref{3:ap1}) reduces to
	\begin{align*}
		\frac{\lambda_{i,\tau}^{(1)}-\lambda_{i,\tau}^{(2)}}{\theta_{i,\tau}}
		= \frac{2+2c}{2+c} \left(  1+ o_P\left(\frac{1}{\sqrt{T}}\right) \right) - \frac{2}{2+c} \left( 1+ o_P\left(\frac{1}{\sqrt{T}}\right)  \right) 
		= \frac{2c}{2+c} + o_P\left(\frac{1}{\sqrt{T}} \frac{2c}{2+c}\right),
	\end{align*}
	for $T,p \to \infty$, and we conclude that
	\begin{align*}
		\sqrt{T}\frac{\lambda_{i,\tau}^{(1)}-\lambda_{i,\tau}^{(2)}}{\theta_{i,\tau}}
		= \sqrt{T} \frac{2c}{2+c} + o_P\left(\frac{2c}{2+c}\right),
	\end{align*}
	when $T,p \to \infty$.
	
	Consequently, when $T,p \to \infty$, $Z_{i,\tau} \rightarrow \infty$  as long as $\sqrt{T} \frac{2c}{2+c} \to \infty$ and $\lambda_{i,\tau}^{(1)} \ne \lambda_{i,\tau}^{(2)}$. And it is sufficient to show the assertion in this theorem.
\end{proof}

\section{Additional Simulations on the Power of Autocovariance Test}
\subsection{The Impact of Autocorrelation on the Power of Autocovariance Test}
In this section, we study the impact of the autocorrelations on the power of autocovariance test. To be specific, we consider exactly the same setup as in Section~\ref{3:sec:4} of the main paper, except that we keep the variance of factors the same across two different factor models but
%\begin{enumerate}
%\item[]\textbf{Scenario 1: Different Factor Strengths}. \\ 
%We keep the temporal autocorrelation unchanged, i.e. AR coefficient $\phi_{1}^{(2)}$ is the same as $\phi_{1}^{(1)}$ (i.e. $\phi_{1}^{(2)} = \phi_{1}^{(1)} = 0.5$), and set
%$\left(\sigma_{1}^{(2)}\right)^2=1.1\left(\sigma_{1}^{(1)}\right)^2, 1.3\left(\sigma_{1}^{(1)}\right)^2, 1.5\left(\sigma_{1}^{(1)}\right)^2, 1.7\left(\sigma_{1}^{(1)}\right)^2, 1.9\left(\sigma_{1}^{(1)}\right)^2,$ respectively.
%\item[]\textbf{Scenario 2: Different Temporal Autocorrelations}. \\
%Let $\left(\sigma_{1}^{(2)}\right)^2 = \left(\sigma_{1}^{(1)}\right)^2 = N^{1-\delta}$, but 
set the AR coefficients for the second population to be 
$\phi_{1}^{(2)} = 0.9\phi_{1}^{(1)}, 0.8\phi_{1}^{(1)}, 0.7\phi_{1}^{(1)},0.6\phi_{1}^{(1)}, 0.5\phi_{1}^{(1)},$ respectively.
%\end{enumerate}
By doing that, we can investigate how the empirical powers of the autocovariance test are affected by the difference between autocorrelations of factors in two factor models.  %Moreover, it is worth to mention that when generating $\left\{f_{i,t}^{(1)}\right\}$ and $\left\{f_{i,t}^{(2)}\right\}$, $\left\{z_{1,t}^{(1)}\right\}$ are i.i.d.\ $\mathcal{N}\left(0,\left(\sigma_z^{(1)}\right)^2\right)$ with $\left(\sigma_z^{(1)}\right)^2 = 1/{\left(1-\left(\phi_1^{(1)}\right)^2\right)}$, whereas $\left\{z_{1,t}^{(2)}\right\}$ are i.i.d.\ $\mathcal{N}\left(0,\left(\sigma_z^{(2)}\right)^2\right)$ with $\left(\sigma_z^{(2)}\right)^2 = 1/{\left(1-\left(\phi_1^{(2)}\right)^2\right)}.$

Similar to Section~\ref{3:sec:4}, for each combination of $T,N$ and $\delta$, two high-dimensional time series observations are generated %as $\left\{\yy_t^{(1)}\right\}$ and $\left\{\yy_t^{(2)}\right\}$ 
first. We then follow the same estimation and testing procedures in Section~\ref{3:sec:3} and compute the test statistic $\widetilde{Z}_{i,\tau}$ by (\ref{3:teststat}), where again $B=500$ bootstrap samples are generated to find $\widetilde{\theta}_{i,\tau}^{(m)\ast}$, $\widetilde{v}_{i,\tau}^{(m)\ast}$, and $\widetilde{\gamma}_{i,\tau}^{(m)\ast}$ for both samples with the number of factors assumed to be known (i.e., $\widetilde{K}_{m} = 1$). Lastly, based on $M=500$ Monte Carlo simulations, the empirical powers of a one-sided autocovariance test for $i=1,$ $\tau=1$, and $\alpha=0.1$ can be estimated by %the empirical probability that $\widetilde{Z}_{1,1}$ is less than $z_{\alpha}$, i.e.,
%$$\frac{1}{M} \sum_{m=1}^M \mathbf{1}_{\left\{\widetilde{Z}_{1,1}(m) < z_{\alpha}\right\}},$$
%for the first scenario, and 
the empirical probability that $\widetilde{Z}_{1,1}$ is greater than $z_{1-\alpha}$, i.e.,
\begin{align*}
	\frac{1}{M} \sum_{m=1}^M \mathbf{1}_{\left\{\widetilde{Z}_{1,1}(m) > z_{1-\alpha}\right\}},
\end{align*}
%for the second scenario, 
where we have assumed %$\mu_{1,1}^{(1)} < \mu_{1,1}^{(2)}$ for various choices of $\left(\sigma_{1}^{(2)}\right)^2$.% in the first scenario, and 
$\mu_{1,1}^{(1)} > \mu_{1,1}^{(2)}$ for various choices of $\phi_{1}^{(2)}$.% in the second scenario.

\begin{figure}[!htbp]
	\centering
	\includegraphics[width=0.75\textwidth]{power_auto}
	\caption{Empirical powers of the autocovariance test with $T=400$, $N=200,400,800,$ and $\delta = 0, 0.1, 0.3, 0.5.$}\label{3:f4}
\end{figure}

\begin{figure}[!htbp]
	\centering
	\includegraphics[width=0.75\textwidth]{power_auto_800}
	\caption{Empirical powers of the autocovariance test with $T=800$, $N=200,400,800,$ and $\delta = 0, 0.1, 0.3, 0.5.$}\label{3:f5}
\end{figure}

As presented in Figure~\ref{3:f4}~and~\ref{3:f5}, for all ratios of $N$ and $T$, empirical powers increase towards $1$, while $\phi_1^{(2)}$ drops from $0.9\phi_1^{(1)}$ to $0.5\phi_1^{(1)}$. As a consequence, Figure~\ref{3:f4}~and~\ref{3:f5} suggest that the autocovariance test can correctly reject the null hypothesis when two high-dimensional time series have different temporal autocorrelations $\phi_1^{(2)} \ne \phi_1^{(1)}$. However, unlike the case for the impact of variance, empirical powers of the one-sided autocovariance test for relatively weak factor models with large $\delta$, especially $\delta = 0.5$, are slightly lower than those of relatively strong factor models with small $\delta$. In other words, compared with strong factor models (i.e. high factor strength), the autocovariance test for weak factor models is slightly less potent in detecting the same proportional changes in autocorrelations of factors for two different factor models.

\subsection{The Impact of Temporal-dependent Noises on the Power of Autocovariance Test}
In this section, we study the impact of temporal-dependent noises on the power of autocovariance test. To be specific, we consider exactly the same setup as in Section~\ref{3:sec:4} of the main paper, except that we change the DGP of $\{\epsilon_{j,t}\}$ from i.i.d.\ $\mathcal{N}(0,1)$ to the same AR(1) processes as $\{f_{1,t}^{(m)}\}$ but with various variances. In that sense, the error components are temporal-dependent and we are studying the case of ``$1$ large spikes $+(N-1)$ small/medium eigenvalues" in the symmetrized autocovariance matrix. To explore the impact of temporal-dependent noise, we set the standard deviation of $\{\epsilon_{j,t}\}$ to be $1,2$ and $5$, respectively. 

\begin{figure}[!htbp]
	\centering
	\includegraphics[width=0.75\textwidth]{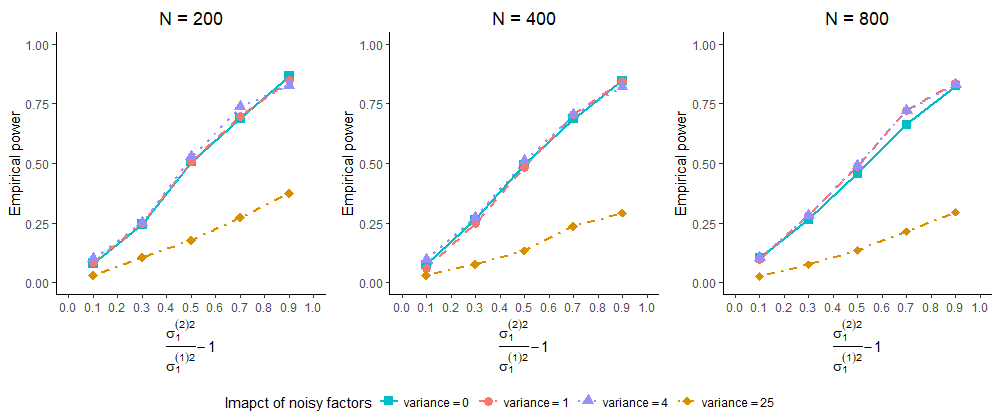}
	\caption{Empirical powers of the autocovariance test with $T=400$, $N=200,400,800,$ and $\sigma_\epsilon = 0, 1, 2, 5.$}\label{pic_error}
\end{figure}

Empirical powers of the autocovariance test with $T=400$ and various choices of $N$ are presented in Figure~\ref{pic_error}. When the autocovariances of noise components are relatively small ($\sigma_\epsilon = 1,2$), i.e. in the case of ``$1$ large spikes $+(N-1)$ small eigenvalues" in the symmetrized autocovariance matrix, the power of autocovariance test is not much different from the case with i.i.d.\ errors. However, when the autocovariance of noise increases ($\sigma_\epsilon = 5$), i.e. in the case of ``$1$ large spikes $+(N-1)$ medium eigenvalues", the power drops since the estimation of sample autocovariance matrices starts to suffer from the ``curse of dimensionality''.

\subsection{The Impact of Time Lags on the Power of Autocovariance Test}
In this section, we study the impact of the choice of time lags on the power of autocovariance test. To be specific, we consider exactly the same setup as in Section~\ref{3:sec:4} of the main paper, except that we perform our test on various choices of time lag $\tau$. To explore the impact of time lag $\tau$, we perform the test for $\tau = 1,2,3$ and $5$, respectively. 

\begin{figure}[!htbp]
	\centering
	\includegraphics[width=0.75\textwidth]{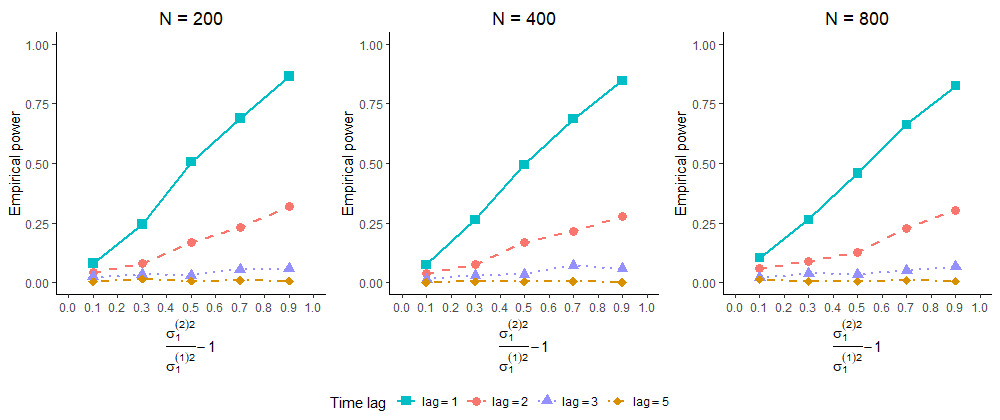}
	\caption{Empirical powers of the autocovariance test with $T=400$, $N=200,400,800,$ and $\tau = 1, 2, 3, 5.$}\label{pic_lag}
\end{figure}

Empirical powers of the autocovariance test with $T=400$ and various choices of $N$ are presented in Figure~\ref{pic_lag}. When the time lag $\tau$ increases, the power of the autocovariance test drops. This is mainly because when $\tau$ is relatively large, the autocorrelation of the factors, which are generated with AR($1$) and $\phi_1=0.5$, becomes weak so that the spikiness of the first eigenvalue drops accordingly.

\section{Additional Study on Hierarchical Clustering for Multi-country Mortality Data }\label{Appendix_app}

\subsection{Comparing Projection Matrices for Multi-country Mortality Data with One Factor}
As discussed in Remark~\ref{3:re:1}, the test of eigenvectors and eigenspace is of interest to many applications as discussed by a few works such as \citet{FanFanHanLv2022} and \citet{silin2020hypothesis}, and we notice that \citet{tang_clustering_2020} proposed a clustering method for multi-country mortality data using eigen functions under a functional time series setup. 

Therefore, in this section, we include a comparison of the mortality rate projection matrices for countries with one spiked factor to study the similarity of the mortality data.

For each country, the projection matrix is computed as the sum of eigenvectors corresponding to spiked eigenvalues multiplied by their transpose, i.e. $\sum_i^K v_iv_i^\top$, where $v_i$ is the eigenvector corresponding to the $i$-th largest eigenvalue of the symmetrized autocovariance matrix. We measure the dissimilarity between each pair of countries by the $\ell_2$ norm of the difference between two projection matrices. The $\ell_2$ norm takes values between $0$ (for exactly the same projection matrix) and $1$ (for projections corresponding to orthonormal eigenvectors). The results are presented in Figure~\ref{fprojection}.

\begin{figure}[!htbp]
	\centering
	\includegraphics[width=0.7\textwidth]{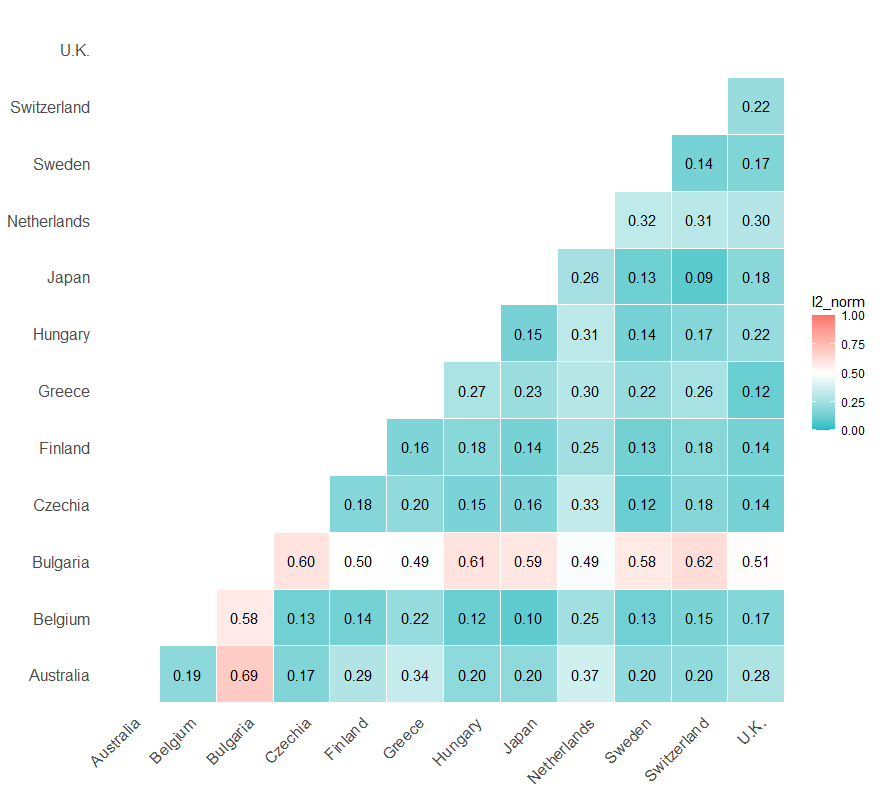}
	\caption{$\ell_2$ norm of the difference between the projection matrices for countries with one factor}\label{fprojection}
\end{figure}

Indeed, for most countries (except for Bulgaria) with one factor, the dissimilarity is relatively small. The similarity in projection matrices reveals that common human characteristics can be extracted from different countries. Meanwhile, this similarity does not guarantee that common human characteristics are of the same importance in leading the trend of human mortality rates in different countries. This motivates our proposed test on the equivalence of the largest eigenvalues since it can tell whether common human characteristics (compared to the country-specific features such as socioeconomic conditions) are of the same strength in affecting the mortality rates in different countries. Similar discussions on the roles of both human characteristics and country-specific features in mortality forecasting are also seen in \citet{LiLee2005}.

\subsection{Hierarchical Clustering for Multi-country Mortality Data with More than One Factors}
For countries with more than one factor, to test on the equivalence of autocovariances through factor models, test statistics between each pair of countries are computed for all three factors as $\widetilde{Z}_{1,1}, \widetilde{Z}_{2,1}$ and $\widetilde{Z}_{3,1}$. As depicted in Figure~\ref{3:f8}, the $p$-values for $\widetilde{Z}_{1,1}$ and $\widetilde{Z}_{2,1}$ between all pairs of countries are relatively large, which suggests that the differences of the first two factors between each pair of countries are not significant (at $\alpha = 0.1$). Nonetheless, $p$-values for $\widetilde{Z}_{1,3}$ are relatively small between Canada and France, Canada and Italy, and Italy and Portugal. 
As a result, despite that $p$-value is $0.09$ for $\widetilde{Z}_{1,3}$ between Italy and Portugal, one may suggest considering France, Italy, and Portugal have similar spiked eigenvalues of their autocovariance matrices in a three-factor model and include them in a combined statistical analysis while leaving Canada for an independent analysis.
To measure the dissimilarity of mortality data between two countries with more than one factors, the overall distance between two countries can be defined as a weighted average of the distances for all factors. In specific, for each pair of countries, we can define the distance for the $i$-th factor as $dist_i = 1 - p_i$, where $p_i$ is the $p$-value of the autocovariance test computed using the $i$-th factor of both countries. It is then straightforward to compute the overall distance between this pair of countries as 
$$dist = \sum_{i=1}^{K} w_i\cdot dist_i,$$ 
where $w_i$ is a weight on $dist_i$. Practically, we suggest that the weight $w_i$ is related to the magnitude of each singular value of the autocovariance matrix (or equivalently the squared root of the eigenvalues of symmetrized autocovariance matrix), since the singular values are related to the autocovariance explained by each factor. Based on this idea, we compute $w_i$ as 
$$w_i = (w_{i}^{(1)} + w_{i}^{(2)})/2,$$ 
where $w_{i}^{(1)} = \sqrt{\tilde \lambda_{i,\tau}^{(1)}} / \left(\sum_{i=1}^{K} \sqrt{\tilde \lambda_{i,\tau}^{(1)}}\right)$ and $w_{i}^{(2)} = \sqrt{\tilde \lambda_{i,\tau}^{(2)}} / \left(\sum_{i=1}^{K} \sqrt{\tilde \lambda_{i,\tau}^{(2)}}\right)$. The result of hierarchical clustering using average linkage for all countries with three factors is presented in Figure~\ref{3:f11}.   

\begin{figure}[!htbp]
	\centering
	\includegraphics[width=0.65\textwidth]{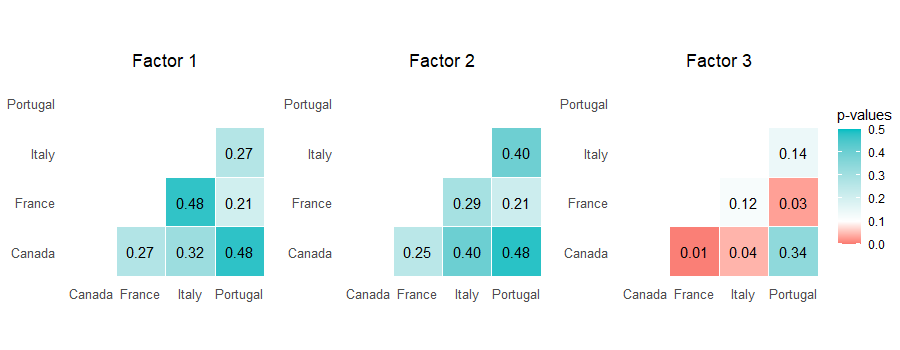}
	\caption{$p$-values of the autocovariance test for each pair of countries that have three factors in the estimated factor model}\label{3:f8}
\end{figure}

\begin{figure}[!htbp]
	\centering
	\includegraphics[width=0.55\textwidth]{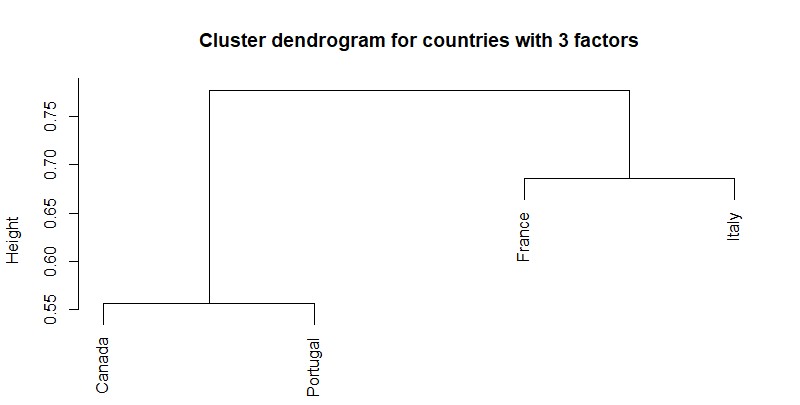}
	\caption{Cluster dendrogram for countries that have three factors in the estimated factor model}\label{3:f11}
\end{figure}

However, as discussed in Remark \ref{3:re:1}, even the number of factors across some countries are different, it may still be of interest to perform the test for the first factor only. The idea behind is also very straightforward, that is we can study whether the mortality rates across all countries have the same low-dimensional representations in the eigenspace spanned by the first eigenvector shared by all countries. In this sense, we perform autocovariance tests and the hierarchical clustering analysis on the first factor for all countries regardless of the estimated total number of factors. The result of $p$-values computed for the first factors are illustrated in Figure~\ref{3:f9} and the result of hierarchical clustering using average linkage for the first factor is presented in Figure~\ref{3:f12}. 
As seen in Figure~\ref{3:f9}, in addition to what has been discussed for those countries with only one factor in their factor models, the first factor of Portugal and France are also different from the first factor of Bulgaria and Finland, respectively.  Consequently, despite the differences between the estimated numbers of factors for  Canada, Denmark, Italy, Poland, and all other countries, the total death rates projected in the eigenspace spanned by the first common eigenvector are not significantly different across these countries.

%From the perspective of combined statistical analysis on age-specific mortality rates, $p$-values of the autocovariance test for each pair of countries in Figure~\ref{3:f9} can also be considered a measure of dissimilarities between the age-specific death rates for two countries. A relatively small $p$-value suggests that these two countries' age-specific death rates are somewhat different. Consequently, a hierarchical clustering method can be developed with the dissimilarities measured by the $p$-values of the autocovariance test, providing clustering results for combined statistical analysis on age-specific mortality rates.

\begin{figure}[!htbp]
	\centering
	\includegraphics[width=0.7\textwidth]{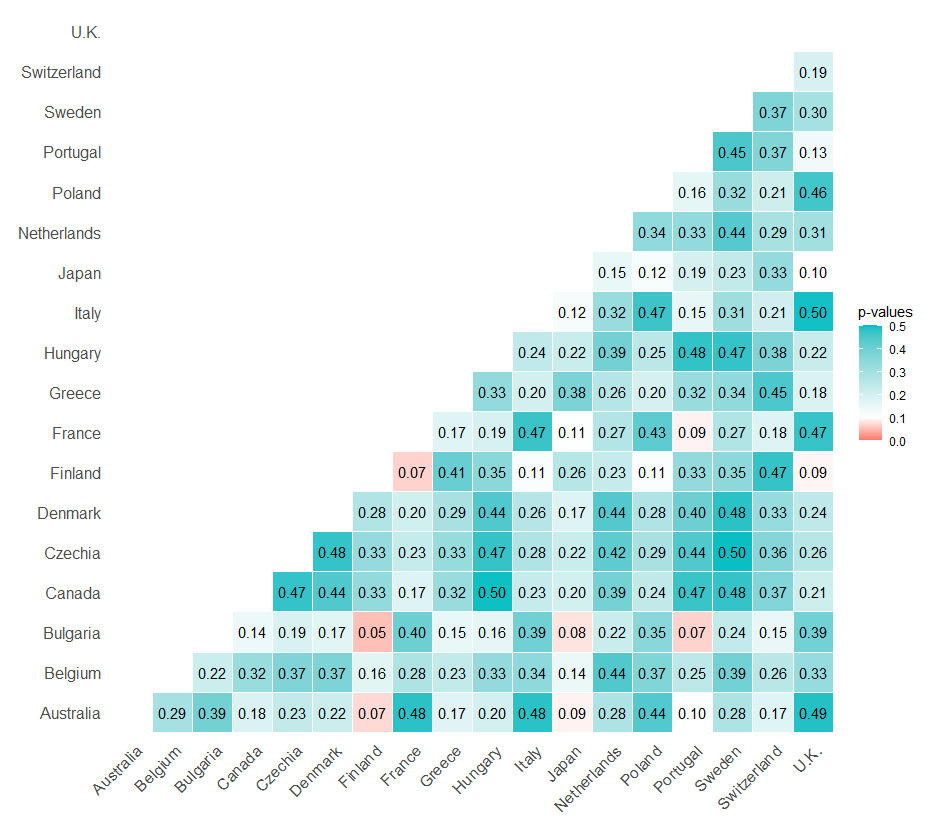}
	\caption{$p$-values of the autocovariance test of the first factor for all countries}\label{3:f9}
\end{figure}

\begin{figure}[!htbp]
	\centering
	\includegraphics[width=0.7\textwidth]{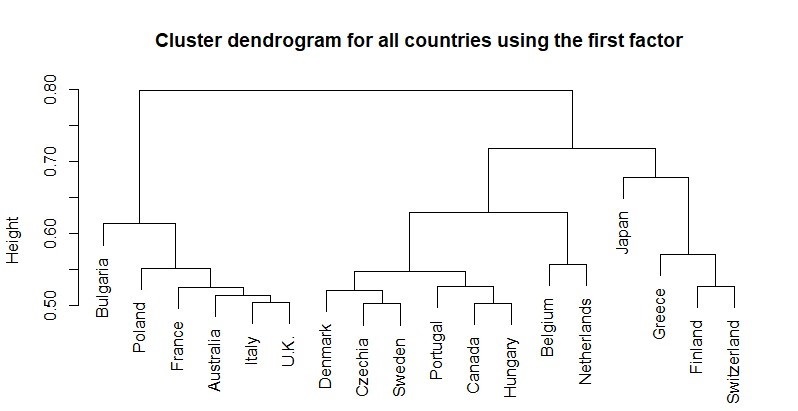}
	\caption{Cluster dendrogram for all countries using the first factor}\label{3:f12}
\end{figure}

\section{Simulation Studies on Gaussian Assumption in Theorem~\ref{theorem - CLT} }\label{Appendix_sim_clt}

In this section, we conduct simulations to show that the results of Theorem~\ref{theorem - CLT} can still be obtained when we replace the Gaussian error $\epsilon_{it}$ by Gamma or Student's t distributed errors. The simulations considered in this section are based on the following factor models
\begin{align*}
	\yy_t
	=
	\begin{pmatrix}
		\mathrm{diag}(\sigma_1, \ldots, \sigma_K) \\ \boldsymbol{0}_{p\times K}
	\end{pmatrix}
	\ff_t + \epsilonb_t
	\numberthis
\end{align*}
where we choose $T=4000,\ p=400,\ K=2$, $\sigma_1=50$, $\sigma_2=10$, and the factors $\ff_t$ are generated by MA($2$) processes
\begin{align*}
	f_{it} = \sum_{l=0}^{2} \phi_{il} z_{i, t-l}, \quad i = 1, 2, \quad t = 1, \ldots, T,
\end{align*}
where $\phib_1 = (5,3,1)$ and $\phib_2 = (1,-2,1)$ with variances of factors normalized to 1. The errors $\epsilonb_t$ are generated i.i.d.\ from three different distributions:(1) Gaussian i.e. $\mathcal{N}(0,1)$; (2) Gamma$(2,2)$ with mean and variance normalized to 0 and 1, respectively; (3) Student's t with d.f. $= 4$ with mean and variance normalized to 0 and 1, respectively. By generating $M=5000$ Monte Carlo simulations, we draw the histograms and density plots of $\lambda_{1,1}$ and $\lambda_{2,1}$ for the above three cases (subject to the same scaling) as shown in Figure~\ref{sim_nonG1}~to~\ref{sim_nonG2}, where the differences among the histograms and density plots are rather small.

\begin{figure}[!htbp]
	\centering
	\includegraphics[width=0.7\textwidth]{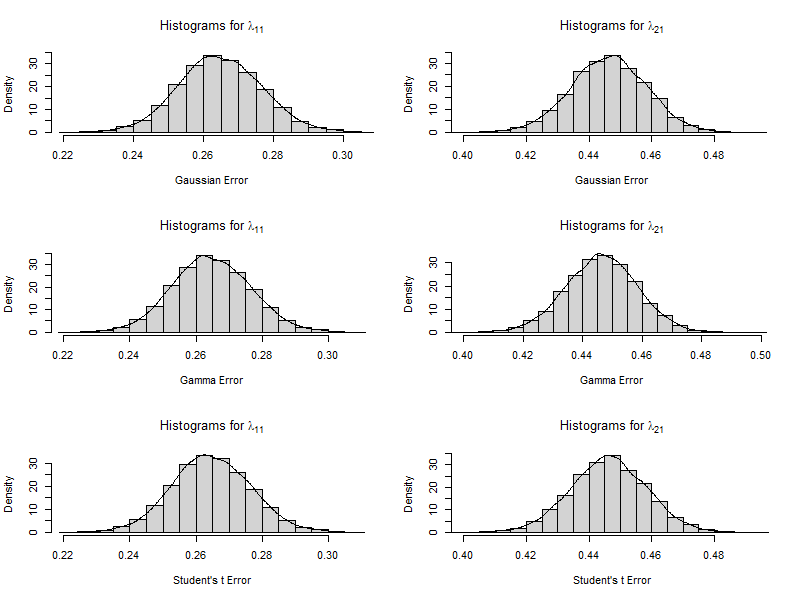}
	\caption{Histograms of $\lambda_{1,1}$ and $\lambda_{2,1}$ (subject to a rescaling) under three different assumptions of $\epsilonb_t$}\label{sim_nonG1}
\end{figure}

\begin{figure}[!htbp]
	\centering
	\includegraphics[width=0.7\textwidth]{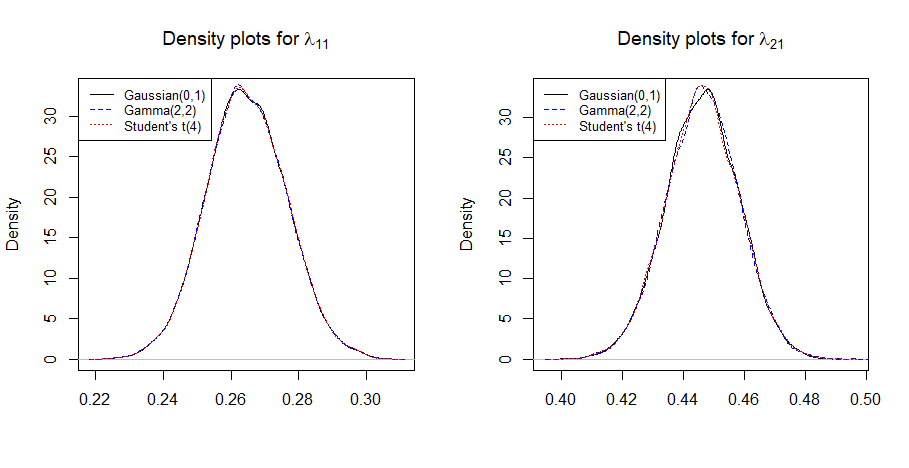}
	\caption{Density plots of $\lambda_{1,1}$ and $\lambda_{2,1}$ (subject to a rescaling) under three different assumptions of $\epsilonb_t$}\label{sim_nonG2}
\end{figure}
				\end{appendices}
		%\end{supplement}

	\end{document}